\documentclass[11pt]{article}
\usepackage{graphicx}
\usepackage{microtype}
\usepackage{hyperref}
\usepackage{amssymb}
\usepackage{amsmath}
\usepackage{amsthm}
\usepackage{nccmath}
\usepackage{dsfont}
\usepackage{algorithm}
\usepackage{algpseudocode}
\usepackage{subcaption}  
\usepackage{enumitem}
\usepackage{soul}
\usepackage{url}
\usepackage{natbib}
\usepackage{indentfirst}
\usepackage{multirow} %
\usepackage{makecell} %
\usepackage{longtable}
\usepackage{setspace} %
\usepackage{comment}
\usepackage{authblk}

\usepackage{xcolor}
\usepackage{threeparttable}
\DeclareMathOperator*{\argmin}{arg\,min}
\theoremstyle{plain}
\newtheorem{theorem}{Theorem}
\newtheorem{lemma}{Lemma}
\newtheorem{proposition}{Proposition}

\setlist[itemize,enumerate]{
  itemsep=0ex,      %
  parsep=0ex,       %
  topsep=3pt,       %
  partopsep=0pt,    %
  leftmargin=\parindent
}

\newtheoremstyle{myitalicremark}
  {3pt}      %
  {3pt}      %
  {\itshape} %
  {}         %
  {\bfseries}%
  {.}        %
  {5pt}      %
  {}         %
\theoremstyle{myitalicremark}
\newtheorem{remark}{Remark}
\newtheorem{example}{Example}

\newtheorem{assumption}{Assumption}

\allowdisplaybreaks

\newcommand{\beginsupplement}{
  \setcounter{section}{0}
  \renewcommand{\thesection}{\Alph{section}} %
  \renewcommand{\thesubsection}{\thesection\arabic{subsection}} %
  \renewcommand{\thesubsubsection}{\thesubsection\arabic{subsubsection}} 
  
  \setcounter{figure}{0}
  \setcounter{table}{0}
  \setcounter{equation}{0}
  
  \renewcommand{\thefigure}{\thesection\arabic{figure}} %
  \renewcommand{\thetable}{\thesection\arabic{table}}   %
  \renewcommand{\theequation}{\thesection\arabic{equation}} %
  
  \counterwithin{figure}{section}
  \counterwithin{table}{section}
  \counterwithin{equation}{section}
  
  \begin{center}
    \Large\bfseries Supplementary Material
  \end{center}
}

\newenvironment{keyword}[1][Keywords]{%
  \par\vspace{6pt}%
  \begin{list}{}{%
      \setlength\leftmargin{2.5em}
      \setlength\rightmargin{2.5em}
      \setlength\topsep{0pt}
      \setlength\parsep{0pt}
      \setlength\listparindent{0pt}%
    }%
    \item\relax
    \noindent\textbf{#1:}\space%
    \ignorespaces
}{%
  \end{list}%
  \par\vspace{6pt}%
}

\usepackage[margin=1in]{geometry}
\selectfont

\title{Harnessing Individual Motivation for Collective Efficiency: A
Mechanism-Driven Distributed Optimization Method}

\author[1]{Dongwei Xie}
\author[1]{Xuhao Wang}
\author[1]{Yujie Tang}
\author[1]{Jie Song}

\affil[1]{School of Advanced Manufacturing and Robotics, Peking University, No.5 Yiheyuan Road, Haidian District, 100871, Beijing, People’s Republic of China }

\date{}

\begin{document}

\maketitle

\begin{abstract}
In industrial scenarios involving multi-agent collective decision-making, centralized decision-making may not be admissible due to restrictive access to individual local information, while the conflicts between participants' self-interest and global performance may also impede collaborative distributed decision-making. This paper proposes a mechanism-driven distributed decision-making method, wherein incentives are employed and designed to motivate participants to collaborate in a distributed fashion even though each participant's decision is driven primarily by self-interest. Focusing on optimization problems with coupled objective functions and coupled constraints, we design a distributed optimization algorithm tailored for this class of problems and provide guarantees for its convergence. Furthermore, we design two incentive mechanisms, the shadow pricing mechanism and the Vickrey-Clarke-Groves mechanism, and demonstrate that participants are willing to engage in distributed collaboration under these mechanisms. The mechanism drives the execution of the distributed algorithm, and the optimal result of distributed computation guides the determination of incentives in the mechanism, both of which are interrelated to form a closed loop. Finally, numerical experiments illustrate the effectiveness of the proposed algorithm and mechanisms.
\end{abstract}

\begin{keyword}
Distributed decision making, Distributed optimization, Constraint-coupled optimization, Incentive mechanism
\end{keyword}

\section{Introduction}
With the advancement of industrial intelligence, optimal decision-making problems have grown in scale and complexity, accompanied by an increasing number of participating individuals or entities. Achieving group-level efficient decision-making typically requires access to individual-level information \citep{mann2018collective,hassidim2021limits}. However, in numerous industrial contexts, participating individuals tend to form temporary coalitions rather than long-term collectives. Consequently, participants often resist disclosing private information to a centralized administrator/aggregator to safeguard their interests, rendering traditional centralized decision-making frameworks untenable. For example, in supply chain management, multiple enterprises often collaborate through joint replenishment to reduce costs. However, the demand rates and inventory holding costs of each enterprise constitute private information, making centralized decision-making unachievable \citep{guler2017design}. Similarly, in distributed energy systems, the energy sector faces challenges characterized by fragmented ownership structures. Here, the system operator struggles to reconcile individual consumers' interests through centralized approaches due to limited or costly access to consumers' private data, ultimately resulting in suboptimal outcomes that fail to align with all parties' priorities \citep{sun2024break}.

Distributed optimization methods offer a viable solution to problems involving reluctant information-sharing participants. Ideally, by decomposing the global problem into local sub-problems, each participant simply solves its sub-problem independently while circumventing disclosure of private information, eliminating the need for an administrator to make a centralized decision \citep{yang2019survey}. However, such an ideal decomposition of global objectives into non-conflicting local ones is rare, as the scarcity of resources in real-world scenarios inevitably creates conflicting interests among participants \citep{homberger2017generic}. This results in inherently coupled individual decision-making relations that cannot be straightforwardly decomposed. For example, in resource transportation problems, when multiple vehicles carrying different resources utilize the same route concurrently, congestion will naturally occur which diminishes efficiency and escalates costs, thereby creating coupling among participants' decisions \citep{guo2024penalty}. In supply chain management, warehouses face storage capacity limitations, and late-arriving suppliers encounter progressively limited space availability, creating positional disadvantages that may force sub-optimal storage solutions \citep{chen2025primal}. In cloud computing platforms, the computational demand needs to be satisfied under capacity limitations, and low-priority tasks may have to be abandoned to maintain system stability when the demand exceeds a certain threshold \citep{liu2025efficient}. 
To handle such coupling in distributed decision-making, various distributed optimization algorithms have been proposed for different problem setups. However, existing works primarily address a single form of coupling---whether manifested in objective functions \citep{nedic2009distributed} or constraints \citep{falsone2017dual}---or transform one type of coupling into another, but typically at the expense of algorithm efficiency \citep{li2020distributed}; see Section~\ref{section:literature_review} for more detailed literature review. Few distributed frameworks are specifically designed to tackle optimization problems characterized by the coexistence of multiple coupled relations.

Additionally, while distributed optimization methods enable participants to collaboratively optimize group-level performance \citep{yang2019survey}, whether the final optimization outcome is acceptable to each participant is often overlooked. In reality, when participants have decision-making power, they are often driven by self-interest to maximize their own benefits rather than collective welfare  \citep{lang2016design}. \cite{xiao2017learning} posited that, within self-interested environments, noncooperative game dynamics emerge among individuals, necessitating the design of reasonable mechanisms to incentivize collaborative behavior. \cite{meng2023selfish} also pointed out that self-interested behavior by decision-makers may prevent the achievement of collective consensus. 
To address this challenge, researchers have proposed designing incentive mechanisms to counteract self-interested tendencies and promote consensus formation. However, existing relevant works on mechanism design predominantly address a single form of coupled relations. For instance, marginal pricing schemes in power grids manage coupled constraints arising from transmission line capacities \citep{alcantara2024optimal}, while game-theoretic mechanisms focus exclusively on couplings within objective functions \citep{tang2025biform}; see Section~\ref{section:literature_review} for more details. Mechanism designs capable of simultaneously addressing coexisting coupled relations remain underinvestigated.

To address these gaps, this paper aims to design a distributed optimization algorithm capable of efficiently handling problems with coupled objective functions and constraints, and develop incentive mechanisms that align participants' self-interest with group-level performance.  Specifically, the major contributions of this paper encompass the following:
\begin{itemize}[leftmargin=\parindent]
    \item A distributed optimization algorithm, called \emph{Consensus-Tracking-ADMM}, is designed to address problems with highly coupled objective functions and coupled constraints simultaneously. The coupled objective functions are decoupled through consensus on local copies of primal variables, while the coupled constraints are handled by distributed tracking of dual variables' gradients. Moreover, convergence guarantees are established to rigorously justify our algorithm's effectiveness.
    \item Two distinct mechanisms are designed: the shadow pricing mechanism (unit-price signal guidance) and the VCG mechanism (total-price signal guidance). Analysis of their properties demonstrates that, under the guidance of these mechanisms, self-interested participants are willing to engage in distributed coordination. This willingness arises as the optimal solution for participants is identical in both the noncooperative game scenario and the distributed coordination scenario.

    Note that in our proposed framework, the mechanism drives the execution of distributed algorithms, while the optimal solution produced by distributed computation guides the determination of incentives in the mechanism, thereby forming an interrelated closed loop.
    
    \item Numerical experiments demonstrate that our proposed algorithm enjoys faster convergence compared to several existing algorithms on test cases with variable scales. Meanwhile, we observe that the incentives received by the participants under each mechanism are task-dependent, suggesting that neither mechanism strictly dominates the other from the participants' perspective.
\end{itemize}

Compared to existing literature dealing with distributed coupled relationships [e.g., \citet{falsone2017dual,li2020distributed}], our algorithm can handle both coupled objective functions and coupled constraints. In contrast to traditional algorithm-centric distributed approaches \citep{yang2019survey}, our study highlights the mechanism-driven distributed approach, motivated by the observation that the absence of proper incentives for participants within transient coalitions may hinder the achievement of the desired outcomes of distributed coordination. Compared with the mechanisms in \cite{alcantara2024optimal,tang2025biform}, our mechanisms can cope with both coupled objective functions and coupled constraints, with theoretical guarantees that the proposed mechanisms comply with the desirable properties.

The structure of this paper is organized as follows: Section 2 presents the literature review of related studies. Section 3 describes the problem and the mathematical formulation. Section 4 introduces our proposed algorithm, \textit{Consensus-Tracking-ADMM}, along with its convergence guarantees. Section 5 proposes two incentive mechanisms to rationalize distributed approaches. Section 6 evaluates the performance of our algorithm through numerical experiments and compares the two incentive mechanisms. Finally, we conclude the paper and provide an outlook in Section 7.

\section{Literature review}
\label{section:literature_review}

Distributed optimization has emerged as a powerful technique for coordinating large-scale multi-agent systems, and has gained significant traction due to wide applicability in, e.g., resource allocation, formation control, sensor network estimation, and multi-agent reinforcement learning \citep{yi2016initialization, pu2020push}. 
In many practical situations, each agent not only makes decisions subject to its constraints, but also needs to be associated with other agents on limited resources to jointly satisfy certain constraints, which is called a constraint-coupled problem in \cite{falsone2020tracking}. Methods for distributed optimization with coupled constraints typically fall into three categories: dual-decomposition-based algorithms, consensus-based algorithms, and hybrid algorithms. Dual-decomposition-based algorithms are suitable for problems that are easily separable in the dual domain \citep{terelius2011decentralized}, enforcing coupled constraints through dual variables. Early algorithms were predominantly based on dual subgradient methods for solving separable problems with linear constraints \citep{necoara2015linear}, while modern algorithms often rely on the ADMM (Alternating Direction Method of Multipliers) framework \citep{mateos2010distributed}, which can handle more complex constraints and typically exhibit faster convergence \citep{chang2016proximal, zhang2020distributed}. For large-scale problems, \cite{zheng2022parallel} proposed dual block decomposition combined with gradient-based algorithms to reformulate the problem. Meanwhile, consensus-based approaches enable each agent to estimate global results by weighting the local decision variables of its neighbors in the update \citep{tsitsiklis2003distributed}. \cite{zhu2011distributed} proposed a classical distributed consensus-based primal-dual subgradient algorithm for handling coupled constraints. Based on this, \cite{su2021distributed} incorporated a perturbation approach to cope with more general constraints and non-separable global cost functions. Starting from parallel ADMM, \cite{falsone2020tracking} embedded a dynamic average consensus protocol in the proposed tracking-ADMM method, which utilizes tracking techniques to capture constraint violations. For large-scale linear programs, \cite{el2023consensus} employed the Dantzig-Wolfe decomposition and designed a consensus-based fully distributed algorithm to handle constraint matrices. Additionally, some algorithms leverage both dual decomposition and consensus techniques, which we categorize as hybrid algorithms. \cite{mateos2016distributed} proposed a distributed method that transforms coupled constraints into a saddle-point problem and incorporates consensus-based ideas. \cite{falsone2017dual} utilized dual decomposition combined with average consensus to update the primal variables. \cite{li2020distributed} further enhanced the algorithm's performance by simultaneously conducting consensus and performing approximate proximal updates on both primal and dual variables. In this paper, we leverage the ideas of consensus-based algorithms and develop the \emph{Consensus-Tracking-ADMM} algorithm tailored to handle both coupled objective functions and coupled constraints.

A mechanism refers to a framework where a designer strategically structures rules to guide participants toward actions aligned with the designer's objectives, by embedding the decision-making process within a game such that the outcome of the game converges to the designer's intended goal \citep{pavan2014dynamic}. Classic mechanism design classes encompass auctions, voting, matching, and pricing schemes. This study specifically focuses on the pricing mechanisms in guiding self-interested individuals to engage in decentralized coordination, thereby enabling efficient resource allocation. One common approach is \textit{dynamic pricing} mechanisms, which adjust prices in real-time based on actual conditions. For instance, \cite{egri2013distributed} utilized such mechanisms to address coordination challenges in supply chain networks comprising multiple retailers and a single supplier. \cite{hu2016pricing} employed such mechanisms to effectively coordinate charging behaviors among multiple electric vehicles. 
Another widely used mechanism is the \textit{Vickrey-Clarke-Groves} (VCG) mechanism, which determines the incentive to a participant by calculating the difference in marginal contributions created before and after each participant's involvement in the game. Although the VCG mechanism originates from an auction, it is frequently employed as a pricing instrument. For instance, \cite{qian2021automatically} leveraged this mechanism to enable distributed renewable resources and electric vehicles to trade power in local energy markets. Additionally, some mechanisms are designed for domain-specific applications. In electricity market contexts, \textit{locational marginal pricing} (LMP) constitutes the predominant pricing mechanism. By calculating nodal clearing prices based on transmission constraints between network nodes, LMP optimally guides distributed resource scheduling \citep{hamoud2004assessment}. This framework represents the most extensively implemented pricing mechanism in the U.S. and European electricity markets. Within carbon markets, commonly used mechanisms include \textit{cap-and-trade} policies where regulators coordinate participants through emission caps and allowance adjustments, thereby achieving ultimate objectives of carbon emission reduction \citep{feng2021altruistic,pun2025cap}. In this paper, we develop a shadow pricing mechanism guided by the concept of LMP and a tailored VCG mechanism adapted to the problem structure based on the traditional VCG.

\section{Problem formulation}\label{sec:Model}

In this paper, we consider the scenario where a group of participants needs to accomplish a common task. Each participant possesses private information regarding their operational capabilities, which remains non-transparent to others. To complete the task, some costs will be incurred on each participant; it is worth noting that the costs of participants may interact with each other, e.g., the congestion cost incurred when the same common resource is occupied by multiple individuals during the task.

We assume the existence of a coordinating administrator for the group of participants, whose primary objective is to achieve maximal group-level efficiency, trying to guide the individuals' decisions to minimize the total operational costs. On the contrary, the participants will strive to maximize their payoffs. This fundamental conflict arises from misaligned objectives between collective efficiency and individual interests. In the following, we provide a detailed mathematical modeling of the problem.

\subsection{The optimization problem}
\label{subsection:formulation_optimization}

Let $N$ be the number of participants. Each participant is associated with a local decision variable $\boldsymbol{x}_i \in \mathbb{R}^{n_i}$ for the task, and we let $\boldsymbol{x}={\rm col}(\boldsymbol{x}_1,\dots,\boldsymbol{x}_N) \in \mathbb{R}^{n_1+\dots+n_N}$ denote the decision of all participants. After implementing the decisions and completing the task, each participant incurs a local cost $f_i(\boldsymbol{x})=f_i(\boldsymbol{x}_1,\ldots,\boldsymbol{x}_N)$. Note that $f_i:\mathbb{R}^{n_1+\cdots+n_N}\rightarrow\mathbb{R}$ is a function of all participants' decisions, allowing coupling in the local objectives such as the congestion costs incurred by sharing common public resources. Additionally, the participants' decisions will be subject to (i) local inequality constraints of the form $g_i(\boldsymbol{x}_i)\leq \boldsymbol{0}$, where $g_i: \mathbb{R}^{n_i} \rightarrow \mathbb{R}^{q_i}$ is a vector-valued function and the inequality is to be interpreted componentwise; (ii) a coupled constraint of the form $\sum_{i=1}^n A_i\boldsymbol{x}_i=\boldsymbol{d}$, where $\boldsymbol{d}\in\mathbb{R}^{n_0}$ is a given vector and $A_i\in\mathbb{R}^{n_0\times n_i}$ are given matrices. Specifically, the coupled constraint mandates that all participants must coordinate their efforts to achieve task completion. The goal is to minimize the total cost given by $f(\boldsymbol{x})=\sum_{i=1}^N f_i(\boldsymbol{x})$, formulated as the following optimization problem:
\begin{equation}\label{eq:Distributed}
\tag{$\mathcal{P}_1$}
     \begin{aligned}
         \min_{\boldsymbol{x}} \quad & f(\boldsymbol{x})=\sum\nolimits_{i=1}^N f_i(\boldsymbol{x}) \\
        {\rm s.t.} \quad \  &\sum\nolimits_{i=1}^N A_i\boldsymbol{x}_{i} =\boldsymbol{d}, \quad \leftrightarrow \quad \lambda, \\
        & \ g_i(\boldsymbol{x}_i) \leq \boldsymbol{0}, \quad \leftrightarrow \quad \alpha_i, \quad \forall i=1,\ldots,N. 
    \end{aligned}
 \end{equation}
Here, $\lambda$ and $\alpha_1,\dots,\alpha_N$ denote the dual variables corresponding to the constraints. For notational clarity, we let $\boldsymbol{\alpha}={\rm col}(\alpha_1,\dots,\alpha_N)$, $\Omega_i = \{\boldsymbol{x}_i \ |\ g_i(\boldsymbol{x}_i) \leq \boldsymbol{0}\}$, and $\Omega=\Omega_1 \times \cdots \times \Omega_N$.
We assume that the functions $f_i$ and $g_i$ contain private information that the $i$th participant does not wish to disclose.

\begin{remark}
    In the optimization problem \ref{eq:Distributed}, the coupled constraint is formulated as an equality constraint. For inequality constraints, e.g., $\sum_{i} A_i\boldsymbol{x}_{i} \leq \boldsymbol{d}$, a slack variable $\boldsymbol{s}_i$ can be introduced to convert the inequality into an equality constraint, $\sum_{i} (A_i\boldsymbol{x}_{i} +\boldsymbol{s}_i) = \boldsymbol{d}$. This slack variable is subsequently incorporated into the original decision variable, forming a new composite variable $\boldsymbol{z}_i={\rm col}(\boldsymbol{x}_i,\boldsymbol{s}_i)$. Concurrently, the constraint matrices will be appropriately modified to accommodate this variable augmentation, e.g., $\sum_{i} A_i'\boldsymbol{z}_{i} = \boldsymbol{d}$.
\end{remark}

The following standard assumptions will be imposed on the optimization problem \ref{eq:Distributed}.

\begin{assumption}\label{ass:const_set}
    For all $i=1,\dots,N$:
    \begin{enumerate}[label=(\roman*),leftmargin=24pt]
        \item $f_i(\boldsymbol{x})$ is a convex function of $\boldsymbol{x}$, and each component of $g_i(\boldsymbol{x}_i)$ is a convex function of $\boldsymbol{x}_i$.
        \item The feasible region of~\ref{eq:Distributed} is nonempty, and each local constraint set $\Omega_i$ is compact.
    \end{enumerate}
\end{assumption}

\begin{assumption}\label{ass:saddle}
    For the Lagrangian function
\[\mathcal{L}(\boldsymbol{x},\lambda,\boldsymbol{\alpha}) = \sum\nolimits_i f_i(\boldsymbol{x}) - \lambda^{\rm T}\left(\sum\nolimits_i A_i\boldsymbol{x}_i - \boldsymbol{d}\right)+ \sum\nolimits_i\alpha_i^{\rm T}g_i(\boldsymbol{x}_i)
\]
associated with optimization problem \ref{eq:Distributed},
    there exists a saddle point $(\boldsymbol{x}^*, \lambda^*, \boldsymbol{\alpha}^*)$ satisfying the inequality
    \begin{equation*}
      \mathcal{L}(\boldsymbol{x}^*, \lambda, \boldsymbol{\alpha}) \leq  \mathcal{L}(\boldsymbol{x}^*, \lambda^*, \boldsymbol{\alpha}^*) \leq \mathcal{L}(\boldsymbol{x}, \lambda^*, \boldsymbol{\alpha}^*),
      \qquad\forall \boldsymbol{x}\in\mathbb{R}^{n_1+\cdots+n_N},\lambda\in\mathbb{R}^{n_0},\boldsymbol{\alpha}\geq\boldsymbol{0}.
    \end{equation*}
\end{assumption}

We demonstrate the applicability of the optimization problem \ref{eq:Distributed} through two examples abstracted from real-world applications.
\begin{example}\label{example:allo}
    The first example is a commodity allocation and transportation problem adapted from \cite{roughgarden2010algorithmic}. Suppose $N$ suppliers and $M$ demanders are located at certain nodes of a transportation network given by the directed graph $\mathcal{G}_{\mathrm{tra}}=(\mathcal{V},\mathcal{E})$, where each edge $e\in\mathcal{E}$ represents a one-way road connecting two nodes. There are in total $K$ types of infinitely divisible commodities, $m_{ik}$ denotes the quantity of type-$k$ commodity stored at supplier $i$, and $l_{ij}$ denotes the transportation capacity available from supplier $i$ to demander $j$. To fulfill demander $j$'s requirement of $d_{jk}$ units of type-$k$ commodity, suppliers need to allocate and transport the commodities. Assume that for each supplier-demander pair $(i,j)$, the set of available paths in the transportation network is $\mathcal{P}_{ij}$. Thus, supplier $i$ needs to determine the decision variable $x_{ijkr}$, representing the quantity of type-$k$ commodity to be transported from supplier $i$ to demander $j$ via path $r\in\mathcal{P}_{ij}$. We assume that a private transportation cost $c_{ie}$ will be incurred by supplier $i$ per unit flow transported along edge $e$. In addition, each edge $e\in\mathcal{E}$ may be shared by multiple paths, and congestion costs arise when multiple suppliers share the same road for transportation. We let $c_e(q_e(\boldsymbol{x}))$ be the congestion cost incurred for each unit of traffic along edge $e$, where $q_e(\boldsymbol{x})=\sum_{i,j,k}\sum_{r\in \mathcal{P}_{ij}:e\in r} x_{ijkr}$ represents the total traffic on edge $e$. We can formulate the following optimization problem to find the optimal allocations $x_{ijkr}$:
    \begin{equation}\label{eq:example-tran}
     \begin{aligned}
         \min_{\boldsymbol{x}=(x_{ijkr})} \quad & \sum\nolimits_{i} \left(\sum\nolimits_{j,k}\sum\nolimits_{r\in\mathcal{P}_{ij}}x_{ijkr}\cdot
         \sum\nolimits_{e\in r}c_e(q_e(\boldsymbol{x}))
         +
         \sum\nolimits_{j,k}\sum\nolimits_{r \in \mathcal{P}_{ij}}x_{ijkr} \cdot \sum\nolimits_{e \in r}c_{ie}\right) \\
        {\rm s.t.} \quad \  &\sum\nolimits_{i}\sum\nolimits_{r\in\mathcal{P}_{ij}} x_{ijkr} =d_{jk}, \qquad\forall j,k,\\
        & x_{ijkr}\geq 0,\ \ 
        \sum\nolimits_j\sum\nolimits_{r\in\mathcal{P}_{ij}} x_{ijkr}\leq m_{ik},\ \ \sum\nolimits_k\sum\nolimits_{r\in\mathcal{P}_{ij}} x_{ijkr}\leq l_{ij},\qquad\forall i,j,k\text{ and }r\in\mathcal{P}_{ij}.
    \end{aligned}
 \end{equation}
By letting $\mathbf{1}_{\{e \in r\}}$ denote the indicator function that takes the value $1$ if edge $e$ belongs to path $r$, and $0$ otherwise, and defining the local objective $f_i$ to be
\begin{equation*}
      f_i(\boldsymbol{x})= \sum\nolimits_{e \in \mathcal{E} }
      \kappa_{i,e}\cdot
       q_e(\boldsymbol{x}) \cdot c_e(q_e(\boldsymbol{x}))+\sum\nolimits_{j,k}\sum\nolimits_{r\in\mathcal{P}_{ij}}
         \sum\nolimits_{e\in r}c_{ie}\cdot x_{ijkr},
\end{equation*}
where
\begin{equation*}
         \kappa_{i,e}=\frac{\sum\nolimits_{j,k}\sum\nolimits_{r\in\mathcal{P}_{ij}} \mathbf{1}_{\{e \in r\}}}{\sum\nolimits_{i',j,k}\sum\nolimits_{r\in\mathcal{P}_{i'j}}\mathbf{1}_{\{e \in r\}}},
\end{equation*}
we see that problem \eqref{eq:example-tran} can be reformulated in the form of \ref{eq:Distributed}. The convexity of each $f_i$ will be guaranteed if $x\mapsto x\cdot c_e(x)$ is a convex function for each $e\in\mathcal{E}$ (see Supplementary Material \ref{sup_mat:Notes} for detailed justification).
\end{example}

\begin{example}\label{example:EV}
    The second example is a distributed electric vehicle (EV) aggregator optimization problem adapted from \cite{rivera2016distributed}. Suppose $N$ EVs coordinated by an aggregator seek to optimize their charging profiles. We let $\boldsymbol{x}_i={\rm col}(x_{i,1},\dots,x_{i,T})$ denote the charging profile of the $i$th EV. Each EV's charging incurs a local cost $r_i(\boldsymbol{x}_i)$, while the aggregated charging profile $\sum_i\boldsymbol{x}_i$ also incurs some cost $r_a(\sum_i \boldsymbol{x}_i)$ on the aggregator. We can then formulate the following optimization problem:
    \begin{equation*}\label{eq:example-ev}
     \begin{aligned}
         \min_{\boldsymbol{x}_1,\dots,\boldsymbol{x}_N} \quad & r_a\left( \sum\nolimits_i \boldsymbol{x}_i\right) + \sum\nolimits_i r_i(\boldsymbol{x}_i) \\
        {\rm s.t.} \quad \  &  \boldsymbol{l} \leq \sum\nolimits_{i} \boldsymbol{x}_{i} \leq \boldsymbol{m}, \\
        &  \boldsymbol{x}_i \in \mathbb{X}_i, \  \forall i,
    \end{aligned}
 \end{equation*}
where each $\boldsymbol{x}_i \in \mathbb{X}_i$ imposes constraints on individual charging, while $\boldsymbol{l} \leq \sum\nolimits_{i} \boldsymbol{x}_{i}\leq \boldsymbol{m}$ represents constraints on aggregated charging. By letting $f_i(\boldsymbol{x}) = \frac{1}{N}r_a\left( \sum\nolimits_i \boldsymbol{x}_i\right) + r_i(\boldsymbol{x}_i)$, we see that \eqref{eq:example-ev} is a special case of \ref{eq:Distributed}.
\end{example}

For the distributed optimization problem \ref{eq:Distributed}, there are two main challenges in solving it.
\begin{itemize}
    \item First, each local objective function $f_i$ not only depends on the decision variable $\boldsymbol{x}_i$ of participant $i$ but also on the decision variables of other participants, which prevents participant $i$ from making independent decisions. 
    \item Second, decentralized decisions need to be made by the participant under coupled constraints. As these constraints cannot be easily decomposed into independent sub-constraints, coordination of the participants’ decisions is required.
\end{itemize}

Addressing both challenges simultaneously is not straightforward, and existing methods mostly transform coupled objective functions into coupled consensus constraints, thereby reducing the two couplings to a single form \citep{li2020distributed}. However, the naive transformation approach fails to account for the intrinsic characteristics of different coupled constraints---particularly the decomposability of consensus constraints---resulting in efficiency loss. Meanwhile, numerical experiments demonstrate that the use of such an approach applied to existing algorithms exhibits inferior convergence performance. In section \ref{sec:Algorithm}, we will introduce our \emph{Consensus-Tracking-ADMM algorithm} (Algorithm \ref{alg:TCADMM}) designed to effectively address the above challenges. Distinct from mere transform approaches, our innovation lies in separately addressing the two types of coupling relationships. For coupling within the objective function, we decouple the relationships by constructing auxiliary variables based on the primal variables and integrating them into each participant's sub-optimization process, while performing consensus updates on the auxiliary variables. Regarding the coupled constraint, we adopt the tracking methodology \citep{falsone2020tracking} wherein each participant progressively estimates $\eta_i$ as a local tracker of the global average constraint violation, $(\sum\nolimits_i A_i\boldsymbol{x}_i -\boldsymbol{d})/N$, and updates this tracker through consensus among participants. Numerical experiments demonstrate that our algorithm significantly enhances convergence performance compared to existing methods. Additionally, we will provide a detailed discussion on establishing convergence guarantees for the optimization problem \ref{eq:Distributed}. 

\subsection{Individual interests vs collective efficiency}

Although the distributed optimization approach allows group-level efficient decision-making in the presence of individual information opacity, it introduces a \emph{new challenge}: Rational participants pursuing independent decision-making prioritize self-interest maximization over collective efficiency maximization. The discrepancy between collective and individual objectives causes participant decisions to deviate from outcomes that maximize collective efficiency. To achieve a collective efficiency solution under a distributed approach, we assume the existence of a virtual administrator responsible for providing incentives for the participants of the distributed optimization task. Each participant will receive a payment $\Pi_i$ as the incentive after completing the task, and the net cost of the $i$th participant is then given by
\[
u_i=f_i(\boldsymbol{x}_i,\boldsymbol{x}_{-i})-\Pi_i,
\]
where $\boldsymbol{x}_{-i}$ consists of all the entries in $\boldsymbol{x}$ excluding $\boldsymbol{x}_i$. The fundamental question is how to assign the values of $\Pi_i$, so that the group-level optimal solution will align with each participant's interest. In Section \ref{sec:Mechanisms}, we will present two $\Pi_i$ designs that align with our objectives: one induces participants' actual decision-making, while the other elicits truthful reporting of their cost functions.

We give further remarks on the virtual administrator who is responsible for disbursing the payments~$\Pi_i$. Here, we shall assume that participant-guiding payments remain negligible compared to task-completion revenues, and consequently minimizing the virtual administrator's payoff is not incorporated in our formulation. For instance, virtual power plants in electricity markets can coordinate consumption via pricing rules. By subsidizing energy storage systems, they increase green power consumption and reduce curtailment penalties, with savings significantly outweighing incentive costs \citep{wang2021evaluation}. Similarly, in supply chain management, managers can restructure profit allocation through contracts to benefit all parties. When manufacturers use buyback contracts as incentives to lower distributors' financing barriers, their increased procurement revenue will outweigh potential buyback costs \citep{zhu2025strategic}.

So far, we have introduced a framework to address the optimization problem under conditions of opaque individual information in a distributed fashion. In Section~\ref{sec:Algorithm}, we shall introduce an algorithm tailored to solve distributed optimization problems with coupled objectives and constraints. In Section~\ref{sec:Mechanisms}, we shall design the payments $\Pi_i$ that enable the distributed approach to operate cohesively while respecting individual rationality. We shall see that the determination of optimal payments $\Pi_i$ depends on the optimal solution of the underlying optimization problem, and consequently, the algorithm part and the mechanism part form an interconnected system.

\section{Algorithm}\label{sec:Algorithm}

In this section, we propose a distributed algorithm, \textit{Consensus-Tracking-ADMM}, for solving the optimization problem \ref{eq:Distributed}, and also establish its convergence guarantee. 

\subsection{Preliminaries for algorithm design}

We first specify the communication structure among participants. Assume that the $N$ participants communicate via a bidirectional communication network, whose topology is represented by an undirected graph $\mathcal{G}_{\mathrm{com}}=(\{1,\dots, N\},\mathcal{E}_{\mathrm{com}})$. Each edge $(i, j)\in\mathcal{E}_{\mathrm{com}}$ represents a communication link between participant $i$ and participant $j$, and the undirectedness of $\mathcal{G}_{\mathrm{com}}$ requires $(i,j) \in \mathcal{E}_{\mathrm{com}}$ if and only if $(j, i) \in \mathcal{E}_{\mathrm{com}}$. Each edge $(i, j) \in \mathcal{E}_{\mathrm{com}}$ is also associated with a tunable weight $w_{ij}$, which will be used in our algorithm to weigh the importance participant $i$ assigns to the information received from participant $j$. We also assign $w_{ij}=0$ if $i\neq j$ and $(i, j) \notin \mathcal{E}_{\mathrm{com}}$. Let $W \in \mathbb{R}_{+}^{ N\times N}$ be the weight matrix whose $(i, j)$-th entry is $w_{ij}$, and we make the following assumption.

\begin{assumption}\label{ass:com_network}
The graph $\mathcal{G}_{\mathrm{com}}$ is connected, and the weight matrix $W$ satisfy the following conditions:
    \begin{enumerate}[label=(\roman*),leftmargin=24pt]
        \item $w_{ii}>0$ for all $i$. For $i \neq j$, we have $w_{ij} > 0$ if and only if $(i, j) \in \mathcal{E}_{\mathrm{com}}$.
        \item $W$ is a symmetric doubly stochastic matrix, i.e., $W=W^{\rm T}$ and $W\mathbf{1}_N = \mathbf{1}_N$. Furthermore, $W$ is positive semidefinite.
    \end{enumerate}
\end{assumption}
\begin{remark}
The connectivity assumption is standard in distributed optimization, which ensures that each participant can obtain information from any other participant in a finite number of communication steps. The assumptions on the weight matrix $W$ are also standard. Particularly, we can construct $W$ as a lazy Metropolis weights matrix, given by
   \begin{equation*}
       w_{ij}=\left\{
       \begin{array}{ll}
       \mfrac{1}{2} + \mfrac{1}{2\max\{{\rm deg}(i),{\rm deg}(j)\}}, &  \text{\ if\ } i\neq j\text{ and }(i,j) \in \mathcal{E}_{\mathrm{com}}, \\
       0,  &  \text{\ if\ } i\neq j \text{\ and\ } (i,j) \notin \mathcal{E}_{\mathrm{com}}, \\
          1-\sum_{s\neq i} w_{is}, &    \text{\ if\ } i=j.
        \end{array}
       \right.
   \end{equation*} 
It can be verified that the lazy Metropolis weights matrix fulfills Assumption \ref{ass:com_network} \citep{olshevsky2017linear}.
\end{remark}

As mentioned in Section~\ref{subsection:formulation_optimization}, one of the challenges in designing distributed algorithms for solving \ref{eq:Distributed} is that the cost $f_i$ of each participant depends on the decisions of others, which prevents the objective function from being decomposed into mutually independent components directly. To handle this issue, we augment the decision variables for the participants and propose an equivalent reformulation of~\ref{eq:Distributed}. We let each participant maintain local copies of the decisions of others when making their own decisions. Specifically, we let $\boldsymbol{y}_i = {\rm col}(\boldsymbol{x}_{i,1},\boldsymbol{x}_{i,2},\dots,\boldsymbol{x}_{i}\dots, \boldsymbol{x}_{i,N})$ be the augmented decision variable for participant $i$, where $\boldsymbol{x}_{i,s}$ denotes participant $i$'s estimate of participant $s$'s original decision variable $\boldsymbol{x}_s$ ($s \neq i$). Then, we introduce the consensus constraints $\boldsymbol{y}_{i} = \boldsymbol{y}_{j}$ ($\forall\,i,j$), so that ultimately each participant's local copy $\boldsymbol{x}_{i,s}$ will be the same as the actual decision $\boldsymbol{x}_s$ made by the associated participant. As a result, we obtain the following reformulation of the original optimization problem~\ref{eq:Distributed}:

\begin{equation}\label{eq:Distri_Conv_copy}
\tag{$\mathcal{P}_2$}
     \begin{aligned}
        \min_{\boldsymbol{y}_1,\dots, \boldsymbol{y}_N} \quad & \sum\nolimits_{i=1}^Nf_i(\boldsymbol{y}_i) \\
         {\rm s.t.} \quad \ & \sum\nolimits_{i}  \tilde{A}_i\boldsymbol{y}_{i}  =\boldsymbol{d}, \\
         & \ \boldsymbol{y}_{i} = \boldsymbol{y}_{j}, \quad \forall i,j,\\
         & \  \boldsymbol{y}_{i}\in \tilde{\Omega}_i, \quad \forall i.
    \end{aligned}
 \end{equation}
Here the matrices $\tilde{A}_i$ and the sets $\tilde{\Omega}_i$ are given by
\[
\begin{aligned}
& \tilde{A}_i=\begin{pmatrix}
0_{n_0\times n_1} & \cdots & 0_{n_0\times n_{i-1}} & A_i & 0_{n_0\times n_{i+1}} & \cdots & 0_{n_0\times n_N}
\end{pmatrix}, \\ 
& \tilde{\Omega}_i=\mathbb{R}^{n_1}\times\cdots\times\mathbb{R}^{n_{i-1}}\times\Omega_i\times\mathbb{R}^{n_{i+1}}\times\cdots\times\mathbb{R}^{n_N}.
\end{aligned}
\]

\subsection{Algorithm description}

Our proposed Consensus-Tracking-ADMM algorithm is detailed in Algorithm~\ref{alg:TCADMM}.

\begin{algorithm}[ht]
    \caption{Consensus-Tracking-ADMM} \label{alg:TCADMM}
    \begin{algorithmic}[1]
        \Statex \textbf{Initialization:} 
        \State Choose parameters $\sigma$, $\rho$, and set $k=0$.  \label{line1_1}
        \For {$i=1,\dots,N$ (in parallel)}   \label{line1_2}
        \State Choose the initial value $\boldsymbol{y}_{i}(0)  \in \tilde{\Omega}_i$    \label{line1_3}
        \State Set $\lambda_i(0)=0$, \ $\eta_i(0)=\tilde{A}_i\boldsymbol{y}_i(0)-\boldsymbol{d}/N$ \label{line1_4}
         \State Set $v_i(0)=\text{deg}(i)^{-1} \sum\nolimits_{\{i,j\} \in \mathcal{E}} \frac{1}{2} (\boldsymbol{y}_{i}(0)+\boldsymbol{y}_{j}(0))$ \label{line1_5}
        \EndFor  \label{line1_6}       
        \Statex \textbf{Iterations:}
        \While {convergence is not reached}   \label{line1_7}
        \For {$i=1,\dots,N$ (in parallel)}  \label{line1_8}
        \State $\gamma_i(k)=\sum_{s\in \mathcal{N}_i} w_{is}\eta_s(k)$   \label{line1_9}
        \State $l_i(k)=\sum_{s\in \mathcal{N}_i} w_{is}\lambda_s(k)$  \label{line1_10}
        \State $\boldsymbol{y}_i(k+1)=\mathop{\mathrm{argmin}}\limits_{\boldsymbol{y}_i \in \tilde{\Omega}_i} \left\{f_i(\boldsymbol{y}_i)+\dfrac{\rho}{2} \text{deg}(i) \| \boldsymbol{y}_i-v_i(k)\|^2 \right.$ 
        \Statex \hspace{59mm} $ +\,l_i(k)^{\rm T}\tilde{A}_i\boldsymbol{y}_i+ \left. \dfrac{\sigma}{2} \| \tilde{A}_i\boldsymbol{y}_i-\tilde{A}_i\boldsymbol{y}_i(k)+\gamma_i(k)\|^2\right\}$  \quad \quad \quad $(\mathcal{S}_{i,k})$   \label{line1_11}
        \State $\eta_i(k+1)=\gamma_i(k)+\tilde{A}_i\boldsymbol{y}_i(k+1)-\tilde{A}_i\boldsymbol{y}_i(k)$  \label{line1_12}
        \State $\lambda_i(k+1)=l_i(k)+\sigma \eta_i(k+1)$  \label{line1_13}
        \State $\delta_i(k+1)= \boldsymbol{y}_i(k+1)-\frac{1}{2}\boldsymbol{y}_i(k)$ \label{line1_14}
        \State $v_i(k+1) = v_i(k) + \text{deg}(i)^{-1}\sum_{s\in \mathcal{N}_i}\delta_s(k+1)- \frac{1}{2}\boldsymbol{y}_i(k)$  \label{line1_15}
        \EndFor  \label{line1_16}
        \State  Set $k= k+1$  \label{line1_17}
        \EndWhile  \label{line1_18}
    \end{algorithmic}
\end{algorithm}

Algorithm \ref{alg:TCADMM} consists of two parts: the initialization part and the iteration part. 
The initialization part (line \ref{line1_1}-\ref{line1_6}) sets initial values for the variables that will be used in the algorithm. Parameters $\sigma$, $\rho$ are common knowledge shared by all participants when they participate in the task.

The iteration part consists of four steps: communicating tracking-related information (lines \ref{line1_9}--\ref{line1_10}), solving the sub-optimization problem (line \ref{line1_11}), updating quantities related to dual variables and constraint violations (lines \ref{line1_12}--\ref{line1_13}), and communicating consensus-related information (line \ref{line1_14}--\ref{line1_15}). The roles and rationales of these four steps are explained as follows:
\begin{itemize}
\item Lines \ref{line1_9}--\ref{line1_10} represent the communication process between participant $i$ and its neighbors for aggregating information on dual variables and constraint violations. Each communication transmits two indicators: $\eta_i(k)$ and $\lambda_i(k)$. These received variables are subsequently weighted and processed to generate $\gamma_i(k)$ and $l_i(k)$ as substitutes for the dual variable $\lambda$ and the average constraint violations.

\item In line \ref{line1_11}, the optimization problem $\mathcal{S}_{i,k}$ is solved to update the primal decision variable $\boldsymbol{y}_i$ for each participant, with the solution constrained to the convex set $\tilde{\Omega}_i$. This subproblem is indicative of the ADMM framework we employ for algorithm design. Note that the objective function of $\mathcal{S}_{i,k}$ contains quadratic terms $\rho/2\cdot\operatorname{deg}(i)\|\boldsymbol{y}_i-\nu_i(k)\|^2$ and $\sigma/2\cdot \|\tilde{A}_i\boldsymbol{y}_i-\tilde{A}_i\boldsymbol{y}_i(k)+\gamma_i(k)\|^2$, which, loosely speaking, result from the quadratic penalty terms in the augmented Lagrangian of~\ref{eq:Distri_Conv_copy}.
\item Lines \ref{line1_12}-\ref{line1_13} are updates to the tracking and dual variables, respectively. Under a centralized architecture, these variables would be publicly available information, with the update given by
\begin{subequations}
    \begin{align}
    & \eta(k+1) = \frac{1}{N}\left(\sum\nolimits_i \tilde{A}_i \boldsymbol{y}_i(k+1)-\boldsymbol{d}\right), \label{eq:track_signal}\\
     & \lambda(k+1)=\lambda(k) + \sigma \eta(k+1).
\end{align}
\end{subequations}
The variable $\eta(k)$ would quantify the average degree of constraint violation in the task. However, in the distributed framework, the values of $\eta(k)$ and $\lambda(k)$ cannot be passed as public information to each participant, so each participant must calculate local estimates of $\eta(k)$ and $\lambda(k)$, generating copies denoted as $\eta_i(k)$ and $\lambda_i(k)$ whose updates are given by
\begin{subequations}
    \begin{align}
    & \eta_i(k+1) =\sum\nolimits_{s\in \mathcal{N}_i} w_{is}\eta_i(k)+(\tilde{A}_i\boldsymbol{y}_i(k+1)-\boldsymbol{d}/N)-(\tilde{A}_i\boldsymbol{y}_i(k)-\boldsymbol{d}/N), \\
     & \lambda_i(k+1)=\sum\nolimits_{s\in \mathcal{N}_i} w_{is}\lambda_s(k) + \sigma \eta_i(k+1).
\end{align}
\end{subequations}
The variable $\eta_i(k)$ serves as a local tracking signal for equation \eqref{eq:track_signal}, while the variable $\lambda_i(k+1)$ is subsequently updated based on the value of $\eta_i(k+1)$. Through sufficient iterations of communication, the local copies of these participants will eventually converge to a consensus.
\item In line \ref{line1_14}, participant $i$ computes an intermediate variable $\delta_i(k)$ related to the solution $\boldsymbol{y}_i(k)$ of problem $\mathcal{S}_{i,k}$ for subsequent network information transmission. In line \ref{line1_15}, the update for auxiliary variable $v_i(k)$  differs slightly from variables $\eta_i(k)$ and $\lambda_i(k)$, as it utilizes neighbors' information with equal weighting (rather than weighted averaging). The auxiliary variable $v_i(k)$ can actually be seen as guidance for optimizing variable $\boldsymbol{y}_i(k)$, where consensus among variables $v_i(k)$ induces the corresponding consensus in variables $\boldsymbol{y}_i(k)$, ultimately causing the optimization variables $\boldsymbol{y}_i$ to satisfy the consensus constraints in optimization problem \ref{eq:Distri_Conv_copy}.
\end{itemize}

Each participant's subproblem in Algorithm \ref{alg:TCADMM} exhibits computational complexity that scales up with the problem dimension. To accelerate computation, we develop dimension-reduction techniques for the subproblem under a specialized structure (affine local constraints). Due to space limitations, implementation details are provided in Supplementary Material  \ref{sup_mat:Alg_impro}.

\subsection{Convergence analysis} \label{subsec:con_ana}
In this subsection, we analyze the convergence properties of Algorithm \ref{alg:TCADMM}. Due to space limitations, we only present an outline of our analysis with critical lemmas and propositions; the detailed proofs are given in Supplementary Material~\ref{sec:supplementary_proofs}.

To begin our analysis, we first decouple the consensus constraints $\boldsymbol{y}_{i}=\boldsymbol{y}_{j},\forall i,j$ in the problem \ref{eq:Distri_Conv_copy} by introducing the auxiliary variables $\boldsymbol{\theta}_{e}$, $e \in \mathcal{E}_{\mathrm{com}}$ and reformulating them as $\boldsymbol{y}_{i} = \boldsymbol{\theta}_e$ and $\boldsymbol{y}_{j} = \boldsymbol{\theta}_e$, $\forall e=(i,j) \in \mathcal{E}_{\mathrm{com}}$. This operation essentially introduces variables based on the network's topology to decouple the consensus constraints, enabling the resulting constraints to be allocated to participants' computations.

Next, we denote $\tilde{\boldsymbol{y}}={\rm col}(\boldsymbol{y}_1,\boldsymbol{y}_2,\dots,\boldsymbol{y}_N)$, $\tilde{\boldsymbol{\theta}}=(\boldsymbol{\theta}_{e}: e \in \mathcal{E})$, $\hat{A} = 
    \begin{pmatrix}
        \tilde{A}_1 & \tilde{A}_2 & \dots & \tilde{A}_N
    \end{pmatrix}$, and $\tilde{\Omega}=\tilde{\Omega}_1 \times \cdots \times \tilde{\Omega}_N$, so that the optimization problem \ref{eq:Distri_Conv_copy} can be expressed in a more compact form:
\begin{equation}\label{eq:Distri_ADMM}
\tag{$\mathcal{A}_0$}
    \begin{aligned}
         \min_{\tilde{\boldsymbol{y}} \in \tilde{\Omega},\, \tilde{\boldsymbol{\theta}}}  \quad & H(\tilde{\boldsymbol{y}}) \\
         {\rm s.t.} \quad \ \ & \  \hat{A}\tilde{\boldsymbol{y}} =\boldsymbol{d}, \quad \leftrightarrow \quad \lambda, \\
         & \ M_1 \tilde{\boldsymbol{y}}+ M_2 \tilde{\boldsymbol{\theta}} = 0, \quad \leftrightarrow \quad \mu , 
    \end{aligned}
\end{equation}
where $H(\tilde{\boldsymbol{y}})=\sum_i f_i(\boldsymbol{y}_i)$.

\begin{lemma}\label{lem:full_rank}
    The matrices $M_1$ and $M_2$ are of full column rank.
\end{lemma}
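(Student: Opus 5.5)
The matrices $M_1$ and $M_2$ are not written out explicitly in the text, so the plan begins by fixing their structure from the reformulation. For every edge $e=(i,j)\in\mathcal{E}_{\mathrm{com}}$ there are two block constraints, $\boldsymbol{y}_i-\boldsymbol{\theta}_e=0$ and $\boldsymbol{y}_j-\boldsymbol{\theta}_e=0$. Stacking them over all edges gives $M_1\tilde{\boldsymbol{y}}+M_2\tilde{\boldsymbol{\theta}}=0$. Write $n=n_1+\cdots+n_N$ for the common dimension of each $\boldsymbol{y}_i$. Then $M_1=E_1\otimes I_n$ and $M_2=-E_2\otimes I_n$, where $E_1\in\{0,1\}^{2|\mathcal{E}|\times N}$ and $E_2\in\{0,1\}^{2|\mathcal{E}|\times|\mathcal{E}|}$. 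Each row of $E_1$ has a single $1$, in the column of the endpoint that row refers to. The two rows belonging to edge $e$ each have a $1$ in column $e$ of $E_2$. Since $\operatorname{rank}(E\otimes I_n)=n\operatorname{rank}(E)$, it suffices to show that $E_1$ and $E_2$ have full column rank. Equivalently, $E_1^{\rm T}E_1$ and $E_2^{\rm T}E_2$ should be nonsingular.

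The Gram matrices can be computed directly. Column $e$ of $E_2$ is supported exactly on the two rows of edge $e$, and different edges use disjoint rows. Hence $E_2^{\rm T}E_2=2I_{|\mathcal{E}|}$, which is nonsingular. For $E_1$, each row contains exactly one nonzero entry, so distinct columns have disjoint supports. This gives $E_1^{\rm T}E_1=\mathrm{diag}(\deg(1),\dots,\deg(N))$. Therefore $M_1^{\rm T}M_1=\mathrm{diag}(\deg(i))\otimes I_n$ and $M_2^{\rm T}M_2=2I$. The degree matrix should match the $\rho\,\deg(i)$ weighting that appears in the subproblem $\mathcal{S}_{i,k}$.

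The only substantive point is that every $\deg(i)\ge 1$. This follows from Assumption~\ref{ass:com_network}: $\mathcal{G}_{\mathrm{com}}$ is connected, and for $N\ge 2$ connectivity rules out isolated vertices. The case $N=1$ is degenerate, since then there are no consensus constraints, and I would exclude it explicitly. With both Gram matrices positive definite, $M_1\boldsymbol{a}=0$ gives $\boldsymbol{a}^{\rm T}M_1^{\rm T}M_1\boldsymbol{a}=0$ and hence $\boldsymbol{a}=0$; the same argument applies to $M_2$. This proves full column rank. I expect no step to be a real obstacle. The main care needed is to set up the row and column ordering of $M_1$ and $M_2$ consistently with the edge-wise decoupling, and to note that each undirected edge is counted once, so that $\tilde{\boldsymbol{\theta}}$ has exactly one block per edge.
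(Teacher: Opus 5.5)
Your proof is correct, and it takes a more explicit route than the paper. The paper argues on the kernel by contradiction. If $M_1\tilde{\boldsymbol{y}}'=0$ with $\tilde{\boldsymbol{y}}'\neq 0$, then $(\tilde{\boldsymbol{y}}',\tilde{\boldsymbol{\theta}}=0)$ would solve $M_1\tilde{\boldsymbol{y}}+M_2\tilde{\boldsymbol{\theta}}=0$. That is impossible because the system encodes $\boldsymbol{y}_i=\boldsymbol{\theta}_e=\boldsymbol{y}_j$ on every edge, which forces all $\boldsymbol{y}_i=0$. The paper then treats $M_2$ with a one-word ``similarly.''

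You instead write $M_1=E_1\otimes I_n$ and $M_2=-E_2\otimes I_n$, and compute the Gram matrices $M_1^{\rm T}M_1=\mathrm{diag}(\deg(i))\otimes I_n$ and $M_2^{\rm T}M_2=2I$. Both arguments rest on the same fact: every $\boldsymbol{y}_i$ and every $\boldsymbol{\theta}_e$ appears in at least one edge constraint. Your version gains three things over the paper's:
\begin{enumerate}
\item It makes explicit the hypothesis the paper's contradiction uses silently, namely that no node is isolated. You correctly derive this from connectivity for $N\ge 2$. Without it the paper's argument fails, since an isolated $\boldsymbol{y}_i$ would be unconstrained.
\item Your exclusion of $N=1$ is warranted, because then $M_1$ has no rows and the lemma fails.
\item It yields the Gram matrices themselves. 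The later analysis actually needs them: the paper invokes $M_2^{\rm T}M_2\succ 0$ in the Lyapunov argument, and $M_1^{\rm T}M_1$ explains the $\rho\deg(i)$ term in $\mathcal{S}_{i,k}$.
\end{enumerate}
The paper's route is shorter and avoids fixing a row and column ordering; yours is quantitative.

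Your caveat about counting each undirected edge once is harmless but not essential. With one $\boldsymbol{\theta}$ per ordered pair you would get $E_2^{\rm T}E_2=2I$ and $E_1^{\rm T}E_1=2\,\mathrm{diag}(\deg(i))$, both still nonsingular.
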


The full column rank properties of matrices $M_1$ and $M_2$ will be leveraged in the subsequent convergence proof. Then, we elaborate on how the iterative process of the optimal problem \ref{eq:Distri_ADMM} leads to the steps specified in Algorithm \ref{alg:TCADMM}. Given notations $\boldsymbol{\eta}(k)= {\rm col}(\eta_1(k),\eta_2(k),\dots,\eta_N(k))$, $\boldsymbol{\lambda}(k)= {\rm col}(\lambda_1(k),\lambda_2(k),\dots,\lambda_N(k))$, $\hat{A}_b = {\rm blkdiag}(\hat{A}_1,\hat{A}_2, \dots, \hat{A}_N)$, and $\mathcal{W}= W \otimes I_{n_0}$, we have the following proposition.

\begin{proposition}\label{pro:step_ADMM}
    Under Assumptions \ref{ass:const_set}--\ref{ass:com_network}, the iterations of Algorithm \ref{alg:TCADMM} can be equivalently reformulated as
    \begin{subequations}\label{eq:step_ADMM}
    \begin{align}
        \begin{split}\label{eq:step_ADMM_y}
            & \tilde{\boldsymbol{y}}(k+1) =  \argmin\limits_{\tilde{\boldsymbol{y}} \in \tilde{\Omega}} \left\{H(\tilde{\boldsymbol{y}})+\mu(k)^{\rm T}\left(M_1 \tilde{\boldsymbol{y}}+ M_2 \tilde{\boldsymbol{\theta}}(k)\right) + \frac{\rho}{2}\left\|M_1 \tilde{\boldsymbol{y}}+ M_2 \tilde{\boldsymbol{\theta}}(k)\right\|^2 \right. \\
        & \hspace{22mm} + \left(\mathcal{W}\boldsymbol{\lambda}(k)\right)^{\rm T}\hat{A}_b\tilde{\boldsymbol{y}}+ \dfrac{\sigma}{2}\left. \left\| \hat{A}_b\tilde{\boldsymbol{y}}- \hat{A}_b\tilde{\boldsymbol{y}}(k)+\mathcal{W}\boldsymbol{\eta}(k)\right\|^2\right\},
        \end{split} \\
        & \tilde{\boldsymbol{\theta}}(k+1) =  \argmin\limits_{\tilde{\boldsymbol{\theta}}} \left\{\mu(k)^{\rm T}\left(M_1 \tilde{\boldsymbol{y}}(k+1)+ M_2 \tilde{\boldsymbol{\theta}}\right)+\frac{\rho}{2}\left\|M_1 \tilde{\boldsymbol{y}}(k+1)+ M_2 \tilde{\boldsymbol{\theta}}\right\|^2\right\}, \label{eq:step_ADMM_theta} \\
        & \mu(k+1)  = \mu(k) + \rho \left(M_1 \tilde{\boldsymbol{y}}(k+1)+ M_2 \tilde{\boldsymbol{\theta}}(k+1)\right),  \label{eq:step_ADMM_mu} \\
        & \boldsymbol{\eta}(k+1) = \mathcal{W}\boldsymbol{\eta}(k) + \hat{A}_b  \left(\tilde{\boldsymbol{y}}(k+1)-\tilde{\boldsymbol{y}}(k)\right), \label{eq:step_ADMM_eta} \\
        & \boldsymbol{\lambda}(k+1) = \mathcal{W}\boldsymbol{\lambda}(k) + \sigma \boldsymbol{\eta}(k+1).  \label{eq:step_ADMM_lambda}
    \end{align}
\end{subequations}
\end{proposition}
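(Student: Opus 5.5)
The plan is to start from the ADMM iteration \eqref{eq:step_ADMM}, eliminate $\tilde{\boldsymbol{\theta}}$ and $\mu$ in closed form, and check that what is left is exactly Algorithm~\ref{alg:TCADMM}. The $\boldsymbol{\eta}$ and $\boldsymbol{\lambda}$ updates \eqref{eq:step_ADMM_eta}--\eqref{eq:step_ADMM_lambda} are, row by row, lines \ref{line1_9}, \ref{line1_10}, \ref{line1_12}, \ref{line1_13}. This holds because the $i$-th block of $\mathcal{W}\boldsymbol{\eta}(k)$ is $\sum_{s\in\mathcal{N}_i} w_{is}\eta_s(k)=\gamma_i(k)$, the $i$-th block of $\mathcal{W}\boldsymbol{\lambda}(k)$ is $l_i(k)$, and $\hat{A}_b$ is block diagonal with $i$-th block $\tilde{A}_i$. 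For the same reason, the last two terms in \eqref{eq:step_ADMM_y} separate across $i$ into $l_i(k)^{\rm T}\tilde{A}_i\boldsymbol{y}_i+\frac{\sigma}{2}\|\tilde{A}_i\boldsymbol{y}_i-\tilde{A}_i\boldsymbol{y}_i(k)+\gamma_i(k)\|^2$. The whole proof therefore reduces to the consensus part: $\tilde{\boldsymbol{\theta}}$, $\mu$ and the terms involving $M_1,M_2$.

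For the consensus part I will write $M_1,M_2$ explicitly. Each edge $e=(i,j)$ contributes two block rows, $\boldsymbol{y}_i-\boldsymbol{\theta}_e$ and $\boldsymbol{y}_j-\boldsymbol{\theta}_e$. The multiplier $\mu$ then splits into edge-endpoint pieces $\mu_{e,i},\mu_{e,j}$. Hence $M_1^{\rm T}M_1=D\otimes I$ with $D={\rm diag}(\deg(i))$, and the penalty term separates across agents. The $\tilde{\boldsymbol{\theta}}$-step \eqref{eq:step_ADMM_theta} is an unconstrained quadratic, solvable edge by edge since $M_2$ has full column rank (Lemma~\ref{lem:full_rank}):
\[
\boldsymbol{\theta}_e(k+1)=\tfrac12\bigl(\boldsymbol{y}_i(k+1)+\boldsymbol{y}_j(k+1)\bigr)+\tfrac{1}{2\rho}\bigl(\mu_{e,i}(k)+\mu_{e,j}(k)\bigr).
\]
Substituting this into \eqref{eq:step_ADMM_mu} gives $\mu_{e,i}(k+1)+\mu_{e,j}(k+1)=0$ for all $k\ge0$. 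Under the natural initialization $\mu(0)=0$ this also holds at $k=0$, so in fact $\boldsymbol{\theta}_e(k)=\frac12(\boldsymbol{y}_i(k)+\boldsymbol{y}_j(k))$ and $\mu_{e,i}(k+1)=\mu_{e,i}(k)+\frac{\rho}{2}(\boldsymbol{y}_i(k+1)-\boldsymbol{y}_j(k+1))$. This is the standard derivation of decentralized consensus ADMM (as in Mateos et al.\ and Chang et al.).

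Next I complete the square in the $\boldsymbol{y}_i$-terms of \eqref{eq:step_ADMM_y}. Collecting $\sum_{e\ni i}\bigl[\mu_{e,i}(k)^{\rm T}\boldsymbol{y}_i+\frac{\rho}{2}\|\boldsymbol{y}_i-\boldsymbol{\theta}_e(k)\|^2\bigr]$ yields $\frac{\rho}{2}\deg(i)\|\boldsymbol{y}_i-v_i(k)\|^2$ plus a constant, where
\[
v_i(k)=\deg(i)^{-1}\sum_{e\ni i}\Bigl(\boldsymbol{\theta}_e(k)-\tfrac{1}{\rho}\mu_{e,i}(k)\Bigr).
\]
At $k=0$ this equals line \ref{line1_5}. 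For the recursion I define $p_i(k)=\sum_{e\ni i}\mu_{e,i}(k)$ and write $\sum_{e\ni i}\boldsymbol{\theta}_e(k)=\frac12\deg(i)\boldsymbol{y}_i(k)+\frac12\sum_{s\in\mathcal{N}_i}\boldsymbol{y}_s(k)$. The difference $v_i(k+1)-v_i(k)$ then becomes an expression in $\boldsymbol{y}_i(k),\boldsymbol{y}_i(k+1)$ and the neighbours' $\boldsymbol{y}_s(k),\boldsymbol{y}_s(k+1)$. I then check that it matches line \ref{line1_15} with $\delta_s(k+1)=\boldsymbol{y}_s(k+1)-\frac12\boldsymbol{y}_s(k)$. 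Finally I reassemble the separable objective to recover $(\mathcal{S}_{i,k})$, using $\tilde{\Omega}=\prod_i\tilde{\Omega}_i$. Assumption~\ref{ass:const_set} makes each subproblem strongly convex in $\boldsymbol{y}_i$ because of the $\rho$-term, so both argmins are unique and the two iterations coincide.

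\textbf{Main obstacle.} I expect the hard part to be the bookkeeping in the $v_i$ recursion. Neighbour sums over $\mathcal{N}_i$ must be weighted by $\deg(i)^{-1}$ with equal weights, the algorithm's indexing of $\mathcal{N}_i$ (whether it includes $i$) must be pinned down, and the $-\frac12\boldsymbol{y}_i(k)$ and $\delta$ terms must come out with exactly the right coefficients. My first approach is to derive $v_i(k+1)-v_i(k)$ symbolically from the closed forms above and compare it term by term with line \ref{line1_15}. If the coefficients disagree, I will adjust the interpretation of $\mathcal{N}_i$ or of the initial $\mu(0)$ so that the two agree.
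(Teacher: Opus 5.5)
Your proposal is correct and follows the same elimination argument the paper uses to identify \eqref{eq:step_ADMM} with Algorithm~\ref{alg:TCADMM}: the closed-form $\boldsymbol{\theta}_e$-step, $\mu_{e,i}+\mu_{e,j}=0$ under zero initialization, completing the square into $\frac{\rho}{2}\deg(i)\|\boldsymbol{y}_i-v_i(k)\|^2$, and differencing $v_i$. The paper only adds, before this, a motivational derivation of \eqref{eq:step_ADMM} from an ADMM on a $\boldsymbol{\nu}_i$-split reformulation, which the equivalence claim does not need. On the obstacle you anticipated, the coefficients in line~\ref{line1_15} match with $\mu(0)=0$ and no other change to $\mu(0)$, as long as $\boldsymbol{\theta}_e(0)$ is initialized to the edge average and $\mathcal{N}_i$ is read two ways: as the closed neighbourhood (including $i$, since $w_{ii}>0$, so that $\gamma_i$ and $l_i$ are the $i$-th blocks of $\mathcal{W}\boldsymbol{\eta}$ and $\mathcal{W}\boldsymbol{\lambda}$) in lines~\ref{line1_9}--\ref{line1_10}, and as the open neighbourhood in line~\ref{line1_15}.
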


By Proposition \ref{pro:step_ADMM}, our subsequent convergence analysis for Algorithm \ref{alg:TCADMM} will be conducted based on \eqref{eq:step_ADMM}. We introduce some intermediate variables that will be utilized in proving the convergence:
\begin{equation*}
    \begin{aligned}
        & \bar{\eta}(k) =\frac{1}{N}\sum\nolimits_i \eta_i(k), \quad \bar{\boldsymbol{\eta}}(k) = \mathbf{1}_N \otimes \bar{\eta}(k), \quad \boldsymbol{\epsilon}_1(k) = \boldsymbol{\eta}(k) -\bar{\boldsymbol{\eta}}(k), \\
        &  \bar{\lambda}(k) =\frac{1}{N}\sum\nolimits_i \lambda_i(k), \quad \bar{\boldsymbol{\lambda}}(k) = \mathbf{1}_N \otimes \bar{\lambda}(k), \quad \boldsymbol{\epsilon}_2(k) = \boldsymbol{\lambda}(k) -\bar{\boldsymbol{\lambda}}(k).
    \end{aligned}
\end{equation*}
Here, $\bar{\eta}(k)$ and $\bar{\lambda}(k)$ represent the averages of copied values of $\eta_i(k)$ and $\lambda_i(k)$ for each node;  $\boldsymbol{\epsilon}_1(k)$ and $\boldsymbol{\epsilon}_2(k)$ are the consensus errors of $\eta_i(k)$ and $\lambda_i(k)$, i.e., the differences between the copied value at each node and the corresponding average after each iteration.

The following lemma highlights key features of the consensus error terms $\boldsymbol{\epsilon}_1(k)$ and $\boldsymbol{\epsilon}_1(k)$.

\begin{lemma}\label{lem:bounded}
    Under the Assumptions \ref{ass:const_set} and \ref{ass:com_network}, by defining the auxiliary sequence $\boldsymbol{\xi}(k) = \hat{A}_b \tilde{\boldsymbol{y}}(k)- \bar{\boldsymbol{\eta}}(k)$ and compression matrix $\mathcal{W}_0=\mathcal{W}-(\frac{1}{N}\mathbf{1}_N\mathbf{1}_N^{\rm T}\otimes I_{n_0})$, we have that
    \begin{enumerate}[label=(\roman*),leftmargin=24pt]
        \item $\boldsymbol{\epsilon}_1(k+1) = \mathcal{W}_0 \boldsymbol{\epsilon}_1(k) + \boldsymbol{\xi}(k+1) - \boldsymbol{\xi}(k)$, \ $\boldsymbol{\epsilon}_2(k+1) = \mathcal{W}_0 \boldsymbol{\epsilon}_2(k) + \sigma \boldsymbol{\epsilon}_1(k+1)$,
        \item The sequences $\{\boldsymbol{\xi}(k)\}_{k \geq 0}$, $\{\boldsymbol{\epsilon}_1(k)\}_{k \geq 0}$, and $\{\boldsymbol{\epsilon}_2(k)\}_{k \geq 0}$ are bounded.
    \end{enumerate}
\end{lemma}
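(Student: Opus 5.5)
The plan is to first derive the recursions in (i) by averaging the updates \eqref{eq:step_ADMM_eta}--\eqref{eq:step_ADMM_lambda} from Proposition~\ref{pro:step_ADMM}, and then to obtain (ii) from compactness together with the contraction property of $\mathcal{W}_0$.

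\textbf{Step 1: averaging.} Let $\mathcal{J}=\frac{1}{N}\mathbf{1}_N\mathbf{1}_N^{\rm T}\otimes I_{n_0}$. Double stochasticity and symmetry of $W$ give $\mathcal{J}\mathcal{W}=\mathcal{W}\mathcal{J}=\mathcal{J}$. Applying $\mathcal{J}$ to \eqref{eq:step_ADMM_eta} yields
\[
\bar{\boldsymbol{\eta}}(k+1)=\bar{\boldsymbol{\eta}}(k)+\mathcal{J}\hat{A}_b(\tilde{\boldsymbol{y}}(k+1)-\tilde{\boldsymbol{y}}(k)).
\]
The initialization $\eta_i(0)=\tilde{A}_i\boldsymbol{y}_i(0)-\boldsymbol{d}/N$ then gives, by induction, $\bar{\eta}(k)=\frac1N(\sum_i\tilde{A}_i\boldsymbol{y}_i(k)-\boldsymbol{d})$.

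\textbf{Step 2: the $\boldsymbol{\epsilon}_1$ recursion.} Subtract the averaged equation from \eqref{eq:step_ADMM_eta}. Using $\mathcal{W}\boldsymbol{\eta}(k)-\bar{\boldsymbol{\eta}}(k)=\mathcal{W}\boldsymbol{\epsilon}_1(k)=\mathcal{W}_0\boldsymbol{\epsilon}_1(k)$ (since $\mathcal{J}\boldsymbol{\epsilon}_1=0$), one gets
\[
\boldsymbol{\epsilon}_1(k+1)=\mathcal{W}_0\boldsymbol{\epsilon}_1(k)+\hat{A}_b(\tilde{\boldsymbol{y}}(k+1)-\tilde{\boldsymbol{y}}(k))-(\bar{\boldsymbol{\eta}}(k+1)-\bar{\boldsymbol{\eta}}(k)),
\]
and the last two terms are exactly $\boldsymbol{\xi}(k+1)-\boldsymbol{\xi}(k)$.

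\textbf{Step 3: the $\boldsymbol{\epsilon}_2$ recursion.} Averaging \eqref{eq:step_ADMM_lambda} gives $\bar{\boldsymbol{\lambda}}(k+1)=\bar{\boldsymbol{\lambda}}(k)+\sigma\bar{\boldsymbol{\eta}}(k+1)$. Subtracting this from \eqref{eq:step_ADMM_lambda} in the same way gives $\boldsymbol{\epsilon}_2(k+1)=\mathcal{W}_0\boldsymbol{\epsilon}_2(k)+\sigma\boldsymbol{\epsilon}_1(k+1)$, which completes (i).

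\textbf{Step 4: boundedness of $\boldsymbol{\xi}$.} Here $\hat{A}_b\tilde{\boldsymbol{y}}(k)$ has block components $\tilde{A}_i\boldsymbol{y}_i(k)=A_i\boldsymbol{x}_{i,i}(k)$, which depend only on the $i$th block of $\boldsymbol{y}_i(k)$. That block lies in the compact set $\Omega_i$ by Assumption~\ref{ass:const_set}(ii), because $\boldsymbol{y}_i(k)\in\tilde{\Omega}_i$. Hence $\hat{A}_b\tilde{\boldsymbol{y}}(k)$ is uniformly bounded. By Step 1, $\bar{\boldsymbol{\eta}}(k)$ is also uniformly bounded, so $\boldsymbol{\xi}(k)$ is bounded. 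This step is important: the unconstrained copies $\boldsymbol{x}_{i,s}$ with $s\neq i$ never enter, so no bound on them is needed.

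\textbf{Step 5: boundedness of $\boldsymbol{\epsilon}_1$ and $\boldsymbol{\epsilon}_2$.} Connectivity, $w_{ii}>0$, symmetry and positive semidefiniteness make $1$ a simple eigenvalue of $W$ with all other eigenvalues in $[0,1)$. Hence $\|\mathcal{W}_0\|=:\beta<1$. Unrolling the recursion gives
\[
\boldsymbol{\epsilon}_1(k)=\mathcal{W}_0^k\boldsymbol{\epsilon}_1(0)+\sum_{t=1}^{k}\mathcal{W}_0^{k-t}(\boldsymbol{\xi}(t)-\boldsymbol{\xi}(t-1)),
\]
so $\|\boldsymbol{\epsilon}_1(k)\|\le\|\boldsymbol{\epsilon}_1(0)\|+2\sup_t\|\boldsymbol{\xi}(t)\|/(1-\beta)$. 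Treating $\sigma\boldsymbol{\epsilon}_1(k+1)$ as a bounded input in the same way gives $\|\boldsymbol{\epsilon}_2(k)\|\le\|\boldsymbol{\epsilon}_2(0)\|+\sigma\sup_t\|\boldsymbol{\epsilon}_1(t)\|/(1-\beta)$.

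\textbf{Main obstacle.} The only delicate point is establishing $\beta<1$. This follows from Perron--Frobenius for a connected graph with positive diagonal weights, together with positive semidefiniteness, which excludes the eigenvalue $-1$. Everything else is routine bookkeeping.
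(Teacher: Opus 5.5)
Your proof is correct and follows the same route as the paper. Like the paper, you average the $\boldsymbol{\eta}$ and $\boldsymbol{\lambda}$ updates using double stochasticity to get the $\mathcal{W}_0$-recursions, bound $\boldsymbol{\xi}(k)$ via compactness of $\Omega_i$ and the identity $N\bar{\eta}(k)=\sum_i\tilde{A}_i\boldsymbol{y}_i(k)-\boldsymbol{d}$, and then bound $\boldsymbol{\epsilon}_1,\boldsymbol{\epsilon}_2$ using $\|\mathcal{W}_0\|<1$.

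Your version is more careful than the paper's in three places. The paper asserts that $\tilde{\boldsymbol{y}}(k)$ itself is bounded because $\tilde{\boldsymbol{y}}(k)\in\tilde{\Omega}$, which is not valid since $\tilde{\Omega}_i$ is unbounded in the copy coordinates; you correctly note that only the $i$th block of $\boldsymbol{y}_i(k)$ enters $\hat{A}_b\tilde{\boldsymbol{y}}(k)$. You also justify $\|\mathcal{W}_0\|<1$, and you derive the tracking identity instead of citing it.
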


\begin{remark}
As a result of the Perron–Frobenius theorem, all eigenvalues of the matrix $\mathcal{W}_0$ are confined within the unit circle. Consequently, it can be shown that if the term $\boldsymbol{\xi}(k+1)-\boldsymbol{\xi}(k)$ converges to $0$ as $k\rightarrow\infty$, then the consensus error terms $\boldsymbol{\epsilon}_1(k)$ and $\boldsymbol{\epsilon}_2(k)$ will also tend towards $0$. This observation will be made rigorous and utilized in our subsequent analysis.
\end{remark}

Finally, based on the boundedness of the above key terms, we aim to identify terms related to the difference between the iterative solution $\tilde{\boldsymbol{y}}(k)$ and the optimal solution $\tilde{\boldsymbol{y}}^*$ for optimization problem \ref{eq:Distri_ADMM}. Subsequently, by constructing the corresponding Lyapunov function, we demonstrate that these difference terms converge to zero, thereby proving that the iterative solution $\tilde{\boldsymbol{y}}(k)$ in the Algorithm \ref{alg:TCADMM} can converge to the optimal solution $\tilde{\boldsymbol{y}}^*$. 

Before proceeding with the detailed elaboration, we define $\tilde{\boldsymbol{\theta}}^*$, and $\lambda^*$, $\mu^*$ as the primal and dual optimal solutions, respectively, for optimization problem \ref{eq:Distri_ADMM}. Meanwhile, we define vectors $\boldsymbol{\zeta}(k) = \sigma (\boldsymbol{\xi}(k)-\hat{A}_b \tilde{\boldsymbol{y}}^*)$ and $\tilde{\boldsymbol{\epsilon}}(k) = {\rm col}(\boldsymbol{\epsilon}_1(k),\sigma \boldsymbol{\epsilon}_2(k))$, and matrix $U_1= \begin{pmatrix}
        I_{Nn_0} & I_{Nn_0}
    \end{pmatrix}^{\rm T}$ and introduce the Lyapunov function
\begin{equation}
    V(k)= \frac{\sigma}{\rho} \| \mu(k)-\mu^*\|^2+\|\bar{\boldsymbol{\lambda}}(k)-\boldsymbol{\lambda}^*\|^2 + \sigma\rho \|M_2(\tilde{\boldsymbol{\theta}}(k)-\tilde{\boldsymbol{\theta}}^*)\|^2+\|U_1\boldsymbol{\zeta}(k)-\tilde{\boldsymbol{\epsilon}}(k)\|_{\mathcal{Q}}^2,
\end{equation}
where $\boldsymbol{\lambda}^* = \mathbf{1}_N \otimes \lambda^*$, and $\mathcal{Q}$ is a symmetric positive-definite matrix, whose specific form is given in Lemma \ref{lem:pos_def_mat} in \ref{pro:Lyapunov}. It is easy to see that the first two terms in the Lyapunov function represent the differences between the iterative values of the dual variables $\lambda(k)$, $\mu(k)$, and their optimal values, respectively. The third term describes the difference between the iterative values of the primal variable $\tilde{\boldsymbol{\theta}}(k)$ and its optimal value. Since Lemma \ref{lem:full_rank} states that matrix $M_2$ is of full column rank, it follows that $M_2^{\rm T}M_2 \succ 0$. Consequently, the boundedness and convergence properties of $\|\tilde{\boldsymbol{\theta}}(k)-\tilde{\boldsymbol{\theta}}^*\|$ are consistent with those of $\|M_2(\tilde{\boldsymbol{\theta}}(k)-\tilde{\boldsymbol{\theta}}^*)\|$. As for the last term, we do the following calculation
\begin{equation*}
\begin{aligned}
    \|U_1\boldsymbol{\zeta}(k)-\tilde{\boldsymbol{\epsilon}}(k)\|_{\mathcal{Q}}^2 & = (U_1\boldsymbol{\zeta}(k)-\tilde{\boldsymbol{\epsilon}}(k))^{\rm T}\mathcal{Q} (U_1\boldsymbol{\zeta}(k)-\tilde{\boldsymbol{\epsilon}}(k)) \\
    & = \|\boldsymbol{\zeta}(k)\|_{\mathcal{R}}^2 - 2\boldsymbol{\zeta}(k)^{\rm T}U_1^{\rm T}\mathcal{Q}\tilde{\boldsymbol{\epsilon}}(k)+\|\tilde{\boldsymbol{\epsilon}}(k)\|_{\mathcal{Q}}^2 \\
    & = \sigma \|\hat{A}_b(\tilde{\boldsymbol{y}}(k)-\tilde{\boldsymbol{y}}^*)-\bar{\boldsymbol{\eta}}(k)\|_{\mathcal{R}}^2 - 2\boldsymbol{\zeta}(k)^{\rm T}U_1^{\rm T}\mathcal{Q}\tilde{\boldsymbol{\epsilon}}(k)+\|\tilde{\boldsymbol{\epsilon}}(k)\|_{\mathcal{Q}}^2,
\end{aligned}
\end{equation*}
where the matrix $\mathcal{R}=(I_{Nn_0}-\mathcal{W}_0)^{-2}$ is a positive definite matrix. Thus, this term to some extent characterizes the difference between the iterative values of the primal variable $\tilde{\boldsymbol{y}}(k)$ and its optimal value, as well as the magnitude of the consensus error.
\begin{proposition}\label{pro:Lyapunov}
    Under Assumptions \ref{ass:const_set}--\ref{ass:com_network}, the sequence $\{V(k)+2\sigma \boldsymbol{\zeta}(k)^{\rm T}\boldsymbol{\epsilon}_2(k)\}_{k \geq 0}$ is non-increasing.
\end{proposition}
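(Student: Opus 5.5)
The plan is to follow the standard ADMM/tracking-ADMM Lyapunov argument, as in \cite{falsone2020tracking}, adapted to the two-block structure of \eqref{eq:step_ADMM}. Throughout I write the saddle point of \ref{eq:Distri_ADMM} as $(\tilde{\boldsymbol{y}}^*,\tilde{\boldsymbol{\theta}}^*,\lambda^*,\mu^*)$; it exists by Assumption~\ref{ass:saddle} together with the equivalence of \ref{eq:Distributed} and \ref{eq:Distri_ADMM}.

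\textbf{Step 1 (optimality conditions).} I write the first-order optimality conditions of the three primal updates in \eqref{eq:step_ADMM}.

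For the $\tilde{\boldsymbol{y}}$-update \eqref{eq:step_ADMM_y}, I use the $\mu$-update \eqref{eq:step_ADMM_mu} and the $\boldsymbol{\lambda}$-update \eqref{eq:step_ADMM_lambda}. Substituting \eqref{eq:step_ADMM_eta} gives
\[
\mathcal{W}\boldsymbol{\lambda}(k)+\sigma(\hat{A}_b\tilde{\boldsymbol{y}}(k+1)-\hat{A}_b\tilde{\boldsymbol{y}}(k)+\mathcal{W}\boldsymbol{\eta}(k))=\boldsymbol{\lambda}(k+1).
\]
The variational inequality for $\tilde{\boldsymbol{y}}(k+1)$ then reads: for all $\tilde{\boldsymbol{y}}\in\tilde{\Omega}$,
\[
H(\tilde{\boldsymbol{y}})-H(\tilde{\boldsymbol{y}}(k+1))+\big(M_1^{\rm T}[\mu(k+1)-\rho M_2(\tilde{\boldsymbol{\theta}}(k+1)-\tilde{\boldsymbol{\theta}}(k))]+\hat{A}_b^{\rm T}\boldsymbol{\lambda}(k+1)\big)^{\rm T}(\tilde{\boldsymbol{y}}-\tilde{\boldsymbol{y}}(k+1))\ge 0 .
\]
For the $\tilde{\boldsymbol{\theta}}$-update \eqref{eq:step_ADMM_theta}, the condition is $M_2^{\rm T}\mu(k+1)=0$ for every $k$.

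I take $\tilde{\boldsymbol{y}}=\tilde{\boldsymbol{y}}^*$ in this inequality. I then add the saddle-point inequality from Assumption~\ref{ass:saddle}, evaluated at $\tilde{\boldsymbol{y}}(k+1)$:
\[
H(\tilde{\boldsymbol{y}}(k+1))-H(\tilde{\boldsymbol{y}}^*)+(\lambda^*)^{\rm T}\hat A(\tilde{\boldsymbol{y}}(k+1)-\tilde{\boldsymbol{y}}^*)+(\mu^*)^{\rm T}M_1(\tilde{\boldsymbol{y}}(k+1)-\tilde{\boldsymbol{y}}^*)\ge0 .
\]
The $H$ terms cancel, which leaves an inequality that is bilinear in the errors.

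\textbf{Step 2 (rewriting the cross terms).} I split $\boldsymbol{\lambda}(k+1)=\bar{\boldsymbol{\lambda}}(k+1)+\boldsymbol{\epsilon}_2(k+1)$. I also use $\hat{A}=(\mathbf 1_N^{\rm T}\otimes I)\hat A_b$ and $\hat A\tilde{\boldsymbol{y}}^*=\boldsymbol d$.

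Column stochasticity of $W$ gives $\bar\lambda(k+1)=\bar\lambda(k)+\sigma\bar\eta(k+1)$ and $\bar\eta(k)=\frac1N(\hat A\tilde{\boldsymbol{y}}(k)-\boldsymbol d)$. The latter holds by the initialization in line~\ref{line1_4}. From these, the term $\bar{\boldsymbol{\lambda}}(k+1)^{\rm T}\hat A_b(\tilde{\boldsymbol{y}}(k+1)-\tilde{\boldsymbol{y}}^*)$ becomes $\frac1\sigma(\bar{\boldsymbol\lambda}(k+1)-\boldsymbol\lambda^*)^{\rm T}(\bar{\boldsymbol\lambda}(k+1)-\bar{\boldsymbol\lambda}(k))$, up to signs. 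The polarization identity $2a^{\rm T}(a-b)=\|a\|^2-\|b\|^2+\|a-b\|^2$ then yields the telescoping piece $\|\bar{\boldsymbol\lambda}(k)-\boldsymbol\lambda^*\|^2-\|\bar{\boldsymbol\lambda}(k+1)-\boldsymbol\lambda^*\|^2$.

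The $\mu$ and $\tilde{\boldsymbol\theta}$ terms are handled as in classical two-block ADMM. I use $M_1\tilde{\boldsymbol y}(k+1)=\frac1\rho(\mu(k+1)-\mu(k))-M_2\tilde{\boldsymbol\theta}(k+1)$, $M_2^{\rm T}\mu(k)=M_2^{\rm T}\mu^*=0$, and the monotonicity inequality $(\mu(k+1)-\mu(k))^{\rm T}M_2(\tilde{\boldsymbol\theta}(k+1)-\tilde{\boldsymbol\theta}(k))\le0$, which follows from the $\tilde{\boldsymbol\theta}$ optimality conditions at steps $k$ and $k+1$. These give the telescoping pieces $\frac{\sigma}{\rho}\|\mu-\mu^*\|^2$ and $\sigma\rho\|M_2(\tilde{\boldsymbol\theta}-\tilde{\boldsymbol\theta}^*)\|^2$, after multiplying the whole inequality by $\sigma$.

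\textbf{Step 3 (main obstacle: consensus-error terms).} The remaining piece is $\boldsymbol\epsilon_2(k+1)^{\rm T}\hat A_b(\tilde{\boldsymbol y}(k+1)-\tilde{\boldsymbol y}^*)$. Using $\boldsymbol\xi=\hat A_b\tilde{\boldsymbol y}-\bar{\boldsymbol\eta}$ and $\bar{\boldsymbol\eta}^{\rm T}\boldsymbol\epsilon_2=0$, this equals $\frac1\sigma\boldsymbol\zeta(k+1)^{\rm T}\boldsymbol\epsilon_2(k+1)$.

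To express it as a difference of a quadratic form, I use the recursions of Lemma~\ref{lem:bounded}(i). Written jointly, they are a linear system
\[
\tilde{\boldsymbol\epsilon}(k+1)=\mathcal{M}\tilde{\boldsymbol\epsilon}(k)+U_1(\boldsymbol\zeta(k+1)-\boldsymbol\zeta(k)),
\qquad
\mathcal M=\begin{pmatrix}\mathcal W_0&0\\ \sigma\mathcal W_0&\mathcal W_0\end{pmatrix}.
\]
Here $\mathcal M$ has spectral radius below $1$. Setting $\boldsymbol e(k)=U_1\boldsymbol\zeta(k)-\tilde{\boldsymbol\epsilon}(k)$, one gets
\[
\boldsymbol e(k+1)=\mathcal M\boldsymbol e(k)+(I-\mathcal M)U_1\boldsymbol\zeta(k),
\]
a contracting system. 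The matrix $\mathcal Q$ of Lemma~\ref{lem:pos_def_mat} should be a discrete Lyapunov solution, with $\mathcal M^{\rm T}\mathcal Q\mathcal M-\mathcal Q\preceq -$ (something). It should be chosen so that
\[
\|\boldsymbol e(k+1)\|_{\mathcal Q}^2-\|\boldsymbol e(k)\|_{\mathcal Q}^2
\]
absorbs the indefinite cross term. The leftover cross term is exactly the $2\sigma\boldsymbol\zeta(k)^{\rm T}\boldsymbol\epsilon_2(k)$ correction appearing in the statement, which is why that correction telescopes alongside $V$.

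I expect the main difficulty to be matching this cross term against the quadratic forms. The approach I will try first is to choose $\mathcal Q$ block-diagonal in terms of $(I-\mathcal W_0)^{-1}$, consistent with $\mathcal R=(I-\mathcal W_0)^{-2}$, and to use symmetry and positive semidefiniteness of $W$ (Assumption~\ref{ass:com_network}). Together these should make every residual term a nonpositive square such as $-\|\mu(k+1)-\mu(k)\|^2$ or $-\|\bar{\boldsymbol\lambda}(k+1)-\bar{\boldsymbol\lambda}(k)\|^2$.

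\textbf{Step 4 (conclusion).} Summing all the pieces gives
\[
V(k+1)+2\sigma\boldsymbol\zeta(k+1)^{\rm T}\boldsymbol\epsilon_2(k+1)\le V(k)+2\sigma\boldsymbol\zeta(k)^{\rm T}\boldsymbol\epsilon_2(k)-(\text{nonnegative terms}),
\]
which is the claimed monotonicity.
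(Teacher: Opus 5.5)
\paragraph{A gap at the crux.} Your Steps 1--2 reproduce the paper's Lemmas~\ref{lem:inequality_2} and~\ref{pro:inequality_1}: optimality of the $\tilde{\boldsymbol{y}}$- and $\tilde{\boldsymbol{\theta}}$-updates, the split $\boldsymbol{\lambda}=\bar{\boldsymbol{\lambda}}+\boldsymbol{\epsilon}_2$ with polarization, the two-block ADMM treatment of $\mu,\tilde{\boldsymbol{\theta}}$, and the saddle-point inequality. Your Step 3 also sets up the same quantity as the paper. The gap is that Step 3 is the whole content of the proposition, and you leave it as a wish. Under mere convexity, nothing in the inequality supplies a negative multiple of $\|\boldsymbol{\zeta}(k)\|^2$. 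So the term $\boldsymbol{\zeta}(k+1)^{\rm T}\boldsymbol{\epsilon}_2(k+1)$ cannot be ``absorbed''; it must be reproduced \emph{exactly} at time $k$ and telescoped. With $\boldsymbol{e}(k)=U_1\boldsymbol{\zeta}(k)-\tilde{\boldsymbol{\epsilon}}(k)$ and $\boldsymbol{e}(k+1)=\boldsymbol{e}(k)+(I-U_0)\tilde{\boldsymbol{\epsilon}}(k)$, one gets
\[
\|\boldsymbol{e}(k+1)\|_{\mathcal{Q}}^2=\|\boldsymbol{e}(k)\|_{\mathcal{Q}}^2+2\boldsymbol{\zeta}(k)^{\rm T}U_1^{\rm T}\mathcal{Q}(I-U_0)\tilde{\boldsymbol{\epsilon}}(k)-\|\tilde{\boldsymbol{\epsilon}}(k)\|^2_{\mathcal{Q}-U_0^{\rm T}\mathcal{Q}U_0}.
\]
So you need one matrix with three properties at once:
\begin{enumerate}
\item $\mathcal{Q}\succ0$;
\item the exact linear identity $U_1^{\rm T}\mathcal{Q}(I-U_0)=U_2$, where $U_2$ extracts the $\boldsymbol{\epsilon}_2$-component;
\item $\mathcal{Q}-U_0^{\rm T}\mathcal{Q}U_0\succeq0$.
\end{enumerate}
A generic discrete-Lyapunov solution gives only the last. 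Any defect in the identity leaves a bilinear $\boldsymbol{\zeta}^{\rm T}(\cdot)\tilde{\boldsymbol{\epsilon}}$ term of indefinite sign. The paper's $\mathcal{Q}$ in Lemma~\ref{lem:pos_def_mat} has blocks $2I$, $P-2I$, $P^2-2P+2I$ with $P=(I-\mathcal{W}_0)^{-1}$. It satisfies $U_1^{\rm T}\mathcal{Q}(I-U_0)=\begin{pmatrix}I&0\end{pmatrix}$ and $\mathcal{Q}-U_0^{\rm T}\mathcal{Q}U_0=\begin{pmatrix}2(I-\mathcal{W}_0)(I+\mathcal{W}_0)&-(I-\mathcal{W}_0)\\-(I-\mathcal{W}_0)&I\end{pmatrix}\succ0$. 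The residual of this part is $-\|\tilde{\boldsymbol{\epsilon}}(k)\|^2_{\mathcal{Q}-U_0^{\rm T}\mathcal{Q}U_0}$. It is not a square in $\mu(k+1)-\mu(k)$ or $\bar{\boldsymbol{\lambda}}(k+1)-\bar{\boldsymbol{\lambda}}(k)$; those come from Step 2.

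\paragraph{Why the block-diagonal ansatz fails.} First, your recursion has the $\sigma$'s wrong. For the state $(\boldsymbol{\epsilon}_1;\sigma\boldsymbol{\epsilon}_2)$, the input is $(\sigma^{-1}I;\sigma I)(\boldsymbol{\zeta}(k+1)-\boldsymbol{\zeta}(k))$ and the lower-left coupling is $\sigma^2\mathcal{W}_0$. With the state $(\boldsymbol{\epsilon}_2;\sigma\boldsymbol{\epsilon}_1)$, Lemma~\ref{lem:bounded}(i) becomes exactly $\tilde{\boldsymbol{\epsilon}}(k+1)=U_0\tilde{\boldsymbol{\epsilon}}(k)+U_1(\boldsymbol{\zeta}(k+1)-\boldsymbol{\zeta}(k))$, with $U_0=\begin{pmatrix}\mathcal{W}_0&\mathcal{W}_0\\0&\mathcal{W}_0\end{pmatrix}$ and $U_2=\begin{pmatrix}I&0\end{pmatrix}$. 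The $\sigma$-bookkeeping needs care: with the remaining terms of $V$ normalized as stated, the correction that telescopes is $2\boldsymbol{\zeta}(k)^{\rm T}\boldsymbol{\epsilon}_2(k)$.

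For block-diagonal $\mathcal{Q}$, the identity forces $\mathcal{Q}=\mathrm{blkdiag}(P,\mathcal{W}_0P^2)$. This matrix is singular on the consensus subspace. On an eigenvector of $\mathcal{W}_0$ with eigenvalue $w$, the $2\times2$ block of $\mathcal{Q}-U_0^{\rm T}\mathcal{Q}U_0$ has determinant $w(1-w^2-w^3)/(1-w)^2$. This is negative once $w$ exceeds roughly $0.755$, i.e.\ for any slowly mixing network. Yet this $\mathcal{Q}$ still satisfies $U_1^{\rm T}\mathcal{Q}U_1=(I-\mathcal{W}_0)^{-2}=\mathcal{R}$, so your consistency check against $\mathcal{R}$ cannot detect the failure. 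The off-diagonal blocks of the paper's $\mathcal{Q}$ are precisely what reconciles the exact identity with the Lyapunov inequality.
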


\begin{remark}
 Since $\boldsymbol{\xi}(k)$ is bounded, we can infer that $\boldsymbol{\zeta}(k)$ is bounded as well by definition. Additionally, given that $\boldsymbol{\epsilon}_2(k)$ is bounded, it follows that the term $2\sigma \boldsymbol{\zeta}(k)^{\rm T}\boldsymbol{\epsilon}_2(k)$ is also bounded. Therefore, after taking the telescoping sum of the Lyapunov function $V(k)$, the term  $2\sigma \boldsymbol{\zeta}(k)^{\rm T}\boldsymbol{\epsilon}_2(k)$ does not affect the convergence of the corresponding series.
\end{remark}

Thus far, we can conclude that the non-increasing sequence in Proposition \ref{pro:Lyapunov} is bounded below, and hence it converges. Consequently, each term in the Lyapunov function is bounded, implying the existence of convergent subsequences for each term. By identifying a common convergent subsequence from among the bounded sequences, constructing its limit points, and proving that these limit points coincide with the optimal point, we can demonstrate the convergence of the Algorithm \ref{alg:TCADMM}. 

\begin{theorem}\label{thm:convergence}
    Under Assumptions \ref{ass:const_set}--\ref{ass:com_network}, the sequence generated by Consensus-Tracking-ADMM satisfy:
    \begin{enumerate}[label=(\roman*),leftmargin=24pt]
        \item $\lim_{k\rightarrow 0}  H\left(\tilde{\boldsymbol{y}}(k+1)\right) = H\left(\tilde{\boldsymbol{y}}^*\right)$,
        \item The primal sequence $\{\tilde{\boldsymbol{y}}(k)\}_{k\geq 0}$ converges to the optimal solution $\tilde{\boldsymbol{y}}^*$,
        \item Each dual sequence $\{\lambda_i(k)\}_{k\geq 0}$, $i = 1,\dots,N$, converges to the same optimal solution $\lambda^*$.
    \end{enumerate}
\end{theorem}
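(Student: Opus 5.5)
Using Proposition~\ref{pro:Lyapunov}, I will first extract summability of the residual terms and then identify limit points, following the standard ADMM/tracking-ADMM template of \cite{falsone2020tracking}.

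\textbf{Step 1 (summability).} The proof of Proposition~\ref{pro:Lyapunov} gives more than monotonicity: the decrease $V(k)+2\sigma\boldsymbol{\zeta}(k)^{\rm T}\boldsymbol{\epsilon}_2(k)-V(k+1)-2\sigma\boldsymbol{\zeta}(k+1)^{\rm T}\boldsymbol{\epsilon}_2(k+1)$ is bounded below by a nonnegative combination of
\[
\|M_1\tilde{\boldsymbol{y}}(k+1)+M_2\tilde{\boldsymbol{\theta}}(k+1)\|^2,\quad
\|M_2(\tilde{\boldsymbol{\theta}}(k+1)-\tilde{\boldsymbol{\theta}}(k))\|^2,\quad
\|\hat{A}_b(\tilde{\boldsymbol{y}}(k+1)-\tilde{\boldsymbol{y}}(k))\|^2,
\]
together with a quadratic in $\boldsymbol{\zeta}$ and $\tilde{\boldsymbol{\epsilon}}$ weighted by $\mathcal{Q}$. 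I will re-derive this inequality explicitly. The sequence is bounded below because $V\ge 0$ and, by Lemma~\ref{lem:bounded} and the remark following Proposition~\ref{pro:Lyapunov}, the cross term is bounded. Telescoping therefore shows that each residual is summable and hence tends to $0$. Three consequences follow:
\begin{itemize}
\item the consensus residual $M_1\tilde{\boldsymbol{y}}+M_2\tilde{\boldsymbol{\theta}}\to 0$;
\item $\boldsymbol{\xi}(k+1)-\boldsymbol{\xi}(k)\to 0$;
\item by Lemma~\ref{lem:bounded}(i) and $\rho(\mathcal{W}_0)<1$, the errors satisfy $\boldsymbol{\epsilon}_1(k),\boldsymbol{\epsilon}_2(k)\to 0$, since a stable linear system driven by a vanishing input has a vanishing state.
\end{itemize}
Because $\bar{\eta}(k)=\frac1N(\sum_i\tilde A_i\boldsymbol{y}_i(k)-\boldsymbol d)$ is preserved by the tracking update and $\bar\lambda(k+1)=\bar\lambda(k)+\sigma\bar\eta(k+1)$, boundedness of $\bar{\boldsymbol\lambda}$ in $V$ combined with the summability terms should give $\bar\eta(k)\to0$, which is primal feasibility of the coupled constraint in the limit.

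\textbf{Step 2 (limit points).} By Assumption~\ref{ass:const_set}(ii), the components $\boldsymbol{x}_{i}$-parts of $\tilde{\boldsymbol y}(k)$ lie in the compact sets $\Omega_i$. Boundedness of $\tilde{\boldsymbol\theta}(k)$, via $V$ and Lemma~\ref{lem:full_rank}, together with the vanishing consensus residual then bounds all copies. The terms $\mu(k)$ and $\bar\lambda(k)$ are bounded by $V$. I will extract a common convergent subsequence with limit $(\hat{\boldsymbol y},\hat{\boldsymbol\theta},\hat\mu,\hat\lambda)$. Next I will pass to the limit in the optimality condition of \eqref{eq:step_ADMM_y}, a variational inequality over $\tilde\Omega$. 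The perturbations $\rho M_1^{\rm T}M_2(\tilde{\boldsymbol\theta}(k)-\tilde{\boldsymbol\theta}(k+1))$, the $\sigma$-term, which equals $\sigma\hat A_b^{\rm T}\boldsymbol\eta(k+1)\to0$, and $\boldsymbol\epsilon_2$ all vanish. Since the subdifferential of the closed convex $H+\iota_{\tilde\Omega}$ has a closed graph, the limit satisfies
\[
0\in\partial H(\hat{\boldsymbol y})+N_{\tilde\Omega}(\hat{\boldsymbol y})+M_1^{\rm T}\hat\mu+\hat A_b^{\rm T}(\mathbf 1\otimes\hat\lambda),\qquad M_2^{\rm T}\hat\mu=0,
\]
together with feasibility. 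This is the KKT system of \eqref{eq:Distri_ADMM}, so the limit is a primal-dual optimal pair, which establishes (i) along the subsequence via continuity of $H$ on compact sets.

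\textbf{Step 3 (whole-sequence convergence).} I will rerun Proposition~\ref{pro:Lyapunov} with $(\tilde{\boldsymbol y}^*,\tilde{\boldsymbol\theta}^*,\mu^*,\lambda^*)$ replaced by this limit point, which is legitimate since the proposition only uses saddle-point optimality. The resulting sequence converges, and along the subsequence it tends to $0$ because $\boldsymbol\epsilon_2\to0$ and the $\zeta$-terms vanish at the limit. Hence the whole sequence tends to $0$, which gives convergence of $\bar\lambda(k)\to\hat\lambda$ and $\lambda_i(k)=\bar\lambda(k)+(\boldsymbol\epsilon_2)_i\to\hat\lambda$, proving (iii). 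For the primal variables, $\hat A_b\tilde{\boldsymbol y}(k)$ and $M_1\tilde{\boldsymbol y}(k)$ converge, and convergence of $H(\tilde{\boldsymbol{y}}(k))$ to the optimal value follows from the saddle-point inequality sandwich, giving (i). For (ii), I would argue that every limit point is optimal and, if the optimizer is unique as the statement presumes, the bounded sequence converges to $\tilde{\boldsymbol{y}}^*$.

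\textbf{Main obstacle.} The hardest part is Step 1: precisely extracting the strictly negative residual terms from the Lyapunov decrease. The cross term $2\sigma\boldsymbol\zeta^{\rm T}\boldsymbol\epsilon_2$ is not sign-definite, so showing that $\bar\eta\to0$ and $\boldsymbol\xi(k+1)-\boldsymbol\xi(k)\to0$ requires carefully reusing the $\mathcal Q$-weighted structure, presumably via Lemma~\ref{lem:pos_def_mat}. Before relying on this structure, I will verify that the decrease indeed includes a $\|\hat A_b(\tilde{\boldsymbol y}(k+1)-\tilde{\boldsymbol y}(k))\|^2$ term.
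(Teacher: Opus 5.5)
Your overall architecture matches the paper's. You telescope the decrease inequality \eqref{eq:V_k_V_k_1}, show that the residuals vanish, extract a convergent subsequence, pass to the limit in the optimality condition of \eqref{eq:step_ADMM_y} using closedness of the subdifferential graph, and then upgrade to whole-sequence convergence.

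Step~1, however, rests on a term the paper's inequality does not contain. The decrease there consists of
$\sigma\rho\|M_2(\tilde{\boldsymbol{\theta}}(k+1)-\tilde{\boldsymbol{\theta}}(k))\|^2$, $\frac{\sigma}{\rho}\|\mu(k+1)-\mu(k)\|^2$, the nonnegative gap $2\sigma(\mathcal{L}^1(\tilde{\boldsymbol{y}}(k+1),\tilde{\boldsymbol{\theta}}(k+1),\lambda^*,\mu^*)-\mathcal{L}^1(\tilde{\boldsymbol{y}}^*,\tilde{\boldsymbol{\theta}}^*,\lambda^*,\mu^*))$, $\|\tilde{\boldsymbol{\epsilon}}(k)\|^2_{\mathcal{Q}-U_0^{\rm T}\mathcal{Q}U_0}$ and $\|\bar{\boldsymbol{\lambda}}(k+1)-\bar{\boldsymbol{\lambda}}(k)\|^2$. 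There is no $\|\hat{A}_b(\tilde{\boldsymbol{y}}(k+1)-\tilde{\boldsymbol{y}}(k))\|^2$ term, because the $\sigma$-proximal term is absorbed into $\boldsymbol{\lambda}(k+1)=\mathcal{W}\boldsymbol{\lambda}(k)+\sigma\boldsymbol{\eta}(k+1)$ in the optimality condition. Your route $\boldsymbol{\xi}(k+1)-\boldsymbol{\xi}(k)\to 0\Rightarrow\boldsymbol{\epsilon}_1,\boldsymbol{\epsilon}_2\to 0$ is therefore unsupported as written.

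You do not need that route:
\begin{itemize}
\item Since $\mathcal{Q}-U_0^{\rm T}\mathcal{Q}U_0\succ 0$, summability of the $\tilde{\boldsymbol{\epsilon}}$-term gives $\boldsymbol{\epsilon}_1,\boldsymbol{\epsilon}_2\to 0$ directly (Lemma~\ref{lem:thm_cor}).
\item $\bar\eta(k)\to 0$ comes from summability of $\|\bar{\boldsymbol{\lambda}}(k+1)-\bar{\boldsymbol{\lambda}}(k)\|^2=N\sigma^2\|\bar\eta(k+1)\|^2$. Boundedness of $\bar{\boldsymbol{\lambda}}$ alone would not give it.
\item The gap term, together with the vanishing residuals, yields (i) at once; this is how the paper proves it.
\end{itemize}

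Where you depart from the paper, your version is the sounder one.
\begin{itemize}
\item \emph{Identification of the limit.} The paper equates the subsequential KKT limit with $(\tilde{\boldsymbol{y}}^*,\tilde{\boldsymbol{\theta}}^*,\bar\lambda^*,\mu^*)$. It then uses convergence of the quantity in Lemma~\ref{lem:thm_cor}(iii), which is centered at that fixed saddle point. This tacitly assumes the KKT point is unique, which Assumptions~\ref{ass:const_set}--\ref{ass:com_network} do not guarantee. For example, $\mu^*$ can be shifted by any circulation $c$ with $M_1^{\rm T}c=0=M_2^{\rm T}c$, and such $c\neq 0$ exists whenever the communication graph has a cycle. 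Your re-centering of the Lyapunov argument at the limit point removes this assumption. It is legitimate because Proposition~\ref{pro:Lyapunov} uses only the saddle-point property of its reference point.
\item \emph{Boundedness of the copies.} The paper infers boundedness of $\tilde{\boldsymbol{y}}(k)$ from $\tilde{\boldsymbol{y}}(k)\in\tilde\Omega$, but $\tilde\Omega_i$ leaves the copy blocks unconstrained. Your argument, via boundedness of $\tilde{\boldsymbol{\theta}}(k)$ and the residual $M_1\tilde{\boldsymbol{y}}+M_2\tilde{\boldsymbol{\theta}}$, supplies the missing bound.
\item \emph{Statement (ii).} You can drop the uniqueness hedge. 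Once $\tilde{\boldsymbol{\theta}}(k)\to\hat{\boldsymbol{\theta}}$ and $M_1\tilde{\boldsymbol{y}}(k)+M_2\tilde{\boldsymbol{\theta}}(k)\to 0$, full column rank of $M_1$ (Lemma~\ref{lem:full_rank}) gives $\tilde{\boldsymbol{y}}(k)\to\hat{\boldsymbol{y}}$ for the whole sequence. The paper's appeal at this step to full column rank of $\hat{A}_b$ does not work, since each $\tilde{A}_i$ has zero column blocks.
\end{itemize}
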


Theorem \ref{thm:convergence} establishes that the function values of the objective function converge, the sequence $\{\tilde{\boldsymbol{y}}(k)\}_{k \geq 0}$ converges to the optimal solution, and the estimates of the dual variables by all participants eventually converge to the same optimal value.

\section{Design of incentive mechanisms}\label{sec:Mechanisms}

In this section, we present two pricing mechanisms designed to incentivize individual participation while ensuring that decisions prioritizing self-interest align with collective efficiency, thereby demonstrating the rationality of employing distributed approaches for solutions under the proposed mechanisms.

To enhance clarity, we formalize the goals of mechanism design into three key criteria: Individual Rationality (IR), Social Efficiency (SE), and Incentive Compatibility (IC) --- a standard framework in mechanism design theory \citep{borgers2015introduction}. The definitions of these criteria are as follows.

\begin{enumerate}[label=\arabic*)]
    \item Individual rationality ensures that participants do not incur losses by engaging in the mechanism. Formally, a mechanism satisfies IR if the utility for each participant is non-negative. 
    \item Social Efficiency (Welfare maximization) refers to the equilibrium solution of participant decisions under a specified mechanism, which also minimizes the total social cost.    
    \item Incentive compatibility ensures that truthful revelation of private information is optimal for all rational participants. 
\end{enumerate}
In the following text, we will present two pricing mechanisms, the shadow pricing mechanism and the VCG mechanism, and elucidate whether these two mechanisms conform to the above three criteria.

\subsection{Shadow pricing mechanism}

Recall that $\Pi_i$ represents the total payment received by participant $i$ for participating in the task. Our shadow pricing mechanism designs the total payment as the product of the unit price and the contribution of the task, i.e., $\Pi_i= \boldsymbol{\pi}_i^{\rm T} \boldsymbol{x}_i$. The value of each $\boldsymbol{\pi}_i$, acting as a price signal, affects the decision of each participant driven by self-interest. Specifically, the decision-making process of the group of participants can be viewed as a game represented by optimization problems of the form:
\begin{equation}\label{eq:ind_supply}
\tag{$\mathcal{P}^{s}$}
\begin{aligned}
      \forall i \in \{1,\dots,N\}   \quad \min_{\boldsymbol{x}_i} \ \ & u_i(\boldsymbol{x}_i,\boldsymbol{x}_{-i})=f_i(\boldsymbol{x}_i,\boldsymbol{x}_{-i})-\boldsymbol{\pi}_i^{\rm T}\boldsymbol{x}_i \\
         {\rm s.t.} \ \ & (\boldsymbol{x}_i,\boldsymbol{x}_{-i}) \in \mathcal{X}:=\Omega \cap \left\{\boldsymbol{x}\ \Big\vert \sum\nolimits_{i=1}^NA_i\boldsymbol{x}_i =\boldsymbol{d}\right\}.
\end{aligned}
\end{equation}
Let $(\boldsymbol{x}_1^E,\ldots,\boldsymbol{x}_N^E)$ denote any \emph{equilibrium} of the problem \ref{eq:ind_supply} satisfying $u_i(\boldsymbol{x}_i^E, \boldsymbol{x}_{-i}^E) \leq u_i(\boldsymbol{x}_i,\boldsymbol{x}_{-i}^E)$ for all $\boldsymbol{x}_i$ such that $(\boldsymbol{x}_i,\boldsymbol{x}_{-i}^E) \in \mathcal{X}$ for all $i$ \citep{yang2024distributed}. Our goal is to derive an optimal price signal that ensures the equilibrium coincides with the solution $(\boldsymbol{x}_1^\ast,\ldots,\boldsymbol{x}_N^\ast)$ to the optimization problem \ref{eq:Distributed}, while satisfying the desired mechanism design criteria.

Before proceeding, we first introduce some technical assumptions.

\begin{assumption}\label{ass:str_conv}
    The summation $\sum_i f_i(\boldsymbol{x})$ is strictly convex with respect to $\boldsymbol{x}$. Furthermore, for $i=1,\dots,N$,
    \begin{enumerate}[label=(\roman*),leftmargin=24pt]
        \item Each $f_i(\boldsymbol{x})$ is a twice continuously differentiable convex function with respect to $\boldsymbol{x}_i$, and each component of $g_i(\boldsymbol{x}_i)$ is a continuously differentiable convex function with respect to $\boldsymbol{x}_i$.
        \item The function $f_i$ satisfies $f_i(\boldsymbol{x})=0$ when $\boldsymbol{x}_i = 0$.
        \item Each constraint set $\Omega_i$ is nonempty and compact, with the property that $0 \in \Omega_i$.
    \end{enumerate}
\end{assumption}

Assumption \ref{ass:str_conv} imposes further conditions compared to Assumption \ref{ass:const_set},
and introduces a ``neutral point'' $\boldsymbol{x}_{i}=0$ at which the local cost will be zero. This neutral point can be seen as participant $i$ not included in or contributing to the task. In the scenarios where the origin $\boldsymbol{x}_i=0$ does not represent such a neutral point, we only need to make a change of variable by shifting the neutral point to the origin accordingly.

\begin{assumption}\label{ass:LICQ}
    In optimization problem \ref{eq:Distributed}, the linear independence constraint qualification (LICQ) holds at the point $\boldsymbol{x}^*$. 
\end{assumption}
Under Assumptions \ref{ass:saddle}, \ref{ass:str_conv} and \ref{ass:LICQ}, both the primal optimal solution and the dual optimal solution to the optimization problem \ref{eq:Distributed} exist and are uniquely determined. 
\begin{assumption}\label{ass:NE_uni}
    The mapping $F:\mathcal{X} \rightarrow \mathbb{R}^{n_1+\dots+n_N}$ defined by $F(\boldsymbol{x})=\left(\nabla_{\boldsymbol{x}_1}f_1(\boldsymbol{x}),\dots, \nabla_{\boldsymbol{x}_N}f_N(\boldsymbol{x})\right)$ is strictly monotone. That is, for all distinct $\boldsymbol{x} \neq \boldsymbol{x}' \in \mathcal{X}$, $(F(\boldsymbol{x})-F(\boldsymbol{x}'))^{\rm T}(\boldsymbol{x}-\boldsymbol{x}') > 0$. 
\end{assumption}

Assumption \ref{ass:NE_uni} guarantees the existence of at most one equilibrium solution in the game \ref{eq:ind_supply}. This is a standard assumption commonly adopted in game-theoretic frameworks \citep{le2020peer}.

Now, we define the price signal $\boldsymbol{\pi}_i^*$ of our shadow pricing mechanism as
\begin{equation}\label{eq:LMP_price} \boldsymbol{\pi}_i^*:=A_i^{\rm T}\lambda^* - \sum\nolimits_{s \neq i} \nabla_{\boldsymbol{x}_i}f_s(\boldsymbol{x}^*),
\end{equation}
It can be seen that the calculation of $\boldsymbol{\pi}_i^*$ involves the optimal primal-dual pair $(\boldsymbol{x}^*,\lambda^*)$ of the optimization problem \ref{eq:Distributed}, which need to be obtained by the algorithm in Section \ref{sec:Algorithm}. We refer to the above price signals as the shadow pricing mechanism due to the use of the dual variable within it. 

We now illustrate the mechanism criteria satisfied by the shadow pricing mechanism~\eqref{eq:LMP_price}.

\begin{proposition}\label{pro:SDP_IR}
    Under Assumptions \ref{ass:saddle}, \ref{ass:str_conv} and \ref{ass:LICQ}, with the shadow pricing mechanism~\eqref{eq:LMP_price}, we have $u_i^\ast=f_i(\boldsymbol{x}^\ast)-{\boldsymbol{\pi}_i^\ast}^\mathrm{T}\boldsymbol{x}_i^\ast\leq 0$ for any participant $i$.
\end{proposition}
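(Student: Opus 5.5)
The plan is to combine the first-order optimality (KKT) conditions of \ref{eq:Distributed} at $\boldsymbol{x}^*$ with two convexity inequalities that compare $\boldsymbol{x}_i^*$ with the neutral point $\boldsymbol{x}_i=0$ from Assumption~\ref{ass:str_conv}.

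\emph{Step 1: KKT conditions at $\boldsymbol{x}^*$.} Under Assumptions~\ref{ass:saddle}, \ref{ass:str_conv} and \ref{ass:LICQ}, the optimal pair $\boldsymbol{x}^*$, $(\lambda^*,\boldsymbol{\alpha}^*)$ exists and is unique. Since the $f_s$ and $g_i$ are differentiable, minimizing $\mathcal{L}(\cdot,\lambda^*,\boldsymbol{\alpha}^*)$ over the free variable $\boldsymbol{x}$ (Assumption~\ref{ass:saddle}) gives stationarity in each block $\boldsymbol{x}_i$:
\begin{equation*}
\sum\nolimits_{s}\nabla_{\boldsymbol{x}_i} f_s(\boldsymbol{x}^*) - A_i^{\rm T}\lambda^* + \nabla g_i(\boldsymbol{x}_i^*)^{\rm T}\alpha_i^* = 0 .
\end{equation*}
Maximization over $\boldsymbol{\alpha}\ge 0$ gives $\alpha_i^*\ge 0$ and complementary slackness ${\alpha_i^*}^{\rm T} g_i(\boldsymbol{x}_i^*)=0$. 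Substituting the definition \eqref{eq:LMP_price} turns stationarity into
\begin{equation*}
\nabla_{\boldsymbol{x}_i} f_i(\boldsymbol{x}^*) = \boldsymbol{\pi}_i^* - \nabla g_i(\boldsymbol{x}_i^*)^{\rm T}\alpha_i^* .
\end{equation*}

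\emph{Step 2: convexity of $f_i$ in $\boldsymbol{x}_i$.} I apply the gradient inequality for $f_i$ in the block $\boldsymbol{x}_i$, with $\boldsymbol{x}_{-i}^*$ held fixed, between $\boldsymbol{x}_i^*$ and $0$. Assumption~\ref{ass:str_conv}(ii) gives $f_i(0,\boldsymbol{x}_{-i}^*)=0$, so
\begin{equation*}
0 \ge f_i(\boldsymbol{x}^*) - \nabla_{\boldsymbol{x}_i} f_i(\boldsymbol{x}^*)^{\rm T}\boldsymbol{x}_i^* .
\end{equation*}
Inserting Step 1 yields
\begin{equation*}
u_i^* = f_i(\boldsymbol{x}^*) - {\boldsymbol{\pi}_i^*}^{\rm T}\boldsymbol{x}_i^* \le -{\alpha_i^*}^{\rm T}\nabla g_i(\boldsymbol{x}_i^*)\,\boldsymbol{x}_i^* .
\end{equation*}

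\emph{Step 3: sign of the remaining term.} It remains to show ${\alpha_i^*}^{\rm T}\nabla g_i(\boldsymbol{x}_i^*)\boldsymbol{x}_i^*\ge 0$, and I expect this to be the only delicate point. Convexity of each component of $g_i$ gives $g_i(0)\ge g_i(\boldsymbol{x}_i^*) - \nabla g_i(\boldsymbol{x}_i^*)\boldsymbol{x}_i^*$ componentwise. Multiplying by $\alpha_i^*\ge 0$ and using complementary slackness gives
\begin{equation*}
{\alpha_i^*}^{\rm T}\nabla g_i(\boldsymbol{x}_i^*)\boldsymbol{x}_i^* \ge -{\alpha_i^*}^{\rm T} g_i(0).
\end{equation*}
Because $0\in\Omega_i$ by Assumption~\ref{ass:str_conv}(iii), we have $g_i(0)\le 0$, so the right-hand side is nonnegative. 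Combining with Step 2 gives $u_i^*\le 0$.

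The main care points are two. First, the convexity used must be convexity of $f_i$ in $\boldsymbol{x}_i$ alone, which Assumption~\ref{ass:str_conv}(i) provides. Second, the multiplier term must be handled through complementary slackness together with $0\in\Omega_i$; dropping it is not justified in general.
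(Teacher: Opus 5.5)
Your proposal is correct and follows the paper's proof essentially step for step. You substitute the KKT stationarity condition into the definition of $\boldsymbol{\pi}_i^*$, apply the gradient inequality for $f_i$ in the block $\boldsymbol{x}_i$ at the neutral point $\boldsymbol{x}_i=\boldsymbol{0}$, and bound the multiplier term using the componentwise gradient inequality for $g_i$, $\alpha_i^*\geq 0$, complementary slackness and $g_i(\boldsymbol{0})\leq \boldsymbol{0}$, exactly as the paper does, with your derivation of the KKT conditions from the saddle point of Assumption~\ref{ass:saddle} being only a slightly more self-contained justification.
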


Proposition~\ref{pro:SDP_IR} indicates that teach participant's net cost is guaranteed to be non-positive (or its profit is non-negative), 
which conforms to the individual rationality criterion. 

\begin{proposition}\label{pro:NE_eq_OP}
    Under Assumptions \ref{ass:saddle}, \ref{ass:str_conv}--\ref{ass:NE_uni}, with the shadow pricing mechanism~\eqref{eq:LMP_price}, the equilibrium $\boldsymbol{x}_i^E$ of the game \ref{eq:ind_supply} exists and is unique, and coincides with the optimal solution $\boldsymbol{x}_i^{*}$.
\end{proposition}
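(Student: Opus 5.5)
Since every player's problem in \ref{eq:ind_supply} is convex in its own variable, I would characterize equilibria through the variational inequality (VI) given by the pseudo-gradient $F$ over the shared set $\mathcal{X}$. First I show that $\boldsymbol{x}^*$ is an equilibrium. Then I get uniqueness from the strict monotonicity in Assumption~\ref{ass:NE_uni}.

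\emph{Step 1 (KKT of the global problem).} Under Assumptions~\ref{ass:saddle}, \ref{ass:str_conv} and \ref{ass:LICQ}, the point $\boldsymbol{x}^*$ with multipliers $(\lambda^*,\boldsymbol{\alpha}^*)$ satisfies, for every $i$,
\[
\sum\nolimits_{s}\nabla_{\boldsymbol{x}_i}f_s(\boldsymbol{x}^*) - A_i^{\rm T}\lambda^* + \nabla g_i(\boldsymbol{x}_i^*)^{\rm T}\alpha_i^* = 0,
\]
together with $\alpha_i^*\ge 0$, $g_i(\boldsymbol{x}_i^*)\le 0$, $\alpha_i^{*\rm T}g_i(\boldsymbol{x}_i^*)=0$ and $\sum_i A_i\boldsymbol{x}_i^*=\boldsymbol{d}$. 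I now substitute the definition \eqref{eq:LMP_price}, which gives $\sum_{s\neq i}\nabla_{\boldsymbol{x}_i}f_s(\boldsymbol{x}^*)=A_i^{\rm T}\lambda^*-\boldsymbol{\pi}_i^*$. The condition above then becomes
\[
\nabla_{\boldsymbol{x}_i}f_i(\boldsymbol{x}^*) - \boldsymbol{\pi}_i^* + \nabla g_i(\boldsymbol{x}_i^*)^{\rm T}\alpha_i^* = 0 .
\]

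\emph{Step 2 ($\boldsymbol{x}^*$ is an equilibrium).} Fix $\boldsymbol{x}_{-i}=\boldsymbol{x}^*_{-i}$. Player $i$ then minimizes the convex function $u_i(\cdot,\boldsymbol{x}_{-i}^*)$ over $\{\boldsymbol{x}_i\in\Omega_i : A_i\boldsymbol{x}_i=\boldsymbol{d}-\sum_{s\neq i}A_s\boldsymbol{x}_s^*\}$. The identity from Step 1 is the KKT system of this problem with multipliers $\alpha_i^*$ for $g_i$ and $0$ for the equality constraint. Because the problem is convex, KKT is sufficient, so $\boldsymbol{x}_i^*$ is a best response. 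This holds for every $i$, so $\boldsymbol{x}^*$ is an equilibrium, and existence follows.

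\emph{Step 3 (uniqueness).} Let $\boldsymbol{x}^E$ be any equilibrium. By convexity and differentiability, player $i$'s optimality gives $(\nabla_{\boldsymbol{x}_i}f_i(\boldsymbol{x}^E)-\boldsymbol{\pi}_i^*)^{\rm T}(\boldsymbol{x}_i-\boldsymbol{x}_i^E)\ge 0$ for every $\boldsymbol{x}_i$ with $(\boldsymbol{x}_i,\boldsymbol{x}_{-i}^E)\in\mathcal{X}$.

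This step is the main obstacle. Summing these inequalities over $i$ only yields the VI against unilateral deviations. The comparison point $\boldsymbol{x}^*$ differs from $\boldsymbol{x}^E$ in all blocks at once, so it is not a unilateral deviation, and a generalized Nash equilibrium need not be a VI solution.

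My approach is to use the affine structure of the coupling directly. The shifted point $(\boldsymbol{x}_i,\boldsymbol{x}_{-i}^E)$ stays in $\mathcal{X}$ exactly when $\boldsymbol{x}_i\in\Omega_i$ and $A_i\boldsymbol{x}_i=A_i\boldsymbol{x}_i^E$. So each player's feasible set is $\Omega_i\cap\{A_i\boldsymbol{x}_i=A_i\boldsymbol{x}_i^E\}$, and player $i$ has its own multiplier $\nu_i$ for that constraint. This multiplier exists under LICQ, which I would need to assume holds at $\boldsymbol{x}^E$ as well.

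The difficulty is that the $\nu_i$ differ across players, so the KKT conditions for $\boldsymbol{x}^E$ do not directly match those for $\boldsymbol{x}^*$. My plan is to show that $\boldsymbol{x}^E$ also minimizes the potential-like problem defining $\boldsymbol{x}^*$, or, more directly, to invoke strict monotonicity as follows.
\begin{itemize}
\item First, restrict attention to equilibria that solve the VI on $\mathcal{X}$ (variational equilibria), where all players share a common multiplier $\nu$. I expect this to be the intended reading of ``equilibrium'' here, which I believe matches \citet{yang2024distributed}.
\item Second, let $\boldsymbol{\pi}^*={\rm col}(\boldsymbol{\pi}_1^*,\dots,\boldsymbol{\pi}_N^*)$ stack the price vectors. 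Both $\boldsymbol{x}^E$ and $\boldsymbol{x}^*$ then solve $\mathrm{VI}(F-\boldsymbol{\pi}^*,\mathcal{X})$: for $\boldsymbol{x}^*$ this follows from Step 1, and for $\boldsymbol{x}^E$ from the first point.
\item Third, add the two VI inequalities, each tested at the other point. This gives $(F(\boldsymbol{x}^E)-F(\boldsymbol{x}^*))^{\rm T}(\boldsymbol{x}^E-\boldsymbol{x}^*)\le 0$, since the constant $\boldsymbol{\pi}^*$ cancels. Assumption~\ref{ass:NE_uni} then forces $\boldsymbol{x}^E=\boldsymbol{x}^*$.
\end{itemize}

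The delicate point I would check carefully is the justification that the equilibria under consideration are variational. I would rely on the text's remark that Assumption~\ref{ass:NE_uni} guarantees at most one equilibrium, and state that convention explicitly.
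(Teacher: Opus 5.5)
Your Steps 1--2 and the variational-inequality argument in Step~3 are the paper's proof. Existence comes from substituting \eqref{eq:LMP_price} into the KKT system of \ref{eq:Distributed}, so that $(\boldsymbol{x}^*,\boldsymbol{\alpha}^*)$ satisfies every player's KKT conditions. The paper writes the players' Lagrangians without the coupled constraint, which amounts to your zero multiplier on it. Uniqueness comes from strict monotonicity of $F$ via Theorem~2.2.3 of Facchinei--Pang. The paper, however, only asserts that $\boldsymbol{x}^*$ solves the VI and then concludes the equilibrium is unique. It silently treats every equilibrium as a VI solution, which is exactly the leap you flag. It also writes that VI with $F$ instead of $F-\boldsymbol{\pi}^*$; your shifted version is the correct one.

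That leap cannot be repaired, so restricting to variational equilibria is a necessary correction of the statement, not a reading you can take from the text. Under the paper's definition, $(\boldsymbol{x}_i,\boldsymbol{x}_{-i}^E)\in\mathcal{X}$ forces $A_i\boldsymbol{x}_i=A_i\boldsymbol{x}_i^E$. Whenever the $A_i$ are injective, the only admissible deviation is $\boldsymbol{x}_i^E$ itself, so \emph{every} point of $\mathcal{X}$ is an equilibrium.

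A concrete counterexample is Example~\ref{ex:simple_case}, with $A_i=1$ and a box $0\le x_i\le 5$ added so that $\Omega_i$ is compact. It satisfies all the assumptions; in particular $F$ is affine with Jacobian $c_0(3I+\mathbf{1}\mathbf{1}^{\rm T})\succ 0$. Yet its equilibrium set is the whole scaled simplex $\mathcal{X}$.

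So make the following changes:
\begin{itemize}
\item Drop the appeal to the remark after Assumption~\ref{ass:NE_uni}; it is false for generalized equilibria.
\item Drop the unused digression on player-specific multipliers $\nu_i$ and LICQ at $\boldsymbol{x}^E$.
\item State what your argument actually proves: $\boldsymbol{x}^*$ is an equilibrium, and it is the unique solution of $\mathrm{VI}(F-\boldsymbol{\pi}^*,\mathcal{X})$, i.e.\ the unique variational equilibrium.
\end{itemize}

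If you want genuine uniqueness instead, use the fact that your Step~2 puts a zero multiplier on the coupled constraint. Then $\boldsymbol{x}^*$ is even a Nash equilibrium of the game in which player $i$ faces only $\Omega_i$. The equilibria of that game are exactly the solutions of $\mathrm{VI}(F-\boldsymbol{\pi}^*,\Omega)$. They are unique provided $F$ is strictly monotone on all of $\Omega$, not only on $\mathcal{X}$.
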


Proposition~\ref{pro:NE_eq_OP} demonstrates that, under the price signal $\boldsymbol{\pi}_i^*$, the equilibrium arising from participant interactions still coincides with the optimum for optimization problem \ref{eq:Distributed}, thereby confirming the criterion of social efficiency.


Note that the price signal $\boldsymbol{\pi}_i^*$ is dependent on the optimal solution $\boldsymbol{x}^*$, and so far we have implicitly assumed that all individuals will not use fake local cost functions or constraints when solving~\ref{eq:Distributed} to obtain $\boldsymbol{x}^*$. However, as the following example shows, by strategically using a fake local cost function $f_i$, an individual may obtain higher net benefits, indicating that the shadow pricing mechanism~\eqref{eq:LMP_price} lacks the property of incentive compatibility.

\begin{example}\label{ex:simple_case}
We consider a special case of Example~\ref{example:allo}, where the transportation network is shown in Figure~\ref{fig:sim_cas}, consisting of three suppliers $(N=3)$ and one demander $(M=1)$. Each supplier has only one path $(R=1)$ to the demander, and only one type of material $(K=1)$ is transported. The suppliers have no constraints on inventory capacity and transportation capacity, and the demander has the fixed demand of $\boldsymbol{d}=5$. There are 4 edges, $e_1$, $e_2$, $e_3$, $e_4$, in the network. Each supplier $i$ has a private cost $c_{i,e_s}$ per edge, forming its cost vector $\boldsymbol{c}_i = (c_{i,e_1}, c_{i,e_2}, c_{i,e_3}, c_{i,e_4})^{\rm T}$. The actual cost vectors are $\boldsymbol{c}_1=(1,0,0,1)^{\rm T}$, $\boldsymbol{c}_2=(0,2,0,1)^{\rm T}$, $\boldsymbol{c}_3=(0,0,3,1)^{\rm T}$. Additionally, we specify the congestion cost function on each edge $e$ as $c_e(q_e(\boldsymbol{x}))=c_0q_e(\boldsymbol{x})$, 
with $c_0=1$.
    \begin{figure}[!ht]
        \centering
        \includegraphics[scale=0.65]{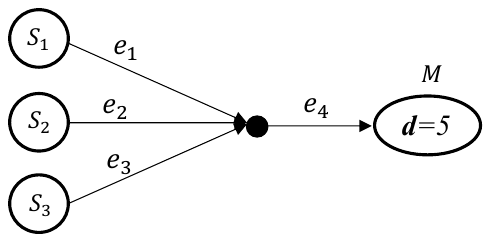}
        \caption{Transportation network in Example \ref{ex:simple_case}}\label{fig:sim_cas}
    \end{figure}

If all suppliers use true local cost parameters, the optimal solution is $x_1^*=13/6$, $x_2^*=5/3$, $x_3^*=7/6$, and their respective benefits are $|u_1^*|=9.38$, $|u_2^*|=5.56$, $|u_3^*|=2.72$. However, if supplier 1 deliberately conceals its private cost parameter and instead uses a fake parameter $\boldsymbol{c}_1=(0.5,0,0,0.5)^{\rm T}$ for distributed optimization, while the other suppliers continue to use true parameters, the solution changes to $x_1'=2.5$, $x_2'=1.5$, $x_3'=1$. As a result, suppler 1's benefit increases to $|u_1'|=12.5$, while suppliers 2 and 3's benefits decrease to $|u_2'|=4.5$, $|u_3'|=2$ (see Supplementary Material \ref{sup_mat:Notes} for full derivations).
\end{example}

The above example demonstrates that in the shadow pricing mechanism, some participants can benefit by using fake local cost functions in the distributed optimization phase, which violates the incentive compatibility criterion. In Supplementary Material \ref{app:typ_exa}, we illustrate an example to show that although participants have incentives to use fake local cost function, participants struggle to gain more profits when all engage in misreporting. In Section~\ref{sec:numerical}, we conduct simulations of misreporting in more complex network topologies, illustrating that a single participant cannot guarantee that using fake local cost function will yield higher benefits. This deters participants from employing fake local cost function, thereby partially addressing the limitation of the shadow pricing mechanism.

\subsection{Vickrey-Clarke-Groves (VCG) mechanism}

Compared to the design in the shadow pricing mechanism, which breaks down the total payment into unit prices, the VCG mechanism directly provides the total payment for each participant \citep{pu2020online}. This mechanism posits that a participant's value is manifested in their substitution effect on the remaining participants. In the context of our problem, the total payment $\Pi_i$ received by participant $i$ should be the difference in total social cost of remaining nodes before and after participant $i$ participates in the task. 

Specifically, let function $f_{-i}(\boldsymbol{x}_{-i})= \sum\nolimits_{s\neq i} \left.f_s(\boldsymbol{x})\right|_{\boldsymbol{x}_i=\boldsymbol{0}}$ denote the total social cost required for the remaining participants to complete the task after excluding participant $i$. 
The optimization problem when participant $i$ is not involved in the task is given by 
\begin{equation}\label{eq:sub_optimal_i}
\tag{$\mathcal{SP}_i$}
    \begin{aligned}
         \min_{\boldsymbol{x}_{-i}} \quad & f_{-i}(\boldsymbol{x}_{-i})= \sum\nolimits_{s\neq i} \left.f_s(\boldsymbol{x})\right|_{\boldsymbol{x}_i=\boldsymbol{0}} \\
        {\rm s.t.} \quad \  &\sum\nolimits_{s \neq i} A_s\boldsymbol{x}_s =\boldsymbol{d}, \\
        & \ g_s(\boldsymbol{x}_s) \leq \boldsymbol{0},  \quad \forall s \neq i.
    \end{aligned}
\end{equation}

The following assumption implies that if any one participant is removed, the task can still be accomplished by the remaining ones.
\begin{assumption}\label{ass:mon}
     The summation $\sum_i f_i(\boldsymbol{x})$ is strictly convex with respect to $\boldsymbol{x}$. Furthermore, for $i=1,\dots,N$,
    \begin{enumerate}[label=(\roman*),leftmargin=24pt]
        \item The function $f_i$ satisfies $f_i(\boldsymbol{x})=0$ when $\boldsymbol{x}_i = 0$.
        \item Each constraint set $\Omega_i$ is compact, with the property that $\boldsymbol{0} \in \Omega_i$.
        \item The optimization problem \ref{eq:sub_optimal_i} has a feasible solution.
    \end{enumerate}
\end{assumption}

To formally define the incentive payments under the VCG mechanism, we introduce the notation $\hat{f}_i$ to denote the cost function reported by the participant $i$ in solving~\ref{eq:Distributed} and each~$\mathcal{SP}_i$, which may be different from the true cost function $f_i$. We also let $\hat{\mathbf{f}}_{-i}$ denote the functions used by the remaining participants other than participant $i$, and denote $\hat{f}_{-i}(\boldsymbol{x}_{-i})= \sum\nolimits_{s\neq i} \left.\hat{f}_s(\boldsymbol{x})\right|_{\boldsymbol{x}_i=\boldsymbol{0}}$. We use $\boldsymbol{x}^*(\hat{ f}_1,\dots,\hat{f}_{N})$ to denote the solution to~\ref{eq:Distributed} and use $\boldsymbol{x}_{-i}^{\circ}(\hat{\mathbf{f}}_{-i})$ to denote the optimal solution to~\ref{eq:sub_optimal_i}, when the participants employ the functions $\hat{f}_1,\dots,\hat{f}_N$. We abbreviate $\boldsymbol{x}^*(\hat{ f}_1,\dots,\hat{f}_{N})$ for participant $i$ as $\boldsymbol{x}^*(\hat{f}_i,\hat{\mathbf{f}}_{-i})$.
Then the total payment $\Pi_i$ received by participant $i$ under the VCG mechanism is defined by  
\begin{equation}\label{eq:VCG_price}
\begin{aligned}
    \Pi_i(\hat{f}_{i},\hat{\mathbf{f}}_{-i}) ={} & \hat{f}_{-i}(\boldsymbol{x}_{-i}^{\circ}(\hat{\mathbf{f}}_{-i})) - \left(\sum\nolimits_{s=1}^N\hat{f}_s (\boldsymbol{x}^*(\hat{f}_{i},\hat{\mathbf{f}}_{-i}))-\hat{f}_i(\boldsymbol{x}^*(\hat{f}_i,\hat{\mathbf{f}}_{-i})) \right) \\
={} & \hat{f}_{-i}(\boldsymbol{x}_{-i}^{\circ}(\hat{\mathbf{f}}_{-i})) - \sum\nolimits_{s\neq i}\hat{f}_s (\boldsymbol{x}^*(\hat{f}_{i},\hat{\mathbf{f}}_{-i})).
\end{aligned}
\end{equation}
Determining the payments using the VCG mechanism requires solving the $N+1$ optimization problems \ref{eq:Distributed}, $\mathcal{SP}_1$, $\dots$, $\mathcal{SP}_N$. Each individual has to participate in $N$ of these problems, and it is worth emphasizing that each participant reports the same local function (whether true or fake) across the $N$ problems.

\begin{remark}
\label{remark:VCG_implementation}
    In contrast to the ex-ante price signals of the shadow pricing mechanism, the VCG mechanism emphasizes ex-post payments. Participants must first declare the functions $\hat{f}_i$, which become irrevocable upon announcement. Incentive payments are then calculated based on the declared functions, and each participant receives the final payment only if it actually delivers the required optimal decision $\boldsymbol{x}^\ast_i(\hat{f}_i,\hat{\mathbf{f}}_{-i})$.
\end{remark}

Now, we illustrate the mechanism design criteria satisfied by the VCG mechanism. Since the true local function is $f_i$, it follows that $\boldsymbol{x}^*({f}_{1},\dots, {f}_N)= \boldsymbol{x}^*$. Assuming that all participants report true local functions, the payment under the VCG mechanism for each participant is then
$
    \Pi_i^* = f_{-i}(\boldsymbol{x}_{-i}^{\circ}(\mathbf{f}_{-i})) - \left(f(\boldsymbol{x}^*)-f_i(\boldsymbol{x}^*) \right)
$.
The following proposition shows that the VCG mechanism fulfills the individual rationality and social efficiency criteria when all participants engage truthfully.
\begin{proposition}\label{pro:VCG_IR}
    Under Assumptions \ref{ass:mon}, with the ex-post incentive payment~$\Pi_i^*$, we have 
    \begin{enumerate}[label=(\roman*),leftmargin=24pt]
        \item Each participant's net cost satisfies $u_i\leq 0$.
        \item Each participant's decision under the VCG mechanism coincides with the optimal solution $\boldsymbol{x}^*$.
    \end{enumerate}
\end{proposition}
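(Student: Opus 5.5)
The plan is to compute the net cost under truthful reporting directly and then compare optimal values of nested problems. With truthful reports, $u_i = f_i(\boldsymbol{x}^*) - \Pi_i^*$. Substituting $\Pi_i^* = f_{-i}(\boldsymbol{x}_{-i}^{\circ}(\mathbf{f}_{-i})) - \bigl(f(\boldsymbol{x}^*) - f_i(\boldsymbol{x}^*)\bigr)$ gives
\[
u_i = f(\boldsymbol{x}^*) - f_{-i}(\boldsymbol{x}_{-i}^{\circ}(\mathbf{f}_{-i})).
\]
Part (i) then reduces to showing that the optimal value of \ref{eq:Distributed} is no larger than the optimal value of \ref{eq:sub_optimal_i}.

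For this comparison I would embed the problem of participant $i$'s exclusion into \ref{eq:Distributed}. Take the optimal solution $\boldsymbol{x}_{-i}^{\circ}$ of \ref{eq:sub_optimal_i}; it exists by Assumption~\ref{ass:mon}(iii), compactness of the $\Omega_s$, and continuity of convex functions. Set $\hat{\boldsymbol{x}} = (\boldsymbol{x}_i = \boldsymbol{0}, \boldsymbol{x}_{-i}^{\circ})$, and check that $\hat{\boldsymbol{x}}$ is feasible for \ref{eq:Distributed}:
\begin{itemize}
\item $\boldsymbol{0} \in \Omega_i$ by Assumption~\ref{ass:mon}(ii);
\item $\boldsymbol{x}_s^{\circ} \in \Omega_s$ for $s \neq i$;
\item $\sum_s A_s \hat{\boldsymbol{x}}_s = A_i\boldsymbol{0} + \sum_{s\neq i} A_s\boldsymbol{x}_s^{\circ} = \boldsymbol{d}$.
\end{itemize}
Next evaluate the objective at this point. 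By Assumption~\ref{ass:mon}(i), $f_i(\hat{\boldsymbol{x}}) = 0$. For $s \neq i$, $f_s(\hat{\boldsymbol{x}}) = f_s(\boldsymbol{x})|_{\boldsymbol{x}_i=\boldsymbol{0}}$ evaluated at $\boldsymbol{x}_{-i}^{\circ}$. Hence $f(\hat{\boldsymbol{x}}) = f_{-i}(\boldsymbol{x}_{-i}^{\circ})$, and optimality of $\boldsymbol{x}^*$ gives $f(\boldsymbol{x}^*) \le f(\hat{\boldsymbol{x}}) = f_{-i}(\boldsymbol{x}_{-i}^{\circ})$. Therefore $u_i \le 0$.

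For part (ii), I would argue that when participant $i$ chooses $\boldsymbol{x}_i$ (others fixed, payment rule fixed), its net cost is
\[
f_i(\boldsymbol{x}) - \Pi_i = f_i(\boldsymbol{x}) + \sum_{s\neq i} f_s(\boldsymbol{x}) - C_i = f(\boldsymbol{x}) - C_i,
\]
where $C_i = f_{-i}(\boldsymbol{x}_{-i}^{\circ})$ does not depend on participant $i$'s decision or report. Since $C_i$ is a constant for participant $i$, minimizing its net cost over feasible decisions is equivalent to minimizing the social cost $f$ over $\mathcal{X}$. By strict convexity of $\sum_i f_i$ (Assumption~\ref{ass:mon}) the minimizer is unique, so it equals $\boldsymbol{x}^*$; no participant has an incentive to deviate.

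Remark~\ref{remark:VCG_implementation} ties the payment to delivering $\boldsymbol{x}_i^*$, and deviating from it can only increase $f$. The main subtlety I expect is making precise in (ii) which dependence of $\Pi_i$ on $\boldsymbol{x}$ is intended: the payment is ex post, so one must interpret $\sum_{s\neq i} f_s$ as evaluated at the realized decision. The uniqueness from strict convexity then closes the argument.
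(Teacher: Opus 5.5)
Your part (i) is the paper's argument. The paper notes that \ref{eq:sub_optimal_i} is \ref{eq:Distributed} with the added constraint $\boldsymbol{x}_i=\boldsymbol{0}$, which is your embedding $\hat{\boldsymbol{x}}=(\boldsymbol{0},\boldsymbol{x}_{-i}^{\circ})$. You just make explicit the facts it relies on ($\boldsymbol{0}\in\Omega_i$, $f_i=0$ at $\boldsymbol{x}_i=\boldsymbol{0}$, and existence of $\boldsymbol{x}_{-i}^{\circ}$), which is fine.

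Part (ii) does not work for the mechanism in the statement. The payment $\Pi_i^*=f_{-i}(\boldsymbol{x}_{-i}^{\circ}(\mathbf{f}_{-i}))-\sum_{s\neq i}f_s(\boldsymbol{x}^*)$ is a number fixed by the reports and the computed optimum $\boldsymbol{x}^*$. It does not depend on the decision participant $i$ actually implements. Hence your identity $f_i(\boldsymbol{x})-\Pi_i=f(\boldsymbol{x})-C_i$ holds only at $\boldsymbol{x}=\boldsymbol{x}^*$. At any other decision, if $\Pi_i^*$ were paid unconditionally, the net cost would be $f_i(\boldsymbol{x}_i,\boldsymbol{x}^*_{-i})-\Pi_i^*$. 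Its minimizer over participant $i$'s feasible slice is in general not $\boldsymbol{x}_i^*$, which is exactly the self-interest/efficiency conflict the paper sets out to address. Re-reading $\sum_{s\neq i}f_s$ as evaluated at the realized decision changes the mechanism (to a Groves-type payment on realized externalities); it does not prove the claim about $\Pi_i^*$.

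The ingredient the paper actually uses is the enforcement clause of Remark~\ref{remark:VCG_implementation}. The mechanism prescribes $\boldsymbol{x}_i^*(\hat f_i,\hat{\mathbf{f}}_{-i})$ and pays $\Pi_i^*$ only if that decision is delivered, so with truthful reports the implemented decision is $\boldsymbol{x}^*$ by construction. Your proposal mentions this only in passing and mixes it with the ``deviating can only increase $f$'' reasoning, which belongs to the other interpretation.

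Even under your reinterpretation, participant $i$ controls only $\boldsymbol{x}_i$. So it minimizes $f(\cdot,\boldsymbol{x}^*_{-i})$ over $\{\boldsymbol{x}_i:(\boldsymbol{x}_i,\boldsymbol{x}^*_{-i})\in\mathcal{X}\}$, not $f$ over $\mathcal{X}$. Strict convexity then makes $\boldsymbol{x}_i^*$ the unique best response to $\boldsymbol{x}^*_{-i}$, so $\boldsymbol{x}^*$ is an equilibrium. It does not single out $\boldsymbol{x}^*$ as the only outcome: with the equality coupling, each slice can be a single point (as in Example~\ref{ex:simple_case}), making every feasible point an equilibrium.
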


\proof For the first statement, since \ref{eq:sub_optimal_i} can be obtained by adding a constraint to~\ref{eq:Distributed}, it is straightforward to see that $f(\boldsymbol{x}^*)$, the optimal value of~\ref{eq:Distributed}, is always less than or equal to $f_{-i}(\boldsymbol{x}_{-i}^{\circ}(\mathbf{f}_{-i}))$, the optimal value of~\ref{eq:sub_optimal_i}. Thus
$
u_i  = f_i(\boldsymbol{x}^*)- \Pi_i^* 
         = f_i(\boldsymbol{x}^*)- f_{-i}(\boldsymbol{x}_{-i}^{\circ}(\mathbf{f}_{-i}))+(f(\boldsymbol{x}^*)-f_i(\boldsymbol{x}^*)) 
         = f(\boldsymbol{x}^*) - f_{-i}(\boldsymbol{x}_{-i}^{\circ}(\mathbf{f}_{-i}))
         \leq 0
$.
The second statement holds by the design of the VCG mechanism when each participant uses true cost information; see Remark~\ref{remark:VCG_implementation}.
\endproof

Next, we illustrate that employing fake local cost function cannot result in a reduction of local net costs for each participant under the VCG mechanism.

\begin{proposition}\label{pro:VCG_IC}
    Regardless of whether other participants report their local functions truthfully, it remains optimal for participant $i$ to use the true function $f_i$ in the distributed optimization phase.
\end{proposition}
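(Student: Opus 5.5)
The plan is the standard Groves argument: participant $i$'s true net cost under the VCG payment~\eqref{eq:VCG_price} equals a constant it cannot influence plus the \emph{true-for-$i$} social cost of the reported-optimal solution. Fix arbitrary reports $\hat{\mathbf{f}}_{-i}$ of the other participants, truthful or not. Participant $i$ chooses its report $\hat f_i$, and the realized decision is $\hat{\boldsymbol{x}}:=\boldsymbol{x}^*(\hat f_i,\hat{\mathbf{f}}_{-i})$, which $i$ must deliver by Remark~\ref{remark:VCG_implementation}. Its true net cost is then
\[
u_i(\hat f_i)=f_i(\hat{\boldsymbol{x}})-\Pi_i(\hat f_i,\hat{\mathbf{f}}_{-i})
= f_i(\hat{\boldsymbol{x}})+\sum\nolimits_{s\neq i}\hat f_s(\hat{\boldsymbol{x}}) - \hat f_{-i}\bigl(\boldsymbol{x}_{-i}^{\circ}(\hat{\mathbf{f}}_{-i})\bigr).
\]
The first step is to observe that the last term depends only on $\hat{\mathbf{f}}_{-i}$: problem~\ref{eq:sub_optimal_i} involves neither $f_i$ nor $\hat f_i$, since it is solved with $\boldsymbol{x}_i=\boldsymbol{0}$ and uses only the others' reported functions. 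So this term is a constant $C_i$ from $i$'s perspective. Its existence and finiteness follow from Assumption~\ref{ass:mon}(ii)--(iii), together with compactness and continuity of the convex reported functions.

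The second step is to define the hybrid social cost $G_i(\boldsymbol{x}):=f_i(\boldsymbol{x})+\sum_{s\neq i}\hat f_s(\boldsymbol{x})$ over the feasible set $\mathcal{X}$ of~\ref{eq:Distributed}. The feasible set does not depend on the reported cost functions: the local constraints $g_i$ are not part of what is misreported in this proposition. We then have $u_i(\hat f_i)=G_i(\boldsymbol{x}^*(\hat f_i,\hat{\mathbf{f}}_{-i}))-C_i$. If $i$ reports truthfully, $\hat f_i=f_i$, then $\boldsymbol{x}^*(f_i,\hat{\mathbf{f}}_{-i})$ is by definition a minimizer of $G_i$ over $\mathcal{X}$. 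Hence, for any alternative report $\hat f_i$,
\[
u_i(f_i)=\min_{\boldsymbol{x}\in\mathcal{X}}G_i(\boldsymbol{x})-C_i\le G_i\bigl(\boldsymbol{x}^*(\hat f_i,\hat{\mathbf{f}}_{-i})\bigr)-C_i=u_i(\hat f_i),
\]
because $\boldsymbol{x}^*(\hat f_i,\hat{\mathbf{f}}_{-i})\in\mathcal{X}$. This shows truthful reporting is a (weakly) dominant strategy, independently of $\hat{\mathbf{f}}_{-i}$.

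The main obstacle is well-definedness rather than the inequality. We need $\boldsymbol{x}^*(\cdot)$ to be a genuine minimizer of the reported social cost, even when the others' reports break the strict convexity in Assumption~\ref{ass:mon}. I would handle this by requiring that reported functions be convex and continuous, so that minimizers exist on the compact set $\mathcal{X}$. I would also note that the argument uses only that the implemented point is \emph{some} minimizer for the reports and is feasible, so uniqueness is not needed. A secondary point is that, by Theorem~\ref{thm:convergence}, Algorithm~\ref{alg:TCADMM} run on the reported functions indeed returns this minimizer; this justifies identifying the distributed optimization phase with exact minimization.
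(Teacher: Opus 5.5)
Your proposal is correct and is essentially the paper's own proof: both write $u_i$ as $f_i+\sum_{s\neq i}\hat f_s$ evaluated at the implemented point, minus the term $\hat f_{-i}(\boldsymbol{x}_{-i}^{\circ}(\hat{\mathbf{f}}_{-i}))$ that $i$ cannot influence, and then use that $\boldsymbol{x}^*(f_i,\hat{\mathbf{f}}_{-i})$ minimizes this hybrid cost over the report-independent feasible set. Your additional remarks on well-definedness (convex, continuous reports so that minimizers exist; no uniqueness needed) make explicit what the paper leaves implicit.
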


\proof By the VCG mechanism~\eqref{eq:VCG_price}, we have
\begin{equation*}
    \begin{aligned}
        u_i(\boldsymbol{x}^*(f_{i},\hat{\mathbf{f}}_{-i})) 
         ={} & f_i(\boldsymbol{x}^*(f_{i},\hat{\mathbf{f}}_{-i}))+\sum\nolimits_{j \neq i}\hat{f}_j(\boldsymbol{x}^*(f_{i},\hat{\mathbf{f}}_{-i}))- \hat{f}_{-i}(\boldsymbol{x}_{-i}^{\circ}(\hat{\mathbf{f}}_{-i})) \\
         \leq{} & f_i(\boldsymbol{x}^*(\hat{f}_{i},\hat{\mathbf{f}}_{-i})) +\sum\nolimits_{j \neq i}\hat{f}_j(\boldsymbol{x}^*(\hat{f}_{i},\hat{\mathbf{f}}_{-i}))- \hat{f}_{-i}(\boldsymbol{x}_{-i}^{\circ}(\hat{\mathbf{f}}_{-i}))  
        =   u_i(\boldsymbol{x}^*(\hat{f}_{i},\hat{\mathbf{f}}_{-i})),
    \end{aligned}
\end{equation*}
where the inequality holds because $\boldsymbol{x}^\ast(f_i,\hat{\mathbf{f}}_{-i})$ minimizes $f_i(\boldsymbol{x}) +\sum\nolimits_{j \neq i}\hat{f}_j(\boldsymbol{x})$, and the last equality holds because participant $i$ must use its true function $f_i$ when calculating the net cost.\endproof

Therefore, the VCG mechanism satisfies the incentive compatibility criterion, implying that all participants in the task will choose to truthfully use their private local cost function.

We now compare the two proposed mechanisms. The shadow pricing mechanism only needs to solve one optimization problem, but it does not satisfy the incentive compatibility criterion. Nevertheless, in complex tasks, it is difficult for any participant to generate positive profits by using fake local cost function, which can partially make up for the lack of the incentive compatibility property. The VCG mechanism satisfies all three aforementioned criteria but has a relatively large solving time overhead. In Section~\ref{subsection:mechanism_compare_numerical}, we shall further compare the two mechanisms numerically by experiments.

\section{Numerical study}\label{sec:numerical}

We demonstrate the effectiveness of the algorithm and discuss the performance of the two mechanisms. We implement the following experiments using the Gurobi optimizer (v11.0.3) on the Python (v3.11.7) platform, with the relevant code made available at \url{https://github.com/CodeDW771/EJOR25_code}.

\subsection{Comparison of algorithms}\label{subsec:Com_Alg}

We conduct algorithm comparisons using the optimization problem \eqref{eq:example-tran} setup presented in Example~\ref{example:allo}, and specify the congestion function on each edge as $c_e(q_e(\boldsymbol{x}))=c_0q_e(\boldsymbol{x})$, indicating identical sensitivity to congestion across all edges. The problem scale is defined by parameters $(N, M, K, R)$, specifically, encompassing three configurations: small-scale $(4, 2, 3, 2)$, medium-scale $(10, 5, 10, 2)$, and large-scale $(20, 8, 10, 3)$. Figure \ref{fig:trans_graph} shows the three transportation networks, with suppliers in red and demanders in blue. We compare our proposed algorithm, \textit{Consensus-Tracking-ADMM}, against several representative recent methods, including DPMM \citep{gong2023decentralized}, IPLUX \citep{wu2023distributed}, Tracking-ADMM \citep{falsone2020tracking}, and DC-ADMM \citep{chang2016proximal}. All sub-problems were addressed using the acceleration technique presented in Supplementary Material~\ref{sup_mat:Alg_impro}. Figure \ref{fig:Opt_gap} illustrates the changes in the relative error of the function value during the solution process, defined as $|\sum\nolimits_i f_i(\boldsymbol{y}_i(k))-f(\boldsymbol{x}^*)|/f(\boldsymbol{x}^*)$. Figure \ref{fig:Con_vio} shows the evolution of the constraint violation, given by $\|\sum\nolimits_i \tilde{A}_i \boldsymbol{y}_i(k)-\boldsymbol{d}\|+\sum_{i,j}\|\boldsymbol{y}_i(k)-\boldsymbol{y}_j(k)\|$.
\begin{figure}[!ht]
    \centering  
    \begin{minipage}{.33\textwidth}  
        \centering  
        \includegraphics[height=30mm,width=35mm]{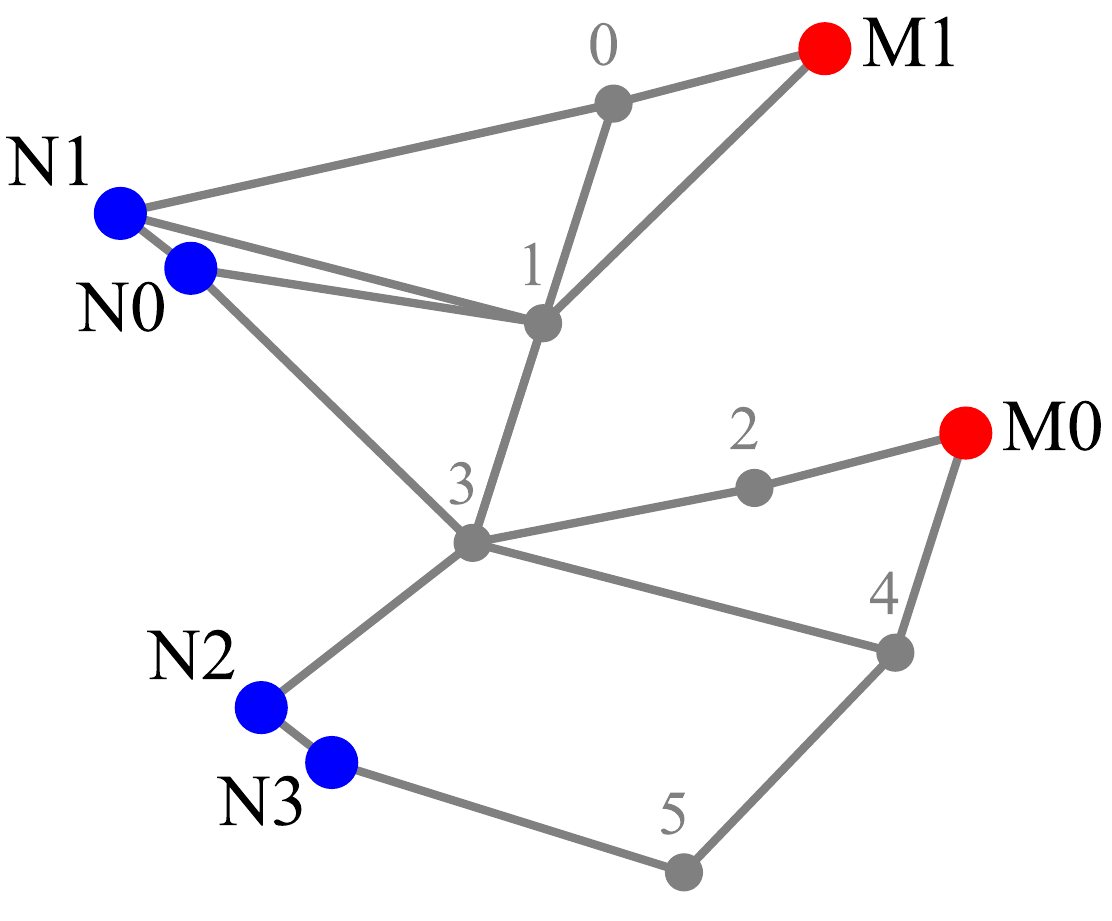}  
        \subcaption{Small-scale} \label{subfig:small_graph} 
    \end{minipage}%
    \hfill
    \begin{minipage}{.33\textwidth}  
        \centering  
        \includegraphics[height=30mm,width=35mm]{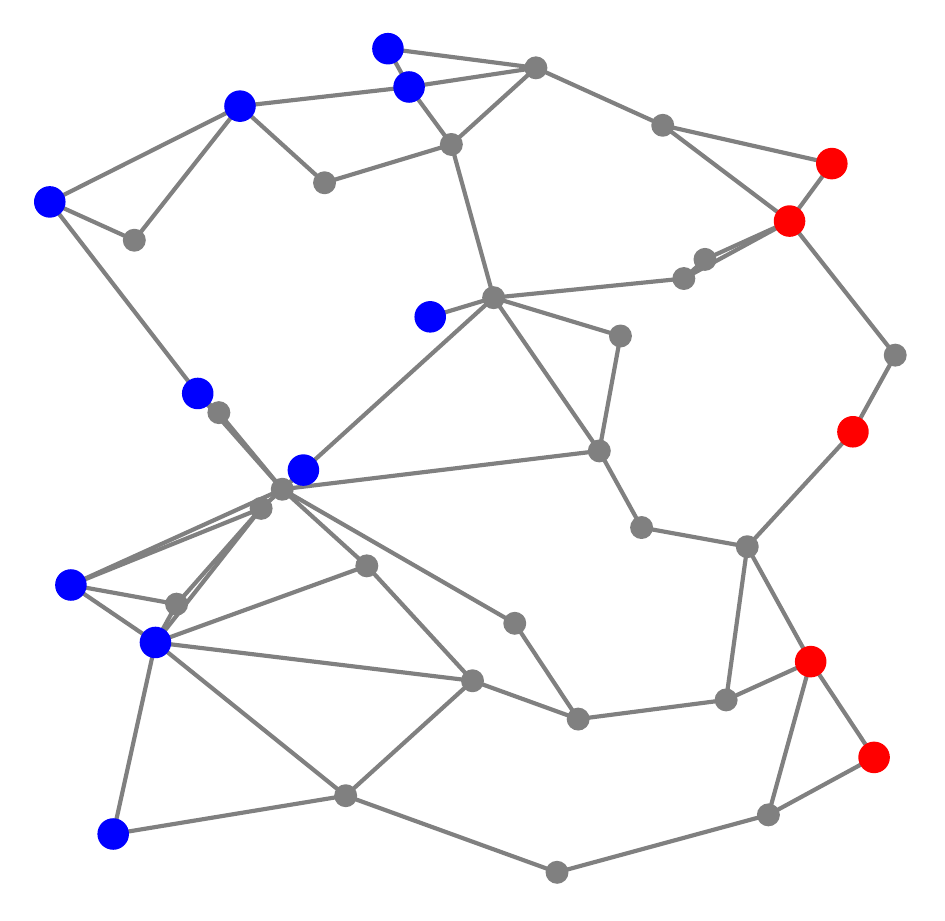}  
        \subcaption{Medium-scale}  
    \end{minipage}%
    \hfill
    \begin{minipage}{.33\textwidth}  
        \centering  
        \includegraphics[height=30mm,width=35mm]{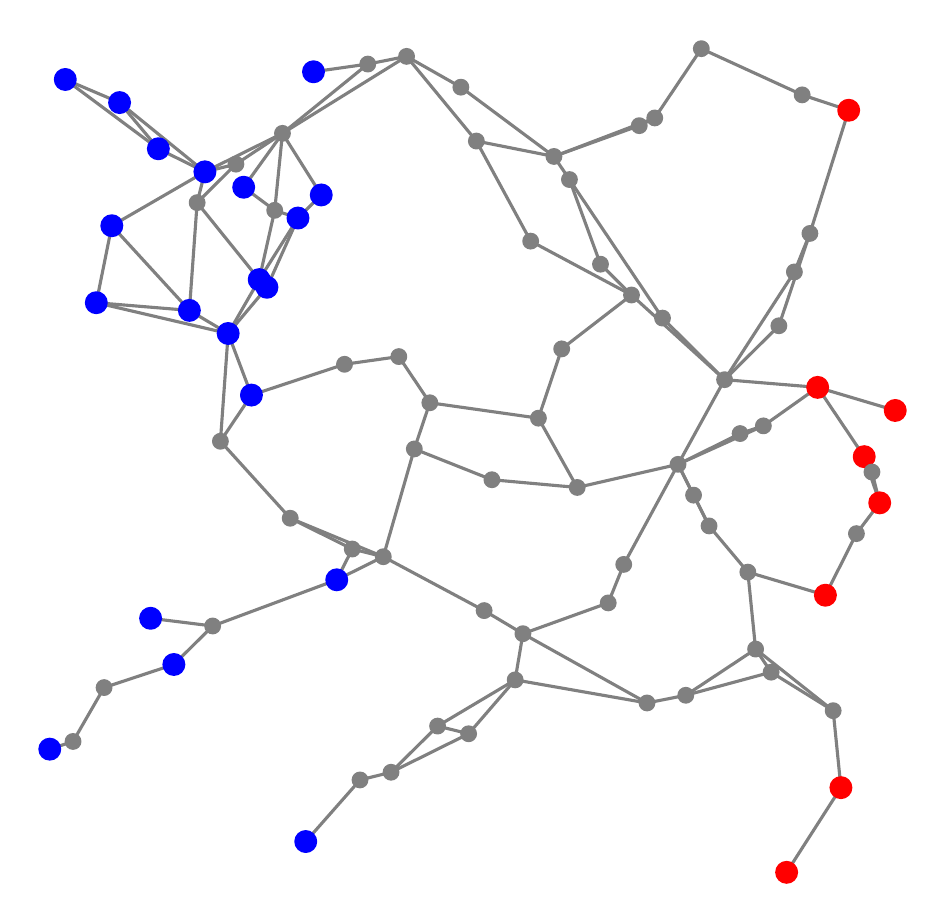}  
        \subcaption{Large-scale}  
    \end{minipage}%
    \caption{Transportation network in different scale}  
    \label{fig:trans_graph}  
\end{figure} 
\begin{figure}[!ht]  
    \centering  
    \begin{minipage}{.33\textwidth}  
        \centering  
        \includegraphics[width=\linewidth]{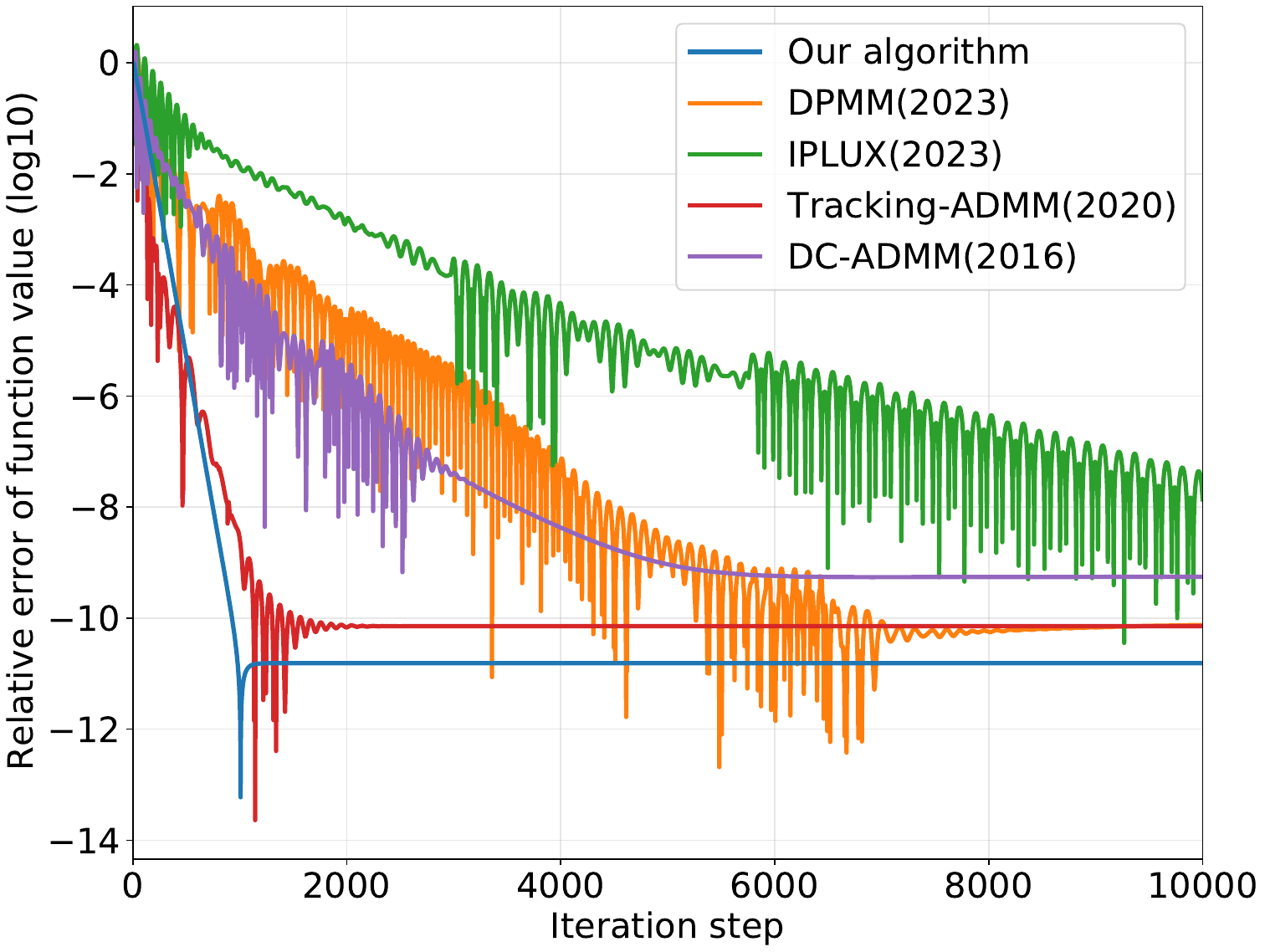}  
        \subcaption{Small-scale}  
    \end{minipage}%
    \hfill 
    \begin{minipage}{.33\textwidth}  
        \centering  
        \includegraphics[width=\linewidth]{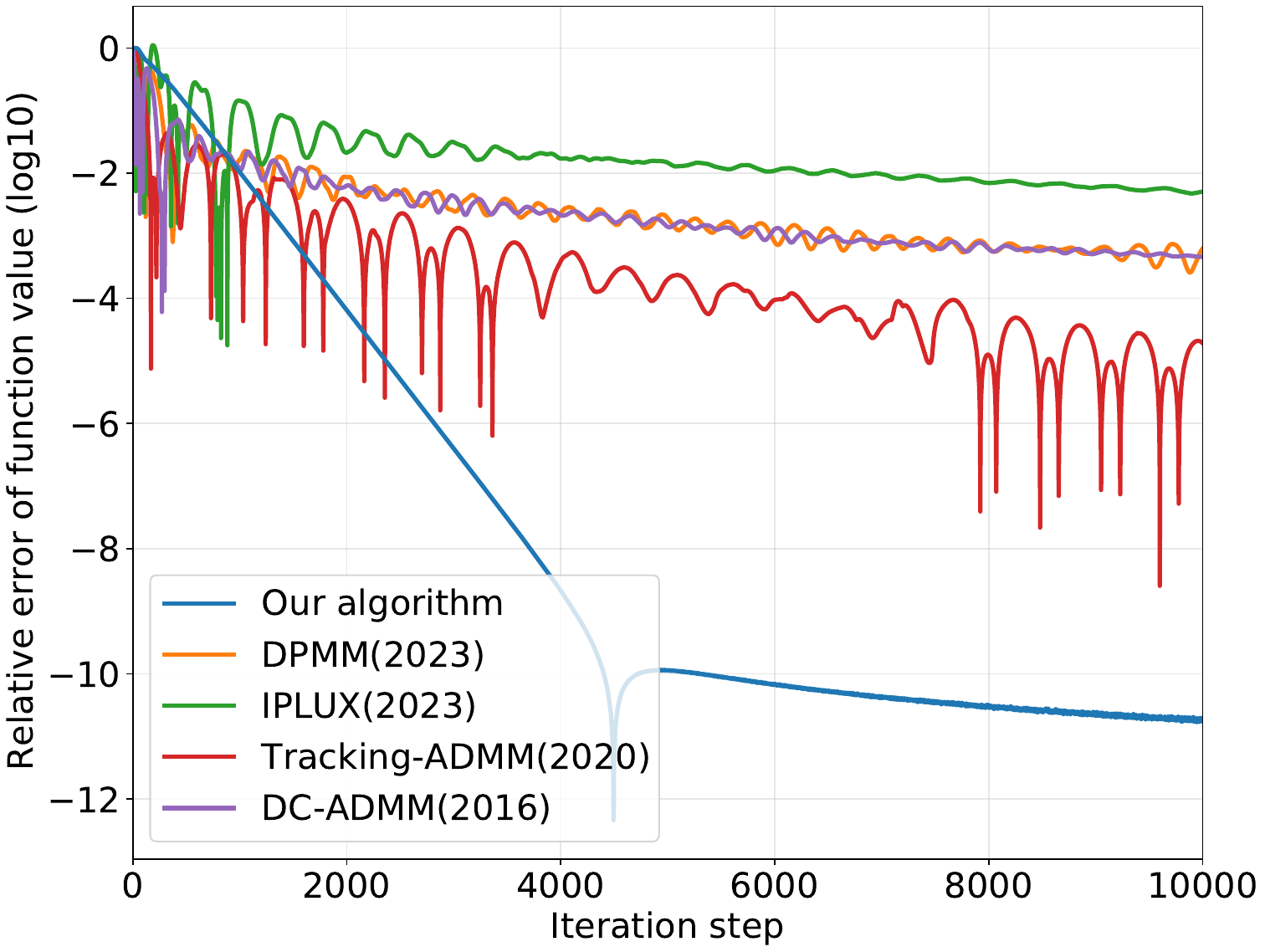}  
        \subcaption{Medium-scale}  
    \end{minipage}%
    \hfill 
    \begin{minipage}{.33\textwidth}  
        \centering  
        \includegraphics[width=\linewidth]{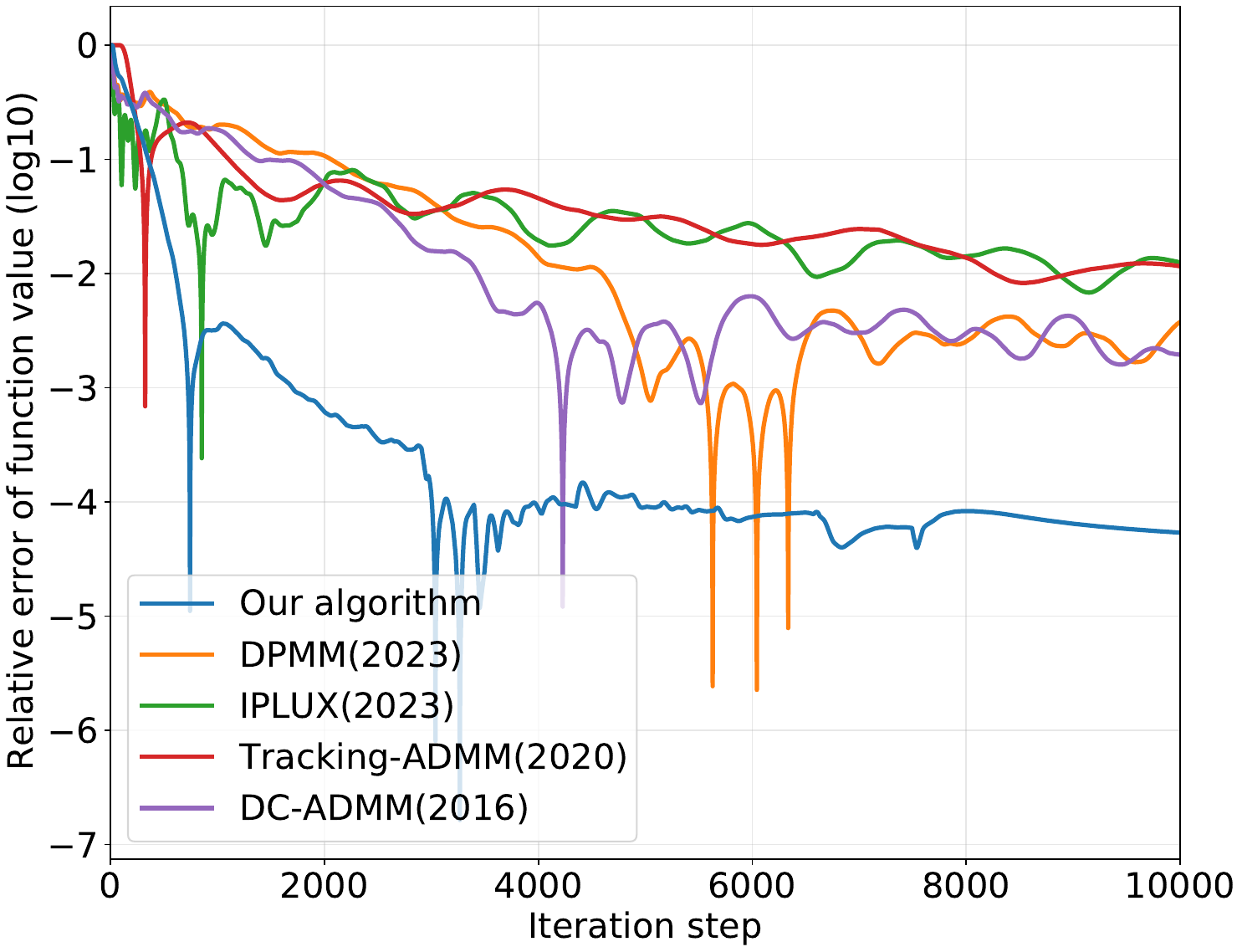}  
        \subcaption{Large-scale}  
    \end{minipage}%
    \caption{Comparison of optimality gap performance}  
    \label{fig:Opt_gap}  
\end{figure} 

\begin{figure}[!ht]  
    \centering  
    \begin{minipage}{.33\textwidth}  
        \centering  
        \includegraphics[width=\linewidth]{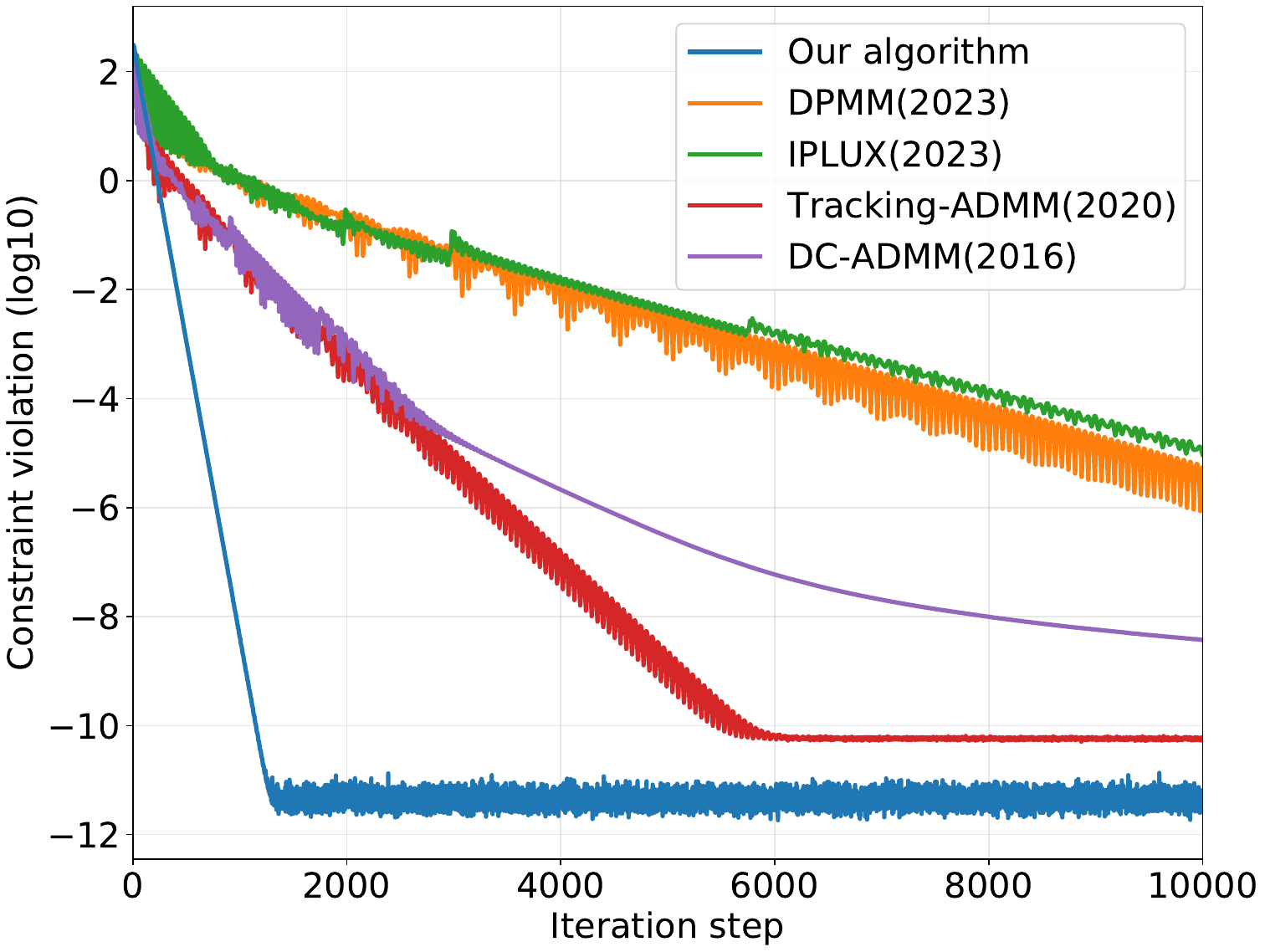}  
        \subcaption{Small-scale}  
    \end{minipage}%
    \hfill 
    \begin{minipage}{.33\textwidth}  
        \centering  
        \includegraphics[width=\linewidth]{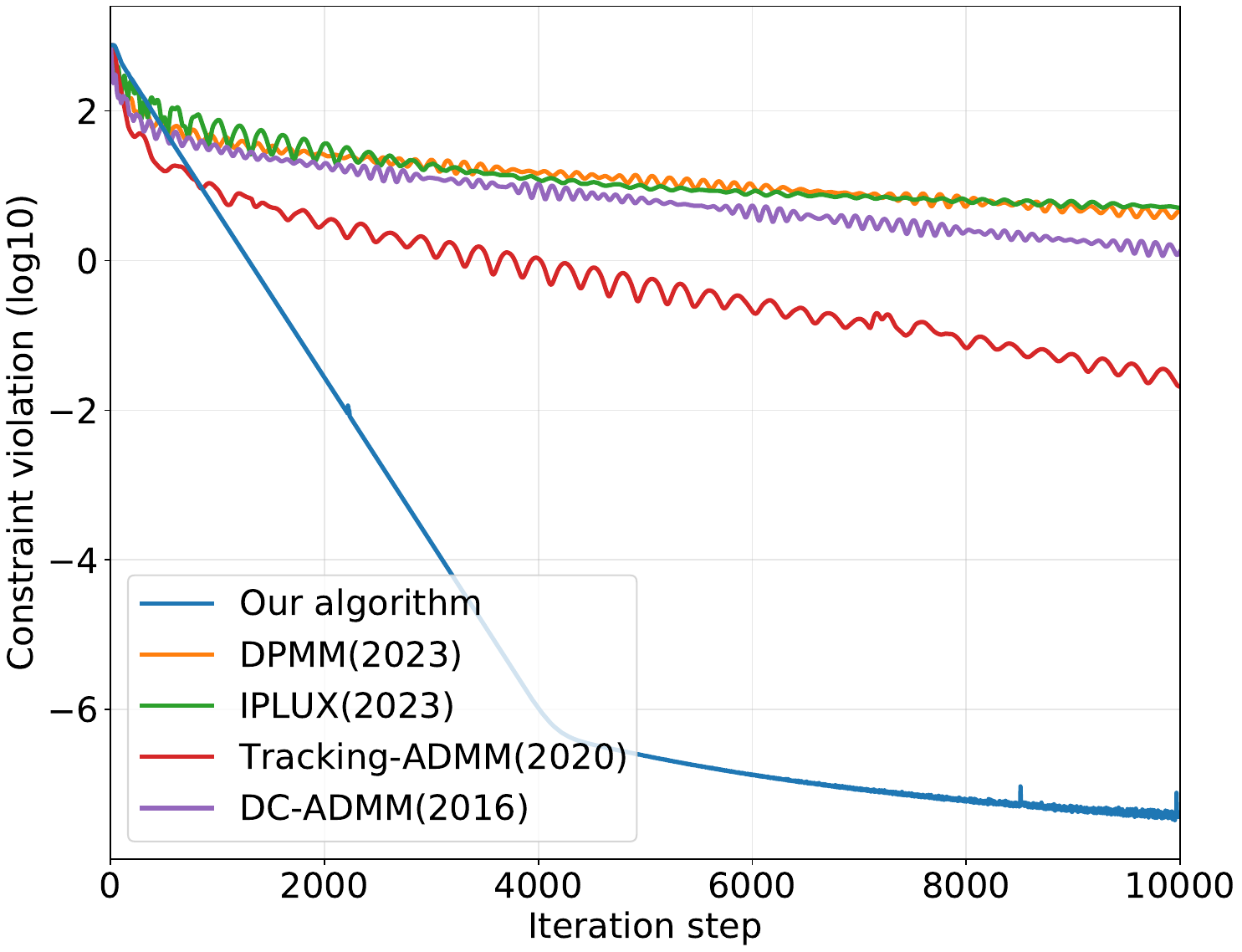}  
        \subcaption{Medium-scale}  
    \end{minipage}%
    \hfill
    \begin{minipage}{.33\textwidth}  
        \centering  
        \includegraphics[width=\linewidth]{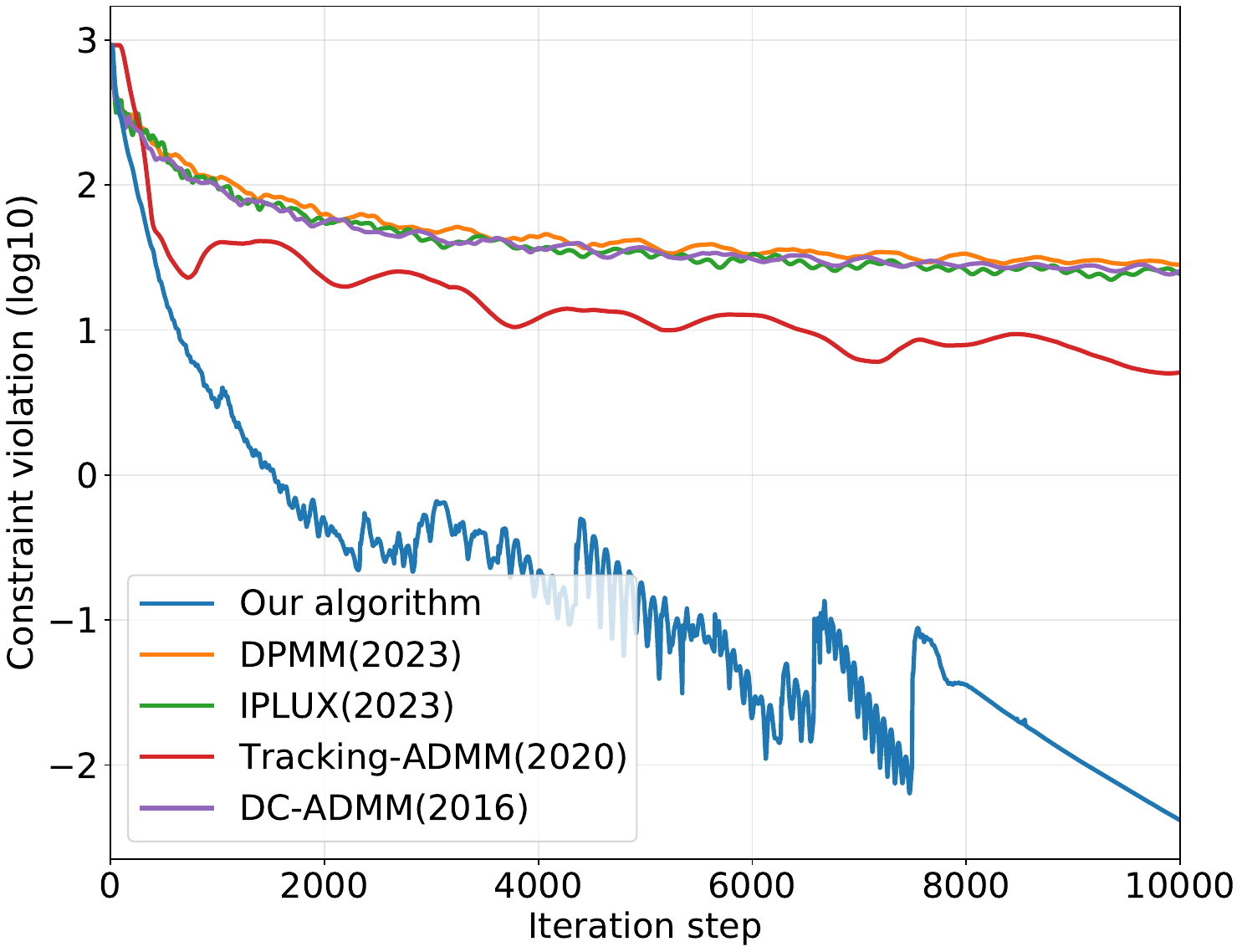}  
        \subcaption{Large-scale}  
    \end{minipage}%
    \caption{Comparison of constraint violation performance}  
    \label{fig:Con_vio}  
\end{figure} 

As can be seen from the figures, under different configurations, our proposed algorithm approaches the optimum significantly faster than other methods handling distributed coupled constraints, while achieving lower constraint violation at the same iteration steps.

\subsection{Comparison of the two mechanisms}
\label{subsection:mechanism_compare_numerical}

We present the comparative analysis of the performance of two mechanisms proposed in Section \ref{sec:Mechanisms}, employing a small-scale parameter tuple $(N, M, K, R)$ = $(4, 2, 3, 2)$. Note that the network structure will influence the mechanism's performance. To demonstrate the core conclusions, we employed the balanced network structure (no participant has a path length advantage to any demand node) shown in Figure \ref{fig:trans_graph}(\subref{subfig:small_graph}), while the comparisons using other network configurations are relegated to Supplementary Material \ref{app:supp_exp}.

First, we demonstrate the variation in total incentive payments resulting from perturbing specific parameters, thereby showing that neither mechanism is absolutely superior to the other. The perturbed parameters include the task target $\boldsymbol{d}$, the individual constraint bound $\boldsymbol{m}$ ($\boldsymbol{m}_i$ of all individual changes proportionally at the same time), and the congestion cost coefficient $c_0$. The total payment difference is defined as SP incentive payments minus VCG incentive payments. The results indicate that under balanced network structures, parameter changes exert a negligible influence on the total payments of both mechanisms, and the differences in these payments do not vary linearly with changes in the same parameter. This suggests that neither mechanism demonstrates significant dominance over the other in balanced networks. Additionally, Supplementary Material \ref{app:supp_exp} will further demonstrate the differences in total payments between the mechanisms under various other network structures.

\begin{table}[!ht]
  \centering
  \caption{Differences in incentive payments across different settings of parameters in a well-balanced network}
  \setstretch{1.1}
    \begin{tabular}{ccccccccc}
    \hline
    \makecell[c]{Task\\Demand} & SP-VCG & Pct. & \makecell[c]{Individual \\Constraints} & SP-VCG & Pct. & \makecell[c]{Congestion \\ Parameter} & SP-VCG & Pct. \\
    \hline
    $0.4\boldsymbol{d}$  & -353.96   & 4\%  & 0.8$\boldsymbol{m}$ & -210.25   & 1\%  & 0.6$c_0$ & -926.62  & 3\%\\
    $0.6\boldsymbol{d}$  & -173.13   & 1\% & 0.9$\boldsymbol{m}$ & 44.43   &   0\% & 0.8$c_0$ & -209.09 & 1\% \\
    0.8$\boldsymbol{d}$    &  -373.33 & 1\%  & $\boldsymbol{m}$     & 245.57   & 1\% & $c_0$  & 245.57 & 1\% \\
    $\boldsymbol{d}$  & 245.57  & 1\%  & 1.1$\boldsymbol{m}$  & 366.76     & 1\% & 1.2$c_0$ & 661.64 &  2\% \\
    1.2$\boldsymbol{d}$  & 574.31  & 1\%  & 1.2$\boldsymbol{m}$  & 217.47  &  1\% & 1.4$c_0$ & 780.4  & 2\% \\
    \hline
    \end{tabular}%
  \label{tab:incentive}%
  \begin{tablenotes}
  \item \footnotesize Pct. (Percentage)= $|SP- VCG|/\max(SP, VCG)$ quantifies the relative difference in total payments between the shadow pricing mechanism (SP) and the VCG mechanism (VCG).
  \end{tablenotes}
\end{table}%
Next, we conduct simulations regarding the lack of incentive compatibility in the SP mechanism, showing that it is difficult for participants to obtain additional gains through misreporting within a complex network structure. We define the private transportation cost for each participant $i$ on edge $e$ as $c_{i,e}$. Then, the transportation cost vector for participant $i$ across all network edges is denoted $\boldsymbol{c}_i={\rm col}(c_{i,e_1},\dots, c_{i,e_E})$, where $E$ is the number of edges in transportation network $\mathcal{G}_{\mathrm{tra}}$. Figure~\ref{fig:misreporting_single} presents the net payoffs $|u_i|$ of all participants as one participant varies its parameter $\boldsymbol{c}_i$ in local function $f_i$, while the others truthfully report their local function. The $x$-label represents the magnitude of variation $\Delta_i$, where at each magnitude the parameter $\boldsymbol{c}_i$ becomes $\boldsymbol{c}_i+\Delta_i \cdot \boldsymbol{1}$. When $\Delta_i$ equals 0, it indicates that all of the participants use the local function truthfully, i.e., at this point, the value obtained is $u_i^*$. The overall pattern revealed that when one participant increases its misreporting parameter, the profit of the other participants rises. This is intuitive because when a participant reports higher costs, other participants progressively gain a relative cost advantage, thus securing a larger task allocation and consequently increasing their profits. In the Figure \ref{fig:misreporting_single}(\subref{subfig:single0}), participant N0 can increase its profit by slightly raising its cost parameter, but excessive increases are counterproductive. Closer inspection reveals that this is because N0 possesses a comparative cost advantage on a path to the demand node M1. A modest cost increase preserves N0’s relative advantage over alternative nodes, while the SP mechanism provides higher incentives for this path, boosting N0’s profit. On the other hand, when the cost increase becomes excessive, N0’s cost advantage diminishes, leading to declining profit. If N0 lowers its cost parameter, it maintains its advantage, but the reduced incentives provided by the SP mechanism result in lower profit. For the remaining three participants, neither upward nor downward misreporting of their own parameters is likely to yield substantial profit improvements. We see that for this test case, when participants lack knowledge of others' parameters to assess their cost advantages, adopting misreporting strategies becomes highly challenging.

\begin{figure}[!ht]  
    \centering  
    \begin{minipage}{.24\textwidth}  
        \centering  
        \includegraphics[width=\linewidth]{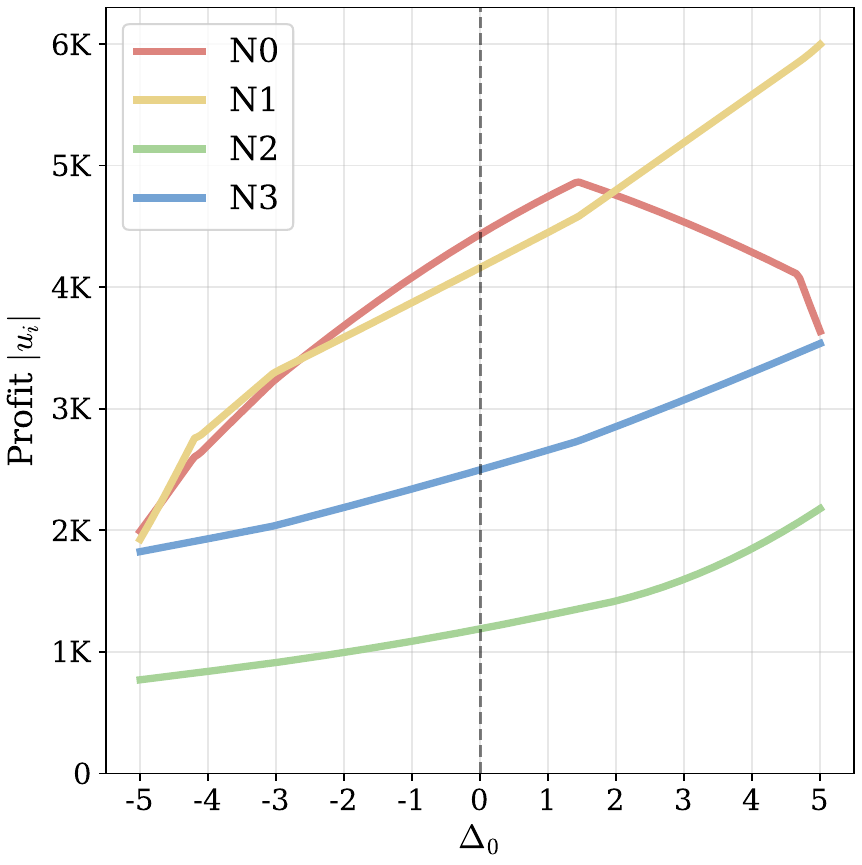}  
        \subcaption{Participant N0}\label{subfig:single0}  
    \end{minipage}%
    \hfill 
    \begin{minipage}{.24\textwidth}  
        \centering  
        \includegraphics[width=\linewidth]{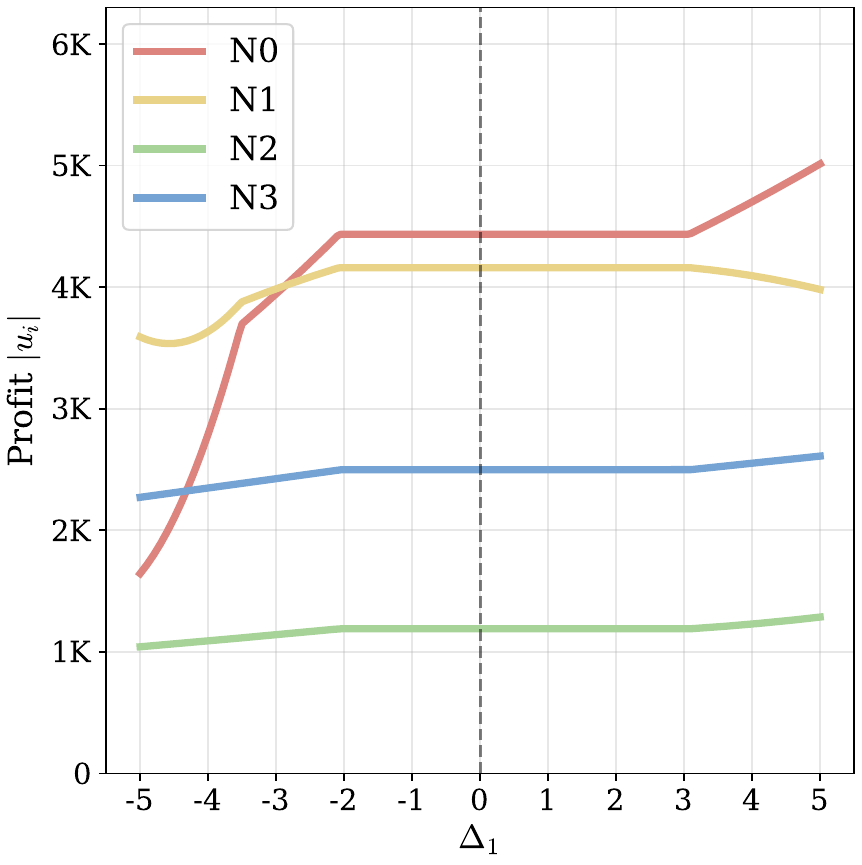}  
        \subcaption{Participant N1}\label{subfig:single1}  
    \end{minipage}%
    \hfill 
    \begin{minipage}{.24\textwidth}  
        \centering  
        \includegraphics[width=\linewidth]{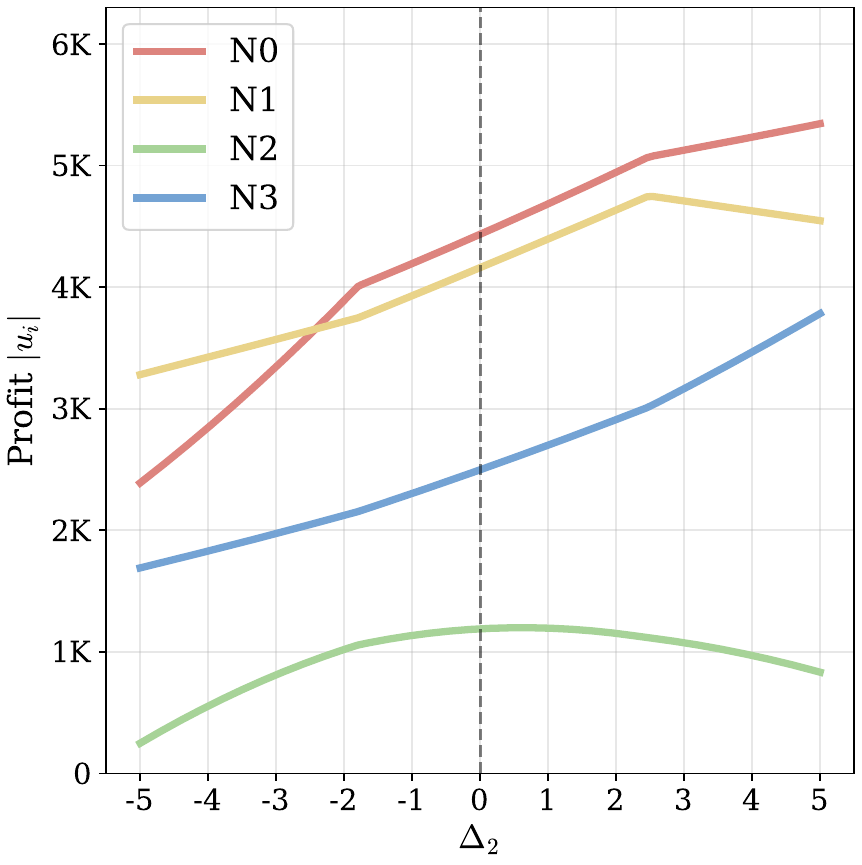}  
        \subcaption{Participant N2}\label{subfig:single2}  
    \end{minipage}%
    \hfill 
    \begin{minipage}{.24\textwidth}  
        \centering  
        \includegraphics[width=\linewidth]{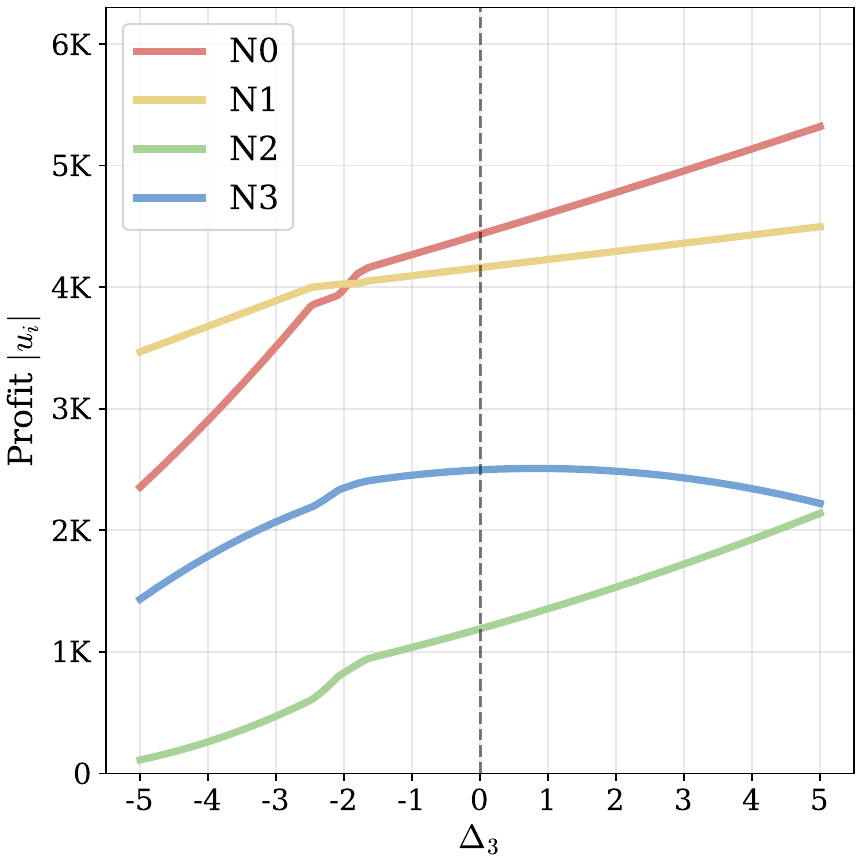}  
        \subcaption{Participant N3}\label{subfig:single3}  
    \end{minipage}%
    \caption{The impact of individual parameters misreporting on profit}  
    \label{fig:misreporting_single}
\end{figure}

Finally, Figure \ref{fig:misreporting_portfolio} further illustrates the profit variations of participants across 30 distinct experimental cases, each simulating coordinated strategic misreporting by all participants. In each case, participants attempted to maximize their profits following this procedure: (1) A random perturbation magnitude was generated for each participant (specific to each participant $i$); (2) A subvector of the original parameter vector $\boldsymbol{c}_i$ was randomly selected; (3) The perturbation was applied to this subvector to produce a manipulated parameter vector $\boldsymbol{c}_i'$. The horizontal dotted line represents the optimal profit values when all participants use their true parameters, while the dashed line illustrates the variations in participants' payoff values across different cases. As illustrated in the figure, when participants lack knowledge of others' cost parameters, random implementation of slight misreporting by all parties cannot ensure guaranteed profit improvement--it may even result in reduced profits. Consequently, in the absence of a strictly dominant misreporting strategy, adopting true information emerges as a prudent choice for all participants.
\begin{figure}[!ht]  
    \centering
    \includegraphics[height=45mm, width=160mm]{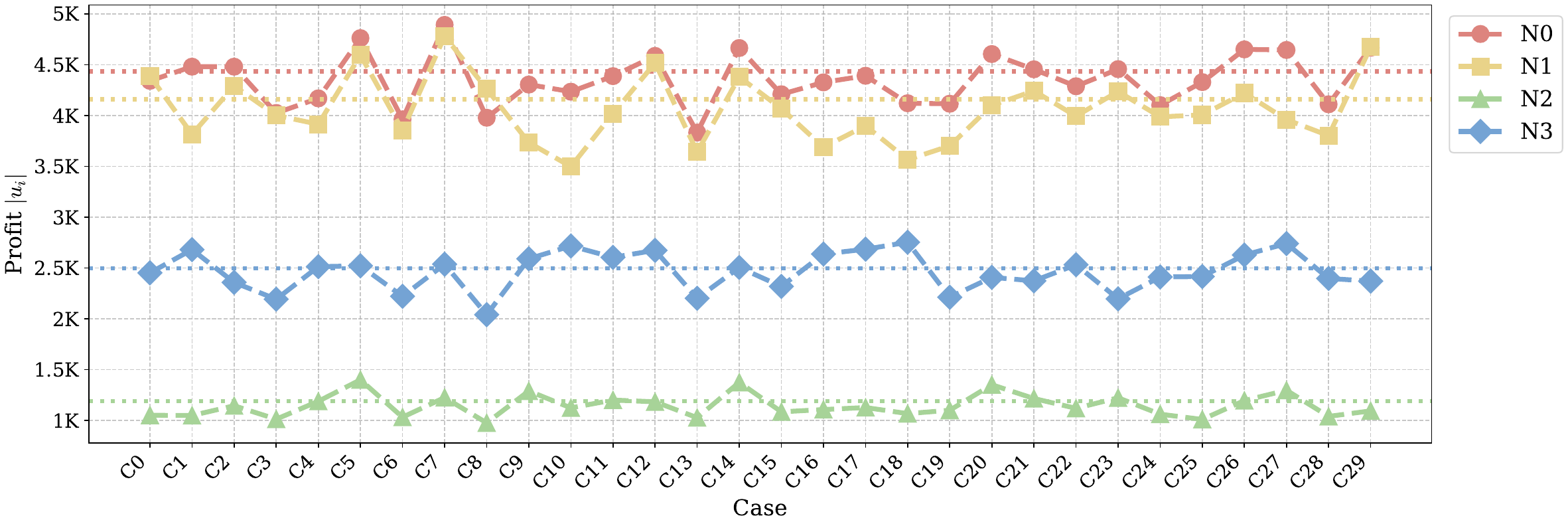}
    \caption{The impact of parameter misreporting portfolios on profit}  
    \label{fig:misreporting_portfolio}
\end{figure}

\section{Conclusion}

This paper focuses on addressing the dilemmas of information barriers and the lack of cooperation among participating parties in collective decision-making, particularly when employing distributed methods may incur conflicts between individual and collective decisions. We propose a mechanism-driven distributed coordination paradigm, consisting of an optimization phase and an incentive mechanism that aligns participant decisions with collective outcomes. Specifically, we develop a distributed optimization algorithm tailored to problems with coupled objectives and coupled constraints, which are increasingly prevalent in practice.
We establish rigorous convergence guarantees for the proposed algorithm. To address large-scale problems, we further 
propose a dimension reduction technique under affine constraint structures, leading to enhanced efficiency and reduced computational costs.
As demonstrated in Section \ref{sec:numerical}, existing algorithms exhibit inferior performance compared to our proposed method. Moreover, to incentivize rational participants to engage in tasks, we develop two incentive mechanisms: the shadow pricing mechanism using unit-price signals, and the VCG mechanism based on participation impact differentials. As revealed by Section \ref{sec:numerical}, neither mechanism dominates the other in terms of total incentive expenditure. 

Future research can proceed in the following directions. Firstly, despite empirical evidence of linear convergence for our proposed algorithm, a formal theoretical proof is lacking. Secondly, it would be interesting to extend our modeling to incorporate two or more types of participants (e.g., supply-side and demand-side participants). Finally, it is worth investigating how to improve the incentive mechanisms in terms of both theoretical properties and computational efficiency.

\bibliographystyle{apalike}
\bibliography{references}  

\newpage
\beginsupplement
\setcounter{page}{1}
\section{Proofs of Lemmas, Propositions, and the Main Theorem}
\label{sec:supplementary_proofs}

\subsection{Proof of Lemma \ref{lem:full_rank}} \label{proof:lem_fu}
In the homogeneous linear system 
\begin{equation}\label{eq:hom_lin_sys}
   M_1 \tilde{\boldsymbol{y}}+M_2 \tilde{\boldsymbol{\theta}} =0,
\end{equation}
if we set $\tilde{\boldsymbol{\theta}}=0$, then the residual equation  $M_1 \tilde{\boldsymbol{y}} =0$ admits a nontrivial solution $\tilde{\boldsymbol{y}}'$ when matrix $M_1$ is not of full column rank. However, since system \eqref{eq:hom_lin_sys} is an equivalent representation of $\boldsymbol{y}_{i} = \boldsymbol{\theta}_{e}, \ \boldsymbol{y}_{j}=\boldsymbol{\theta}_{e}(\forall \ {e}=\{i,j\} \in \mathcal{E}_{\mathrm{com}})$, the pair $\tilde{\boldsymbol{\theta}}=0, \tilde{\boldsymbol{y}} = \tilde{\boldsymbol{y}}' \neq 0$ can not be a solution to the system \eqref{eq:hom_lin_sys}. This contradiction implies that matrix $M_1$ must be of full column rank. Similarly, it can be proven that matrix $M_2$ is also of full column rank.

\subsection{Proof of Proposition \ref{pro:step_ADMM}} \label{proof:pro_st}

Recall the optimization problem \ref{eq:Distri_ADMM}, 
\begin{equation*}
    \begin{aligned}
         \min_{\tilde{\boldsymbol{y}}\in \tilde{\Omega},\tilde{\boldsymbol{\theta}}} \quad & H(\tilde{\boldsymbol{y}})+G(\tilde{\boldsymbol{\theta}}) \\
         {\rm s.t.} \quad \ & \  \hat{A}\tilde{\boldsymbol{y}} =\boldsymbol{d}, \quad \leftrightarrow \quad \lambda, \\
         & \ M_1 \tilde{\boldsymbol{y}}+ M_2 \tilde{\boldsymbol{\theta}} = 0, \quad \leftrightarrow \quad \mu.
    \end{aligned}
\end{equation*}
In the context of distributed computing methods, the constraint $\hat{A}\tilde{\boldsymbol{y}}=\boldsymbol{d}$ is challenging to directly decompose and compute independently on each participant. Therefore, we introduce a new variable $\boldsymbol{\nu}_i$ to decompose this constraint, thereby transforming the optimization problem as follows,
\begin{equation*}
    \begin{aligned}
         \min_{\tilde{\boldsymbol{y}} \in \tilde{\Omega},\tilde{\boldsymbol{\theta}}} \quad & H(\tilde{\boldsymbol{y}})+G(\tilde{\boldsymbol{\theta}}) \\
         {\rm s.t.} \quad \ & \  \hat{A}_i\boldsymbol{y}_i =\boldsymbol{\nu}_i, \quad \leftrightarrow \quad \varrho_i \quad \forall i,\\
         & \ M_1 \tilde{\boldsymbol{y}}+ M_2 \tilde{\boldsymbol{\theta}} = 0, \quad \leftrightarrow \quad \mu , \\
         & \sum\nolimits_i \boldsymbol{\nu}_i = \boldsymbol{d}.
    \end{aligned}
\end{equation*}
The augmented Lagrangian function is expressed as
\begin{equation*}
    \begin{aligned}
         \mathcal{L}_{\rho,\sigma} = & H(\tilde{\boldsymbol{y}})+\mu^{\rm T}\left(M_1 \tilde{\boldsymbol{y}}+ M_2 \tilde{\boldsymbol{\theta}}\right) + \frac{\rho}{2}\left\|M_1 \tilde{\boldsymbol{y}}+ M_2 \tilde{\boldsymbol{\theta}}\right\|^2 + \sum\nolimits_i \left(\varrho_i^{\rm T}\left(\hat{A}_i \boldsymbol{y}_i-\boldsymbol{\nu}_i\right)+ \frac{\sigma}{2} \left\| \hat{A}_i\boldsymbol{y}_i-\boldsymbol{\nu}_i\right\|^2\right) \\
         = & \sum\nolimits_i \left( f_i(\boldsymbol{y}_i) + \varrho_i^{\rm T} \left(\hat{A}_i \boldsymbol{y}_i-\boldsymbol{\nu}_i \right) + \frac{\sigma}{2} \left\| \hat{A}_i\boldsymbol{y}_i-\boldsymbol{\nu}_i\right\|^2 \right) \\ 
         & + \sum\nolimits_{e=\{i,j\} \in \mathcal{E}}\left[\mu_{e,i} ^{\rm T}( \boldsymbol{y}_{i}-\boldsymbol{\theta}_{e})+ \mu_{e,j}^{\rm T}(\boldsymbol{y}_{j}-\boldsymbol{\theta}_{e}) + \frac{\rho}{2} (\|\boldsymbol{y}_{i}-\boldsymbol{\theta}_{e}\|^2 + \|\boldsymbol{y}_{j}-\boldsymbol{\theta}_{e}\|^2)\right] 
    \end{aligned}
\end{equation*}
subject to $\sum_i \boldsymbol{\nu}_i =\boldsymbol{d}$ and $\boldsymbol{y}_i \in \tilde{\Omega}_i$ for any $i$. Thus, the iteration with respect to the variables $\boldsymbol{y}_i$, $\boldsymbol{\theta}_e$, and $\boldsymbol{\nu}_i$ of the classic ADMM is given by
\begin{subequations}
    \begin{align}
    \begin{split} \label{eq:group_y}
         & \boldsymbol{y}_i(k+1) =  \argmin\limits_{\boldsymbol{y}_i \in \tilde{\Omega}_i} \left\{f_i(\boldsymbol{y}_i) + \varrho_i(k)^{\rm T} \left(\hat{A}_i \boldsymbol{y}_i-\boldsymbol{\nu}_i(k) \right) + \frac{\sigma}{2} \left\| \hat{A}_i\boldsymbol{y}_i-\boldsymbol{\nu}_i(k)\right\|^2 \right. \\
        & \hspace{28mm} +  \left.\sum\nolimits_{e\in \mathcal{E}: e\ni i} \boldsymbol{y}_{i}^{\rm T}\left(\mu_{e,i}(k)-\rho\boldsymbol{\theta}_{e}(k)\right) + \frac{\rho}{2}{\rm deg}(i)\|\boldsymbol{y}_{i}\|^2 \right\}, \quad \forall i 
    \end{split} \\
        & \boldsymbol{\theta}_{e}(k+1) =  \argmin\limits_{ e=\{i,j\}, \boldsymbol{\theta}_{e}} \left\{ -\boldsymbol{\theta}_{e}^{\rm T}\left(\mu_{e,i}(k) + \mu_{e,j}(k) + \rho \boldsymbol{y}_{i}(k+1)+\rho \boldsymbol{y}_{j}(k+1)\right)+\rho\left\|\boldsymbol{\theta}_{e}\right\|^2\right\}, \quad \forall e \label{eq:theta_e_k} \\
        & \left\{\boldsymbol{\nu}_i(k+1)| \ i \in [N] \right\} = \argmin\limits_{\sum\nolimits_i \boldsymbol{\nu}_i =\boldsymbol{d}} \left\{ \sum\nolimits_i \left( - \varrho_i(k)^{\rm T}\boldsymbol{\nu}_i + \frac{\sigma}{2} \left\| \hat{A}_i\boldsymbol{y}_i(k+1)-\boldsymbol{\nu}_i\right\|^2 \right)  \right\}  \label{eq:group_nu}\\
        & \varrho_i(k+1)  = \varrho_i(k) + \sigma \left(\hat{A}_i \boldsymbol{y}_i(k+1) -\boldsymbol{\nu}_i(k+1)\right), \quad \forall i  \label{eq:varrho_k_1} \\
        & \mu_{e,s}(k+1)  = \mu_{e,s}(k) + \rho (\boldsymbol{y}_s(k+1) - \boldsymbol{\theta}_{e}(k+1)), \quad \forall e \in \mathcal{E}_{\rm com}, s \in e \label{eq:mu_k_1}
\end{align}
\end{subequations}

Note that the process of minimizing the equation \eqref{eq:group_nu} with respect to the variable group $\boldsymbol{\nu}_i$ involves dealing with a separable quadratic function along with a single equality constraint, which allows for an analytical solution. Specifically, the minimum value is achieved when
\begin{equation*}
    \boldsymbol{\nu}_i^{*} = \hat{A}_i \boldsymbol{y}_i + \frac{\varrho_i(k)-\lambda}{\sigma}, \quad \forall i,
\end{equation*}
where $\lambda$ is a scalar value acting as a Lagrange multiplier(Here, $\lambda$ essentially serves as the dual variable corresponding to the original constraint $\sum\nolimits_i \hat{A}_i \boldsymbol{y}_i=\boldsymbol{d}$), selected to ensure that the condition $\sum_i \boldsymbol{\nu}_i^* = \boldsymbol{d}$ holds. Thus, under the optimal values $\boldsymbol{y}_i(k+1)$, the optimal values $\lambda(k+1)$ and $\boldsymbol{\nu}_i(k+1)$ at $k$-th iteration are given by
\begin{align}
   & \lambda(k+1) = \frac{1}{N}\sum\nolimits_i \varrho_i(k) + \frac{\sigma}{N}\left(\sum\nolimits_i \hat{A}_i \boldsymbol{y}_i(k+1) - \boldsymbol{d}\right), \\
    & \boldsymbol{\nu}_i(k+1) = \hat{A}_i \boldsymbol{y}_i(k+1) + \frac{\varrho_i(k)-\lambda(k+1)}{\sigma}, \quad \forall i. \label{eq:nu_k_1}
\end{align}
By comparing equation \eqref{eq:nu_k_1} and equation \eqref{eq:varrho_k_1}, it can be obtained that
\begin{equation*}
    \varrho_i(k+1) = \lambda(k+1), \quad \forall i.
\end{equation*}
Therefore, we can compute the expression for $\boldsymbol{\nu}_i$ in equation \eqref{eq:group_y},
\begin{equation*}
    \begin{aligned}
        \boldsymbol{\nu}_i(k) & = \hat{A}_i \boldsymbol{y}_i(k) + \frac{\varrho_i(k-1)-\lambda(k)}{\sigma} \\
        & = \hat{A}_i \boldsymbol{y}_i(k) + \frac{\lambda(k-1)-\lambda(k)}{\sigma} \\
        & = \hat{A}_i \boldsymbol{y}_i(k) -\frac{1}{N}\left(\sum\nolimits_i \hat{A}_i \boldsymbol{y}_i(k) - \boldsymbol{d}\right) \\
        & = \hat{A}_i \boldsymbol{y}_i(k) - \eta(k).
    \end{aligned}
\end{equation*}
Then, the equation \eqref{eq:group_y}-\eqref{eq:mu_k_1} becomes
\begin{align*}
    \begin{split}
         & \boldsymbol{y}_i(k+1) =  \argmin\limits_{\boldsymbol{y}_i \in \tilde{\Omega}_i} \left\{f_i(\boldsymbol{y}_i) + \lambda(k)^{\rm T} \hat{A}_i \boldsymbol{y}_i + \frac{\sigma}{2} \left\| \hat{A}_i\boldsymbol{y}_i- \hat{A}_i \boldsymbol{y}_i(k) +\eta(k) \right\|^2 \right. \\
        & \hspace{28mm} +  \left.\sum\nolimits_{e\in \mathcal{E}: e\ni i} \boldsymbol{y}_{i}^{\rm T}\left(\mu_{e,i}(k)-\rho\boldsymbol{\theta}_{e}(k)\right) + \frac{\rho}{2}{\rm deg}(i)\|\boldsymbol{y}_{i}\|^2 \right\}, \quad \forall i 
    \end{split}  \\
        & \boldsymbol{\theta}_{e}(k+1) =  \argmin\limits_{ e=\{i,j\},  \boldsymbol{\theta}_{e} } \left\{ -\boldsymbol{\theta}_{e}^{\rm T}\left(\mu_{e,i}(k) + \mu_{e,j}(k) + \rho \boldsymbol{y}_{i}(k+1)+\rho \boldsymbol{y}_{j}(k+1)\right)+\rho\left\|\boldsymbol{\theta}_{e}\right\|^2\right\}, \quad \forall e \\
        & \lambda(k+1)  = \lambda(k) + \sigma \eta(k+1)   \\
        & \mu_{e,s}(k+1)  = \mu_{e,s}(k) + \rho (\boldsymbol{y}_s(k+1) - \boldsymbol{\theta}_{e}(k+1)), \quad \forall e \in \mathcal{E}_{\rm com}, s \in e
\end{align*}

In the framework of distributed optimization, the existing forms of $\lambda(k)$ and $\eta(k)$ are not accessible to each participant, as mentioned in the main text. Therefore, we utilize $\lambda_i(k)$ and $\eta_i(k)$ as substitutes for $\lambda(k)$ and $\eta(k)$ at each agent. According to the distributed average consensus mechanism \citep{falsone2020tracking,kia2019tutorial}, we propose:
\begin{equation*}
    \begin{aligned}
        & \lambda_i(k+1) = \sum_{s \in \mathcal{N}_i} w_{is} \lambda_s(k) +\sigma \eta_i(k) = l_i(k) + \sigma \eta_i(k), \\
        & \eta_i(k+1) = \sum_{s \in \mathcal{N}_i} w_{is} \eta_s(k) + \hat{A}_i\boldsymbol{y}_i(k+1) - \hat{A}_i\boldsymbol{y}_i(k) = \gamma_i(k) + \hat{A}_i\boldsymbol{y}_i(k+1) - \hat{A}_i\boldsymbol{y}_i(k).
    \end{aligned}
\end{equation*}
This represents the consensus process described in lines \ref{line1_9}-\ref{line1_10} and lines \ref{line1_12}-\ref{line1_13} of our Algorithm \ref{alg:TCADMM}. After making local copies of the public variables and introducing consensus, the equation \eqref{eq:group_y}-\eqref{eq:mu_k_1} transforms into
\begin{align}
    \begin{split}\label{eq:group_y_new}
         & \boldsymbol{y}_i(k+1) =  \argmin\limits_{\boldsymbol{y}_i \in \tilde{\Omega}_i} \left\{f_i(\boldsymbol{y}_i) + l_i(k)^{\rm T} \hat{A}_i \boldsymbol{y}_i + \frac{\sigma}{2} \left\| \hat{A}_i\boldsymbol{y}_i- \hat{A}_i \boldsymbol{y}_i(k) +\gamma_i(k) \right\|^2 \right. \\
        & \hspace{28mm} +  \left.\sum\nolimits_{e\in \mathcal{E}: e\ni i} \boldsymbol{y}_{i}^{\rm T}\left(\mu_{e,i}(k)-\rho\boldsymbol{\theta}_{e}(k)\right) + \frac{\rho}{2}{\rm deg}(i)\|\boldsymbol{y}_{i}\|^2 \right\}, \quad \forall i 
    \end{split}  \\
        & \boldsymbol{\theta}_{e}(k+1) =  \argmin\limits_{ e=\{i,j\},  \boldsymbol{\theta}_{e} } \left\{ -\boldsymbol{\theta}_{e}^{\rm T}\left(\mu_{e,i}(k) + \mu_{e,j}(k) + \rho \boldsymbol{y}_{i}(k+1)+\rho \boldsymbol{y}_{j}(k+1)\right)+\rho\left\|\boldsymbol{\theta}_{e}\right\|^2\right\}, \quad \forall e \notag \\
        & \mu_{e,s}(k+1)  = \mu_{e,s}(k) + \rho (\boldsymbol{y}_s(k+1) - \boldsymbol{\theta}_{e}(k+1)), \quad \forall e \in \mathcal{E}_{\rm com}, s \in e \notag \\ 
        & \eta_i(k+1) = \sum_{s \in \mathcal{N}_i} w_{is} \eta_s(k) + \hat{A}_i\boldsymbol{y}_i(k+1) - \hat{A}_i\boldsymbol{y}_i(k) , \quad \forall i  \notag \\
        & \lambda_i(k+1) = \sum_{s \in \mathcal{N}_i} w_{is} \lambda_s(k) +\sigma \eta_i(k), \quad \forall i  \notag
\end{align}
If we convert the above iterative process into a compact form using the notation in section \ref{subsec:con_ana}, we obtain the iterative process \eqref{eq:step_ADMM} described in Proposition \ref{pro:step_ADMM}.

However, in the iterative process, the solution for variable $\boldsymbol{\theta}_{e}$ and the update of $\mu_{e,s}$ depend on the structure of the edges $e$ in the communication network $\mathcal{E}_{\rm com}$, making them similarly infeasible to perform at individual agents. We did not employ local copies for these two parts at each agent because they can be simplified by the following method. 

Considering equation \eqref{eq:theta_e_k}, as it is an unconstrained quadratic optimization problem, it has an explicit solution,
\begin{equation}\label{eq:theta_express_k}
    \boldsymbol{\theta}_{e}(k+1) = \frac{\rho^{-1}(\mu_{e,i}(k)+\mu_{e,j}(k))+\boldsymbol{y}_{i}(k+1)+\boldsymbol{y}_{j}(k+1)}{2}, \quad e=\{i,j\}.
\end{equation}
Then we simplify the above iterations by eliminating variables $\boldsymbol{\theta}_{e}(k+1)$. First, note that by plugging the equation \eqref{eq:theta_express_k} into the equation \eqref{eq:mu_k_1}, we get
\begin{equation}\label{eq:mu_k_new}
\begin{aligned}
    \mu_{e,i}(k+1) & = \mu_{e,i}(k) +\rho \boldsymbol{y}_{i}(k+1) - \frac{1}{2}\left(\mu_{e,i}(k)+\mu_{e,j}(k) + \rho \boldsymbol{y}_{i}(k+1)+\rho\boldsymbol{y}_{j}(k+1) \right) \\
    & = \frac{1}{2}( \mu_{e,i}(k) + \mu_{e,j}(k) +\rho\boldsymbol{y}_{i}(k+1)- \rho \boldsymbol{y}_{j}(k+1)), \quad e \in \{i,j\}.
\end{aligned}
\end{equation}
Based on the symmetry of nodes on edges in the communication network, by exchanging $i$ and $j$ in the above equation and summing the two resulting equations, we can obtain for all $k \geq 0$,
\begin{equation}\label{eq:sum_eq_zero}
    \mu_{e,i}(k+1) + \mu_{e,j}(k+1) =  0.
\end{equation}
Without loss of generality we may adopt the initialization $\mu_{e,i}(0) = \mu_{e,j}(0) = 0$ for all $e = \{i, j\} \in \mathcal{E}_{\rm com}$, and then we get
\begin{equation*}
    \boldsymbol{\theta}_{e}(k+1)=\frac{\boldsymbol{y}_{i}(k+1)+\boldsymbol{y}_{j}(k+1)}{2}.
\end{equation*}
Then, we eliminate $\boldsymbol{\theta}_{e}$ in the equation \eqref{eq:group_y_new} and obtain
\begin{equation}\label{eq:group_y_new2}
    \begin{split}
         & \boldsymbol{y}_i(k+1) =  \argmin\limits_{\boldsymbol{y}_i \in \tilde{\Omega}_i} \left\{f_i(\boldsymbol{y}_i) + l_i(k)^{\rm T} \hat{A}_i \boldsymbol{y}_i + \frac{\sigma}{2} \left\| \hat{A}_i\boldsymbol{y}_i- \hat{A}_i \boldsymbol{y}_i(k) +\gamma_i(k) \right\|^2 \right. \\
        & \hspace{28mm} +  \left.\sum\nolimits_{e\in \mathcal{E}: e=\{i,j\}} \boldsymbol{y}_{i}^{\rm T}\left(\mu_{e,i}(k)-\frac{\rho}{2}(\boldsymbol{y}_{i}(k)+\boldsymbol{y}_{j}(k))\right) + \frac{\rho}{2}{\rm deg}(i)\|\boldsymbol{y}_{i}\|^2 \right\}, \quad \forall i. 
    \end{split}
\end{equation}
To further simplify the iterations, we introduce the auxiliary variable
\begin{equation*}
    v_{i}(k) = \frac{1}{{\rm deg}(i)} \sum_{e \in \mathcal{E}: e=\{i,j\}} \left[ -\rho^{-1} \mu_{e,i}(k) +\frac{1}{2}(\boldsymbol{y}_{i}(k)+\boldsymbol{y}_{j}(k))\right].
\end{equation*}
Then the equation \eqref{eq:group_y_new2} can be written as
\begin{equation*}
\begin{split}
    \boldsymbol{y}_i(k+1) & =  \argmin\limits_{\boldsymbol{y}_i \in \tilde{\Omega}_i} \left\{f_i(\boldsymbol{y}_i) + l_i(k)^{\rm T} \hat{A}_i \boldsymbol{y}_i + \frac{\sigma}{2} \left\| \hat{A}_i\boldsymbol{y}_i- \hat{A}_i \boldsymbol{y}_i(k) +\gamma_i(k) \right\|^2  \right. \\
    & \hspace{28mm} \left. -\rho {\rm deg}(i) \left(v_i(k)^{\rm T}\boldsymbol{y}_i -\frac{1}{2} \|\boldsymbol{y}_{i}\|^2 \right) \right\} \\
    & =  \argmin\limits_{\boldsymbol{y}_i \in \tilde{\Omega}_i} \left\{f_i(\boldsymbol{y}_i) + l_i(k)^{\rm T} \hat{A}_i \boldsymbol{y}_i + \frac{\sigma}{2} \left\| \hat{A}_i\boldsymbol{y}_i- \hat{A}_i \boldsymbol{y}_i(k) +\gamma_i(k) \right\|^2 + \frac{\rho {\rm deg}(i)}{2}\|\boldsymbol{y}_{i}-v_i(k)\|^2 \right\}\!.
\end{split} 
\end{equation*}
This is the sub-optimization problem $\mathcal{S}_{i,k}$ in line \ref{line1_11} of Algorithm \ref{alg:TCADMM}. Moreover, for the auxiliary variable $v_i$, we have 
\begin{equation*}
    \begin{aligned}
        & v_{i}(k+1)-v_{i}(k) \\
        = & \frac{1}{{\rm deg}(i)} \sum_{e\in \mathcal{E}: e\in \{i,j\}} \left[ - \rho^{-1} (\mu_{e,i}(k+1)-\mu_{e,j}(k))+ \frac{1}{2}(\boldsymbol{y}_{i}(k+1)+\boldsymbol{y}_{j}(k+1)- \boldsymbol{y}_{i}(k) - \boldsymbol{y}_{j}(k))\right] \\
        = & \frac{1}{{\rm deg}(i)} \sum_{e\in \mathcal{E}: e\in \{i,j\}} \left[ - \frac{1}{2\rho} (\mu_{e,i}(k)+\mu_{e,j}(k))+\boldsymbol{y}_{j}(k+1)- \frac{1}{2}( \boldsymbol{y}_{i}(k) + \boldsymbol{y}_{j}(k))\right] \\
        = & \frac{1}{{\rm deg}(i)} \sum_{j \in \mathcal{N}_{i}}\left(\boldsymbol{y}_j(k+1)-\frac{1}{2}\boldsymbol{y}_j(k)\right) -\frac{1}{2}\boldsymbol{y}_i(k),
    \end{aligned}
\end{equation*}
where the second equality uses the update formula \eqref{eq:mu_k_new}, and the third equality employs equation \eqref{eq:sum_eq_zero}. Thus, the update formula for auxiliary variable $v_i$ is
\begin{equation*}
\begin{aligned}
    v_{i}(k+1) & = v_{i}(k) + \frac{1}{{\rm deg}(i)} \sum_{j \in \mathcal{N}_{i}}\left(\boldsymbol{y}_j(k+1)-\frac{1}{2}\boldsymbol{y}_j(k)\right) -\frac{1}{2}\boldsymbol{y}_i(k) \\
    & =v_{i}(k) + \frac{1}{{\rm deg}(i)} \sum_{j \in \mathcal{N}_{i}}\delta_j(k+1) -\frac{1}{2}\boldsymbol{y}_i(k).
\end{aligned}
\end{equation*}
This is the update step in line \ref{line1_15} of Algorithm \ref{alg:TCADMM}. So far, we have shown that the steps in Proposition \ref{pro:step_ADMM} are consistent with the steps in our Algorithm \ref{alg:TCADMM}.

\subsection{Proof of Lemma \ref{lem:bounded}} \label{proof:lem_bo}

$(\romannumeral 1)$ We start by considering
\begin{equation*}
        \mathcal{W}\bar{\boldsymbol{\eta}}(k)  = (W \otimes I_{n_0})(\textbf{1}_{N} \otimes \bar{\eta}(k))  = (W \textbf{1}_{N} \otimes I_{n_0} \bar{\eta}(k))  =  \bar{\boldsymbol{\eta}}(k),
\end{equation*}
where the last equality relies on the row stochasticity stated in Assumption \ref{ass:com_network}. Then, based on the definition of the error term $\boldsymbol{\epsilon}_1(k+1)$ and the update formula for $\boldsymbol{\eta}(k+1)$ in equation \eqref{eq:step_ADMM_eta}, we have
\begin{equation*}
    \begin{aligned}
        \boldsymbol{\epsilon}_1(k+1) & = \boldsymbol{\eta}(k+1)-\bar{\boldsymbol{\eta}}(k+1) \\
        & = \mathcal{W}\boldsymbol{\eta}(k) + \hat{A}_b(\tilde{\boldsymbol{y}}(k+1)-\tilde{\boldsymbol{y}}(k)) - \bar{\boldsymbol{\eta}}(k+1) +\bar{\boldsymbol{\eta}}(k) -\bar{\boldsymbol{\eta}}(k) \\
        & = \mathcal{W}\boldsymbol{\eta}(k)  -\bar{\boldsymbol{\eta}}(k)  + (\hat{A}_b\tilde{\boldsymbol{y}}(k+1) - \bar{\boldsymbol{\eta}}(k+1)) -(\hat{A}_b\tilde{\boldsymbol{y}}(k) -\bar{\boldsymbol{\eta}}(k)) \\
        & = \mathcal{W}(\boldsymbol{\eta}(k)  -\bar{\boldsymbol{\eta}}(k)) + \boldsymbol{\xi}(k+1) -\boldsymbol{\xi}(k) \\
        & = \mathcal{W}\boldsymbol{\epsilon}_1(k)  + \boldsymbol{\xi}(k+1) -\boldsymbol{\xi}(k).
    \end{aligned}
\end{equation*}
Moreover, since $\|\mathcal{W}\|=\|W \otimes I_{n_0}\|=\|W\|\|I_{n_0}\|=1$, the above iterative relationship does not exhibit a contraction property when term $\boldsymbol{\xi}(k+1) -\boldsymbol{\xi}(k)$ can be controlled. We further consider the compression matrix $\mathcal{W}_0$, whose norm satisfies $\|\mathcal{W}_0\|=\|W-\frac{1}{N}\textbf{1}_N\textbf{1}_N^{\rm T}\|\|I_{n_0}\| < 1$, and additionally fulfills the following equation
\begin{equation*}
\begin{aligned}
    \mathcal{W}_0 \boldsymbol{\epsilon}_1(k) & = \mathcal{W} \boldsymbol{\epsilon}_1(k) - \left(\frac{1}{N}\textbf{1}_N\textbf{1}_N^{\rm T} \otimes I_{n_0}\right)\boldsymbol{\epsilon}_1(k) \\
    & = \mathcal{W} \boldsymbol{\epsilon}_1(k) - \left(\frac{1}{N}\textbf{1}_N\textbf{1}_N^{\rm T} \otimes I_{n_0}\right)(\boldsymbol{\eta}(k)-\bar{\boldsymbol{\eta}}(k)) \\
    & = \mathcal{W} \boldsymbol{\epsilon}_1(k) - \left(\frac{1}{N}\textbf{1}_N\textbf{1}_N^{\rm T} \otimes I_{n_0}\right)\boldsymbol{\eta}(k)-\textbf{1}_N \otimes \bar{\eta}(k) \\
    & = \mathcal{W} \boldsymbol{\epsilon}_1(k) - \textbf{1}_N \otimes \left(\frac{1}{N}\sum\nolimits_i\eta_i(k)\right)-\textbf{1}_N \otimes \bar{\eta}(k) \\
    & = \mathcal{W} \boldsymbol{\epsilon}_1(k).
\end{aligned}
\end{equation*}
Therefore, we have
\begin{equation}\label{eq:epsilon_1}
    \boldsymbol{\epsilon}_1(k+1)= \mathcal{W}_0\boldsymbol{\epsilon}_1(k)  + \boldsymbol{\xi}(k+1) -\boldsymbol{\xi}(k).
\end{equation}
Thus, the evolution of $\boldsymbol{\epsilon}_1$ is asymptotically stable.

$(\romannumeral 2)$ Similarly, following the same process outlined above, we obtain
\begin{equation*}
    \begin{aligned}
        &\mathcal{W}\bar{\boldsymbol{\lambda}}(k)  = (W \otimes I_{MK})(\textbf{1}_{N} \otimes \bar{\lambda}(k)) =  \bar{\boldsymbol{\lambda}}(k), \\
        &\mathcal{W}_0 \boldsymbol{\epsilon}_2(k) = \mathcal{W} \boldsymbol{\epsilon}_2(k).
    \end{aligned}
\end{equation*}
Subsequently, we consider the iteration process of the error term $\boldsymbol{\epsilon}_2(k+1)$,
\begin{equation}\label{eq:epsilon_2}
    \begin{aligned}
        \boldsymbol{\epsilon}_2(k+1)  & = \boldsymbol{\lambda}(k+1)-\bar{\boldsymbol{\lambda}}(k+1) \\
        & = \mathcal{W}\boldsymbol{\lambda}(k)+ \sigma \boldsymbol{\eta}(k+1) - \textbf{1}_N \otimes \bar{\lambda}(k+1) \\
        & = \mathcal{W}\boldsymbol{\lambda}(k)+ \sigma \boldsymbol{\eta}(k+1) - \textbf{1}_N \otimes (\bar{\lambda}(k)+\sigma \bar{\eta}(k+1)) \\
        & = \mathcal{W}\boldsymbol{\lambda}(k) - \bar{\boldsymbol{\lambda}}(k) + \sigma \boldsymbol{\eta}(k+1) - \sigma \bar{\boldsymbol{\eta}}(k+1) \\
        & = \mathcal{W}\boldsymbol{\epsilon}_2(k) + \sigma \boldsymbol{\epsilon}_1(k+1) \\
        & = \mathcal{W}_0\boldsymbol{\epsilon}_2(k) + \sigma \boldsymbol{\epsilon}_1(k+1).
    \end{aligned}
\end{equation}
The third equation relies on the equality,
\begin{equation}\label{eq:lemma2_in_fal}
  \bar{\lambda}(k+1)  = \bar{\lambda}(k)+ \sigma \bar{\eta}(k+1).
\end{equation}
For detailed proofs, readers are referred to Lemma 2 in \cite{falsone2020tracking}.

$(\romannumeral 3)$ Given that $\tilde{\boldsymbol{y}}(k)$ belongs to the set $\tilde{\Omega}$ for all $k$, the sequence $\{\tilde{\boldsymbol{y}}(k)\}_{k \geq 0}$ is confined within bounds. Furthermore, according to the following equation
\begin{equation}\label{eq:lemma1_in_fal}
   N\bar{\eta}(k)= \sum\nolimits_i \hat{A}_i\boldsymbol{y}_i(k+1) - \boldsymbol{d},
\end{equation}
where the proof relies on the double stochasticity property stated in Assumption \ref{ass:com_network}. For detailed proofs, readers are referred to Lemma 1 in \cite{falsone2020tracking}. Then, the variable $\bar{\eta}(k)$ is bounded, and the sequence $\{\bar{\boldsymbol{\eta}}(k)\}_{k \geq 0}$ is similarly bounded by the definition. Consequently, since $\boldsymbol{\xi}(k) = \hat{A}_b\tilde{\boldsymbol{y}}(k) -\tilde{\boldsymbol{\eta}}(k)$, it follows that $\{\boldsymbol{\xi}(k)\}_{k \geq 0}$ is also a bounded sequence.

In light of the boundedness of $\{\boldsymbol{\xi}(k)\}_{k \geq 0}$, it is straightforward to deduce that the sequence $\{\boldsymbol{\xi}(k+1)-\boldsymbol{\xi}(k)\}_{k \geq 0}$ remains bounded as well. Consequently, leveraging the compression matrix $\mathcal{W}_0$ in equation \eqref{eq:epsilon_1}, we can ascertain that $\{\boldsymbol{\epsilon}_1(k)\}_{k \geq 0}$ is a bounded sequence. Ultimately, using the same reason applied to equation \eqref{eq:epsilon_2}, combined with the boundedness of $\boldsymbol{\epsilon}_1(k)$, we conclude that $\{\boldsymbol{\epsilon}_2(k)\}_{k \geq 0}$ is also bounded.

\subsection{Preparation for Proposition \ref{pro:Lyapunov}}
\label{proof:pro_Lya_before}

\begin{lemma}\label{lem:pos_def_mat}
   The following matrix is a symmetric positive-definite matrix,
   \begin{equation*}
       \mathcal{Q} = 
       \begin{pmatrix}
        2 I_a  &  (I_a - \mathcal{W}_0)^{-1}-2I_a \\
        (I_a - \mathcal{W}_0)^{-1}-2I_a  & \quad (I_a - \mathcal{W}_0)^{-2}-2 (I_a - \mathcal{W}_0)^{-1}+2I_a
       \end{pmatrix},
   \end{equation*}
   where $a=Nn_0$.
\end{lemma}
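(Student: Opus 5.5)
The plan is to diagonalize $\mathcal{W}_0$ so that $\mathcal{Q}$ splits into $2\times 2$ blocks, and then check each block directly. This works because every block of $\mathcal{Q}$ is a polynomial in the single symmetric matrix $B:=(I_a-\mathcal{W}_0)^{-1}$.

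\textbf{Spectrum of $\mathcal{W}_0$.} By Assumption~\ref{ass:com_network}, $W$ is symmetric, doubly stochastic and positive semidefinite, so its eigenvalues lie in $[0,1]$. Since $\mathcal{G}_{\mathrm{com}}$ is connected and $w_{ij}>0$ on its edges, Perron--Frobenius makes the eigenvalue $1$ of $W$ simple, with eigenvector $\mathbf{1}_N$.

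Take an orthonormal eigenbasis of $W$ whose first vector is $\mathbf{1}_N/\sqrt N$. In this basis $W-\frac1N\mathbf{1}_N\mathbf{1}_N^{\rm T}$ replaces the eigenvalue $1$ by $0$ and leaves the others unchanged. Hence every eigenvalue $\omega$ of $W_0:=W-\frac1N\mathbf{1}_N\mathbf{1}_N^{\rm T}$ satisfies $\omega\in[0,1)$, and $W_0$ is symmetric.

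Taking the Kronecker product with $I_{n_0}$ gives the same conclusions for $\mathcal{W}_0$. In particular $I_a-\mathcal{W}_0$ is invertible, so $\mathcal{Q}$ is well defined, and $B$ is symmetric with eigenvalues $b=1/(1-\omega)\in[1,\infty)$.

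\textbf{Symmetry.} The diagonal blocks $2I_a$ and $B^2-2B+2I_a$ are symmetric, and both off-diagonal blocks equal the symmetric matrix $B-2I_a$. Therefore $\mathcal{Q}=\mathcal{Q}^{\rm T}$.

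\textbf{Reduction to $2\times 2$ blocks.} Let $P$ be orthogonal with $B=P\,\mathrm{diag}(b_1,\dots,b_a)P^{\rm T}$. Conjugating $\mathcal{Q}$ by $\mathrm{blkdiag}(P,P)$ and then applying a permutation that interleaves coordinates, $\mathcal{Q}$ becomes orthogonally similar to $\mathrm{blkdiag}(Q_1,\dots,Q_a)$, where
\[
Q_m=\begin{pmatrix} 2 & b_m-2\\ b_m-2 & b_m^2-2b_m+2\end{pmatrix}.
\]
It therefore suffices to show that each $Q_m$ is positive definite.

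\textbf{Positivity of each block.} The $(1,1)$ entry of $Q_m$ is $2>0$. Its determinant is
\[
2(b_m^2-2b_m+2)-(b_m-2)^2=b_m^2>0,
\]
using $b_m\ge 1$. By Sylvester's criterion each $Q_m\succ 0$, and hence $\mathcal{Q}\succ0$.

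\textbf{Main obstacle.} The only delicate point is the spectral claim that $1$ is not an eigenvalue of $\mathcal{W}_0$, which is what makes $I_a-\mathcal{W}_0$ invertible. This is where connectivity of $\mathcal{G}_{\mathrm{com}}$, through the simplicity of the Perron eigenvalue, is used.

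Positive semidefiniteness of $W$ is not essential for the argument. Any eigenvalue $\omega\in(-1,1)$ gives $b>1/2>0$, and positivity only needs $b\neq0$, because the determinant $b^2$ and the $(1,1)$ entry $2$ are positive whenever $b\neq 0$.
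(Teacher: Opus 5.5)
Your proof is correct and is essentially the paper's argument: the paper takes the Schur complement of the $2I_a$ block and obtains $\frac{1}{2}(I_a-\mathcal{W}_0)^{-2}\succ 0$, which is your determinant computation $\det Q_m/2=b_m^2/2$ done at the matrix level instead of eigenvalue by eigenvalue. Your justification that $1$ is not an eigenvalue of $\mathcal{W}_0$, via connectivity and Perron--Frobenius, is more explicit than the paper's, which simply asserts $\|\mathcal{W}_0\|<1$.
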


\proof
 First of all, given that the symmetric matrix $\mathcal{W}_0$ satisfies $\|\mathcal{W}_0\| <1$, it implies that all eigenvalues of $\mathcal{W}_0$ lie within the open unit circle. Consequently, the matrix $I_a-\mathcal{W}_0$ is invertible, which ensures that the matrix $\mathcal{Q}$ is well-defined. Additionally, $I_a-\mathcal{W}_0$ is symmetric and positive definite, i.e., $(I_a-\mathcal{W}_0)^{\rm T} = I_a-\mathcal{W}_0$ and $I_a-\mathcal{W}_0 \succ 0$.

Then, for symmetry, it is easy to see that matrices $(I_a - \mathcal{W}_0)^{-1}$ and $(I_a - \mathcal{W}_0)^{-2}$ are all symmetric, so the symmetry of $\mathcal{Q}$ is proved.

Finally, since the matrix $2I_a$ is invertible, we consider the Schur complement with respect to the matrix $2I_a$,
\begin{equation*}
    \begin{aligned}
       & (I_a - \mathcal{W}_0)^{-2}-2 (I_a - \mathcal{W}_0)^{-1}+2I_a - \frac{1}{2} \left((I_a - \mathcal{W}_0)^{-1}-2I_a\right)^{2} \\
       = & (I_a - \mathcal{W}_0)^{-2}-2 (I_a - \mathcal{W}_0)^{-1}+2I_a- \frac{1}{2}(I_a - \mathcal{W}_0)^{-2} + 2(I_a - \mathcal{W}_0)^{-1} -2I_a \\
       = & \frac{1}{2}(I_a - \mathcal{W}_0)^{-2} \succ 0.
    \end{aligned}
\end{equation*}
By the Schur complement lemma, the positive definiteness of $\mathcal{Q}$ is proved.
\endproof

\begin{lemma}\label{lem:inequality_2}
    Under the Assumption \ref{ass:const_set}-\ref{ass:com_network}, 
    we have the following equation,
    \begin{equation}\label{eq:lambda_A_y}
        \begin{aligned}
           & (\boldsymbol{\lambda}(k+1)-\boldsymbol{\lambda}^*)^{\rm T} \hat{A}_b \left(\tilde{\boldsymbol{y}}(k+1)-\tilde{\boldsymbol{y}}^*\right) \\
            = & \frac{1}{2\sigma} \left(\|\bar{\boldsymbol{\lambda}}(k+1)-\boldsymbol{\lambda}^*\|^2  + \|\bar{\boldsymbol{\lambda}}(k+1)-\bar{\boldsymbol{\lambda}}(k)\|^2 - \|\bar{\boldsymbol{\lambda}}(k)-\boldsymbol{\lambda}^*\|^2 \right)+ \boldsymbol{\epsilon}_2(k+1) ^{\rm T}\left( \boldsymbol{\xi}(k+1)-\hat{A}_b \tilde{\boldsymbol{y}}^*\right)
        \end{aligned}
    \end{equation}
\end{lemma}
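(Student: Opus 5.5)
The identity is purely algebraic, so the plan is to split $\boldsymbol{\lambda}(k+1)$ into its network average and its consensus error, $\boldsymbol{\lambda}(k+1)=\bar{\boldsymbol{\lambda}}(k+1)+\boldsymbol{\epsilon}_2(k+1)$, and handle the two resulting terms separately. Two facts about averages, both already used in the proof of Lemma~\ref{lem:bounded}, will be needed:
\begin{itemize}
\item the tracking identity $N\bar{\eta}(k+1)=\sum_i \hat{A}_i\boldsymbol{y}_i(k+1)-\boldsymbol{d}$, i.e.\ \eqref{eq:lemma1_in_fal} with the time index read as $k+1$;
\item the averaged dual update \eqref{eq:lemma2_in_fal}, $\bar{\lambda}(k+1)=\bar{\lambda}(k)+\sigma\bar{\eta}(k+1)$.
\end{itemize}

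\textbf{Tracking identity.} I will re-derive it rather than cite it. Left-multiply \eqref{eq:step_ADMM_eta} by $\mathbf{1}_N^{\rm T}\otimes I_{n_0}$. Column stochasticity of the symmetric $W$ preserves the sum. The initialization $\eta_i(0)=\tilde{A}_i\boldsymbol{y}_i(0)-\boldsymbol{d}/N$ then gives the identity by induction. The same computation applied to \eqref{eq:step_ADMM_lambda} gives \eqref{eq:lemma2_in_fal}.

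\textbf{Averaged part.} For any $a\in\mathbb{R}^{n_0}$ we have $(\mathbf{1}_N\otimes a)^{\rm T}\hat{A}_b\tilde{\boldsymbol{y}}=a^{\rm T}\sum_i\hat{A}_i\boldsymbol{y}_i$. The optimal point satisfies $\sum_i\hat{A}_i\boldsymbol{y}_i^*=\boldsymbol{d}$, so combining with the tracking identity,
\[
(\bar{\boldsymbol{\lambda}}(k+1)-\boldsymbol{\lambda}^*)^{\rm T}\hat{A}_b(\tilde{\boldsymbol{y}}(k+1)-\tilde{\boldsymbol{y}}^*)
= N(\bar{\lambda}(k+1)-\lambda^*)^{\rm T}\bar{\eta}(k+1)
= (\bar{\boldsymbol{\lambda}}(k+1)-\boldsymbol{\lambda}^*)^{\rm T}\bar{\boldsymbol{\eta}}(k+1).
\]
By \eqref{eq:lemma2_in_fal}, $\bar{\boldsymbol{\eta}}(k+1)=(\bar{\boldsymbol{\lambda}}(k+1)-\bar{\boldsymbol{\lambda}}(k))/\sigma$. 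Apply the polarization identity
\[
2(x-c)^{\rm T}(x-z)=\|x-c\|^2+\|x-z\|^2-\|z-c\|^2
\]
with $x=\bar{\boldsymbol{\lambda}}(k+1)$, $z=\bar{\boldsymbol{\lambda}}(k)$ and $c=\boldsymbol{\lambda}^*$. This produces exactly the bracketed term with prefactor $1/(2\sigma)$.

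\textbf{Consensus-error part.} Write $\hat{A}_b\tilde{\boldsymbol{y}}(k+1)=\boldsymbol{\xi}(k+1)+\bar{\boldsymbol{\eta}}(k+1)$, which is just the definition of $\boldsymbol{\xi}$. Then
\[
\boldsymbol{\epsilon}_2(k+1)^{\rm T}\hat{A}_b(\tilde{\boldsymbol{y}}(k+1)-\tilde{\boldsymbol{y}}^*)
=\boldsymbol{\epsilon}_2(k+1)^{\rm T}\bigl(\boldsymbol{\xi}(k+1)-\hat{A}_b\tilde{\boldsymbol{y}}^*\bigr)+\boldsymbol{\epsilon}_2(k+1)^{\rm T}\bar{\boldsymbol{\eta}}(k+1).
\]
The last term vanishes. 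The reason is that $\bar{\boldsymbol{\eta}}(k+1)=\mathbf{1}_N\otimes\bar{\eta}(k+1)$, while the blocks of $\boldsymbol{\epsilon}_2(k+1)$ sum to zero by the definition of $\bar{\lambda}$. Adding the averaged part and the consensus-error part gives \eqref{eq:lambda_A_y}.

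\textbf{Main obstacle.} The only delicate point is bookkeeping. One must apply the tracking identity at index $k+1$, consistently with the indexing in \eqref{eq:step_ADMM}. One must also check that the same initialization makes both averaged relations hold exactly, not merely asymptotically. Once these are verified, everything else is direct substitution.
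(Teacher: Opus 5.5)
Your proof is correct and follows the paper's argument essentially step for step. Like the paper, you split $\boldsymbol{\lambda}(k+1)=\bar{\boldsymbol{\lambda}}(k+1)+\boldsymbol{\epsilon}_2(k+1)$, apply the tracking identity, the averaged dual update and polarization to the averaged part, and drop $\boldsymbol{\epsilon}_2(k+1)^{\rm T}\bar{\boldsymbol{\eta}}(k+1)$ because the consensus errors sum to zero; the only difference is that you re-derive the two averaged identities from column stochasticity of $W$ instead of citing \citet{falsone2020tracking}, which also uses the correct index in \eqref{eq:lemma1_in_fal}, namely $N\bar{\eta}(k)=\sum_i\hat{A}_i\boldsymbol{y}_i(k)-\boldsymbol{d}$.
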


\proof
First, we add and subtract term $\bar{\boldsymbol{\lambda}}(k+1)^{\rm T}\hat{A}_b(\tilde{\boldsymbol{y}}(k+1)-\tilde{\boldsymbol{y}}^*)$ to the left-hand side in equation \eqref{eq:lambda_A_y} to get
\begin{equation*}
    (\bar{\boldsymbol{\lambda}}(k+1)-\boldsymbol{\lambda}^*)^{\rm T}\hat{A}_b(\tilde{\boldsymbol{y}}(k+1)-\tilde{\boldsymbol{y}}^*) + (\boldsymbol{\lambda}(k+1)-\bar{\boldsymbol{\lambda}}(k+1))^{\rm T}\hat{A}_b(\tilde{\boldsymbol{y}}(k+1)-\tilde{\boldsymbol{y}}^*).
\end{equation*}
For the first term, we have
\begin{equation}\label{eq:lambda_A_y_2_1}
    \begin{aligned}
        (\bar{\boldsymbol{\lambda}}(k+1)-\boldsymbol{\lambda}^*)^{\rm T}\hat{A}_b(\tilde{\boldsymbol{y}}(k+1)-\tilde{\boldsymbol{y}}^*) & =   (\bar{\lambda}(k+1)-\lambda^*)^{\rm T}\hat{A}(\tilde{\boldsymbol{y}}(k+1)-\tilde{\boldsymbol{y}}^*) \\
        & = (\bar{\lambda}(k+1)-\lambda^*)^{\rm T}(\hat{A}\tilde{\boldsymbol{y}}(k+1)-\boldsymbol{d}) \\
        & = (\bar{\lambda}(k+1)-\lambda^*)^{\rm T}N\bar{\eta}(k+1) \\
        & = \frac{N}{\sigma}(\bar{\lambda}(k+1)-\lambda^*)^{\rm T}(\bar{\lambda}(k+1)-\bar{\lambda}(k)) \\
        & = \frac{1}{\sigma}(\bar{\boldsymbol{\lambda}}(k+1)-\boldsymbol{\lambda}^*)^{\rm T}(\bar{\boldsymbol{\lambda}}(k+1)-\bar{\boldsymbol{\lambda}}(k)).
    \end{aligned}
\end{equation}
The third and fourth equalities hold by equations \eqref{eq:lemma2_in_fal} and \eqref{eq:lemma1_in_fal} respectively. For the second term, we have
\begin{equation}\label{eq:lambda_A_y_2_2}
    \begin{aligned}
        & (\boldsymbol{\lambda}(k+1)-\bar{\boldsymbol{\lambda}}(k+1))^{\rm T}\hat{A}_b(\tilde{\boldsymbol{y}}(k+1)-\tilde{\boldsymbol{y}}^*) \\
        = &    (\boldsymbol{\lambda}(k+1)-\bar{\boldsymbol{\lambda}}(k+1))^{\rm T}(\hat{A}_b\tilde{\boldsymbol{y}}(k+1)-\bar{\boldsymbol{\eta}}(k+1)+\bar{\boldsymbol{\eta}}(k+1)-\hat{A}_b\tilde{\boldsymbol{y}}^*) \\
       = &    (\boldsymbol{\lambda}(k+1)-\bar{\boldsymbol{\lambda}}(k+1))^{\rm T}(\boldsymbol{\xi}(k+1)+\bar{\boldsymbol{\eta}}(k+1)-\hat{A}_b\tilde{\boldsymbol{y}}^*) \\
       =  &    (\boldsymbol{\lambda}(k+1)-\bar{\boldsymbol{\lambda}}(k+1))^{\rm T}(\boldsymbol{\xi}(k+1)-\hat{A}_b\tilde{\boldsymbol{y}}^*) + (\boldsymbol{\lambda}(k+1)-\bar{\boldsymbol{\lambda}}(k+1))^{\rm T}\bar{\boldsymbol{\eta}}(k+1) \\
        = &    (\boldsymbol{\lambda}(k+1)-\bar{\boldsymbol{\lambda}}(k+1))^{\rm T}(\boldsymbol{\xi}(k+1)-\hat{A}_b\tilde{\boldsymbol{y}}^*) + \sum\nolimits_i(\lambda_i(k+1)-\bar{\lambda}(k+1))^{\rm T}\bar{\eta}(k+1) \\
       = &   (\boldsymbol{\lambda}(k+1)-\bar{\boldsymbol{\lambda}}(k+1))^{\rm T}(\boldsymbol{\xi}(k+1)-\hat{A}_b\tilde{\boldsymbol{y}}^*). 
    \end{aligned}
\end{equation}
The second equation holds by definition of $\boldsymbol{\xi}(k+1)$. Then, by combining the equations \eqref{eq:lambda_A_y_2_1} and \eqref{eq:lambda_A_y_2_2}, we have
\begin{equation*}
\begin{aligned}
& (\boldsymbol{\lambda}(k+1)-\boldsymbol{\lambda}^*)^{\rm T} \hat{A}_b \left(\tilde{\boldsymbol{y}}(k+1)-\tilde{\boldsymbol{y}}^*\right) \\
  =&   \frac{1}{\sigma}(\bar{\boldsymbol{\lambda}}(k+1)-\boldsymbol{\lambda}^*)^{\rm T}(\bar{\boldsymbol{\lambda}}(k+1)-\bar{\boldsymbol{\lambda}}(k)) + (\boldsymbol{\lambda}(k+1)-\bar{\boldsymbol{\lambda}}(k+1))^{\rm T}(\boldsymbol{\xi}(k+1)-\hat{A}_b\tilde{\boldsymbol{y}}^*).
\end{aligned}
\end{equation*}

Finally, utilizing the following identity, 
\begin{equation*}
    \begin{aligned}
    & 2(\bar{\boldsymbol{\lambda}}(k+1)-\boldsymbol{\lambda}^*)^{\rm T}(\bar{\boldsymbol{\lambda}}(k+1)-\bar{\boldsymbol{\lambda}}(k)) \\
    = &
            \|\bar{\boldsymbol{\lambda}}(k+1)-\boldsymbol{\lambda}^*\|^2  + \|\bar{\boldsymbol{\lambda}}(k+1)-\bar{\boldsymbol{\lambda}}(k)\|^2 - \|\bar{\boldsymbol{\lambda}}(k)-\boldsymbol{\lambda}^*\|^2, 
        \end{aligned}
\end{equation*}
it can be deduced that equation \eqref{eq:lambda_A_y} holds, thereby proving the lemma.
\endproof

\begin{lemma}\label{pro:inequality_1}
    Under the Assumptions \ref{ass:const_set}-\ref{ass:com_network}, we have
    \begin{equation}\label{eq:upper-bound}
    \begin{aligned}
       & \mathcal{L}^1\left(\tilde{\boldsymbol{y}}(k+1),\tilde{\boldsymbol{\theta}}(k+1),\lambda^*,\mu^{*}\right) - \mathcal{L}^1\left(\tilde{\boldsymbol{y}}^*,\tilde{\boldsymbol{\theta}}^*,\lambda^*,\mu^{*}\right) \\
        \leq & -\frac{1}{\rho} \left( \mu(k+1)-\mu^*\right)^{\rm T}\left(  \mu(k+1)-\mu(k)\right) \\
        &- \rho \left( M_2(\tilde{\boldsymbol{\theta}}(k+1)-\tilde{\boldsymbol{\theta}}(k))\right)^{\rm T}\left( M_2(\tilde{\boldsymbol{\theta}}(k+1)-\tilde{\boldsymbol{\theta}}^*)\right)  \\
        & +\frac{1}{2\sigma}\left(\|\bar{\boldsymbol{\lambda}}(k)-\boldsymbol{\lambda}^*\|^2 - \|\bar{\boldsymbol{\lambda}}(k+1)-\bar{\boldsymbol{\lambda}}(k)\|^2 - \|\bar{\boldsymbol{\lambda}}(k+1)-\boldsymbol{\lambda}^*\|^2\right) \\
        & -\boldsymbol{\epsilon}_2(k+1) ^{\rm T}\left( \boldsymbol{\xi}(k+1)-\hat{A}_b \tilde{\boldsymbol{y}}^*\right),
    \end{aligned}
    \end{equation}
where $\boldsymbol{\lambda}^* = \mathbf{1}_N \otimes \lambda^*$.
\end{lemma}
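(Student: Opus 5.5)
The plan is the standard ADMM argument. I combine the first-order optimality conditions of the $\tilde{\boldsymbol{y}}$- and $\tilde{\boldsymbol{\theta}}$-subproblems in \eqref{eq:step_ADMM} with convexity of $H$. The dual updates \eqref{eq:step_ADMM_mu}--\eqref{eq:step_ADMM_lambda} then turn the penalty terms into the iterates $\mu(k+1)$ and $\boldsymbol{\lambda}(k+1)$. Finally, Lemma~\ref{lem:inequality_2} handles the $\lambda$-part.

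I take $\mathcal{L}^1(\tilde{\boldsymbol{y}},\tilde{\boldsymbol{\theta}},\lambda,\mu)=H(\tilde{\boldsymbol{y}})+\lambda^{\rm T}(\hat{A}\tilde{\boldsymbol{y}}-\boldsymbol{d})+\mu^{\rm T}(M_1\tilde{\boldsymbol{y}}+M_2\tilde{\boldsymbol{\theta}})$. This is the sign convention implicit in \eqref{eq:step_ADMM_y}; the definition is not reproduced in the text shown, so if the paper's $\mathcal{L}^1$ differs only the signs below change.

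\textbf{Step 1: optimality of the $\tilde{\boldsymbol{y}}$-step.} Since \eqref{eq:step_ADMM_y} is a convex problem over the convex set $\tilde\Omega$, there is $g\in\partial H(\tilde{\boldsymbol{y}}(k+1))$ such that $(g+M_1^{\rm T}[\mu(k)+\rho(M_1\tilde{\boldsymbol{y}}(k+1)+M_2\tilde{\boldsymbol{\theta}}(k))]+\hat A_b^{\rm T}[\mathcal{W}\boldsymbol{\lambda}(k)+\sigma(\hat A_b\tilde{\boldsymbol{y}}(k+1)-\hat A_b\tilde{\boldsymbol{y}}(k)+\mathcal{W}\boldsymbol{\eta}(k))])^{\rm T}(\tilde{\boldsymbol{y}}^*-\tilde{\boldsymbol{y}}(k+1))\ge 0$. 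I then substitute two identities:
\begin{itemize}
\item by \eqref{eq:step_ADMM_eta}, the last bracket equals $\mathcal{W}\boldsymbol{\lambda}(k)+\sigma\boldsymbol{\eta}(k+1)=\boldsymbol{\lambda}(k+1)$, using \eqref{eq:step_ADMM_lambda};
\item by \eqref{eq:step_ADMM_mu}, the first bracket equals $\mu(k+1)-\rho M_2(\tilde{\boldsymbol{\theta}}(k+1)-\tilde{\boldsymbol{\theta}}(k))$.
\end{itemize}
Convexity of $H$ then gives
\[
H(\tilde{\boldsymbol{y}}(k+1))-H(\tilde{\boldsymbol{y}}^*)\le -\bigl(\mu(k+1)-\rho M_2\Delta\tilde{\boldsymbol{\theta}}\bigr)^{\rm T}M_1(\tilde{\boldsymbol{y}}(k+1)-\tilde{\boldsymbol{y}}^*)-\boldsymbol{\lambda}(k+1)^{\rm T}\hat A_b(\tilde{\boldsymbol{y}}(k+1)-\tilde{\boldsymbol{y}}^*),
\]
where $\Delta\tilde{\boldsymbol{\theta}}=\tilde{\boldsymbol{\theta}}(k+1)-\tilde{\boldsymbol{\theta}}(k)$.

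\textbf{Step 2: optimality of the $\tilde{\boldsymbol{\theta}}$-step and dual feasibility.} The unconstrained $\tilde{\boldsymbol{\theta}}$-step together with \eqref{eq:step_ADMM_mu} gives $M_2^{\rm T}\mu(k+1)=0$ for every $k$; since $\mu(0)=0$ this holds for all iterates. The saddle-point conditions for \eqref{eq:Distri_ADMM} (with $G\equiv0$) give $M_2^{\rm T}\mu^*=0$, $M_1\tilde{\boldsymbol{y}}^*+M_2\tilde{\boldsymbol{\theta}}^*=0$ and $\hat A\tilde{\boldsymbol{y}}^*=\boldsymbol{d}$. I add $\lambda^{*\rm T}(\hat A\tilde{\boldsymbol{y}}(k+1)-\boldsymbol{d})+\mu^{*\rm T}(M_1\tilde{\boldsymbol{y}}(k+1)+M_2\tilde{\boldsymbol{\theta}}(k+1))$ to both sides, using $\lambda^{*\rm T}\hat A=\boldsymbol{\lambda}^{*\rm T}\hat A_b$ and $M_2^{\rm T}\mu^*=0$. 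The left side becomes the $\mathcal{L}^1$ difference. On the right:
\begin{itemize}
\item The $\lambda$-part becomes $-(\boldsymbol{\lambda}(k+1)-\boldsymbol{\lambda}^*)^{\rm T}\hat A_b(\tilde{\boldsymbol{y}}(k+1)-\tilde{\boldsymbol{y}}^*)$. Lemma~\ref{lem:inequality_2} turns this exactly into the $\frac1{2\sigma}(\cdots)$ bracket and the $-\boldsymbol{\epsilon}_2(k+1)^{\rm T}(\boldsymbol{\xi}(k+1)-\hat A_b\tilde{\boldsymbol{y}}^*)$ term.
\item The $\mu$-part becomes $-(\mu(k+1)-\mu^*)^{\rm T}M_1(\tilde{\boldsymbol{y}}(k+1)-\tilde{\boldsymbol{y}}^*)+\rho(M_2\Delta\tilde{\boldsymbol{\theta}})^{\rm T}M_1(\tilde{\boldsymbol{y}}(k+1)-\tilde{\boldsymbol{y}}^*)$.
\end{itemize}

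\textbf{Step 3: the $\mu$-terms (the main obstacle).} Write $M_1(\tilde{\boldsymbol{y}}(k+1)-\tilde{\boldsymbol{y}}^*)=\rho^{-1}(\mu(k+1)-\mu(k))-M_2(\tilde{\boldsymbol{\theta}}(k+1)-\tilde{\boldsymbol{\theta}}^*)$, using \eqref{eq:step_ADMM_mu} and $M_1\tilde{\boldsymbol{y}}^*=-M_2\tilde{\boldsymbol{\theta}}^*$.
\begin{itemize}
\item In the first $\mu$-term, the $M_2$ piece vanishes because $M_2^{\rm T}(\mu(k+1)-\mu^*)=0$. What remains is $-\frac1\rho(\mu(k+1)-\mu^*)^{\rm T}(\mu(k+1)-\mu(k))$.
\item In the second $\mu$-term, the piece $(M_2\Delta\tilde{\boldsymbol{\theta}})^{\rm T}(\mu(k+1)-\mu(k))$ vanishes because $M_2^{\rm T}\mu(k)=M_2^{\rm T}\mu(k+1)=0$. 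What remains is $-\rho(M_2\Delta\tilde{\boldsymbol{\theta}})^{\rm T}M_2(\tilde{\boldsymbol{\theta}}(k+1)-\tilde{\boldsymbol{\theta}}^*)$.
\end{itemize}
These match the first two terms of \eqref{eq:upper-bound}, which completes the bound.

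The delicate points are all in this last step and in fixing conventions. One must check that $M_2^{\rm T}\mu(k)=0$ holds for all $k$, including $k=0$, which rests on the initialization $\mu(0)=0$ used in the proof of Proposition~\ref{pro:step_ADMM}. One must also pin down the exact sign and stacking convention of $\mathcal{L}^1$ so that the $\lambda^*$ terms combine with $\boldsymbol{\lambda}(k+1)$ into the form required by Lemma~\ref{lem:inequality_2}.
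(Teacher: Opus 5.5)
Your proposal is correct and follows the paper's proof essentially step for step: subgradient optimality of the $\tilde{\boldsymbol{y}}$-step rewritten via \eqref{eq:step_ADMM_mu}--\eqref{eq:step_ADMM_lambda}, adding the $\lambda^*,\mu^*$ terms, and invoking Lemma~\ref{lem:inequality_2}. The one difference is that you show the cross term $(M_2(\tilde{\boldsymbol{\theta}}(k+1)-\tilde{\boldsymbol{\theta}}(k)))^{\rm T}(\mu(k+1)-\mu(k))$ is exactly zero using $M_2^{\rm T}\mu(k)=0$ for all $k$ (with $\mu(0)=0$ covering $k=0$), whereas the paper only shows it is nonpositive by combining the $\tilde{\boldsymbol{\theta}}$-optimality conditions at two consecutive iterations; that argument needs the same $k=0$ care and would also go through for a nonzero convex $G$.
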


\proof
Firstly, we transform equation \eqref{eq:step_ADMM_y} by letting 
\begin{equation*}
    \begin{aligned}
        & J_1(\tilde{\boldsymbol{y}})=H(\tilde{\boldsymbol{y}})+\mu(k)^{\rm T}\left(M_1 \tilde{\boldsymbol{y}}+ M_2 \tilde{\boldsymbol{\theta}}(k)\right) +\left(\mathcal{W}\boldsymbol{\lambda}(k)\right)^{\rm T}\hat{A}_b\tilde{\boldsymbol{y}},  \\
       &  J_2(\tilde{\boldsymbol{y}})= \frac{\rho}{2}\left\|M_1 \tilde{\boldsymbol{y}}+ M_2 \tilde{\boldsymbol{\theta}}(k)\right\|^2 + \frac{\sigma}{2} \left\| \hat{A}_b\tilde{\boldsymbol{y}}- \hat{A}_b\tilde{\boldsymbol{y}}(k)+\mathcal{W}\boldsymbol{\eta}(k)\right\|^2.
    \end{aligned}
\end{equation*}
It is straightforward to verify that $J_1$ and $J_2$ are convex functions with respect to $\tilde{\boldsymbol{y}}$, and the feasible region is a convex set. According to Lemma 4.1 in \cite{bertsekas2015parallel} and equations \eqref{eq:step_ADMM_eta}-\eqref{eq:step_ADMM_lambda}, it follows that
\begin{equation*}
    \begin{split}
          \tilde{\boldsymbol{y}}(k+1) & =  \argmin\limits_{\tilde{\boldsymbol{y}} \in \tilde{\Omega}} \left\{H(\tilde{\boldsymbol{y}})+\mu(k)^{\rm T}\left(M_1 \tilde{\boldsymbol{y}}+ M_2 \tilde{\boldsymbol{\theta}}(k)\right) + \rho\left(M_1 \tilde{\boldsymbol{y}}(k+1)+ M_2 \tilde{\boldsymbol{\theta}}(k)\right)^{\rm T}M_1\tilde{\boldsymbol{y}} \right. \\
        & \hspace{32mm} + \left(\mathcal{W}\boldsymbol{\lambda}(k)\right)^{\rm T}\hat{A}_b\tilde{\boldsymbol{y}} +\left. \sigma \left(\hat{A}_b\tilde{\boldsymbol{y}}(k+1)- \hat{A}_b\tilde{\boldsymbol{y}}(k)+\mathcal{W}\boldsymbol{\eta}(k) \right)^{\rm T} \hat{A}_b \tilde{\boldsymbol{y}}  \right\} \\
        & =  \argmin\limits_{\tilde{\boldsymbol{y}} \in \tilde{\Omega}} \left\{H(\tilde{\boldsymbol{y}})+\mu(k)^{\rm T}\left(M_1 \tilde{\boldsymbol{y}}+ M_2 \tilde{\boldsymbol{\theta}}(k)\right) + \rho\left(M_1 \tilde{\boldsymbol{y}}(k+1)+ M_2 \tilde{\boldsymbol{\theta}}(k)\right)^{\rm T}M_1\tilde{\boldsymbol{y}} \right. \\
        & \hspace{32mm} + \left. \left[\mathcal{W}\boldsymbol{\lambda}(k)+ \sigma \left(\hat{A}_b\tilde{\boldsymbol{y}}(k+1)- \hat{A}_b\tilde{\boldsymbol{y}}(k)+\mathcal{W}\boldsymbol{\eta}(k) \right)\right]^{\rm T}\hat{A}_b\tilde{\boldsymbol{y}} \right\} \\
        & =  \argmin\limits_{\tilde{\boldsymbol{y}} \in \tilde{\Omega}} \left\{H(\tilde{\boldsymbol{y}})+\boldsymbol{\lambda}(k+1)^{\rm T}\hat{A}_b\tilde{\boldsymbol{y}}+\mu(k)^{\rm T}M_1 \tilde{\boldsymbol{y}} + \rho\left(M_1 \tilde{\boldsymbol{y}}(k+1)+ M_2 \tilde{\boldsymbol{\theta}}(k)\right)^{\rm T}M_1\tilde{\boldsymbol{y}} \right\}  \\
        & =  \argmin\limits_{\tilde{\boldsymbol{y}}} \left\{H(\tilde{\boldsymbol{y}})+ \mathcal{I}_{\tilde{\Omega}}(\tilde{\boldsymbol{y}})+\boldsymbol{\lambda}(k+1)^{\rm T}\hat{A}_b\tilde{\boldsymbol{y}}+\mu(k)^{\rm T}M_1 \tilde{\boldsymbol{y}} + \rho\left(M_1 \tilde{\boldsymbol{y}}(k+1)+ M_2 \tilde{\boldsymbol{\theta}}(k)\right)^{\rm T}M_1\tilde{\boldsymbol{y}} \right\},
    \end{split} 
\end{equation*}
where the symbol $\mathcal{I}_{\tilde{\Omega}}$ represents the indicator function on set $\tilde{\Omega}$, i.e.,
\begin{equation*}
    \begin{aligned}
        \mathcal{I}_{\tilde{\Omega}}(\tilde{\boldsymbol{y}}) = \left\{
        \begin{array}{cl}
          0,   & \quad \text{if\ } \tilde{\boldsymbol{y}} \in \tilde{\Omega}, \\
        +\infty,  & \quad \text{otherwise}.
        \end{array}
        \right.
    \end{aligned}
\end{equation*}
Based on the definition of a convex function, it is straightforward to prove that the indicator function $\mathcal{I}$, when applied to a convex set $\tilde{\Omega}$, results in a convex function $\mathcal{I}_{\tilde{\Omega}}$. Since $\tilde{\boldsymbol{y}}(k+1)$ is the optimal solution to the above optimization problem, it follows that
\begin{equation*}
    0 \in \partial_{\tilde{\boldsymbol{y}}}\left(H(\tilde{\boldsymbol{y}}(k+1))+\mathcal{I}_{\tilde{\Omega}}(\tilde{\boldsymbol{y}}(k+1))\right) + \hat{A}_b^{\rm T}\boldsymbol{\lambda}(k+1)+M_1^{\rm T}\mu(k)+\rho M_1^{\rm T}(M_1 \tilde{\boldsymbol{y}}(k+1)+ M_2 \tilde{\boldsymbol{\theta}}(k)).
\end{equation*}
Since $\mathcal{I}_{\tilde{\Omega}}(\tilde{\boldsymbol{y}})$ is not a continuously differentiable function, the symbol $\partial_{\tilde{\boldsymbol{y}}}\mathcal{I}_{\tilde{\Omega}}(\tilde{\boldsymbol{y}})$ here denotes the sub-gradient of $\mathcal{I}_{\tilde{\Omega}}(\tilde{\boldsymbol{y}})$ at point $\tilde{\boldsymbol{y}}$. If $\tilde{\boldsymbol{y}} \in \tilde{\Omega}$, then the sub-gradient serves as the normal vector to the hyperplane supporting that point inside $\Omega$. If $\tilde{\boldsymbol{y}} \notin \tilde{\Omega}$, then the sub-gradient does not exist. Thus, we can obtain an element in the sub-gradient set of function $ H(\tilde{\boldsymbol{y}})+\mathcal{I}_{\tilde{\Omega}}(\tilde{\boldsymbol{y}})$ at point $\tilde{\boldsymbol{y}}(k+1)$, i.e.,
\begin{equation}\label{eq:sub_grad_y}
    -\hat{A}_b^{\rm T}\boldsymbol{\lambda}(k+1)-M_1^{\rm T}\mu(k)-\rho M_1^{\rm T}(M_1 \tilde{\boldsymbol{y}}(k+1)+ M_2 \tilde{\boldsymbol{\theta}}(k)) \in \partial_{\tilde{\boldsymbol{y}}}\left(H(\tilde{\boldsymbol{y}}(k+1))+\mathcal{I}_{\tilde{\Omega}}(\tilde{\boldsymbol{y}}(k+1))\right)
\end{equation}
Therefore, based on the properties of convex functions, for an arbitrary $\tilde{\boldsymbol{y}} \in \tilde{\Omega}$, we have
\begin{equation}\label{eq:convex_y}
    \begin{aligned}
       H(\tilde{\boldsymbol{y}})+\mathcal{I}_{\tilde{\Omega}}(\tilde{\boldsymbol{y}}) \geq & H(\tilde{\boldsymbol{y}}(k+1))+\mathcal{I}_{\tilde{\Omega}}(\tilde{\boldsymbol{y}}(k+1)) + \boldsymbol{\lambda}(k+1)^{\rm T} \hat{A}_b \left(\tilde{\boldsymbol{y}}(k+1)-\tilde{\boldsymbol{y}}\right) \\
        & + \left[\mu(k)+\rho(M_1 \tilde{\boldsymbol{y}}(k+1)+ M_2 \tilde{\boldsymbol{\theta}}(k))\right]^{\rm T}M_1\left(\tilde{\boldsymbol{y}}(k+1)-\tilde{\boldsymbol{y}}\right) \\
        = & H(\tilde{\boldsymbol{y}}(k+1))+\mathcal{I}_{\tilde{\Omega}}(\tilde{\boldsymbol{y}}(k+1)) + \boldsymbol{\lambda}(k+1)^{\rm T} \hat{A}_b \left(\tilde{\boldsymbol{y}}(k+1)-\tilde{\boldsymbol{y}}\right) \\
        & + \left[\mu(k+1)-\rho M_2( \tilde{\boldsymbol{\theta}}(k+1)-  \tilde{\boldsymbol{\theta}}(k))\right]^{\rm T}M_1\left(\tilde{\boldsymbol{y}}(k+1)-\tilde{\boldsymbol{y}}\right),
    \end{aligned}
\end{equation}
where the equality holds due to the update step $\eqref{eq:step_ADMM_mu}$.  Similarly, $\tilde{\boldsymbol{\theta}}(k+1)$ is the optimal solution for equation \eqref{eq:step_ADMM_theta}, then for any 
$\tilde{\boldsymbol{\theta}} \in \mathbb{R}^{\rm N(n_0+n_1+\dots+n_N)}$ we get
\begin{equation}\label{eq:convex_theta}
\begin{aligned}
    0 & \geq  [\mu(k)+\rho(M_1 \tilde{\boldsymbol{y}}(k+1)+ M_2 \tilde{\boldsymbol{\theta}}(k+1))]^{\rm T}M_2(\tilde{\boldsymbol{\theta}}(k+1)-\tilde{\boldsymbol{\theta}}) \\
    & =  \mu(k+1)^{\rm T}M_2(\tilde{\boldsymbol{\theta}}(k+1)-\tilde{\boldsymbol{\theta}}).
\end{aligned}
\end{equation}
By setting $\tilde{\boldsymbol{y}} = \tilde{\boldsymbol{y}}^*$ in inequality \eqref{eq:convex_y} and $\tilde{\boldsymbol{\theta}} = \tilde{\boldsymbol{\theta}}^*$ in inequality \eqref{eq:convex_theta}, and considering the definition of function $\mathcal{I}_{\tilde{\Omega}}$, we have
\begin{align}
    \begin{split}\label{eq:convex_y2}
       & H(\tilde{\boldsymbol{y}}^*) \geq  H(\tilde{\boldsymbol{y}}(k+1)) + \boldsymbol{\lambda}(k+1)^{\rm T} \hat{A}_b \left(\tilde{\boldsymbol{y}}(k+1)-\tilde{\boldsymbol{y}}^*\right) \\
       & \hspace{20mm} +  \left[\mu(k+1)-\rho M_2( \tilde{\boldsymbol{\theta}}(k+1)-  \tilde{\boldsymbol{\theta}}(k))\right]^{\rm T}M_1\left(\tilde{\boldsymbol{y}}(k+1)-\tilde{\boldsymbol{y}}^*\right),
    \end{split} \\
    & \mu(k+1)^{\rm T}M_2(\tilde{\boldsymbol{\theta}}(k+1)-\tilde{\boldsymbol{\theta}}^*) \leq 0. \label{eq:convex_theta2}
\end{align}

Next, according to the Lagrangian function $\mathcal{L}^1$, we have
\begin{equation}\label{eq:Lag_bound}
    \begin{aligned}
       & \mathcal{L}^1\left(\tilde{\boldsymbol{y}}(k+1),\tilde{\boldsymbol{\theta}}(k+1),\lambda^*,\mu^{*}\right) - \mathcal{L}^1\left(\tilde{\boldsymbol{y}}^*,\tilde{\boldsymbol{\theta}}^*,\lambda^*,\mu^{*}\right) \\
        = & H(\tilde{\boldsymbol{y}}(k+1))-H(\tilde{\boldsymbol{y}}^*)+{\mu^*}^{\rm T}\left(M_1\tilde{\boldsymbol{y}}(k+1)+M_2 \tilde{\boldsymbol{\theta}}(k+1)\right) + {\lambda^*}^{\rm T}\left(\hat{A}\tilde{\boldsymbol{y}}(k+1)-\boldsymbol{d}\right) \\
        \leq &  -\boldsymbol{\lambda}(k+1)^{\rm T} \hat{A}_b \left(\tilde{\boldsymbol{y}}(k+1)-\tilde{\boldsymbol{y}}^*\right)-\left[\mu(k+1)-\rho M_2( \tilde{\boldsymbol{\theta}}(k+1)-  \tilde{\boldsymbol{\theta}}(k))\right]^{\rm T}M_1\left(\tilde{\boldsymbol{y}}(k+1)-\tilde{\boldsymbol{y}}^*\right) \\
        & - \mu(k+1)^{\rm T}M_2(\tilde{\boldsymbol{\theta}}(k+1)-\tilde{\boldsymbol{\theta}}^*) + {\mu^*}^{\rm T}\left(M_1\tilde{\boldsymbol{y}}(k+1)+M_2 \tilde{\boldsymbol{\theta}}(k+1)\right) + {\lambda^*}^{\rm T}\left(\hat{A}\tilde{\boldsymbol{y}}(k+1) -\boldsymbol{d}\right) \\
        = & -\boldsymbol{\lambda}(k+1)^{\rm T} \hat{A}_b \left(\tilde{\boldsymbol{y}}(k+1)-\tilde{\boldsymbol{y}}^*\right) + {\lambda^*}^{\rm T}\left(\hat{A}\tilde{\boldsymbol{y}}(k+1) -\boldsymbol{d}\right) \\
        & + (\mu^*-\mu(k+1))^{\rm T}\left(M_1\tilde{\boldsymbol{y}}(k+1)+M_2 \tilde{\boldsymbol{\theta}}(k+1)\right)+ \rho \left[M_2( \tilde{\boldsymbol{\theta}}(k+1)-  \tilde{\boldsymbol{\theta}}(k))\right]^{\rm T}\! M_1\left(\tilde{\boldsymbol{y}}(k+1)-\tilde{\boldsymbol{y}}^*\right) \\
          = & -\boldsymbol{\lambda}(k+1)^{\rm T} \hat{A}_b \left(\tilde{\boldsymbol{y}}(k+1)-\tilde{\boldsymbol{y}}^*\right) + {\lambda^*}^{\rm T}\left(\hat{A}\tilde{\boldsymbol{y}}(k+1) -\boldsymbol{d}\right)-\frac{1}{\rho} (\mu(k+1)-\mu^*)^{\rm T}(\mu(k+1)-\mu(k)) \\
        & - \rho \left[M_2( \tilde{\boldsymbol{\theta}}(k+1)-  \tilde{\boldsymbol{\theta}}(k))\right]^{\rm T}M_2(\tilde{\boldsymbol{\theta}}(k+1)-\tilde{\boldsymbol{\theta}}^*) + \left[M_2( \tilde{\boldsymbol{\theta}}(k+1)-  \tilde{\boldsymbol{\theta}}(k))\right]^{\rm T}(\mu(k+1)-\mu(k)) \\
        = & -(\boldsymbol{\lambda}(k+1)-\boldsymbol{\lambda}^*)^{\rm T} \hat{A}_b \left(\tilde{\boldsymbol{y}}(k+1)-\tilde{\boldsymbol{y}}^*\right) -\frac{1}{\rho} (\mu(k+1)-\mu^*)^{\rm T}(\mu(k+1)-\mu(k))  \\
        & - \rho \left[M_2( \tilde{\boldsymbol{\theta}}(k+1)-  \tilde{\boldsymbol{\theta}}(k))\right]^{\rm T}M_2(\tilde{\boldsymbol{\theta}}(k+1)-\tilde{\boldsymbol{\theta}}^*) + \left[M_2( \tilde{\boldsymbol{\theta}}(k+1)-  \tilde{\boldsymbol{\theta}}(k))\right]^{\rm T}(\mu(k+1)-\mu(k)).
    \end{aligned}
\end{equation}
The first inequality is derived using the inequalities \eqref{eq:convex_y2}-\eqref{eq:convex_theta2}, the third equality follows from update step \eqref{eq:step_ADMM_mu}, and the final equality is established by writing $\boldsymbol{d}=\hat{A}\tilde{\boldsymbol{y}}^*$ and using the following equation,
\begin{equation*}
    {\boldsymbol{\lambda}^{*}}^{\rm T}\hat{A}_b ={\lambda^*}^{\rm T}\hat{A}.
\end{equation*}
To further bound the inner product term $\left[M_2( \tilde{\boldsymbol{\theta}}(k+1)-  \tilde{\boldsymbol{\theta}}(k))\right]^{\rm T}(\mu(k+1)-\mu(k))$, we set $\tilde{\boldsymbol{\theta}}=\tilde{\boldsymbol{\theta}}(k)$ in inequality \eqref{eq:convex_theta} and get
\begin{equation*}
     0  \geq 
     \mu(k+1)^{\rm T}M_2(\tilde{\boldsymbol{\theta}}(k+1)-\tilde{\boldsymbol{\theta}}(k)).
\end{equation*}
Then, we decrement the index $t+1$ to $t$ in \eqref{eq:convex_theta} and set $\tilde{\boldsymbol{\theta}}=\tilde{\boldsymbol{\theta}}(k+1)$ to get
\begin{equation*}
     0  \geq 
     \mu(k)^{\rm T}M_2(\tilde{\boldsymbol{\theta}}(k)-\tilde{\boldsymbol{\theta}}(k+1)).
\end{equation*}
Summing the above two inequalities leads to
\begin{equation*}
    (\mu(k+1)-\mu(k))^{\rm T}M_2( \tilde{\boldsymbol{\theta}}(k+1)-  \tilde{\boldsymbol{\theta}}(k)) \leq 0.
\end{equation*}
Thus, the inequality \eqref{eq:Lag_bound} becomes
\begin{equation*}
    \begin{aligned}
        & \mathcal{L}^1\left(\tilde{\boldsymbol{y}}(k+1),\tilde{\boldsymbol{\theta}}(k+1),\lambda^*,\mu^{*}\right) - \mathcal{L}^1\left(\tilde{\boldsymbol{y}}^*,\tilde{\boldsymbol{\theta}}^*,\lambda^*,\mu^{*}\right) \\
         \leq & -(\boldsymbol{\lambda}(k+1)-\boldsymbol{\lambda}^*)^{\rm T} \hat{A}_b \left(\tilde{\boldsymbol{y}}(k+1)-\tilde{\boldsymbol{y}}^*\right) -\frac{1}{\rho} (\mu(k+1)-\mu^*)^{\rm T}(\mu(k+1)-\mu(k))  \\
        & - \rho \left[M_2( \tilde{\boldsymbol{\theta}}(k+1)-  \tilde{\boldsymbol{\theta}}(k))\right]^{\rm T}M_2(\tilde{\boldsymbol{\theta}}(k+1)-\tilde{\boldsymbol{\theta}}^*) .
    \end{aligned}
\end{equation*}

Finally, by substituting term $(\boldsymbol{\lambda}(k+1)-\boldsymbol{\lambda}^*)^{\rm T} \hat{A}_b \left(\tilde{\boldsymbol{y}}(k+1)-\tilde{\boldsymbol{y}}^*\right)$ with equation \eqref{eq:lambda_A_y}, we have
\begin{equation}\label{eq:L_L_1}
    \begin{aligned}
        & \mathcal{L}^1\left(\tilde{\boldsymbol{y}}(k+1),\tilde{\boldsymbol{\theta}}(k+1),\lambda^*,\mu^{*}\right) - \mathcal{L}^1\left(\tilde{\boldsymbol{y}}^*,\tilde{\boldsymbol{\theta}}^*,\lambda^*,\mu^{*}\right) \\
         \leq & -\frac{1}{\rho} \left( \mu(k+1)-\mu^*\right)^{\rm T}\left(  \mu(k+1)-\mu(k)\right) - \rho \left( M_2(\tilde{\boldsymbol{\theta}}(k+1)-\tilde{\boldsymbol{\theta}}(k))\right)^{\rm T}\left( M_2(\tilde{\boldsymbol{\theta}}(k+1)-\tilde{\boldsymbol{\theta}}^*)\right)  \\
        & +\frac{1}{2\sigma}\left(\|\bar{\boldsymbol{\lambda}}(k)-\boldsymbol{\lambda}^*\|^2 - \|\bar{\boldsymbol{\lambda}}(k+1)-\bar{\boldsymbol{\lambda}}(k)\|^2 - \|\bar{\boldsymbol{\lambda}}(k+1)-\boldsymbol{\lambda}^*\|^2\right) -\boldsymbol{\epsilon}_2(k+1) ^{\rm T}\!\left( \boldsymbol{\xi}(k+1)-\hat{A}_b \tilde{\boldsymbol{y}}^*\right)\!.
    \end{aligned}
\end{equation}
\endproof

\subsection{Proof of Proposition \ref{pro:Lyapunov}}
\label{proof:pro_Lya}
\proof  
First of all, for convenience in expression, we define $\boldsymbol{\zeta}(k) = \sigma (\boldsymbol{\xi}(k)-\hat{A}_b \tilde{\boldsymbol{y}}^*)$ and
\begin{equation*}
\begin{aligned}
     &\tilde{\boldsymbol{\epsilon}}(k) = 
    \begin{pmatrix}
        \boldsymbol{\epsilon}_1(k) \\
        \sigma \boldsymbol{\epsilon}_2(k)
    \end{pmatrix}, \quad 
    U_0=\begin{pmatrix}
        \mathcal{W}_0 & \mathcal{W}_0  \\
         0_a & \mathcal{W}_0
    \end{pmatrix}, \quad  U_1= \begin{pmatrix}
        I_{a}  \\
         I_{a}
    \end{pmatrix}, \quad 
    U_2= \begin{pmatrix}
        I_{a}  & 0_a
    \end{pmatrix},
\end{aligned}
\end{equation*}
where $0_a$ denotes zero matrices of dimensions $a\times a$ and $a=Nn_0$. Then, the iterative process in equations \eqref{eq:epsilon_1}-\eqref{eq:epsilon_2} can be represented as
\begin{equation}\label{eq:iter_epsi}
    \tilde{\boldsymbol{\epsilon}}(k+1) = U_0  \tilde{\boldsymbol{\epsilon}}(k) + U_1(\boldsymbol{\zeta}(k+1)-\boldsymbol{\zeta}(k)),
\end{equation}
and the equation \eqref{eq:lambda_A_y} can be represented as
\begin{equation}\label{eq:inequality_3}
    \begin{aligned}
     & (\boldsymbol{\lambda}(k+1)-\boldsymbol{\lambda}^*)^{\rm T} \hat{A}_b \left(\tilde{\boldsymbol{y}}(k+1)-\tilde{\boldsymbol{y}}^*\right) \\
     = &      \frac{1}{2\sigma} \left(\|\bar{\boldsymbol{\lambda}}(k+1)-\boldsymbol{\lambda}^*\|^2  + \|\bar{\boldsymbol{\lambda}}(k+1)-\bar{\boldsymbol{\lambda}}(k)\|^2 - \|\bar{\boldsymbol{\lambda}}(k)-\boldsymbol{\lambda}^*\|^2 \right)+ \frac{1}{\sigma}\boldsymbol{\zeta}(k+1) ^{\rm T}U_2\tilde{\boldsymbol{\epsilon}}(k+1).
     \end{aligned}
\end{equation}

Next, recall the positive definite symmetric matrix $\mathcal{Q} \in \mathbb{R}^{\rm 2Nn_0 \times 2Nn_0}$ in Lemma \ref{lem:pos_def_mat} and consider
\begin{equation}\label{eq:V_zeta_epsi}
    \begin{aligned}
       & \|U_1\boldsymbol{\zeta}(k+1)-\tilde{\boldsymbol{\epsilon}}(k+1)\|_{\mathcal{Q}}^2 \\
       =&  \|U_1\boldsymbol{\zeta}(k)-\tilde{\boldsymbol{\epsilon}}(k) + (I_b-U_0)\tilde{\boldsymbol{\epsilon}}(k)\|_{\mathcal{Q}}^2 \\
      = &\|U_1\boldsymbol{\zeta}(k)-\tilde{\boldsymbol{\epsilon}}(k)\|_{\mathcal{Q}}^2+2(U_1\boldsymbol{\zeta}(k)-\tilde{\boldsymbol{\epsilon}}(k))^{\rm T}\mathcal{Q}(I_b-U_0)\tilde{\boldsymbol{\epsilon}}(k)+\tilde{\boldsymbol{\epsilon}}(k)^{\rm T}(I_b-U_0)^{\rm T}\mathcal{Q}(I_b-U_0)\tilde{\boldsymbol{\epsilon}}(k) \\
      = & \|U_1\boldsymbol{\zeta}(k)-\tilde{\boldsymbol{\epsilon}}(k)\|_{\mathcal{Q}}^2 + 2 \boldsymbol{\zeta}(k)^{\rm T}U_1^{\rm T}\mathcal{Q}(I_b-U_0)\tilde{\boldsymbol{\epsilon}}(k)-\|\tilde{\boldsymbol{\epsilon}}(k)\|_{\mathcal{Q}-U_0^{\rm T}\mathcal{Q}U_0}^2 \\
      = & \|U_1\boldsymbol{\zeta}(k)-\tilde{\boldsymbol{\epsilon}}(k)\|_{\mathcal{Q}}^2 + 2 \boldsymbol{\zeta}(k)^{\rm T}U_2\tilde{\boldsymbol{\epsilon}}(k)-\|\tilde{\boldsymbol{\epsilon}}(k)\|_{\mathcal{Q}-U_0^{\rm T}\mathcal{Q}U_0}^2,
    \end{aligned}
\end{equation}
where $b=2a$. The first equality is derived from equation \eqref{eq:iter_epsi} through the addition and subtraction of the term $\tilde{\boldsymbol{\epsilon}}(k)$, and the last equality relies on the following fact, 
\begin{equation*}
        U_1^{\rm T}\mathcal{Q}(I_b-U_0) =  U_1^{\rm T}\mathcal{Q} \begin{pmatrix}
            I_a - \mathcal{W}_0 & -\mathcal{W}_0 \\
            0_a  & I_a - \mathcal{W}_0
        \end{pmatrix} 
        =  U_1^{\rm T}
        \begin{pmatrix}
           2( I_a - \mathcal{W}_0) & -I_a \\
            2 \mathcal{W}_0 - I_a  & I_a
        \end{pmatrix} 
        =  
        \begin{pmatrix}
           I_a  & 0_a
        \end{pmatrix} =U_2.
\end{equation*}
Additionally, 
\begin{equation*}
    \begin{aligned}
        \mathcal{Q}-U_0^{\rm T}\mathcal{Q}U_0 = & \mathcal{Q}-U_0^{\rm T} \begin{pmatrix}
            2\mathcal{W}_0 & (I_a-\mathcal{W}_0)^{-1}\mathcal{W}_0 \\
            (I_a-\mathcal{W}_0)^{-1}\mathcal{W}_0-2\mathcal{W}_0 & \quad \mathcal{W}_0(I_a-\mathcal{W}_0)^{-2}\mathcal{W}_0
        \end{pmatrix} \\
        = & \mathcal{Q} - \begin{pmatrix}
            2\mathcal{W}_0^2 & \mathcal{W}_0(I_a-\mathcal{W}_0)^{-1}\mathcal{W}_0 \\
            \mathcal{W}_0(I_a-\mathcal{W}_0)^{-1}\mathcal{W}_0 & \quad \mathcal{W}_0(I_a-\mathcal{W}_0)^{-2}\mathcal{W}_0
        \end{pmatrix} \\
        = & \begin{pmatrix}
            2(I_a-\mathcal{W}_0)(I_a+\mathcal{W}_0) & \quad -(I_a-\mathcal{W}_0) \\
            -(I_a-\mathcal{W}_0) &  I_a
        \end{pmatrix} \succ 0.
    \end{aligned}
\end{equation*}
The establishment of positive definiteness relies on Schur's complement lemma, wherein the matrix $I_a$ is positive definite, and its corresponding Schur complement is
\begin{equation*}
    2 (I_a-\mathcal{W}_0)(I_a+\mathcal{W}_0) - (I_a-\mathcal{W}_0)^2 = (I_a +3\mathcal{W}_0)(I_a-\mathcal{W}_0) \succ 0.
\end{equation*}
The above equation holds due to the eigenvalues of the matrix $\mathcal{W}_0$ are all lie within the unit circle. This implies that both term $\|\tilde{\boldsymbol{\epsilon}}(k)\|_{\mathcal{Q}-U_0^{\rm T}\mathcal{Q}U_0}^2$ are nonnegative for all $k$.

Finally, by multiplying both sides of inequality \eqref{eq:L_L_1} by $2\sigma$, and then adding the equation \eqref{eq:V_zeta_epsi} to the result, we obtain
\begin{equation}\label{eq:inequality_4}
    \begin{aligned}
          & 2\sigma\mathcal{L}^1\left(\tilde{\boldsymbol{y}}(k+1),\tilde{\boldsymbol{\theta}}(k+1),\lambda^*,\mu^{*}\right) - 2\sigma\mathcal{L}^1\left(\tilde{\boldsymbol{y}}^*,\tilde{\boldsymbol{\theta}}^*,\lambda^*,\mu^{*}\right) \\
         & +\|U_1\boldsymbol{\zeta}(k+1)-\tilde{\boldsymbol{\epsilon}}(k+1)\|_{\mathcal{Q}}^2+\|\bar{\boldsymbol{\lambda}}(k+1)-\boldsymbol{\lambda}^*\|^2  +   2\boldsymbol{\zeta}(k+1) ^{\rm T}U_2\tilde{\boldsymbol{\epsilon}}(k+1)
            \\
           \leq & -\frac{2\sigma}{\rho} \left( \mu(k+1)-\mu^*\right)^{\rm T}\left(  \mu(k+1)-\mu(k)\right) - 2\sigma\rho \left( M_2(\tilde{\boldsymbol{\theta}}(k+1)-\tilde{\boldsymbol{\theta}}(k))\right)^{\rm T}\left( M_2(\tilde{\boldsymbol{\theta}}(k+1)-\tilde{\boldsymbol{\theta}}^*)\right) \\
             & +  \|U_1\boldsymbol{\zeta}(k)-\tilde{\boldsymbol{\epsilon}}(k)\|_{\mathcal{Q}}^2 +\|\bar{\boldsymbol{\lambda}}(k)-\boldsymbol{\lambda}^*\|^2 + 2 \boldsymbol{\zeta}(k)^{\rm T}U_2\tilde{\boldsymbol{\epsilon}}(k)-\|\tilde{\boldsymbol{\epsilon}}(k)\|_{\mathcal{Q}-U_0^{\rm T}\mathcal{Q}U_0}^2-\|\bar{\boldsymbol{\lambda}}(k+1)-\bar{\boldsymbol{\lambda}}(k)\|^2 .
        \end{aligned}
\end{equation}
To have better control over the cross terms in the above equation, we define 
\begin{equation*}
    V_0(k) = \sigma\rho \|M_2(\tilde{\boldsymbol{\theta}}(k)-\tilde{\boldsymbol{\theta}}^*)\|^2 + \frac{\sigma}{\rho} \| \mu(k)-\mu^*\|^2.
\end{equation*}
Then it can be seen that
\begin{equation}\label{eq:V_0_k}
    \begin{aligned}
        V_0(k) = & \sigma\rho \|M_2(\tilde{\boldsymbol{\theta}}(k+1)-\tilde{\boldsymbol{\theta}}(k)+\tilde{\boldsymbol{\theta}}(k+1)-\tilde{\boldsymbol{\theta}}^*)\|^2 + \frac{\sigma}{\rho} \| \mu(k)-\mu(k+1)+\mu(k+1)-\mu^*\|^2 \\
        = & V_0(k+1) -2\sigma\rho \!\left( M_2(\tilde{\boldsymbol{\theta}}(k+1)-\tilde{\boldsymbol{\theta}}(k))\right)^{\!\rm T} \! \left( M_2(\tilde{\boldsymbol{\theta}}(k+1)-\tilde{\boldsymbol{\theta}}^*)\right)\! + \sigma\rho \|M_2(\tilde{\boldsymbol{\theta}}(k+1)-\tilde{\boldsymbol{\theta}}(k))\|^2 \\
        & -\frac{2\sigma}{\rho} \left( \mu(k+1)-\mu^*\right)^{\rm T}\left(  \mu(k+1)-\mu(k)\right) + \frac{\sigma}{\rho}\|\mu(k+1)-\mu(k)\|^2.
    \end{aligned}
\end{equation}
Now, we introduce the Lyapunov function
\begin{equation*}
    V(k)=V_0(k)+\|\bar{\boldsymbol{\lambda}}(k)-\boldsymbol{\lambda}^*\|^2 + \|U_1\boldsymbol{\zeta}(k)-\tilde{\boldsymbol{\epsilon}}(k)\|_{\mathcal{Q}}^2.
\end{equation*}
Immediately, using Lyapunov function $V(k)$ and equation \eqref{eq:V_0_k},  we can rewrite inequality \eqref{eq:inequality_4} as
\begin{equation}\label{eq:V_k_V_k_1}
    \begin{aligned}
     V(k) \geq & V(k+1)  + \sigma\rho \|M_2(\tilde{\boldsymbol{\theta}}(k+1)-\tilde{\boldsymbol{\theta}}(k))\|^2 +\frac{\sigma}{\rho}\|\mu(k+1)-\mu(k)\|^2\\
     &+2\sigma \left(\mathcal{L}^1(\tilde{\boldsymbol{y}}(k+1),\tilde{\boldsymbol{\theta}}(k+1),\lambda^*,\mu^{*}) - \mathcal{L}^1(\tilde{\boldsymbol{y}}^*,\tilde{\boldsymbol{\theta}}^*,\lambda^*,\mu^{*})\right)   \\
     &+   2\boldsymbol{\zeta}(k+1) ^{\rm T}U_2\tilde{\boldsymbol{\epsilon}}(k+1)- 2 \boldsymbol{\zeta}(k)^{\rm T}U_2\tilde{\boldsymbol{\epsilon}}(k) +\|\tilde{\boldsymbol{\epsilon}}(k)\|_{\mathcal{Q}-U_0^{\rm T}\mathcal{Q}U_0}^2+\|\bar{\boldsymbol{\lambda}}(k+1)-\bar{\boldsymbol{\lambda}}(k)\|^2 . 
    \end{aligned}
\end{equation}

At the same time, our design for the optimization problem ensures it inherently possesses the strong duality property, then by the Saddle Point Theorem, we have 
\begin{equation*}
\mathcal{L}^1\left(\tilde{\boldsymbol{y}}^*,\tilde{\boldsymbol{\theta}}^*,\lambda,\mu\right)\leq \mathcal{L}^1\left(\tilde{\boldsymbol{y}}^*,\tilde{\boldsymbol{\theta}}^*,\lambda^{*},\mu^{*}\right) \leq \mathcal{L}^1\left(\tilde{\boldsymbol{y}},\tilde{\boldsymbol{\theta}},\lambda^*,\mu^*\right).
\end{equation*}\endproof

\subsection{Preparation for Theorem \ref{thm:convergence}}

\begin{lemma}\label{lem:thm_cor}
    Under the Assumptions \ref{ass:const_set}-\ref{ass:com_network}, the sequence generated by Consensus-Tracking-ADMM satisfy:
    \begin{enumerate}[label=(\roman*),leftmargin=24pt]
        \item $\lim_{k\rightarrow 0}\boldsymbol{\epsilon}_1(k)=\boldsymbol{0}$,
        \item $\lim_{k\rightarrow 0}\boldsymbol{\epsilon}_2(k)=\boldsymbol{0}$, 
        \item $\{\rho\sigma \|M_2(\tilde{\boldsymbol{\theta}}(k)-\tilde{\boldsymbol{\theta}}^*)\|^2+\frac{\sigma}{\rho}\|\mu(k)-\mu^*\|^2+\|\bar{\boldsymbol{\lambda}}(k)-\boldsymbol{\lambda}^*\|^2+2\sigma^2\|\boldsymbol{\xi}(k)-\hat{A}_b\tilde{\boldsymbol{y}}^*\|^2\}$ is convergent.
    \end{enumerate}
\end{lemma}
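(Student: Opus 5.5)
The plan is to telescope the descent inequality \eqref{eq:V_k_V_k_1} from the proof of Proposition~\ref{pro:Lyapunov}. By the saddle-point inequality for $\mathcal{L}^1$, the term $2\sigma(\mathcal{L}^1(\tilde{\boldsymbol{y}}(k+1),\tilde{\boldsymbol{\theta}}(k+1),\lambda^*,\mu^*)-\mathcal{L}^1(\tilde{\boldsymbol{y}}^*,\tilde{\boldsymbol{\theta}}^*,\lambda^*,\mu^*))$ is nonnegative. Set $\tilde V(k)=V(k)+2\boldsymbol{\zeta}(k)^{\rm T}U_2\tilde{\boldsymbol{\epsilon}}(k)$; since $U_2\tilde{\boldsymbol{\epsilon}}(k)=\boldsymbol{\epsilon}_1(k)$, this is the sequence of Proposition~\ref{pro:Lyapunov}. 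Inequality \eqref{eq:V_k_V_k_1} then gives
\[
\tilde V(k)-\tilde V(k+1)\ \ge\ \sigma\rho\|M_2(\tilde{\boldsymbol{\theta}}(k+1)-\tilde{\boldsymbol{\theta}}(k))\|^2+\tfrac{\sigma}{\rho}\|\mu(k+1)-\mu(k)\|^2+\|\tilde{\boldsymbol{\epsilon}}(k)\|^2_{\mathcal{Q}-U_0^{\rm T}\mathcal{Q}U_0}+\|\bar{\boldsymbol{\lambda}}(k+1)-\bar{\boldsymbol{\lambda}}(k)\|^2 .
\]
By Lemma~\ref{lem:bounded}, $\boldsymbol{\zeta}(k)$ and $\tilde{\boldsymbol{\epsilon}}(k)$ are bounded, so the cross term is bounded. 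Since $V\ge 0$, $\tilde V(k)$ is bounded below, and being non-increasing it converges. Summing the inequality over $k$ shows that every term on the right is summable.

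For (i) and (ii), I use that $\mathcal{Q}-U_0^{\rm T}\mathcal{Q}U_0\succ 0$, as shown in the proof of Proposition~\ref{pro:Lyapunov}. Hence $\sum_k\|\tilde{\boldsymbol{\epsilon}}(k)\|^2<\infty$, so $\tilde{\boldsymbol{\epsilon}}(k)\to 0$, which gives both $\boldsymbol{\epsilon}_1(k)\to\boldsymbol{0}$ and $\boldsymbol{\epsilon}_2(k)\to\boldsymbol{0}$ (the limits are as $k\to\infty$). As a by-product, $\mu(k+1)-\mu(k)\to 0$, $M_2(\tilde{\boldsymbol{\theta}}(k+1)-\tilde{\boldsymbol{\theta}}(k))\to 0$, and $\bar{\boldsymbol{\lambda}}(k+1)-\bar{\boldsymbol{\lambda}}(k)\to 0$. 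Since $\bar\lambda(k+1)-\bar\lambda(k)=\sigma\bar\eta(k+1)$, we also get $\bar{\boldsymbol{\eta}}(k)\to 0$.

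For (iii), expand $\|U_1\boldsymbol{\zeta}(k)-\tilde{\boldsymbol{\epsilon}}(k)\|^2_{\mathcal{Q}}=\|\boldsymbol{\zeta}(k)\|^2_{U_1^{\rm T}\mathcal{Q}U_1}-2\boldsymbol{\zeta}(k)^{\rm T}U_1^{\rm T}\mathcal{Q}\tilde{\boldsymbol{\epsilon}}(k)+\|\tilde{\boldsymbol{\epsilon}}(k)\|^2_{\mathcal{Q}}$. The last two terms, and the cross term in $\tilde V$, tend to $0$ because $\boldsymbol{\zeta}$ is bounded and $\tilde{\boldsymbol{\epsilon}}\to 0$. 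So $V_0(k)+\|\bar{\boldsymbol{\lambda}}(k)-\boldsymbol{\lambda}^*\|^2+\|\boldsymbol{\zeta}(k)\|^2_{U_1^{\rm T}\mathcal{Q}U_1}$ converges. It remains to identify the weight: a direct block computation gives $U_1^{\rm T}\mathcal{Q}U_1=(I_a-\mathcal{W}_0)^{-2}$. Since $\boldsymbol{\zeta}(k)=\sigma(\boldsymbol{\xi}(k)-\hat{A}_b\tilde{\boldsymbol{y}}^*)$, the last term is a positive-definite quadratic form in $\boldsymbol{\xi}(k)-\hat{A}_b\tilde{\boldsymbol{y}}^*$.

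The main obstacle is matching this with the exact quantity $2\sigma^2\|\boldsymbol{\xi}(k)-\hat{A}_b\tilde{\boldsymbol{y}}^*\|^2$ in the statement, i.e.\ reconciling the weight with $2I_a$. My first attempt is to split $\boldsymbol{\xi}(k)-\hat{A}_b\tilde{\boldsymbol{y}}^*$ into its consensus (average) component and its disagreement component. On the averages $\mathcal{W}_0$ acts as zero, so the weight reduces to the identity there. The disagreement component I would try to control using $\boldsymbol{\epsilon}_1\to 0$ and $\bar{\boldsymbol{\eta}}\to 0$. If that exact identification fails, I would prove (iii) with the weighted norm $\|\cdot\|^2_{(I_a-\mathcal{W}_0)^{-2}}$ in place of $2\|\cdot\|^2$, which is all the subsequent limit-point argument needs: combined with boundedness from compactness of $\tilde\Omega$, it suffices to extract convergent subsequences and identify their limits.
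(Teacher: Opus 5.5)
Your argument for (i) and (ii) is the paper's: telescope \eqref{eq:V_k_V_k_1}, drop the nonnegative Lagrangian gap, bound the cross term via Lemma~\ref{lem:bounded}, and use $\mathcal{Q}-U_0^{\rm T}\mathcal{Q}U_0\succ 0$. One small inaccuracy: your $\tilde V$ carries the cross term $2\boldsymbol{\zeta}(k)^{\rm T}\boldsymbol{\epsilon}_1(k)$, not the $2\sigma\boldsymbol{\zeta}(k)^{\rm T}\boldsymbol{\epsilon}_2(k)$ of Proposition~\ref{pro:Lyapunov}. This is an artifact of the paper's inconsistent constants and is harmless, since either cross term is bounded and tends to zero. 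Your reduction of (iii) to convergence of $V_0(k)+\|\bar{\boldsymbol{\lambda}}(k)-\boldsymbol{\lambda}^*\|^2+\|\boldsymbol{\zeta}(k)\|^2_{U_1^{\rm T}\mathcal{Q}U_1}$ also matches the paper. Your computation $U_1^{\rm T}\mathcal{Q}U_1=(I_a-\mathcal{W}_0)^{-2}$ is correct; it is the matrix $\mathcal{R}$ of the main text. The gap you flag is real, but it sits in the statement, not in your reasoning. The paper reaches $2\sigma^2\|\boldsymbol{\xi}(k)-\hat{A}_b\tilde{\boldsymbol{y}}^*\|^2$ by replacing $\|U_1\boldsymbol{\zeta}(k)\|_{\mathcal{Q}}^2$ with $\|U_1\boldsymbol{\zeta}(k)\|^2$ via the ``principle of norm equivalence''. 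That inference is invalid: equivalent norms give two-sided bounds on each term, not convergence of a sum after one summand's weight is changed. So neither the paper nor your proposal establishes (iii) as literally written.

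Your first attempt cannot repair this. By the tracking invariant $N\bar{\eta}(k)=\hat{A}\tilde{\boldsymbol{y}}(k)-\boldsymbol{d}$ and $\hat{A}\tilde{\boldsymbol{y}}^*=\boldsymbol{d}$, the block average of $\boldsymbol{\zeta}(k)$ vanishes identically. Hence $\boldsymbol{\zeta}(k)=\sigma\bigl(I_a-\tfrac{1}{N}\mathbf{1}_N\mathbf{1}_N^{\rm T}\otimes I_{n_0}\bigr)\hat{A}_b(\tilde{\boldsymbol{y}}(k)-\tilde{\boldsymbol{y}}^*)$ lies entirely in the orthogonal complement of the consensus subspace. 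There $(I_a-\mathcal{W}_0)^{-2}$ has eigenvalues $(1-\omega_j)^{-2}$, where $\omega_j\in[0,1)$ are the non-unit eigenvalues of $W$, and these are generally not $2$. So the two sequences differ by the quadratic form of $\boldsymbol{\zeta}(k)$ in the generally indefinite matrix $2I_a-(I_a-\mathcal{W}_0)^{-2}$. Neither $\boldsymbol{\epsilon}_1\to\boldsymbol{0}$ nor $\bar{\eta}\to 0$ controls this; the $\boldsymbol{\epsilon}_1$ recursion only yields $\boldsymbol{\zeta}(k+1)-\boldsymbol{\zeta}(k)\to\boldsymbol{0}$. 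That $\boldsymbol{\zeta}(k)\to\boldsymbol{0}$ is part of what Theorem~\ref{thm:convergence} is supposed to establish.

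So commit to your fallback. Statement (iii) with $2\sigma^2\|\cdot\|^2$ replaced by $\sigma^2\|\cdot\|^2_{(I_a-\mathcal{W}_0)^{-2}}$ is fully proved by your argument. It is also all the proof of Theorem~\ref{thm:convergence} needs, since that proof uses only convergence of the sequence, its vanishing along a subsequence, and positive definiteness of the weight.

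One correction to your last sentence: $\tilde{\Omega}$ is not compact, because $\tilde{\Omega}_i$ constrains only the $i$-th block of $\boldsymbol{y}_i$. Compactness of the $\Omega_i$ does bound $\hat{A}_b\tilde{\boldsymbol{y}}(k)$, which is all this lemma uses. Boundedness of the full $\tilde{\boldsymbol{y}}(k)$ in the downstream argument must instead come from $M_1$ having full column rank, together with boundedness of $\mu(k)$ and $\tilde{\boldsymbol{\theta}}(k)$.
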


\proof
 Based on inequality \eqref{eq:V_0_Sum} in the proof of Theorem \ref{thm:convergence}, it can be deduced that the boundedness of $V(0)$ implies that as $K_0 \rightarrow \infty$, 
 \begin{equation*}
     \|\tilde{\boldsymbol{\epsilon}}(k)\|_{\mathcal{Q}-U_0^{\rm T}\mathcal{Q}U_0} \rightarrow 0.
 \end{equation*}
 Furthermore, given that $\mathcal{Q}-U_0^{\rm T}\mathcal{Q}U_0 \succ 0$ and the principle of norm equivalence, it follows that 
 \begin{equation*}
     \|\tilde{\boldsymbol{\epsilon}}(k)\| \rightarrow 0.
 \end{equation*}
Subsequently, according to the definition of $\tilde{\boldsymbol{\epsilon}}(k)$, we can conclude that 
 \begin{equation*}
     \|\boldsymbol{\epsilon}_1(k)\| \rightarrow 0, \quad \|\boldsymbol{\epsilon}_2(k)\| \rightarrow 0.
 \end{equation*}
Hence, the statements $(\romannumeral 1)$ and $(\romannumeral 4)$ are proven.

Then, from inequality \eqref{eq:V_k_V_k_1}, we have that the sequence $\{V(k)+2 \boldsymbol{\zeta}(k)^{\rm T}U_2\tilde{\boldsymbol{\epsilon}}(k)\}_{k \geq 0}$ is a non-increasing sequence. Additionally, given that the cross-term $\boldsymbol{\zeta}(k)^{\rm T}U_2\tilde{\boldsymbol{\epsilon}}(k)$ is bounded and the matrix $\mathcal{Q} \succ 0$, we can deduce that the sequence possesses a lower bound, and consequently, is convergent. Since $\|\tilde{\boldsymbol{\epsilon}}(k)\|$ converges to 0 as $k$ approaches infinity and $\{\boldsymbol{\zeta}(k)\}_{k\geq 0}$ is bounded by definition, we have that also the sequence $\{V_0(k)+\|\bar{\boldsymbol{\lambda}}(k)-\boldsymbol{\lambda}^*\|^2+\|U_1\boldsymbol{\zeta}(k)\|_{\mathcal{Q}}^2\}_{k\geq 0}$ is convergent. Furthermore, applying the  principle of norm equivalence, we deduce that the sequence $\{V_0(k)+\|\bar{\boldsymbol{\lambda}}(k)-\boldsymbol{\lambda}^*\|^2+\|U_1\boldsymbol{\zeta}(k)\|\}_{k\geq 0}$ is convergent. Finally, through straightforward calculation, we have
\begin{equation*}
    \|U_1\boldsymbol{\zeta}(k)\|^2 = \boldsymbol{\zeta}(k)^{\rm T} U_1^{\rm T}U_1\boldsymbol{\zeta}(k) = 2\boldsymbol{\zeta}(k)^{\rm T} \boldsymbol{\zeta}(k) = 2\sigma^2  \|\boldsymbol{\xi}(k)-\hat{A}_b\tilde{\boldsymbol{y}}^*\|^2.
\end{equation*}
This concludes the proof of the statement $(\romannumeral 3)$.
\endproof

\subsection{Proof of Theorem \ref{thm:convergence}}\label{proof:thm_co}

\proof
By taking the telescoping sum from $k=0$ to $K_0-1$ with $K_0 \in \mathbb{N}_{+}$ in above inequality, we have
\begin{equation}\label{eq:V_0_V_K}
    \begin{aligned}
        V(0) \geq & V(K_0) + \sum\nolimits_{i=0}^{K_0-1} \left(\sigma\rho \|M_2(\tilde{\boldsymbol{\theta}}(k+1)-\tilde{\boldsymbol{\theta}}(k))\|^2 +\frac{\sigma}{\rho}\|\mu(k+1)-\mu(k)\|^2\right) \\
        & + 2\sigma \sum\nolimits_{i=0}^{K_0-1}\left(\mathcal{L}^1(\tilde{\boldsymbol{y}}(k+1),\tilde{\boldsymbol{\theta}}(k+1),\lambda^*,\mu^{*}) - \mathcal{L}^1(\tilde{\boldsymbol{y}}^*,\tilde{\boldsymbol{\theta}}^*,\lambda^*,\mu^{*})\right) \\
        & + \sum\nolimits_{i=0}^{K_0-1} \left(\|\tilde{\boldsymbol{\epsilon}}(k)\|_{\mathcal{Q}-U_0^{\rm T}\mathcal{Q}U_0}^2+\|\bar{\boldsymbol{\lambda}}(k+1)-\bar{\boldsymbol{\lambda}}(k)\|^2\right) + 2\boldsymbol{\zeta}(K_0) ^{\rm T}U_2\tilde{\boldsymbol{\epsilon}}(K_0)- 2 \boldsymbol{\zeta}(0)^{\rm T}U_2\tilde{\boldsymbol{\epsilon}}(0).
    \end{aligned}
\end{equation}
According to Lemma \ref{lem:bounded}, the sequences $\{\boldsymbol{\xi}(k)\}_{k \geq 0}$, $\{\boldsymbol{\epsilon}_1(k)\}_{k \geq 0}$, and $\{\boldsymbol{\epsilon}_2(k)\}_{k \geq 0}$ are all bounded. Consequently, based on the definitions of $\tilde{\boldsymbol{\epsilon}}(k)$ and $\boldsymbol{\zeta}(k)$, it follows that $\{\tilde{\boldsymbol{\epsilon}}(k)\}_{k \geq 0}$ and $\{\boldsymbol{\zeta}(k)\}_{k \geq 0}$ are also bounded sequences. Therefore, we conclude that $2\boldsymbol{\zeta}(K_0) ^{\rm T}U_2\tilde{\boldsymbol{\epsilon}}(K_0)- 2 \boldsymbol{\zeta}(0)^{\rm T}U_2\tilde{\boldsymbol{\epsilon}}(0) \geq -\mathcal{C}$. Additionally, given that the Lyapunov function $V(k)$ is composed of four nonnegative norms, it follows that $V(k) \geq 0$ for all $k \geq 0$. Thus, the inequality \eqref{eq:V_0_V_K} can be further rewritten as 
\begin{equation*}
    \begin{aligned}
        V(0) \geq &  \sum\nolimits_{i=0}^{K_0-1} \left(\sigma\rho \|M_2(\tilde{\boldsymbol{\theta}}(k+1)-\tilde{\boldsymbol{\theta}}(k))\|^2 +\frac{\sigma}{\rho}\|\mu(k+1)-\mu(k)\|^2\right) \\
        & + 2\sigma \sum\nolimits_{i=0}^{K_0-1}\left(\mathcal{L}^1(\tilde{\boldsymbol{y}}(k+1),\tilde{\boldsymbol{\theta}}(k+1),\lambda^*,\mu^{*}) - \mathcal{L}^1(\tilde{\boldsymbol{y}}^*,\tilde{\boldsymbol{\theta}}^*,\lambda^*,\mu^{*})\right) \\
        & + \sum\nolimits_{i=0}^{K_0-1} \left(\|\tilde{\boldsymbol{\epsilon}}(k)\|_{\mathcal{Q}-U_0^{\rm T}\mathcal{Q}U_0}^2+\|\bar{\boldsymbol{\lambda}}(k+1)-\bar{\boldsymbol{\lambda}}(k)\|^2\right) - \mathcal{C}.
    \end{aligned}
\end{equation*}
Taking the limit as $K_0 \rightarrow \infty$ finally yields
\begin{equation}\label{eq:V_0_Sum}
    \begin{aligned}
        V(0) \geq &  \sum\nolimits_{i=0}^{\infty} \left(\sigma\rho \|M_2(\tilde{\boldsymbol{\theta}}(k+1)-\tilde{\boldsymbol{\theta}}(k))\|^2 +\frac{\sigma}{\rho}\|\mu(k+1)-\mu(k)\|^2\right) \\
        & + 2\sigma \sum\nolimits_{i=0}^{\infty}\left(\mathcal{L}^1(\tilde{\boldsymbol{y}}(k+1),\tilde{\boldsymbol{\theta}}(k+1),\lambda^*,\mu^{*}) - \mathcal{L}^1(\tilde{\boldsymbol{y}}^*,\tilde{\boldsymbol{\theta}}^*,\lambda^*,\mu^{*})\right) \\
        & + \sum\nolimits_{i=0}^{\infty} \left(\|\tilde{\boldsymbol{\epsilon}}(k)\|_{\mathcal{Q}-U_0^{\rm T}\mathcal{Q}U_0}^2+\|\bar{\boldsymbol{\lambda}}(k+1)-\bar{\boldsymbol{\lambda}}(k)\|^2\right) - \mathcal{C}.
    \end{aligned}
\end{equation}
Since $V(0)$ is finite, it follows that as $k \rightarrow \infty$,
\begin{equation*}
\begin{aligned}
   & \mathcal{L}^1(\tilde{\boldsymbol{y}}(k+1),\tilde{\boldsymbol{\theta}}(k+1),\lambda^*,\mu^{*}) - \mathcal{L}^1(\tilde{\boldsymbol{y}}^*,\tilde{\boldsymbol{\theta}}^*,\lambda^*,\mu^{*}) \rightarrow 0,\\
    &
   \mu(k+1)-\mu(k) \rightarrow 0, \quad  \bar{\boldsymbol{\lambda}}(k+1)-\bar{\boldsymbol{\lambda}}(k) \rightarrow 0 \ \left(\bar{\lambda}(k+1)-\bar{\lambda}(k) \rightarrow 0 \right).
\end{aligned}
\end{equation*}
Thus, combining equations \eqref{eq:lemma2_in_fal} \eqref{eq:lemma1_in_fal}, we have 
\begin{equation*}
    \bar{\eta}(k) = \frac{1}{\sigma}\left(\bar{\lambda}(k+1)-\bar{\lambda}(k)\right) \rightarrow 0,
\end{equation*}
thereby proving the statement $(\romannumeral 1)$. Immediately after that, we have
\begin{align}
        & \hat{A}\tilde{\boldsymbol{y}}(k+1)-\boldsymbol{d}=\sum\nolimits_i\hat{A}_i\boldsymbol{y}_i(k+1) -\boldsymbol{d} = N\bar{\eta}(k) \rightarrow 0,  \label{eq:lim_con_1}\\
        & M_1\tilde{\boldsymbol{y}}(k)+M_2\tilde{\boldsymbol{\theta}}(k) = \mu(k)-\mu(k-1) \rightarrow 0, \label{eq:lim_con_2}\\
        & H(\tilde{\boldsymbol{y}}(k))-H(\tilde{\boldsymbol{y}}^*)=\mathcal{L}^1(\tilde{\boldsymbol{y}}(k),\tilde{\boldsymbol{\theta}}(k),\lambda^*,\mu^{*}) - \mathcal{L}^1(\tilde{\boldsymbol{y}}^*,\tilde{\boldsymbol{\theta}}^*,\lambda^*,\mu^{*}) \notag \\
        & \hspace{30mm} - {\mu^*}^{\rm T}\left(M_1\tilde{\boldsymbol{y}}(k)+M_2\tilde{\boldsymbol{\theta}}(k)\right)-{\lambda^*}^{\rm T}\left(\hat{A}\tilde{\boldsymbol{y}}(k)-\boldsymbol{d}\right) \rightarrow 0, \notag
\end{align}
This demonstrates that the function values converge, thereby proving the statement $(\romannumeral 2)$. 

Next, in the context of the convergence analysis for the iterative sequence $\{\tilde{\boldsymbol{y}}(k)\}_{k \geq 0}$, $\{\tilde{\boldsymbol{\theta}}(k)\}_{k \geq 0}$, $\{\mu(k)\}_{k \geq 0}$, and $\{\boldsymbol{\lambda}(k)\}_{k \geq 0}$ generated by iterative process \eqref{eq:step_ADMM} in optimization problem \ref{eq:Distri_ADMM}, given that the Slater condition is readily verifiable, we focus on the KKT conditions satisfied by the optimal solution $\tilde{\boldsymbol{y}}^*$, $\tilde{\boldsymbol{\theta}}^*$, $\mu^*$, and $\lambda^*$, which is,
\begin{equation*}
    \begin{aligned}
        & -M_1^{\rm T}\mu^* - \hat{A}^{\rm T}\lambda^* \in \partial_{\tilde{\boldsymbol{y}}}(H(\tilde{\boldsymbol{y}}^*)+\mathcal{I}_\Omega(\tilde{\boldsymbol{y}}^*)), \\
        & -M_2^{\rm T}\mu^* \in \partial_{\tilde{\boldsymbol{\theta}}}G(\tilde{\boldsymbol{\theta}}^*), \\
        & \hat{A} \tilde{\boldsymbol{y}}^* =\boldsymbol{d}, \\
        & M_1\tilde{\boldsymbol{y}}^* + M_2\tilde{\boldsymbol{\theta}}^*=0.
    \end{aligned}
\end{equation*}
Then, from \eqref{eq:sub_grad_y}, we can identify a subgradient of the function $H(\tilde{\boldsymbol{y}})+\mathcal{I}_{\tilde{\Omega}}(\tilde{\boldsymbol{y}})$ at the optimal solution $\tilde{\boldsymbol{y}}(k+1)$ after the $(k+1)$-th iteration. By introducing the term $\hat{A}_b^{\rm T}\bar{\boldsymbol{\lambda}}(k+1)$ into this subgradient(both by adding and subtracting it), and applying the iterative step \eqref{eq:step_ADMM_mu}, we obtain the following transformation of the sub-gradient.
\begin{equation*}
    \begin{aligned}
       & -\hat{A}_b^{\rm T}\boldsymbol{\lambda}(k+1)-M_1^{\rm T}\mu(k)-\rho M_1^{\rm T}(M_1 \tilde{\boldsymbol{y}}(k+1)+ M_2 \tilde{\boldsymbol{\theta}}(k))-\hat{A}_b^{\rm T}\bar{\boldsymbol{\lambda}}(k+1)+\hat{A}_b^{\rm T}\bar{\boldsymbol{\lambda}}(k+1) \\
       =& -M_1^{\rm T}[\mu(k)+\rho(M_1 \tilde{\boldsymbol{y}}(k+1)+ M_2 \tilde{\boldsymbol{\theta}}(k))]-\hat{A}_b^{\rm T}\bar{\boldsymbol{\lambda}}(k+1) - \hat{A}_b^{\rm T}(\boldsymbol{\lambda}(k+1)-\bar{\boldsymbol{\lambda}}(k+1))\\
        = &  -M_1^{\rm T}\mu(k+1)+\rho M_1^{\rm T}M_2(\tilde{\boldsymbol{\theta}}(k+1)-\tilde{\boldsymbol{\theta}}(k)) -\hat{A}_b^{\rm T}\bar{\boldsymbol{\lambda}}(k+1) - \hat{A}_b^{\rm T}(\boldsymbol{\lambda}(k+1)-\bar{\boldsymbol{\lambda}}(k+1)) \\
      = &  -M_1^{\rm T}\mu(k+1) -\hat{A}^{\rm T}\bar{\lambda}(k+1) +\rho M_1^{\rm T}M_2(\tilde{\boldsymbol{\theta}}(k+1)-\tilde{\boldsymbol{\theta}}(k))- \hat{A}_b^{\rm T}\boldsymbol{\epsilon}_2(k+1),
    \end{aligned}
\end{equation*}
where the last equality holds by
\begin{equation*}
    \hat{A}_b^{\rm T}\bar{\boldsymbol{\lambda}}(k+1) =  \hat{A}^{\rm T} \bar{\lambda}(k+1).
\end{equation*}
Similarly, the subgradient of the function $G$ at optimal solution $\tilde{\boldsymbol{\theta}}(k+1)$ after $(k+1)$-th iteration can also be simplified using iterative step \eqref{eq:step_ADMM_mu}. Ultimately, after $k+1$ iterations, we obtain,

\begin{align}
\begin{split}
   & -M_1^{\rm T}\mu(k+1) -\hat{A}^{\rm T}\bar{\lambda}(k+1) +\rho M_1^{\rm T}M_2(\tilde{\boldsymbol{\theta}}(k+1)-\tilde{\boldsymbol{\theta}}(k))- \hat{A}_b^{\rm T}\boldsymbol{\epsilon}_2(k+1)  \\
    & \hspace{50mm} \in \partial_{\tilde{\boldsymbol{y}}}(H(\tilde{\boldsymbol{y}}(k+1))+\mathcal{I}_{\tilde{\Omega}}(\tilde{\boldsymbol{y}}(k+1))),
\end{split} \\
& -M_2^{\rm T}\mu(k+1) = 0
\end{align}

Furthermore, according to Lemma \ref{lem:thm_cor}, we know the sequence $\{V(k)+2 \boldsymbol{\zeta}(k)^{\rm T}U_2\tilde{\boldsymbol{\epsilon}}(k)\}_{k \geq 0}$ is bounded. Consequently, the terms $\|M_2(\tilde{\boldsymbol{\theta}}(k)-\tilde{\boldsymbol{\theta}}^*)\|^2$, $\|\bar{\boldsymbol{\lambda}}(k)-\boldsymbol{\lambda}^*\|^2$, and $\|\mu(k)-\mu^*\|^2$ within the sequence are bounded. Additionally, given that the matrix $M_2$ is of full column rank, it can be inferred that the sequences $\{\tilde{\boldsymbol{y}}(k)\}_{k \geq 0}$, $\{\tilde{\boldsymbol{\theta}}(k)\}_{k \geq 0}$, $\{\bar{\lambda}(k)\}_{k \geq 0}$, and $\{\mu(k)\}_{k \geq 0}$ are also bounded (the boundedness of $\{\tilde{\boldsymbol{y}}(k)\}_{k \geq 0}$ is given in Lemma \ref{lem:bounded}). Since every bounded sequence necessarily contains a convergent subsequence, there exists a set of indices $k_s$ such that the subsequences of the above sequence converge to the limit points $\tilde{\boldsymbol{y}}^{\infty}$, $\tilde{\boldsymbol{\theta}}^{\infty}$, $\bar{\lambda}^{\infty}$, and $\mu^{\infty}$, respectively. Moreover, based on inequality \eqref{eq:V_0_Sum}, it can be deduced that as $k$ approaches infinity, $\|M_2(\tilde{\boldsymbol{\theta}}(k+1)-\tilde{\boldsymbol{\theta}}(k))\|$ tends towards 0. Combining this with statement $(\romannumeral 2)$ of Lemma \ref{lem:thm_cor}, we have
\begin{equation*}
\begin{aligned}
   & \lim\nolimits_{s \rightarrow\infty} -M_1^{\rm T}\mu(k_s) -\hat{A}^{\rm T}\bar{\lambda}(k_s) +\rho M_1^{\rm T}M_2(\tilde{\boldsymbol{\theta}}(k_s)-\tilde{\boldsymbol{\theta}}(k_s-1))- \hat{A}_b^{\rm T}\boldsymbol{\epsilon}_2(k_s) 
    = -M_1^{\rm T}\mu^{\infty} - \hat{A}^{\rm T}\bar{\lambda}^{\infty}, \\
    & \lim\nolimits_{s \rightarrow\infty} -M_2^{\rm T}\mu(k+1) = -M_2^{\rm T}\mu^{\infty}.
\end{aligned}
\end{equation*}
From the fact that the graph of the subgradient mapping is a closed set, it can be inferred that
\begin{equation*}
    -M_1^{\rm T}\mu^{\infty} - \hat{A}^{\rm T}\bar{\lambda}^{\infty} \in \partial_{\tilde{\boldsymbol{y}}}(H(\tilde{\boldsymbol{y}}^{\infty})+\mathcal{I}_{\tilde{\Omega}}(\tilde{\boldsymbol{y}}^{\infty})), \quad -M_2^{\rm T}\mu^{\infty} =0.
\end{equation*}
Additionally, based on \eqref{eq:lim_con_1} and \eqref{eq:lim_con_2}, we have
\begin{equation*}
   \lim\nolimits_{s \rightarrow\infty} \hat{A} \tilde{\boldsymbol{y}}(k_s) -\boldsymbol{d}
    = \hat{A} \tilde{\boldsymbol{y}}^{\infty} -\boldsymbol{d}=0, \quad
    \lim\nolimits_{s \rightarrow\infty} M_1\tilde{\boldsymbol{y}}(k_s)+M_2\tilde{\boldsymbol{\theta}}(k_s)=M_1\tilde{\boldsymbol{y}}^{\infty}+M_2\tilde{\boldsymbol{\theta}}^{\infty} = 0.
\end{equation*}
Therefore, $\tilde{\boldsymbol{y}}^{\infty}$, $\tilde{\boldsymbol{\theta}}^{\infty}$, $\bar{\lambda}^{\infty}$, and $\mu^{\infty}$ satisfy the KKT conditions for optimization problem \ref{eq:Distri_ADMM}, and hence $(\tilde{\boldsymbol{y}}^{\infty},\tilde{\boldsymbol{\theta}}^{\infty},\bar{\lambda}^{\infty},\mu^{\infty})=(\tilde{\boldsymbol{y}}^*,\tilde{\boldsymbol{\theta}}^*,\bar{\lambda}^*,\mu^*)$. Thus, by taking the limit of the sequence specified in statement $(\romannumeral 3)$ of Lemma \ref{lem:thm_cor} in the sense of subsequences, we obtain that
\begin{equation*}
   \lim\nolimits_{s \rightarrow \infty} \rho\sigma \|M_2(\tilde{\boldsymbol{\theta}}(k_s)-\tilde{\boldsymbol{\theta}}^*)\|^2+\frac{\sigma}{\rho}\|\mu(k_s)-\mu^*\|^2+\|\bar{\boldsymbol{\lambda}}(k_s)-\boldsymbol{\lambda}^*\|^2+2\sigma^2\|\boldsymbol{\xi}(k_s)-\hat{A}_b\tilde{\boldsymbol{y}}^*\|^2 = 0.
\end{equation*}
Meanwhile, Lemma \ref{lem:thm_cor} informs us that the above sequence is convergent, hence all limit points of its subsequences are identical, that is,
\begin{equation*}
   \lim\nolimits_{k \rightarrow \infty} \rho\sigma \|M_2(\tilde{\boldsymbol{\theta}}(k)-\tilde{\boldsymbol{\theta}}^*)\|^2+\frac{\sigma}{\rho}\|\mu(k)-\mu^*\|^2+\|\bar{\boldsymbol{\lambda}}(k)-\boldsymbol{\lambda}^*\|^2+2\sigma^2\|\boldsymbol{\xi}(k)-\hat{A}_b\tilde{\boldsymbol{y}}^*\|^2 = 0.
\end{equation*}
Furthermore, we have
\begin{align*}
        &\lim\nolimits_{k\rightarrow \infty } \|\mu(k)-\mu^*\|=0,  \\
       & \lim\nolimits_{k\rightarrow \infty }\|M_2(\tilde{\boldsymbol{\theta}}(k)-\tilde{\boldsymbol{\theta}}^*)\| =0,\\ 
       & \lim\nolimits_{k\rightarrow \infty } \|\boldsymbol{\xi}(k)-\hat{A}_b\tilde{\boldsymbol{y}}^*\|=\lim\nolimits_{k\rightarrow \infty } \|\hat{A}_b\tilde{\boldsymbol{y}}(k)-\hat{A}_b\tilde{\boldsymbol{y}}^*-\bar{\boldsymbol{\eta}}(k)\|=0, \\
       & \lim\nolimits_{k\rightarrow \infty } \|\bar{\boldsymbol{\lambda}}(k)-\boldsymbol{\lambda}^* \|=0.
\end{align*}
From the above equation, it is straightforward to deduce that $\mu(k)\rightarrow \mu^*$. Given that the matrix $M_2$ is column full-rank (i.e., $M_2^{\rm T}M_2 \succ 0$), it follows that $\tilde{\boldsymbol{\theta}}(k) \rightarrow \tilde{\boldsymbol{\theta}}^*$. With $\bar{\eta} \rightarrow 0$  and matrix $\hat{A}_b$ being column full-rank, we can conclude that $\tilde{\boldsymbol{y}}(k) \rightarrow \tilde{\boldsymbol{y}}^*$. Finally, for the dual variable $\lambda$, we have
\begin{equation*}
    0 \leq \lim_{k \rightarrow \infty} \|\boldsymbol{\lambda}(k)-\boldsymbol{\lambda}^*\| \leq \lim_{k \rightarrow \infty} \|\boldsymbol{\lambda}(k)-\bar{\boldsymbol{\lambda}}(k)\|+\lim_{k \rightarrow \infty} \|\bar{\boldsymbol{\lambda}}(k)-\boldsymbol{\lambda}^*\|=\lim_{k \rightarrow \infty} \|\boldsymbol{\epsilon}_2(k)\|+\lim_{k \rightarrow \infty} \|\bar{\boldsymbol{\lambda}}(k)-\boldsymbol{\lambda}^*\|=0.
\end{equation*}
By the definitions of $\boldsymbol{\lambda}(k)$ and $\boldsymbol{\lambda}^*$, this conclusion is equivalent to
\begin{equation*}
    \lim_{k \rightarrow \infty} \|\lambda_i(k)-\lambda^*\| =0, \quad \forall i \in \{1,\dots,N\}.
\end{equation*}
Thus far, statements $(\romannumeral 3)$ and $(\romannumeral 4)$ in Theorem \ref{thm:convergence} have been proven.
\endproof

\subsection{Proof of Proposition \ref{pro:SDP_IR}}\label{proof:pro_sd}

\proof

We show that participants are individually rational under the optimal price signal $\pi_i^*$. From Assumption \ref{ass:str_conv}, $f_i$ is convex in $\boldsymbol{x}_i$. Thus, we have
\begin{equation}\label{eq:convex_f}
    f_i(\boldsymbol{0},\boldsymbol{x}_{-i}^*) \geq f_i(\boldsymbol{x}_i^*,\boldsymbol{x}_{-i}^*) -\left(\nabla_{ \boldsymbol{x}_i}f_i(\boldsymbol{x}_i^*,\boldsymbol{x}_{-i}^*)\right)^{\rm T}\boldsymbol{x}_i^*, \quad \forall i.
\end{equation}
Similarly, Assumption \ref{ass:str_conv} implies that each component constraint function $g_i^j (j=1,\dots,q_i)$ is convex in $\boldsymbol{x}_i$. Therefore, by this convexity, we derive that
\begin{equation}\label{eq:convex_g}
    g_i^j(\boldsymbol{0}) \geq g_i^j(\boldsymbol{x}_i^*) -\left(\nabla g_i^j(\boldsymbol{x}_i^*)\right)^{\rm T}\boldsymbol{x}_i^*.
\end{equation}
Combined with the setting $f_i(\boldsymbol{0},\boldsymbol{x}_{-i}^*)=0$ in Assumption \ref{ass:str_conv}, inequalities \eqref{eq:convex_f} becomes
\begin{equation}\label{eq:convex_f_i}
        f_i(\boldsymbol{x}_i^*,\boldsymbol{x}_{-i}^*) -\left(\nabla_{ \boldsymbol{x}_i}f_i(\boldsymbol{x}_i^*,\boldsymbol{x}_{-i}^*)\right)^{\rm T}\boldsymbol{x}_i^* \leq 0, \quad \forall i.   
\end{equation}

Next, we calculate the benefit that each participant receives for participating in the task
\begin{equation*}
    \begin{aligned}
        u_i(\boldsymbol{x}_i^*,\boldsymbol{x}_{-i}^*) & = f_i(\boldsymbol{x}_i^*,\boldsymbol{x}_{-i}^*)- {\boldsymbol{\pi}_i^*}^{\rm T} \boldsymbol{x}_i^* \\
        & = f_i(\boldsymbol{x}_i^*,\boldsymbol{x}_{-i}^*)- \left(A_i^{\rm T}\lambda^* - \nabla_{ \boldsymbol{x}_i}\sum\nolimits_{s\neq i} f_s(\boldsymbol{x}^*)\right)^{\rm T} \boldsymbol{x}_i^*  \\
       & = f_i(\boldsymbol{x}_i^*,\boldsymbol{x}_{-i}^*)- \left(\nabla_{ \boldsymbol{x}_i}f_i(\boldsymbol{x}_i^*,\boldsymbol{x}_{-i}^*)+ (J_{g_i}(\boldsymbol{x}_i^*))^{\rm T} \alpha_{i}^*\right)^{\rm T} \boldsymbol{x}_i^* \\
       & = f_i(\boldsymbol{x}_i^*,\boldsymbol{x}_{-i}^*)- \left(\nabla_{ \boldsymbol{x}_i}f_i(\boldsymbol{x}_i^*,\boldsymbol{x}_{-i}^*)\right)^{\rm T}\boldsymbol{x}_i^*- {\alpha_{i}^*}^{\rm T} J_{g_i}(\boldsymbol{x}_i^*)\boldsymbol{x}_i^* \\
       & \leq   -\sum\nolimits_j{\alpha_{i,j}^*} \left(\nabla g_i^j(\boldsymbol{x}_i^*)\right)^{\rm T}\boldsymbol{x}_i^*\\
       & \leq  \sum\nolimits_j{\alpha_{i,j}^*}g_i^j(\boldsymbol{0})- \sum\nolimits_j\alpha_{i,j}^*g_i^j(\boldsymbol{x}_i^*)\\
       & \leq 0,
    \end{aligned}
\end{equation*}
where $J_{g_i}(\boldsymbol{x}_i)$ represents the Jacobi matrix of the function $g_i$. The first equality follows from the definition of $\pi_i^*$, the second equality employs substitution using equation \eqref{eq:sta_con_c}, the first inequality applies inequality \eqref{eq:convex_f_i}, the second inequality applies inequality \eqref{eq:convex_g}, and the final inequality leverages equation $\alpha_i^* \odot g_i(\boldsymbol{x}_i^*)$  in conjunction with the condition $\alpha_i^*\geq 0$ and $0 \in \Omega_i$.
\endproof

\subsection{Proof of Proposition \ref{pro:NE_eq_OP}}\label{proof:pro_NE}

\proof
We start with the existence of the equilibrium solution. The Lagrangian function associated with the optimization problem \ref{eq:Distributed} is 
\begin{equation*}
    \begin{aligned}
        \mathcal{L} & =\sum\nolimits_{i}f_i(\boldsymbol{x}) - \lambda^{\rm T}\left(\sum\nolimits_{i} A_i\boldsymbol{x}_{i} -\boldsymbol{d}\right) +\sum\nolimits_i\alpha_i^{\rm T}g_i(\boldsymbol{x}_i).
    \end{aligned}
\end{equation*}
By assumption \ref{ass:saddle}, the optimal solution $\boldsymbol{x}^*={\rm col}(\boldsymbol{x}_1^*,\dots,\boldsymbol{x}_N^*)$ and dual optimal solutions $\lambda^*$, $\alpha_1^*, \dots, \alpha_N^*$ satisfy the following Karush-Kuhn-Tucker(KKT) condition.
\begin{itemize}
    \item Stability conditions: 
    \begin{equation}\label{eq:sta_con_c}
    \begin{aligned}
         \nabla_{\boldsymbol{x}_i}\sum\nolimits_s f_s(\boldsymbol{x}^*) - A_i^{\rm T} \lambda^*  + (J_{g_i}(\boldsymbol{x}_i^*))^{\rm T} \alpha_{i}^*=0 \quad \forall i.
    \end{aligned}      
    \end{equation}
    \item Initial feasibility conditions: 
    \begin{equation*}
        \begin{aligned}
            & \sum\nolimits_i A_i \boldsymbol{x}_i^*  = \boldsymbol{d}, \\
            & g_i(\boldsymbol{x}_i^*) \leq \boldsymbol{0}, \quad \forall i.
        \end{aligned}
    \end{equation*}
    \item Dual feasibility conditions:
    \begin{equation*}
        \begin{aligned}
            & \alpha_i^* \geq 0, \quad \forall i.
        \end{aligned}
    \end{equation*}
    \item Complementary slackness conditions:
    \begin{equation}
        \begin{aligned}
            & {\lambda^*}^{\rm T}\left(\sum\nolimits_{i} A_i\boldsymbol{x}_{i}^* -\boldsymbol{d}\right)=0, \notag\\
            & \alpha_i^* \odot g_i(\boldsymbol{x}_i^*)=0, \quad \forall i. \label{eq:con_sla_c}
        \end{aligned}
    \end{equation}
\end{itemize}

Similarly, we consider the Lagrangian function and the KKT condition for the optimization problem \ref{eq:ind_supply} under the optimal price signal $\pi_i^*$ in the shadow pricing mechanism.
\begin{equation*}
    \begin{aligned}
        \mathcal{L}_i(\boldsymbol{x}_i,\alpha_i) & = f_i(\boldsymbol{x}_i,\boldsymbol{x}_{-i}) -{\boldsymbol{\pi}_i^*}^{\rm T} \boldsymbol{x}_i +\alpha_i^{\rm T}g_i(\boldsymbol{x}_i) \\
        &=f_i(\boldsymbol{x}_i,\boldsymbol{x}_{-i}) -\left(A_i^{\rm T}\lambda^* - \nabla_{ \boldsymbol{x}_i}\sum\nolimits_{s\neq i} f_s(\boldsymbol{x}^*)\right)^{\rm T} \boldsymbol{x}_i +\alpha_i^{\rm T}g_i(\boldsymbol{x}_i).
    \end{aligned}
\end{equation*}
If optimization problems $\mathcal{P}_1^s, \dots, \mathcal{P}_N^s$ have an equilibrium solution $\boldsymbol{x}^{E},\boldsymbol{\alpha}^{E}$, then this equilibrium solution should satisfy the following KKT conditions for any $i \in \{1,\dots, N\}$.
\begin{itemize}
    \item Stability conditions: 
    \begin{equation}\label{eq:sta_con}
    \begin{aligned}
        \nabla_{ \boldsymbol{x}_i}f_i(\boldsymbol{x}_i^{E},\boldsymbol{x}_{-i}^{E}) - A_i^{\rm T}\lambda^* + \nabla_{ \boldsymbol{x}_i}\sum\nolimits_{s\neq i} f_s(\boldsymbol{x}^*)  + (J_{g_i}(\boldsymbol{x}_i^{E}))^{\rm T} \alpha_{i}^{E}=0.
    \end{aligned}      
    \end{equation}
    \item Initial feasibility conditions: 
    \begin{equation}\label{eq:ini_con}
        \begin{aligned}
            & g_i(\boldsymbol{x}_i^{E}) \leq \boldsymbol{0}.
        \end{aligned}
    \end{equation}
    \item Dual feasibility conditions:
    \begin{equation}\label{eq:dua_con}
        \begin{aligned}
            \alpha_i^{E} \geq 0.
        \end{aligned}
    \end{equation}
    \item Complementary slackness conditions:
    \begin{equation}\label{eq:com_con}
        \begin{aligned}
            & \alpha_i^{E} \odot g_i(\boldsymbol{x}_i^{E}) = 0.
        \end{aligned}
    \end{equation}
\end{itemize}
It is easy to verify that the optimal solution $\boldsymbol{x}_1^*,\dots,\boldsymbol{x}_N^*$ and the optimal dual variables $\alpha_1^*,\dots,\alpha_N^*$ satisfy equation \eqref{eq:sta_con}-\eqref{eq:com_con}. So far, we have demonstrated that the equilibrium solution of the game exists.

Next, Assumption \ref{ass:NE_uni} implies that function $F$ is strictly monotonic. Furthermore, based on Theorem 2.2.3 in \cite{facchinei2003finite}, the following variational inequality problem has at most one solution.
\begin{equation*}
    \text{Find\ } \boldsymbol{x}^E \in \mathcal{X}, \text{\ such that\ } \langle F(\boldsymbol{x}), \boldsymbol{x}-\boldsymbol{x}^E \rangle \geq 0, \ \forall \boldsymbol{x} \in   \mathcal{X}.
\end{equation*}
Based on the preceding analysis, solution $\boldsymbol{x}^*$ satisfies the aforementioned variational inequality. Therefore, a solution exists, and according to the theorem, this solution is unique. 

Therefore, under the Assumption \ref{ass:const_set}, \ref{ass:saddle}, \ref{ass:str_conv} and \ref{ass:LICQ}, the optimal solution $\boldsymbol{x}^*$ and the optimal dual variables $\lambda^*,\alpha_1^*,\dots,\alpha_N^*$ of problem \ref{eq:Distributed} are unique. Furthermore, under the unique optimal price signal $\boldsymbol{\pi}_i^*$, the optimal solution $\boldsymbol{x}^*$ coincides with the equilibrium of problems $\mathcal{P}_1^s, \dots, \mathcal{P}_N^s$. Since the equilibrium solution $\boldsymbol{x}^E$ is unique, it must coincide with the optimal solution.
\endproof

\section{Notes on Examples}\label{sup_mat:Notes}
\subsection{Notes on Example \ref{example:allo}}
In this section, we provide supplementary analysis regarding the convexity of the function $f_i$ defined in Example \ref{example:allo}. For simplicity, we combine variables into vector form according to specific rules, as follows:
\begin{equation*}
    \begin{aligned}
    & \boldsymbol{x}_{ijk} = {\rm col}(x_{ijk1}, \dots, x_{ijkr}, \dots, x_{ijkR_{ij}}),\quad  \boldsymbol{x}_{ij} = {\rm col}(\boldsymbol{x}_{ij1},\dots , \boldsymbol{x}_{ijk},\dots,\boldsymbol{x}_{ijK}),\\
   & \boldsymbol{x}_i = {\rm col}(\boldsymbol{x}_{i1}, \dots, \boldsymbol{x}_{ij}, \dots,\boldsymbol{x}_{iM}), \quad  \boldsymbol{c}_i = {\rm col}(\boldsymbol{c}_{i1}, \dots, \boldsymbol{c}_{ie}, \dots,\boldsymbol{c}_{iE}),
    \end{aligned}
\end{equation*}
where $R_{ij}$ denotes the total number of paths in set $\mathcal{P}_{ij}$, and $E$ represents the number of edges in graph $\mathcal{G}_{\mathrm{tra}}$. Meanwhile, we design the indicator function $\mathbf{1}_{ijkr}(e)$ as
\begin{equation*}
    \mathbf{1}_{ijkr}(e)=\left\{
    \begin{array}{cl}
      1,   & \text{if}\  r\in P_{ij}, \  e\in r,  \\
      0,   & \text{if}\  r\in P_{ij}, \ e\notin r,
    \end{array}
    \right.
\end{equation*}
and define
\begin{equation*}
\begin{aligned}
   & \mathbf{1}_{ijk}(e) = {\rm col}(\mathbf{1}_{ijk1}(e), \dots, \mathbf{1}_{ijkr}(e),\dots, \mathbf{1}_{ijkR_{ij}}(e)), \quad  \mathbf{1}_{ij}(e) = {\rm col}(\mathbf{1}_{ij1}(e),\dots , \mathbf{1}_{ijk}(e),\dots,\mathbf{1}_{ijK}(e)),\\
   & \mathbf{1}_{i}(e) = {\rm col}(\mathbf{1}_{i1}(e), \dots, \mathbf{1}_{ij}(e), \dots,\mathbf{1}_{iM}(e)).
\end{aligned}
\end{equation*}
Then, we can rewrite the total traffic on edge $e$ in the above notation as
\begin{equation*}
    q_e(\boldsymbol{x})= \sum\nolimits_{i=1}^N \mathbf{1}_i(e)^{\rm T} \boldsymbol{x}_i.
\end{equation*}
The vector consisting of the total traffic on each edge can be expressed as
\begin{equation*}
    {\rm col}\left(q_{e_1}(\boldsymbol{x}),q_{e_2}(\boldsymbol{x}), \dots, q_{e_E}(\boldsymbol{x})\right) = \sum\nolimits_{i=1}^N Q_i \boldsymbol{x}_i,
\end{equation*}
where the matrix $Q_i$ is defined by 
$\begin{pmatrix}
        \mathbf{1}_{i}(e_1) & \mathbf{1}_{i}(e_2) & \dots & \mathbf{1}_{i}(e_E) 
    \end{pmatrix}^{\rm T}$.
The matrix $Q_i$ indicates whether participant $i$ has an available transportation path on each edge $e$ of the network $\mathcal{G}$. We can further define the vector-valued cost function $C(\boldsymbol{x})$ as:
\begin{equation*}
    C(\boldsymbol{x}):= {\rm col}\left(c_{e_1}(q_{e_1}(\boldsymbol{x})), c_{e_2}(q_{e_2}(\boldsymbol{x})), \dots,  c_{e_E}(q_{e_E}(\boldsymbol{x}))\right).
\end{equation*}
Thus, we can rewrite the objective function $f(\boldsymbol{x})$ in the optimization problem \eqref{eq:example-tran} as:
\begin{equation*}
    f(\boldsymbol{x})=C(\boldsymbol{x})^{\rm T} \left(\sum\nolimits_{i=1}^N  Q_i \boldsymbol{x}_i \right) + \sum\nolimits_{i=1}^N \boldsymbol{c}_i^{\rm T}Q_i\boldsymbol{x}_i.
\end{equation*}
When the function $C(\boldsymbol{x})$ has the linear form
\begin{equation}\label{eq:function_c}
    C(\boldsymbol{x})={\rm col}(c_{e_1}\cdot q_{e_1}(\boldsymbol{x}), c_{e_2}\cdot q_{e_2}(\boldsymbol{x}), \dots, c_{e_E}\cdot q_{e_E}(\boldsymbol{x})),
\end{equation}
then the objective function $f(\boldsymbol{x})$ has the form
\begin{equation*}
    f(\boldsymbol{x})=\left(\boldsymbol{c}_e \odot \sum\nolimits_{i=1}^N  Q_i \boldsymbol{x}_i \right)^{\rm T} \left(\sum\nolimits_{i=1}^N  Q_i \boldsymbol{x}_i \right) + \sum\nolimits_{i=1}^N \boldsymbol{c}_i^{\rm T}Q_i\boldsymbol{x}_i,
\end{equation*}
where $\boldsymbol{c}_e={\rm col}(c_{e_1},c_{e_2},\dots,c_{e_E})$ and the operator $\odot$ denotes element-wise multiplication.

Then, for each participant, the decomposition according to individual actual cost is
\begin{equation}\label{eq:actual_f}
    f_i^{\rm actual}(\boldsymbol{x})=\left(\boldsymbol{c}_e \odot \sum\nolimits_{s=1}^N  Q_s \boldsymbol{x}_s \right)^{\rm T}  Q_i \boldsymbol{x}_i  +  \boldsymbol{c}_i^{\rm T}Q_i\boldsymbol{x}_i.
\end{equation}
However, with such a decomposition, $f_i^{\rm actual}(\boldsymbol{x})$ is not guaranteed to be a convex function for $\boldsymbol{x}$. Thus, we need to perform a different decomposition on the objective function $f(\boldsymbol{x})$ such that the resulting function $f_i(\boldsymbol{x})$ is convex with respect to $\boldsymbol{x}$. For this purpose, we define the following two matrices,
\begin{equation*}
 Q_0 = {\rm diag}\left(\sum\nolimits_{i=1}^N Q_i \mathbf{1} \right), \quad \bar{Q}_i = {\rm diag} \left( Q_0^{-1}Q_i \mathbf{1} \right), \forall i.
\end{equation*}
The value of the $s$-th diagonal element in the diagonal matrix $\bar{Q}_i$ is $\frac{\sum_{j,k}\sum_{r \in \mathcal{P}_{i,j}} \boldsymbol{1}_{\{e_s \in r\} }}{\sum_{i',j,k}\sum_{r \in \mathcal{P}_{i,j}} \boldsymbol{1}_{\{e_s \in r\} }}$, which is denoted by $\kappa_{i,e_s}$ as defined in Example \ref{example:allo}.
Then, the function $f_i(\boldsymbol{x})$ obtained after readjusting the decomposition scheme is
\begin{equation}\label{eq:convexity}
    f_i(\boldsymbol{x})=\left(\boldsymbol{c}_e \odot \sum\nolimits_{s=1}^N  Q_s \boldsymbol{x}_s \right)^{\rm T} \bar{Q}_i\left(\sum\nolimits_{s=1}^NQ_s\boldsymbol{x}_s\right)+\boldsymbol{c}_i^{\rm T}Q_i\boldsymbol{x}_i,
\end{equation}
which is consistent with the expression in Example \ref{example:allo}.

\begin{remark}
    The diagonal elements of the diagonal matrix $Q_0$ are all positive numbers, because if one of the elements were zero, it would indicate that no routes from any nodes traverse the corresponding edge $e$, making this edge removable from the transportation network $\mathcal{G}$ without impacting the outcome. Consequently, the diagonal matrix $Q_0^{-1}$ must exist. 
\end{remark}

Next, we verify convexity for $f_i(\boldsymbol{x})$ in the equation \ref{eq:convexity}. From its functional expression, it is clear that $f_i$ is twice continuously differentiable. The Hessian matrix of $\boldsymbol{x}$ is given by
\begin{equation*}
\begin{aligned}
    \nabla^2f_i(\boldsymbol{x}) & = 
        \begin{pmatrix}
            \frac{\partial^2}{\partial \boldsymbol{x}_1^2}f_i &  \frac{\partial^2}{\partial \boldsymbol{x}_1 \partial \boldsymbol{x}_2}f_i & \cdots & \frac{\partial^2}{\partial \boldsymbol{x}_1 \partial \boldsymbol{x}_N}f_i \\
            \frac{\partial^2}{\partial \boldsymbol{x}_2 \partial \boldsymbol{x}_1}f_i &  \frac{\partial^2}{\partial \boldsymbol{x}_2^2}f_i & \cdots & \frac{\partial^2}{\partial \boldsymbol{x}_2 \partial \boldsymbol{x}_N}f_i \\
            \vdots & \vdots & \ddots & \vdots  \\
            \frac{\partial^2}{\partial \boldsymbol{x}_N \partial \boldsymbol{x}_1}f_i &  \frac{\partial^2}{\partial \boldsymbol{x}_N \partial \boldsymbol{x}_2}f_i & \cdots & \frac{\partial^2}{ \partial \boldsymbol{x}_N^2}f_i  
        \end{pmatrix} \\
        &=  \ 2
        \begin{pmatrix}
            Q_1^{\rm T} \\
            Q_2^{\rm T} \\
            \vdots \\
            Q_N^{\rm T}
        \end{pmatrix}  \bar{Q}_i 
        \begin{pmatrix}
           \boldsymbol{c}_e \odot Q_1 & \boldsymbol{c}_e \odot Q_2 & \cdots & \boldsymbol{c}_e \odot Q_N
        \end{pmatrix} \\
        & \succeq  \  0.
\end{aligned}
\end{equation*}
Thus, according to the second-order condition for convexity, the function $f_i$ is convex with respect to $\boldsymbol{x}$. Simultaneously, all constraints in Example \ref{example:allo} are linear. Therefore, $g_i$ is necessarily convex with respect to $\boldsymbol{x}$, thereby satisfying statement $(\romannumeral 1)$ of Assumption \ref{ass:const_set}. Regarding statement $(\romannumeral 2)$ of Assumption \ref{ass:const_set}, it holds as long as the boundary values in Example \ref{example:allo} are well-defined.

\subsection{Notes on Example \ref{ex:simple_case}}

We provide some explanations for the calculations in Example \ref{ex:simple_case}. First, it can be readily verified that when the objective function in the optimization problem \eqref{eq:example-tran} adopts the form given by equation \eqref{eq:actual_f}, Assumptions \ref{ass:str_conv}-\ref{ass:NE_uni} clearly hold.

\begin{remark}
    We emphasize that equation \eqref{eq:actual_f} is employed here instead of equation \eqref{eq:convexity} because the construction of \eqref{eq:convexity} does not represent the participant's actual cost. It is introduced solely to ensure the convergence of the distributed algorithm and the computability of the optimal solution. Throughout the mechanism analysis, we exclusively utilize the participant's actual cost function $f_i^{\rm actual}$.
\end{remark}

According to problem \eqref{eq:example-tran}, and with the parameters set to $N = 3$, $M = 1$, $K = 1$, $R = 1$, the overall objective function is given by
\begin{equation}\label{eq:note_tran1}
\begin{aligned}
         \min_{x_1,x_2,x_3} \quad & \sum_{i=1}^3 \left[c_0 \cdot x_{i}\cdot \left(x_i+\sum_{s=1}^3 x_s\right) +
          (c_{ii}+c_{i4})  \cdot x_{i} \right] \\
        {\rm s.t.} \quad \  & x_1+x_2+x_3 =d, \\
        & x_{i}\geq 0,
\end{aligned}
\end{equation}
where $c_{ij}$ denotes the element at the $j$-th position of the vector $\boldsymbol{c}_i$ in Example \ref{ex:simple_case}. Substituting the parameter values specified in Example \ref{ex:simple_case}, the solution to optimization problem \ref{eq:note_tran1} can be obtained.

Subsequently, to calculate the payoff for each individual, the value of the price signal $\pi_i$ should first be computed according to equation \eqref{eq:LMP_price}, followed by calculating the payoff using the equation below
\begin{equation*}
    |u_i^*| = \pi_i^* - c_0 \cdot x_{i}^*\cdot \left(x_i^*+\sum_{s=1}^3 x_s^*\right) - (c_{ii}+c_{i4})  \cdot x_{i}^*.
\end{equation*}

\section{Algorithm improvement} \label{sup_mat:Alg_impro}

\subsection{Accelerated-Consensus-Tracking ADMM}

In many real-world optimization scenarios, the dimensionality of variable $\boldsymbol{y}_i(k)$ in each optimization problem $\mathcal{S}_{i,k}$ is extremely high. For instance, in Example \ref{example:allo}, the corresponding variable $\boldsymbol{y}_i$ exhibits an $NMKR$-dimensional configuration. The consequence of excessive dimensionality is that the algorithm cannot produce satisfactory results within a short time, which is unacceptable for many scenarios in reality. Therefore, we need to make some improvements to Algorithm \ref{alg:TCADMM}. 

For each individual convex constraint set $\Omega_i$ in optimization problem \ref{eq:Distributed}, we perform a linear approximation by replacing it with a nonempty, compact convex polygon $\Xi_i = \{ \boldsymbol{x}_i \ |\ B_i\boldsymbol{x}_i \leq \boldsymbol{m}_i\}$. In optimization problem \ref{eq:Distri_Conv_copy}, the set $\Xi_i$ can be written as $\tilde{\Xi}_i = \{\boldsymbol{y}_i \ |\ \tilde{B}_i\boldsymbol{y}_i \leq \boldsymbol{m}_i\}$, where $\tilde{B}_i = \boldsymbol{e}_{N,i}^{\rm T} \otimes B_i$.  Consider the highly sparse matrix $\tilde{B}_i$, it only imposes constraints on the variables $\boldsymbol{x}_i$, while the remaining parts consist entirely of zero elements to match the dimension of the decision variables $\boldsymbol{y}_i$. Since only a subset of the variables are actually constrained in the optimization problem $\mathcal{S}_{i,k}$, we can optimize the problem by focusing solely on these constrained variables. By transforming the remaining unconstrained variables into functions of the constrained ones, the dimensionality of the problem $\mathcal{S}_{i,k}$ can be significantly reduced, thereby achieving the goal of acceleration.

For brevity in expressions, we denote the objective function in the optimization problem $\mathcal{S}_{i,k}$ as $F_{i,k}$, i.e.,
\begin{equation}\label{eq:F_ik}
    F_{i,k}(\boldsymbol{y}_i) := h_i(\boldsymbol{y}_i)+\dfrac{\rho}{2} \text{deg}(i) \| \boldsymbol{y}_i-v_i(k)\|^2 +l_i(k)^{\rm T}\tilde{A}_i\boldsymbol{y}_i+ \dfrac{\sigma}{2} \| \tilde{A}_i\boldsymbol{y}_i-\tilde{A}_i\boldsymbol{y}_i(k)+\gamma_i(k)\|^2.
\end{equation}
Meanwhile, recalling the definition of $\boldsymbol{y}_i$, we use $\tilde{\boldsymbol{x}}_{-i}$ to denote the part of $\boldsymbol{y}_i$ excluding $\boldsymbol{x}_i$. Next, we denote by $\mathcal{F}_{i,k}$ the new function formed from $F_{i,k}$ by treating the part of the variable $\boldsymbol{y}_i$ involving $\boldsymbol{x}_{i}$ as known, i.e.,
    $\mathcal{F}_{i,k}(\tilde{\boldsymbol{x}}_{-i}) := F_{i,k}(\boldsymbol{y}_i | \boldsymbol{x}_{i}).$
Therefore, the following problem is an unconstrained optimization problem,
\begin{equation}\label{eq:unconstriant}
\tag{$\mathcal{U}_{i,k}$}
    \tilde{\boldsymbol{x}}_{-i}^{k} =  \argmin_{\tilde{\boldsymbol{x}}_{-i}}  \mathcal{F}_{i,k}(\tilde{x}_{-i}).
\end{equation}
\begin{assumption}\label{ass:explicit}
    The optimization problem \ref{eq:unconstriant} has an explicit solution.
\end{assumption}
Under Assumption \ref{ass:explicit}, the optimal solution to optimization problem \ref{eq:unconstriant} can be expressed as $\tilde{\boldsymbol{x}}_{-i}^k(\boldsymbol{x}_i)$, allowing us to treat $\tilde{\boldsymbol{x}}_{-i}^k$ as a function of $\boldsymbol{x}_i$. In optimization problem $\mathcal{S}_{i,k}$, we substitute the component $\tilde{\boldsymbol{x}}_{-i}$ in the variable $\boldsymbol{y}_i$ with $\tilde{\boldsymbol{x}}_{-i}^k(\boldsymbol{x}_i)$, thereby transforming the decision variables into $\boldsymbol{x}_i$ with low dimension. This yields the $k$+$1$-th updated value of $\boldsymbol{x}_i$, given by
\begin{equation*}
    \boldsymbol{x}_i(k+1) = \mathop{\mathrm{argmin}}\limits_{\boldsymbol{x}_i\in \Omega_i} F_{i,k}(\boldsymbol{x}_i,\tilde{\boldsymbol{x}}_{-i}^k(\boldsymbol{x}_i)).
\end{equation*}
Thus, we have revised Algorithm \ref{alg:TCADMM} to create Improved-Consensus-Tracking-ADMM (Algorithm \ref{alg:ATCADMM}). In comparison to Algorithm \ref{alg:TCADMM}, Algorithm \ref{alg:ATCADMM} demonstrates improved efficiency in dealing with sub-optimization problems. We use the symbols $\tilde{\boldsymbol{x}}_{-i,l}$ and $\tilde{\boldsymbol{x}}_{-i,r}$ to denote the first and second halves of the variable $\tilde{\boldsymbol{x}}_{-i}$ respectively, i.e., $\tilde{\boldsymbol{x}}_{-i,l}={\rm col}(\boldsymbol{x}_{i,1},\boldsymbol{x}_{i,2},\dots,\boldsymbol{x}_{i,i-1})$ and $\tilde{\boldsymbol{x}}_{-i,r} = {\rm col}(\boldsymbol{x}_{i,i+1},\boldsymbol{x}_{i,i+2},\dots,\boldsymbol{x}_{i,N})$.
\begin{algorithm}[thb]
    \caption{Improved-Consensus-Tracking-ADMM} \label{alg:ATCADMM}
    \begin{algorithmic}[1]
        \Statex \textbf{Initialization:} 
        \State Choose parameter $\sigma$, $\rho$, and set $k=0$. 
        \For {$i=1,\dots,N$(in parallel)}
        \State Choose the initial value $\boldsymbol{y}_{i}(0)  \in \tilde{\Omega}_i$    \label{line2_3}
        \State Set $\lambda_i(0)=0$, \ $\eta_i(0)=\tilde{A}_i\boldsymbol{y}_i(0)-\boldsymbol{d}/N$ \label{line2_4}
         \State Set $v_i(0)=\text{deg}(i)^{-1} \sum\nolimits_{\{i,j\} \in \mathcal{E}} \frac{1}{2} (\boldsymbol{y}_{i}(0)+\boldsymbol{y}_{j}(0))$ \label{line2_5}
        \EndFor
        \Statex \textbf{Iterations:}
        \While {Convergence is not reached} 
        \For {$i=1,\dots,N$(in parallel)}
        \State $\gamma_i(k)=\sum_{s\in \mathcal{N}_i} w_{is}\eta_s(k)$   \label{line2_9}
        \State $l_i(k)=\sum_{s\in \mathcal{N}_i} w_{is}\lambda_s(k)$  \label{line2_10}
        \State  $\boldsymbol{x}_i(k+1) = \mathop{\mathrm{argmin}}\limits_{\boldsymbol{x}_i\in \Omega_i} F_{i,k}(\boldsymbol{x}_i,\tilde{\boldsymbol{x}}_{-i}^k(\boldsymbol{x}_i))$
        \State $\tilde{\boldsymbol{x}}_{-i}(k+1)  = \tilde{\boldsymbol{x}}_{-i}^k(\boldsymbol{x}_i(k+1))$
        \State $\boldsymbol{y}_i(k+1)  = {\rm col}\left( \tilde{\boldsymbol{x}}_{-i,l}(k+1), \boldsymbol{x}_i(k+1), \tilde{\boldsymbol{x}}_{-i,r}(k+1)\right)$
        \State $\eta_i(k+1)=\gamma_i(k)+\tilde{A}_i\boldsymbol{y}_i(k+1)-\tilde{A}_i\boldsymbol{y}_i(k)$ 
        \State $\lambda_i(k+1)=l_i(k)+\sigma \eta_i(k+1)$
        \State $\delta_i(k+1)= \boldsymbol{y}_i(k+1)-\frac{1}{2}\boldsymbol{y}_i(k)$
        \State $v_i(k+1) = v_i(k) + \text{deg}(i)^{-1}\sum_{s\in \mathcal{N}_i}\delta_s(k+1)- \frac{1}{2}\boldsymbol{y}_i(k)$
        \EndFor
        \State  Set $k= k+1$
        \EndWhile
    \end{algorithmic}
\end{algorithm}

\subsection{Example of improved Algorithm}

We present an illustrative example where the objective function $h_i$ exhibits a quadratic structure and the constraint functions $g_i$ possess a linear structure.

\begin{example}\label{example:alg2}
    We demonstrate the accelerated approach for subproblem optimization using an example where the objective function is defined as $h_i(\boldsymbol{y}_i)=\frac{1}{2}\boldsymbol{y}_i^{\rm T}\Sigma_i \boldsymbol{y}_i+\psi _i^{\rm T}\boldsymbol{y}_i$ and the individual constraint set takes the form $\tilde{\Xi}_i=\{\boldsymbol{y}_i \ |\ B_i\boldsymbol{y}_i-\boldsymbol{m}_i \leq 0\}$.
\end{example}

Under the above setting, the equation \eqref{eq:F_ik} becomes
\begin{equation*}
\begin{aligned}
    F_{i,k}(\boldsymbol{y}_i) &= \frac{1}{2}\boldsymbol{y}_i^{\rm T}\Sigma_i \boldsymbol{y}_i+\dfrac{\rho}{2} \text{deg}(i) \| \boldsymbol{y}_i-v_i(k)\|^2 +l_i(k)^{\rm T}\tilde{A}_i\boldsymbol{y}_i+ \dfrac{\sigma}{2} \| \tilde{A}_i\boldsymbol{y}_i-\tilde{A}_i\boldsymbol{y}_i(k)+\gamma_i(k)\|^2  \\
    & = \frac{1}{2}\boldsymbol{y}_i^{\rm T}\Phi_i\boldsymbol{y}_i+\Psi_i(k)^{\rm T}\boldsymbol{y}_i,
\end{aligned}    
\end{equation*}
where
\begin{equation*}
    \begin{aligned}
         & \Phi_i= \Sigma_i+\rho {\rm deg}(i)I_{n_1+\dots+n_N}+\sigma \tilde{A}_i^{\rm T}\tilde{A}_i,\\
        & \Psi_i(k) = \psi_i-\rho {\rm deg}(i)v_i(k)+\tilde{A}_i^{\rm T}l_i(k)-\sigma \tilde{A}_i^{\rm T}\tilde{A}_i\boldsymbol{y}_i(k)+\sigma \tilde{A}_i^{\rm T} \gamma_i(k).
    \end{aligned}
\end{equation*}
Thus, the sub-optimization problem $\mathcal{S}_{i,k}$ is a quadratic programming with a compact feasible region, which can be formulated as follows,
\begin{equation}\label{eq:sub_opt_stand}
\tag{$\mathcal{S}_{i,k}'$}
    \boldsymbol{y}_i(k+1)=\argmin\limits_{\boldsymbol{y}_i \in \tilde{\Xi}_i} \left\{\dfrac{1}{2}\boldsymbol{y}_i^{\rm T}\Phi_i\boldsymbol{y}_i+\Psi_i(k)^{\rm T}\boldsymbol{y}_i\right\}.
\end{equation}

Then, define $n^0=n_1+\dots+n_N$. For the $n^0$-dimensional vector $v_i(k)$, we define
\begin{equation*}
    \begin{aligned}
       &v_{i,i}(k) = \left(\boldsymbol{e}_{N,i}^{\rm T} \otimes I_{n_i}\right) v_{i}(k), \quad  v_{i,-i}(k) = \left(\sum\nolimits_{s=1,s\neq i}^{N}\boldsymbol{e}_{N,s}^{\rm T} \otimes I_{n_s}\right)v_{i}(k),\\
       &\psi_{i,i} = \left(\boldsymbol{e}_{N,i}^{\rm T} \otimes I_{n_i}\right) \psi_i, \quad  \psi_{i,-i} = \left(\sum\nolimits_{s=1,s\neq i}^{N}\boldsymbol{e}_{N,s}^{\rm T} \otimes I_{n_s}\right)\psi_{i}.
    \end{aligned}
\end{equation*}
And for the $n^0$-dimensional symmetric matrix $\Sigma_i$, we perform some splitting and define the following matrices,
\begin{equation*}
    \begin{aligned}
       & \Sigma_{i,i}= \left(\boldsymbol{e}_{N,i}^{\rm T} \otimes I_{n_i}\right) \Sigma \left(\boldsymbol{e}_{N,i} \otimes I_{n_i}\right) \in \mathbb{R}^{\rm n_i\times n_i}, \\
       & \Sigma_{i,L} = \left(\boldsymbol{e}_{N,i}^{\rm T} \otimes I_{n_i}\right) \Sigma \left(\sum\nolimits_{s=1,s\neq i}^{N}\boldsymbol{e}_{N,s} \otimes I_{n_s}\right) \in \mathbb{R}^{\rm n_i\times (n^0- n_i)}, \\
       & \Sigma_{i,R} = \left(\sum\nolimits_{s=1,s\neq i}^{N}\boldsymbol{e}_{N,s}^{\rm T} \otimes I_{n_s}\right) \Sigma \left(\sum\nolimits_{s=1,s\neq i}^{N}\boldsymbol{e}_{N,s} \otimes I_{n_s}\right) \in \mathbb{R}^{\rm (n^0-n_i)\times (n^0-n_i)}.
    \end{aligned}
\end{equation*}
Additionally, define two more matrices,
\begin{equation*}
    \begin{aligned}
      &  \bar{\Sigma}_{i,i} = \Sigma_{i,i} + \rho {\rm deg}(i) I_{n_i} \in \mathbb{R}^{\rm n_i\times n_i}, \\
      &  \bar{\Sigma}_{i,R} = \Sigma_{i,R} + \rho {\rm deg}(i) I_{n^0-n_i} \in \mathbb{R}^{\rm (n^0-n_i) \times (n^0-n_i)},
    \end{aligned}
\end{equation*}

Next, we express the portion $\tilde{\boldsymbol{x}}_{-i}$ in $\boldsymbol{y}_i$ as an optimal reaction function of the constrained variable $\boldsymbol{x}_i$, and then reduce the dimension of the optimization problem $\mathcal{S}_{i,k}'$ to a constrained optimization problem with only respect to $\boldsymbol{x}_i$. To be more specific, we split the variable $\boldsymbol{y}_i$ into $\boldsymbol{x}_i$ and $\tilde{\boldsymbol{x}}_{-i}$, and transform the objective function in the optimization problem according to these two parts. For the quadratic term,
\begin{equation*}
    \begin{aligned}
        \frac{1}{2}\boldsymbol{y}_i^{\rm T}\Phi_i\boldsymbol{y}_i =& \frac{1}{2}\boldsymbol{y}_i^{\rm T} \left( \Sigma_i+\rho {\rm deg}(i)I_{n^0}+\sigma \tilde{A}_i^{\rm T}\tilde{A}_i\right)\boldsymbol{y}_i \\
        = & \frac{1}{2} \boldsymbol{x}_i^{\rm T} \bar{\Sigma}_{i,i} \boldsymbol{x}_i + \frac{1}{2}\tilde{\boldsymbol{x}}_{-i}^{\rm T} \bar{\Sigma}_{i,R} \tilde{\boldsymbol{x}}_{-i} + \left(\Sigma_{i,L}^{\rm T}\boldsymbol{x}_i\right)^{\rm T}\tilde{\boldsymbol{x}}_{-i} +  \frac{\sigma}{2}\boldsymbol{x}_i^{\rm T}A^{\rm T}A \boldsymbol{x}_i\\
        = & \frac{1}{2} \boldsymbol{x}_i^{\rm T} \left(\bar{\Sigma}_i + \sigma A^{\rm T}A \right)\boldsymbol{x}_i   + \frac{1}{2}\tilde{\boldsymbol{x}}_{-i}^{\rm T} \bar{\Sigma}_{i,R} \tilde{\boldsymbol{x}}_{-i} + \left(\Sigma_{i,L}^{\rm T}\boldsymbol{x}_i\right)^{\rm T}\tilde{\boldsymbol{x}}_{-i}. 
    \end{aligned}
\end{equation*}
For the linear term within $k$-th iteration,
\begin{equation*}
    \begin{aligned}
        \Psi_i(k)^{\rm T}\boldsymbol{y}_i =  & \left(\psi_i-\rho {\rm deg}(i)v_i(k)+\tilde{A}_i^{\rm T}l_i(k)-\sigma \tilde{A}_i^{\rm T}\tilde{A}_i\boldsymbol{y}_i(k)+\sigma \tilde{A}_i^{\rm T} \gamma_i(k)\right)^{\rm T} \boldsymbol{y}_i \\
        = & \psi_{i,i}^{\rm T} \boldsymbol{x}_i + \psi_{i,-i}^{\rm T}\tilde{\boldsymbol{x}}_{-i}- \rho {\rm deg}(i) v_{i,i}(k)^{\rm T}\boldsymbol{x}_i- \rho {\rm deg}(i) v_{i,-i}(k)^{\rm T}\tilde{\boldsymbol{x}}_{-i} + (A_i^{\rm T}l_i(k))^{\rm T}\boldsymbol{x}_i \\
        & - \sigma \boldsymbol{x}_i(k)^{\rm T}A_i^{\rm T}A_i \boldsymbol{x}_i + \sigma (A_i^{\rm T}\gamma_i(k))^{\rm T}\boldsymbol{x}_i   \\
        = & \left( \psi_{i,i} + A_i^{\rm T}l_i(k)+ \sigma A_i^{\rm T}\gamma_i(k) - \sigma A_i^{\rm T}A_i \boldsymbol{x}_{i}(k)- \rho {\rm deg}(i) v_{i,i}(k)\right)^{\rm T}\boldsymbol{x}_i \\
        & + \left(\psi_{i,-i}-\rho {\rm deg}(i) v_{i,-i}(k)\right)^{\rm T}\tilde{\boldsymbol{x}}_{-i}.
    \end{aligned}
\end{equation*}
From this, we can obtain the terms related to $\tilde{\boldsymbol{x}}_{-i}$. By treating the parts involving $\boldsymbol{x}_i$ as parameters, we can derive the best reaction function of $\tilde{\boldsymbol{x}}_{-i}$ with respect to $\boldsymbol{x}_i$. The function consisting of $\tilde{\boldsymbol{x}}_{-i}$ within $k$-th iteration is
\begin{equation*}
    \frac{1}{2}\tilde{\boldsymbol{x}}_{-i}^{\rm T} \bar{\Sigma}_{i,R} \tilde{\boldsymbol{x}}_{-i} + \left(\Sigma_{i,L}^{\rm T}\boldsymbol{x}_i +\psi_{i,-i}- \rho {\rm deg}(i) v_{i,-i}(k)\right)^{\rm T}\tilde{\boldsymbol{x}}_{-i}.
\end{equation*}
Thus, the best reaction function within $k$-th iteration is 
\begin{equation*}
    \tilde{\boldsymbol{x}}_{-i}^k(\boldsymbol{x}_i)= - \bar{\Sigma}_{i,R}^{-1} \Sigma_{i,L}^{\rm T} \boldsymbol{x}_i - \bar{\Sigma}_{i,R}^{-1}\left(\psi_{i,-i}-\rho {\rm deg}(i)  v_{i,-i}(k)\right).
\end{equation*}
Finally, by substituting the $\tilde{\boldsymbol{x}}_{-i}$ in optimization problem  with its best response function $\tilde{\boldsymbol{x}}_{-i}^k(\boldsymbol{x}_i)$, we have
\begin{equation*}
\begin{aligned}
    & \frac{1}{2} \boldsymbol{x}_i^{\rm T} \left(\bar{\Sigma}_i + \sigma A^{\rm T}A \right)\boldsymbol{x}_i  \\
     & +  \left( \psi_{i,i} + A_i^{\rm T}l_i(k)+ \sigma A_i^{\rm T}\gamma_i(k) - \sigma A_i^{\rm T}A_i \boldsymbol{x}_{i}(k)- \rho {\rm deg}(i) v_{i,i}(k)\right)^{\rm T}\boldsymbol{x}_i \\
     & + \frac{1}{2}\tilde{\boldsymbol{x}}_{-i}^k(\boldsymbol{x}_i)^{\rm T} \bar{\Sigma}_{i,R} \tilde{\boldsymbol{x}}_{-i}^k(\boldsymbol{x}_i) + \left(\Sigma_{i,L}^{\rm T}\boldsymbol{x}_i\right)^{\rm T}\tilde{\boldsymbol{x}}_{-i}^k(\boldsymbol{x}_i)
     + (\psi_{i,-i}-\rho {\rm deg}(i) v_{i,-i}(k))^{\rm T}\tilde{\boldsymbol{x}}_{-i}^k(\boldsymbol{x}_i) \\
     = & \frac{1}{2} \boldsymbol{x}_i^{\rm T} \left(\bar{\Sigma}_i + \sigma A^{\rm T}A \right)\boldsymbol{x}_i  \\
     & +  \left( \psi_{i,i} + A_i^{\rm T}l_i(k)+ \sigma A_i^{\rm T}\gamma_i(k) - \sigma A_i^{\rm T}A_i \boldsymbol{x}_{i}(k)- \rho {\rm deg}(i) v_{i,i}(k)\right)^{\rm T}\boldsymbol{x}_i \\
     & + \frac{1}{2} \left(- \bar{\Sigma}_{i,R}^{-1} \Sigma_{i,L}^{\rm T} \boldsymbol{x}_i - \bar{\Sigma}_{i,R}^{-1}\left(\psi_{i,-i}-\rho {\rm deg}(i)  v_{i,-i}(k)\right)\right)^{\rm T} \bar{\Sigma}_{i,R}\left(- \bar{\Sigma}_{i,R}^{-1} \Sigma_{i,L}^{\rm T} \boldsymbol{x}_i - \bar{\Sigma}_{i,R}^{-1}\left(\psi_{i,-i}-\rho {\rm deg}(i)  v_{i,-i}(k)\right)\right) \\
     & + \left(\Sigma_{i,L}^{\rm T}\boldsymbol{x}_i +\psi_{i,-i} -\rho {\rm deg}(i) v_{i,-i}(k) \right)^{\rm T}\left(- \bar{\Sigma}_{i,R}^{-1} \Sigma_{i,L}^{\rm T} \boldsymbol{x}_i - \bar{\Sigma}_{i,R}^{-1}\left(\psi_{i,-i}-\rho {\rm deg}(i)  v_{i,-i}(k)\right)\right) \\
     = & \frac{1}{2}
        \boldsymbol{x}_i^{\rm T}\check{\Phi}_i\boldsymbol{x}_i 
        +\check{\Psi}_{i}(k)^{\rm T}\boldsymbol{x}_i, 
\end{aligned}
\end{equation*}
where 
\begin{equation*}
    \begin{aligned}
        & \check{\Phi}_i  =  \bar{\Sigma}_{i,i}-\Sigma_{i,L}\bar{\Sigma}_{i,R}^{-1}\Sigma_{i,L}^{\rm T} + \sigma A^{\rm T}A , \\
        & \check{\Psi}_{i}(k) = \psi_{i,i} + A_i^{\rm T}l_i(k)+ \sigma A_i^{\rm T}\gamma_i(k) - \sigma A_i^{\rm T}A_i \boldsymbol{x}_{i}(k)- \rho {\rm deg}(i) v_{i,i}(k)  +\Sigma_{i,L}\bar{\Sigma}_{i,R}^{-1}\left( \psi_{i,-i} -\rho {\rm deg}(i)  v_{i,-i}(k) \right).
    \end{aligned}
\end{equation*}
Thus, we can obtain $\boldsymbol{x}_i(k+1)$ by solving the following problem,
\begin{equation*}
    \boldsymbol{x}_i(k+1)=\argmin\limits_{\boldsymbol{x}_i \in \Omega_i} \left\{\dfrac{1}{2}\boldsymbol{x}_i^{\rm T}\check{\Phi}_i\boldsymbol{x}_i+\check{\Psi}_i(k)^{\rm T}\boldsymbol{x}_i\right\}.
\end{equation*}
Given that the optimal solution, from the optimal response function of $\tilde{\boldsymbol{x}}_{-i}^k(\boldsymbol{x}_i)$, we can derive the following equation,
\begin{equation*}
    \tilde{\boldsymbol{x}}_{-i}^k(\boldsymbol{x}_i)= - \bar{\Sigma}_{i,R}^{-1} \Sigma_{i,L}^{\rm T} \boldsymbol{x}_i(k+1) - \bar{\Sigma}_{i,R}^{-1}\left(\psi_{i,-i}-\rho {\rm deg}(i)  v_{i,-i}(k)\right).
\end{equation*}
By the combination of $\boldsymbol{x}_i(k+1)$ and $\tilde{\boldsymbol{x}}_{-i}(k+1)$ to obtain $\boldsymbol{y}(k+1)$.

Therefore, under the setting of Example \ref{example:alg2}, line 11-13 in Algorithm \ref{alg:ATCADMM} can be written more specifically as,
    \begin{equation*}
    \begin{aligned}
     & {\rm step\ } 1. \quad 
            \boldsymbol{x}_i(k+1) 
        =\argmin\limits_{B_i\boldsymbol{x}_i \leq \boldsymbol{m}_i} \left\{\dfrac{1}{2}
       \boldsymbol{x}_i^{\rm T}\check{\Phi}_i\boldsymbol{x}_i 
        +\check{\Psi}_{i}(k)^{\rm T}\boldsymbol{x}_i \right\}. \\
      & {\rm step\ } 2. \quad  \tilde{\boldsymbol{x}}_{-i}(k+1)  = - \bar{\Sigma}_{i,R}^{-1} \Sigma_{i,L}^{\rm T} \boldsymbol{x}_i(k+1) - \bar{\Sigma}_{i,R}^{-1}\left(\psi_{i,-i}-\rho {\rm deg}(i)  v_{i,-i}(k)\right). \\
      & {\rm step\ } 3. \quad  \boldsymbol{y}_i(k+1)  = \text{col}\left( \tilde{\boldsymbol{x}}_{-i,l}(k+1), \boldsymbol{x}_i(k+1), \tilde{\boldsymbol{x}}_{-i,r}(k+1)\right).
    \end{aligned}
\end{equation*}

Next, Figures \ref{fig:Imp_Opt_gap}-\ref{fig:Imp_Con_vio} illustrate the different performance between Algorithm \ref{alg:TCADMM} and Algorithm \ref{alg:ATCADMM} under identical computation times(neglecting the communication time in a distributed framework), utilizing optimization problem \eqref{eq:example-tran} in Example \ref{example:allo} and the network structure of different scales in Figure \ref{fig:trans_graph}.

\begin{figure}[!ht] 
    \centering  
    \begin{minipage}{.33\textwidth}  
        \centering  
        \includegraphics[width=\linewidth]{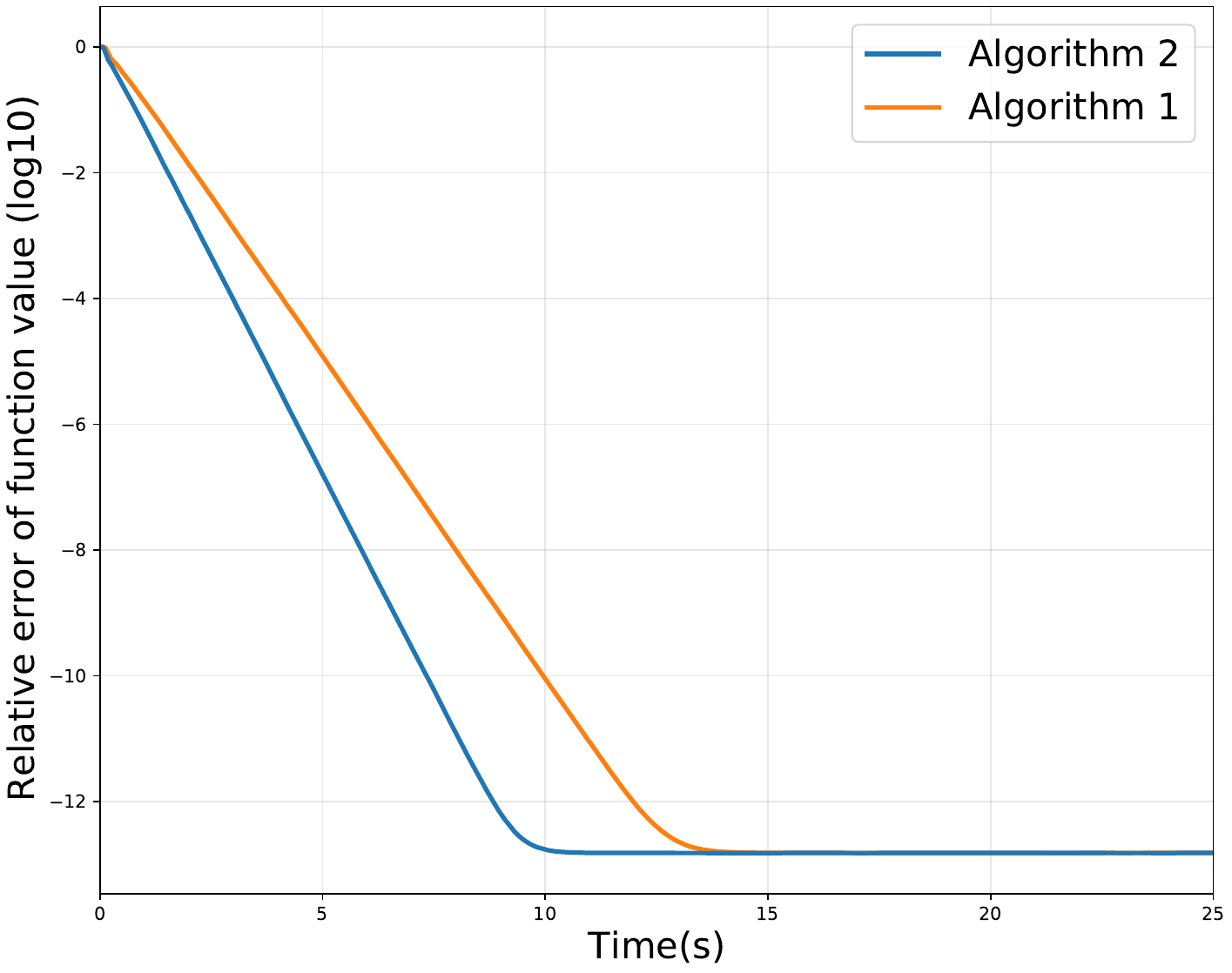}  
        \subcaption{Small-scale}  
    \end{minipage}%
    \hfill 
    \begin{minipage}{.33\textwidth}  
        \centering  
        \includegraphics[width=\linewidth]{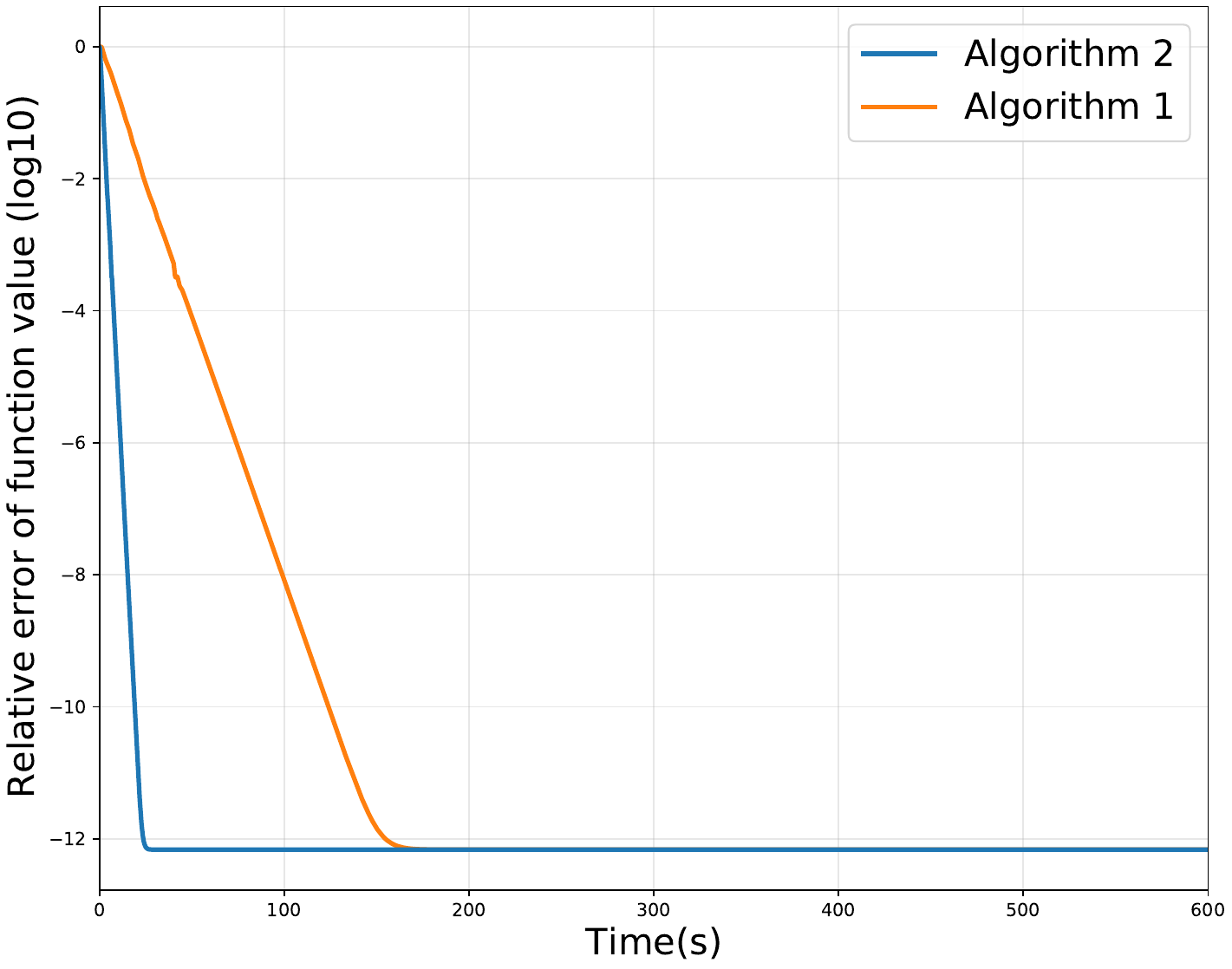}  
        \subcaption{Medium-scale}  
    \end{minipage}%
    \hfill 
    \begin{minipage}{.33\textwidth}  
        \centering  
        \includegraphics[width=\linewidth]{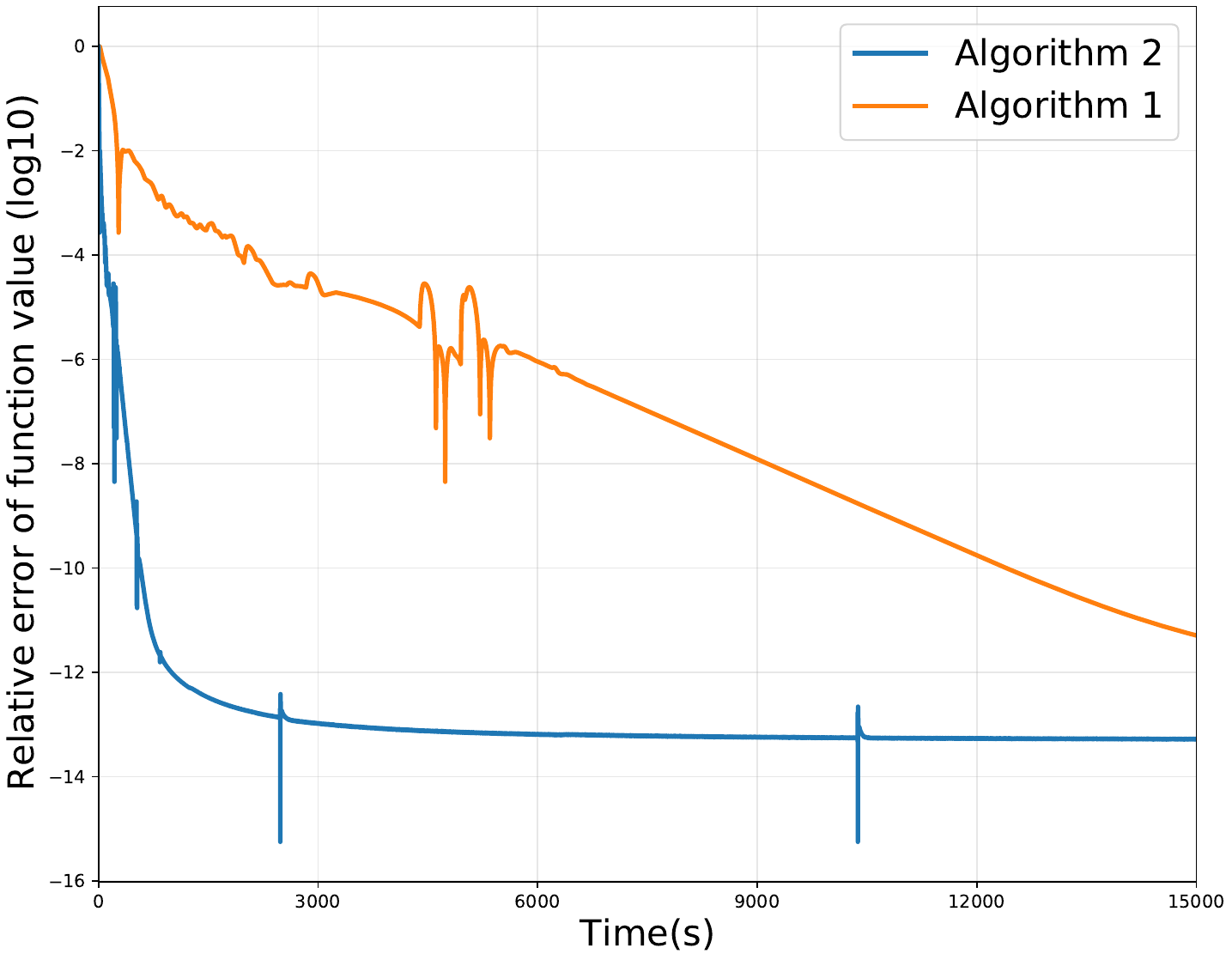}  
        \subcaption{Large-scale}  
    \end{minipage}%
    \caption{Improvement in optimality gap performance}  
    \label{fig:Imp_Opt_gap}  
\end{figure} 

\begin{figure}[!ht]  
    \centering  
    \begin{minipage}{.33\textwidth}  
        \centering  
        \includegraphics[width=\linewidth]{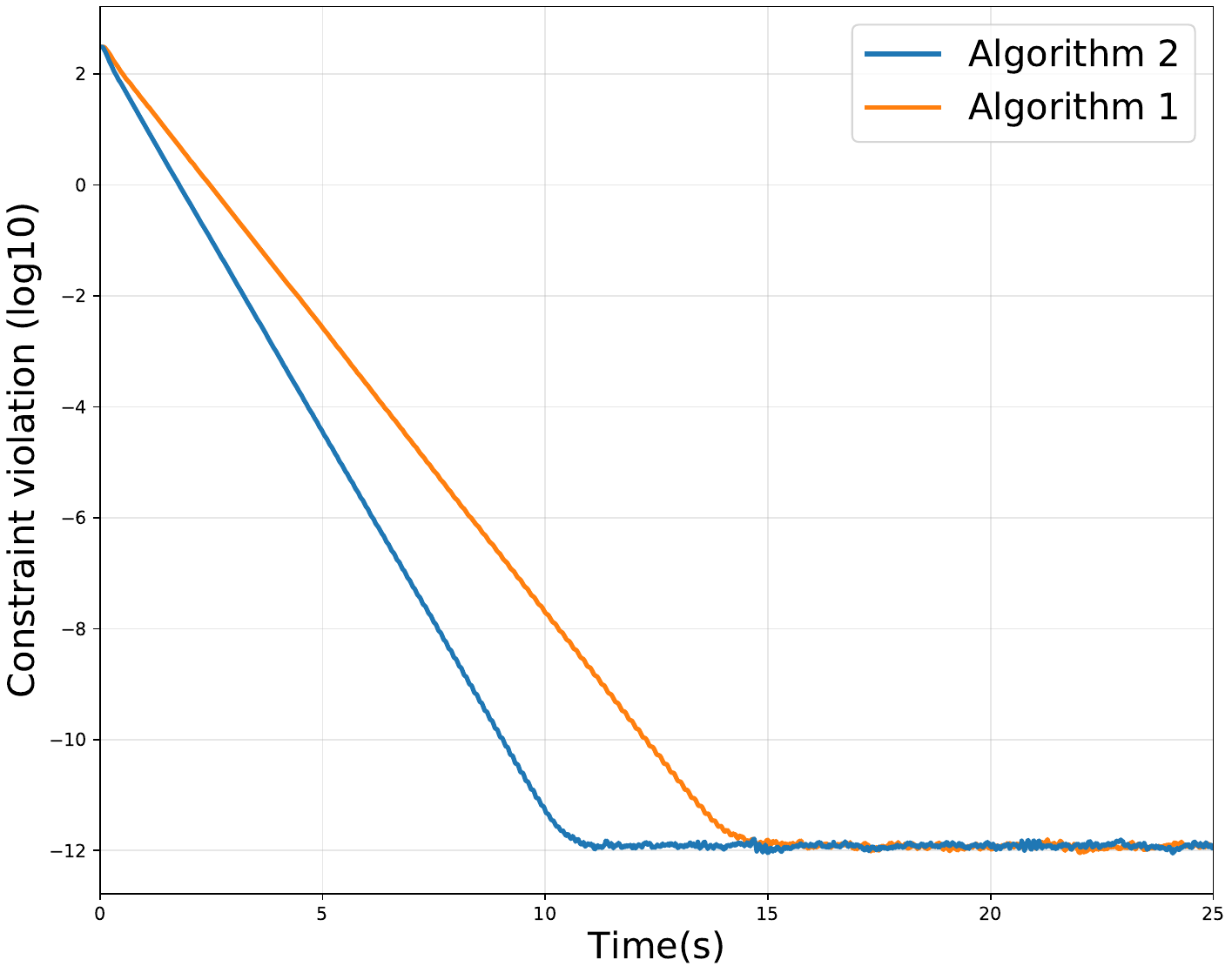}  
        \subcaption{Small-scale}  
    \end{minipage}%
    \hfill 
    \begin{minipage}{.33\textwidth}  
        \centering  
        \includegraphics[width=\linewidth]{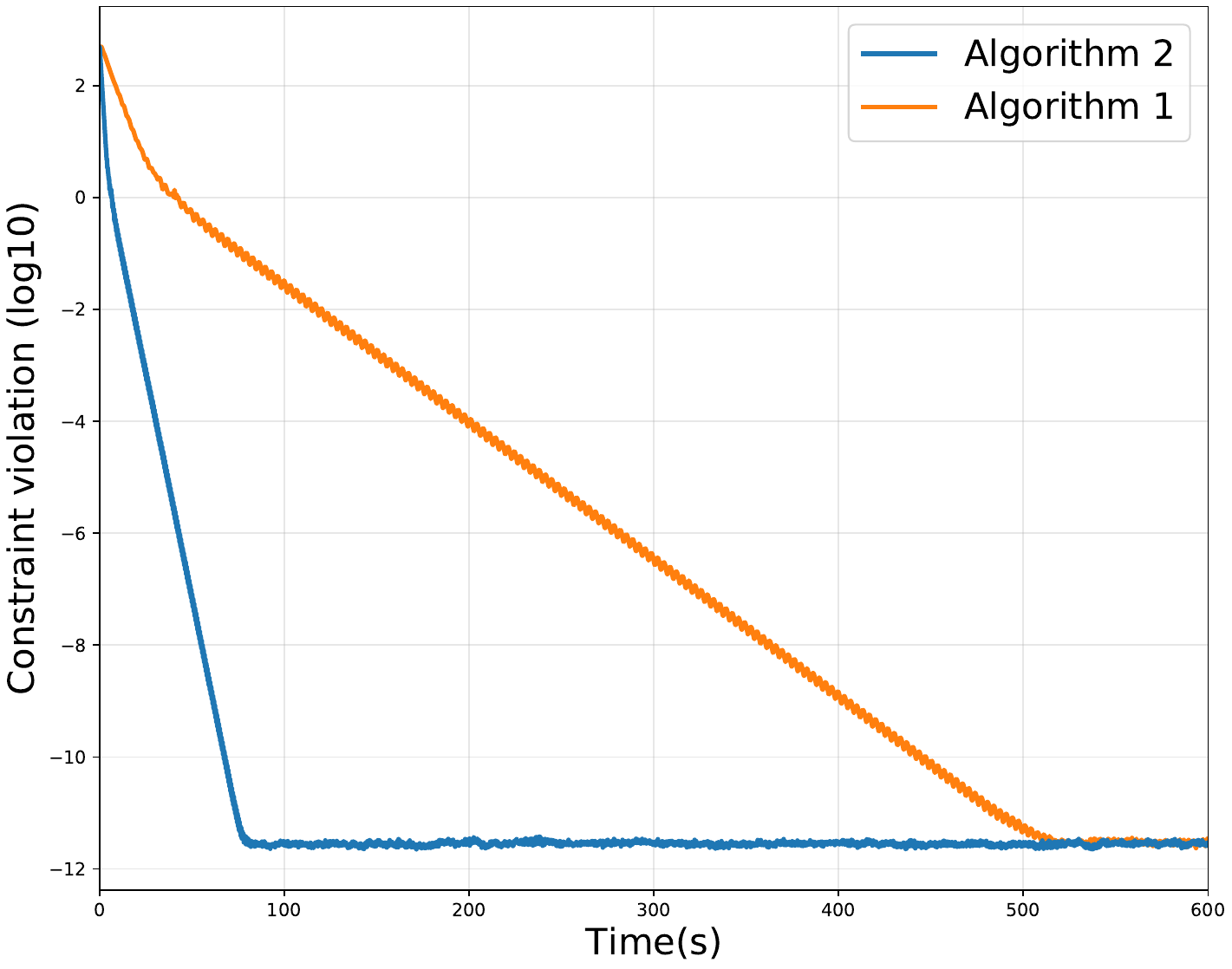}  
        \subcaption{Medium-scale}  
    \end{minipage}%
    \hfill 
    \begin{minipage}{.33\textwidth}  
        \centering  
        \includegraphics[width=\linewidth]{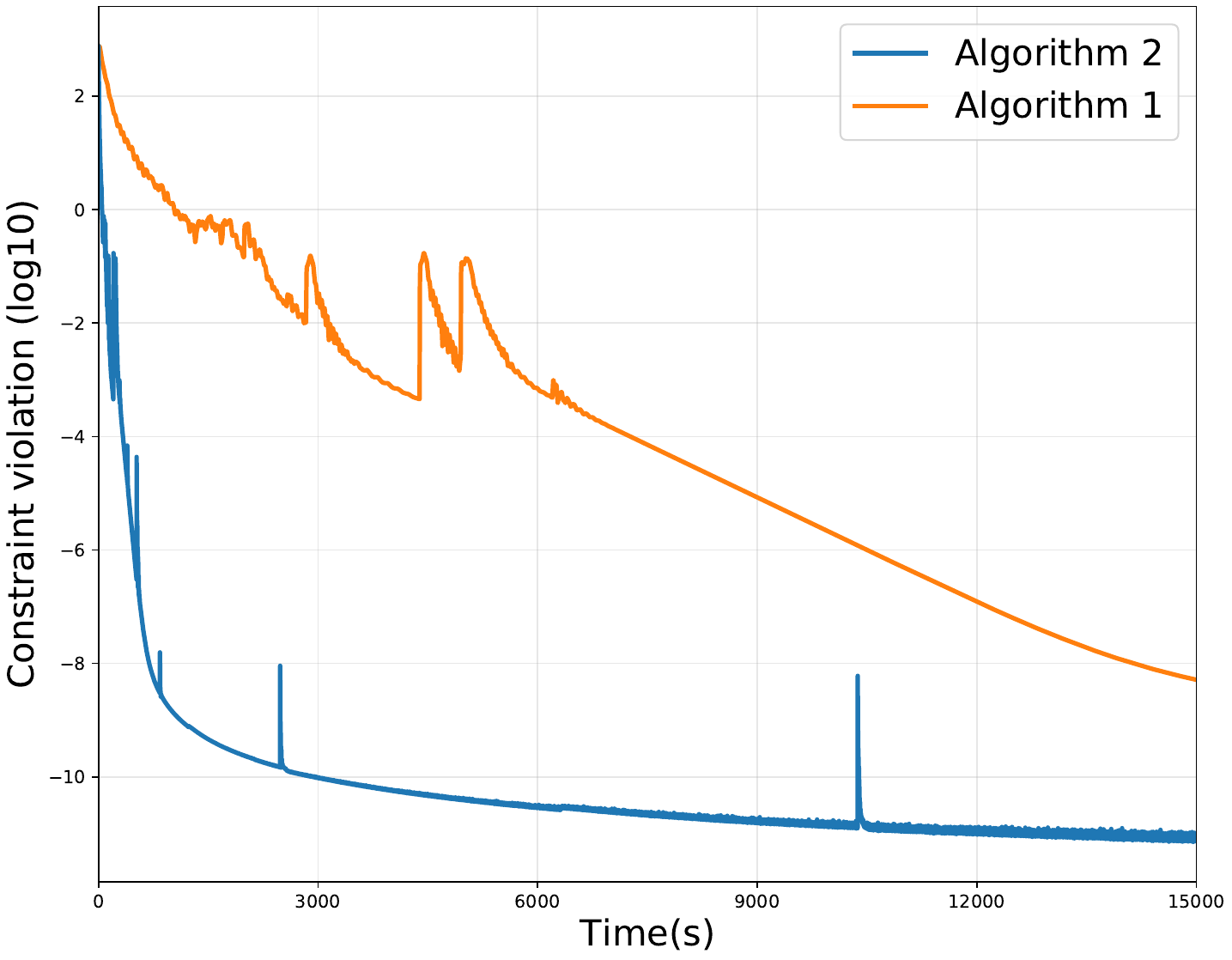}  
        \subcaption{Large-scale}  
    \end{minipage}%
    \caption{Improvement in constraint violation performance}  
    \label{fig:Imp_Con_vio}  
\end{figure}

The figures demonstrate that dimensionality reduction for the sub-optimization problems $\mathcal{S}_{i,k}$ enhances the overall computational efficiency of the algorithm. Moreover, the improvement effect becomes more pronounced for problems with larger parameter scales.

\section{Typical example}\label{app:typ_exa}

\begin{example}\label{ex:general_case_1}
 Based on example \ref{ex:simple_case}, we consider a more general case as shown in Figure \ref{fig:gen_cas_1}, consisting of $N$ supply nodes and one demand node $(M=1)$. Each supply node has only one path $(R=1)$ to the demand node, and only one type of material $(K=1)$ is transported. The supply nodes have no constraints on inventory capacity and transportation capacity. 
    \begin{figure}[!ht]
        \centering
        \includegraphics[scale=0.75]{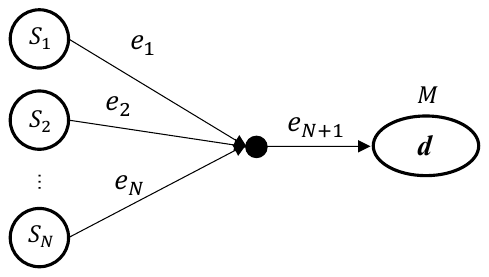}
        \caption{Transportation network in Example \ref{ex:general_case_1}}\label{fig:gen_cas_1}
    \end{figure}
\end{example}

In the setting of the above example, the matrix $Q_i$ degenerates to a vector, i.e., $Q_i=\boldsymbol{e}_{N+1,i}+\boldsymbol{e}_{N+1,N+1}$, and the optimization problem \eqref{eq:example-tran} can be written as
   \begin{equation}
     \begin{aligned}
         \min_{\boldsymbol{x}} \quad & \sum\nolimits_{i=1}^N\left(c_0\left(\sum\nolimits_{s=1}^{N}Q_sx_s\right)^{\rm T}Q_ix_i+\boldsymbol{c}_i^{\rm T}Q_ix_i\right)=\sum\nolimits_{i=1}^N \left(c_0 \left(x_i^2+x_i\sum\nolimits_sx_s\right)+\|\boldsymbol{c}_i\|_1x_i\right) \\
        {\rm s.t.} \quad \  &\sum\nolimits_{i} x_{i} =d, \quad \leftrightarrow \quad \lambda\\
        & \ x_{i} \geq 0, \quad \leftrightarrow \quad \alpha_i, \quad  \forall i.
    \end{aligned}
 \end{equation}
The Lagrangian function associated with the above optimization problem is 
\begin{equation*}
    \mathcal{L} = \sum\nolimits_{i} \left(c_0 \left(x_i^2+x_i\sum\nolimits_sx_s\right)+\|\boldsymbol{c}_i\|_1x_i\right) - \lambda^{\rm T}\left(\sum\nolimits_i x_i - {d}\right)- \sum\nolimits_i\alpha_i{x}_i,
\end{equation*}
and the stability conditions are
\begin{equation*}
   \nabla_{ x_i}\mathcal{L}= 2c_0x_i^*+2c_0d + \|\boldsymbol{c}_i\|_1-\lambda^* - \alpha_i^* = 0, \quad \forall i.
\end{equation*}
If we assume that there are $T$ participants whose optimal solutions are non-zero $(x_i \neq 0)$, denote these participants as forming the set $\mathcal{T}$, and the remaining $N-T$ participants as forming the set $\mathcal{S}$. Combined with the rest of the KKT conditions, we can arrive at the following results. 
\begin{equation*}
        \lambda^*=2\frac{T+1}{T}c_0\boldsymbol{d} + \frac{1}{T}\sum\nolimits_{t=1}^T \|\boldsymbol{c}_{j_t}\|, \quad j_t \in \mathcal{T}.
\end{equation*}
If $i = j_t \in \mathcal{T}$, then
\begin{equation*}
    \left\{
    \begin{array}{l}
     x_i^*= \frac{1}{T}\boldsymbol{d} + \frac{1}{2c_0} \left(\frac{1}{T}\sum\nolimits_{k=1}^T \|\boldsymbol{c}_{j_k}\|-\|\boldsymbol{c}_i\|\right),   \\
      \alpha_i^* = 0.  
    \end{array}
    \right.
\end{equation*}
If $i = i_s \in \mathcal{S}$, then
\begin{equation*}
    \left\{
    \begin{array}{l}
     x_i^*= 0,   \\
      \alpha_i^* = \|\boldsymbol{c}_{i}\|-\frac{1}{T}\left(2c_0 \boldsymbol{d}+\sum\nolimits_{t=1}^T \|\boldsymbol{c}_{j_t}\|\right).  
    \end{array}
    \right.
\end{equation*}
The determination of sets $\mathcal{S}$ and $\mathcal{S}$ can be achieved through comparative analysis of parameter $\|\boldsymbol{c}_i\|_1$, with boundary conditions specified as
\begin{equation*}
    \|\boldsymbol{c}_{i_s}\| \geq \frac{1}{T}\left(2c_0 \boldsymbol{d}+\sum\nolimits_{t=1}^T \|\boldsymbol{c}_{j_t}\|\right), \quad j_t \in \mathcal{T},\ \forall i_s \in \mathcal{S}.
\end{equation*}

Then, we calculate the price guidance signal by equation \eqref{eq:LMP_price} and the profit $|u_i^*|$ for each participant under the SP mechanism.
\begin{equation*}
    \pi_i^*=\lambda^*-c_0Q_i^{\rm T}\left(\sum\nolimits_{s \neq i}Q_sx_s^*\right)=\left\{
    \begin{array}{ll}
      \frac{T+3}{T}c_0\boldsymbol{d} + \frac{2}{3T}\sum\nolimits_{t=1}^T\|\boldsymbol{c}_{j_t}\|_1-\frac{1}{2}\|\boldsymbol{c}_i\|_1,   & i \in \mathcal{T}, \\
      \frac{T+2}{T}c_0\boldsymbol{d} + \frac{1}{T}\sum\nolimits_{t=1}^T\|\boldsymbol{c}_{j_t}\|_1,   & i \in \mathcal{S}.
    \end{array}
    \right.
\end{equation*}
\begin{equation*}
    |u_i^*| = \pi_i^*x_i^* - c_0\left(\sum\nolimits_{s=1}^{N}Q_sx_s\right)^{\rm T}Q_ix_i - \boldsymbol{c}_i^{\rm T}Q_ix_i = \left\{\!\!
    \begin{array}{ll}
      \frac{1}{2c_0T^2}\left(2c_0\boldsymbol{d}+\sum\nolimits_{t=1}^T\|\boldsymbol{c}_{j_t}\|-T\|\boldsymbol{c}_i\|\right)^2,   & i \in \mathcal{T}, \\
      0,   & i \in \mathcal{S}.
    \end{array}
    \right.
\end{equation*}

Next, we investigate the changes of price signals and profits for all participants when a single member of the set $\mathcal{T}$ engages in misreporting, while the remaining participants maintain truthful reporting. Let participant $j_{t_0}$ introduce a perturbation of magnitude $\Delta$ to the norm of the parameter vector, resulting in the new parameter norm $\|\boldsymbol{c}_{j_{t_0}}\|+\Delta$. The motivation for participant $j_{t_0}$ to misreport is to increase its own profit, as $|u_i^*|=\frac{1}{2c_0T^2}\left(2c_0\boldsymbol{d}+\sum\nolimits_{t=1}^T\|\boldsymbol{c}_{j_t}\|-T\|\boldsymbol{c}_i\|\right)^2$ indicates this is only possible when $\Delta$ is negative. Consequently, changes in $\Delta$ will not cause changes to the elements in sets $\mathcal{T}$ and $\mathcal{S}$. Let $\pi_i^{\circ}$ and $|u_i^{\circ}|$ represent the optimal price signal and profit after misreporting, respectively. Using the previous method, we can get
\begin{equation*}
    \pi_i^{\circ}(\Delta)=\left\{
    \begin{array}{ll}
      \frac{T+3}{T}c_0\boldsymbol{d} + \frac{2}{3T}\sum\nolimits_{t=1}^T\left(\|\boldsymbol{c}_{j_t}\|_1+\Delta\cdot \boldsymbol{1}_{\{i=j_{t_0}\}}\right)-\frac{1}{2}\left(\|\boldsymbol{c}_i\|_1+\Delta\cdot \boldsymbol{1}_{\{i=j_{t_0}\}}\right),   & i \in \mathcal{T}, \\
      \frac{T+2}{T}c_0\boldsymbol{d} + \frac{1}{T}\sum\nolimits_{t=1}^T\left(\|\boldsymbol{c}_{j_t}\|_1+\Delta\cdot \boldsymbol{1}_{\{i=j_{t_0}\}}\right),   & i \in \mathcal{S}.
    \end{array}
    \right.
\end{equation*}
\begin{equation*}
    |u_i^{\circ}(\Delta)| = \left\{
    \begin{array}{ll}
      |u_i^*|-\frac{T-2}{2c_0T^2}\Delta\left(2c_0\boldsymbol{d}+\sum\nolimits_{t=1}^T\|\boldsymbol{c}_{j_t}\|-T\|\boldsymbol{c}_i\|\right)-\frac{T-1}{2c_0T^2}\Delta^2,   & i \in \mathcal{T}, \ i = j_{t_0}, \\
      |u_i^*|+\frac{1}{c_0T^2}\Delta\left(2c_0\boldsymbol{d}+\sum\nolimits_{t=1}^T\|\boldsymbol{c}_{j_t}\|-T\|\boldsymbol{c}_i\|\right)+\frac{1}{2c_0T^2}\Delta^2,   & i \in \mathcal{T},\ i \neq j_{t_0}, \\
      0,   & i \in \mathcal{S}.
    \end{array}
    \right.
\end{equation*}
As can be observed, for participant $j_{t_0}$, the change in profit follows a concave quadratic function with respect to its misreporting level. This implies that reducing the cost parameter within a certain range can increase its profit. For other participants, however, the payoff variation with respect to participant $j_{t_0}$'s misreporting level follows a convex quadratic function, indicating that their profits are subject to decreases due to misreporting.

If a participant can benefit from reducing the cost parameter, then the remaining participants can do the same. Consequently, we ultimately analyze the optimal magnitude of parameter reduction when every member of set $\mathcal{T}$ engages in such coordinated parameter minimization. To simplify the discussion, we assume that participants in set $\mathcal{S}$ report truthfully, whereas each participant in set $\mathcal{T}$ has a misreporting magnitude $\Delta_t$. Let $\pi_i^{\diamond}$ and $|u_i^{\diamond}|$ represent the optimal price signal and profit after misreporting, respectively. Using the previous method, we can get
\begin{equation*}
    \pi_i^{\diamond} = \left\{
    \begin{array}{ll}
      \frac{T+3}{T}c_0\boldsymbol{d} + \frac{2}{3T}\sum\nolimits_{t=1}^T\left(\|\boldsymbol{c}_{j_t}\|_1+\Delta_{j_t}\right)-\frac{1}{2}\left(\|\boldsymbol{c}_i\|_1+\Delta_i\right),   & i \in \mathcal{T}, \\
      \frac{T+2}{T}c_0\boldsymbol{d} + \frac{1}{T}\sum\nolimits_{t=1}^T\left(\|\boldsymbol{c}_{j_t}\|_1+\Delta_{j_t}\right),   & i \in \mathcal{S}.
    \end{array}
    \right.
\end{equation*}
{\small \begin{equation*}
    |u_i^{\diamond}| = \left\{\!\!
    \begin{array}{ll}
      |u_i^*|+\frac{1}{2c_0T^2}\left(\sum\nolimits_t \Delta_{j_t} - T \Delta_i\right)\left(\sum\nolimits_t \Delta_{j_t}\right)+\frac{1}{c_0T^2}\left(\sum\nolimits_t \Delta_{j_t} - \frac{1}{2} T \Delta_i\right)\left(2c_0\boldsymbol{d}+\sum\nolimits_{t=1}^T\|\boldsymbol{c}_{j_t}\|-T\|\boldsymbol{c}_i\|\right),   & i \in \mathcal{T}, \\
      0,   & i \in \mathcal{S}.
    \end{array}
    \right.
\end{equation*}}
As can be seen from the above equation, the profit of every participant is affected by the collective misreporting magnitudes of all participants. Let
{\small \begin{equation*}
    G_i = \frac{1}{2c_0T^2}\left(\sum\nolimits_t \Delta_{j_t} - T \Delta_i\right)\left(\sum\nolimits_t \Delta_{j_t}\right)+\frac{1}{c_0T^2}\left(\sum\nolimits_t \Delta_{j_t} - \frac{1}{2} T \Delta_i\right)\left(2c_0\boldsymbol{d}+\sum\nolimits_{t}\|\boldsymbol{c}_{j_t}\|-T\|\boldsymbol{c}_i\|\right), i \in \mathcal{T}.
\end{equation*}}
Given the  misreporting levels of other participants, the best response of participant $i$ must satisfy the following equation,
\begin{equation*}
    \frac{\partial G_i}{\partial \Delta_i}  =0, \quad \forall i \in \mathcal{T},
\end{equation*}
that is 
\begin{equation*}
    \Delta_i^{\diamond}=-\frac{T-2}{2(T-1)}\left(2c_0\boldsymbol{d}+\sum\nolimits_{t}\|\boldsymbol{c}_{j_t}\|-T\|\boldsymbol{c}_i\|+\sum\nolimits_{j_t \neq i} \Delta_{j_t}\right), \quad \forall i \in \mathcal{T}.
\end{equation*}
By simultaneously solving the above $T$ equations, we obtain
\begin{subequations}
\begin{align}
   & \sum\nolimits_t \Delta_{j_t}^{\diamond}= -\frac{T-2}{T-1}2c_0\boldsymbol{d}, \\
   & \Delta_i^{\diamond}=-\frac{T-2}{T(T-1)} \left[2c_0\boldsymbol{d}+(T-1)\left(\sum\nolimits_t \|\boldsymbol{c}_{j_t}\|-T\|\boldsymbol{c}_i\|\right)\right]. \label{eq:best_lie_vol}
\end{align}
\end{subequations}
The optimal misreporting quantity $\Delta_i^{\diamond}$ means that when all participants use this variation magnitude, they can reach an equilibrium where no further misreporting occurs. At this point, the profit of participant $i$ is
\begin{equation*}
    |u_i^{\diamond}|=|u_i^*|-\frac{T-2}{2c_0T^2(T-1)} \left[4c_0^2\boldsymbol{d}^2 - (T-1)\left(\sum\nolimits_t \|\boldsymbol{c}_{j_t}\|-T\|\boldsymbol{c}_i\|\right)^2\right].
\end{equation*}
From the expression, it can be observed that participants' profits decrease. In practice, participants cannot computationally determine the optimal $\Delta_i$ because the expression for $\Delta_i^*$ contains parameters $T$ and $\|\boldsymbol{c}_{j_t}\|$—information that participants lack access to. Consequently, it is difficult for participants to gain additional profits through misreporting. The most reliable approach is to use truthful information.
\section{Supplementary numerical experiments} \label{app:supp_exp}

In the main text, we presented the well-balanced network structure (no supply node has a path length advantage to any demand node) in Figure \ref{subfig:small_graph}. For reader convenience in the supplementary material, this figure is reproduced here as Figure \ref{fig:Add_net_str}(\subref{subfig:well-balanced}). In this section, we introduce two additional structures: semi-balanced and unbalanced networks. Figure \ref{fig:Add_net_str}(\subref{subfig:semi-balanced}) illustrates the semi-balanced network, characterized by a specific supply node (e.g., N1) having the shortest path to a particular demand node (e.g., M1). Figure \ref{fig:Add_net_str}(\subref{subfig:unbalanced}) depicts the unbalanced network, distinguished by a supply node (e.g., N1) possessing the shortest paths to all demand nodes.

\begin{figure}[!ht]
 \centering  
    \begin{minipage}{.33\textwidth}  
        \centering  
        \includegraphics[height=40mm,width=45mm]{Graph_S.pdf}  
        \subcaption{well-balanced} \label{subfig:well-balanced} 
    \end{minipage}%
    \hfill
    \centering  
    \begin{minipage}{.33\textwidth}  
        \centering  
        \includegraphics[height=40mm,width=45mm]{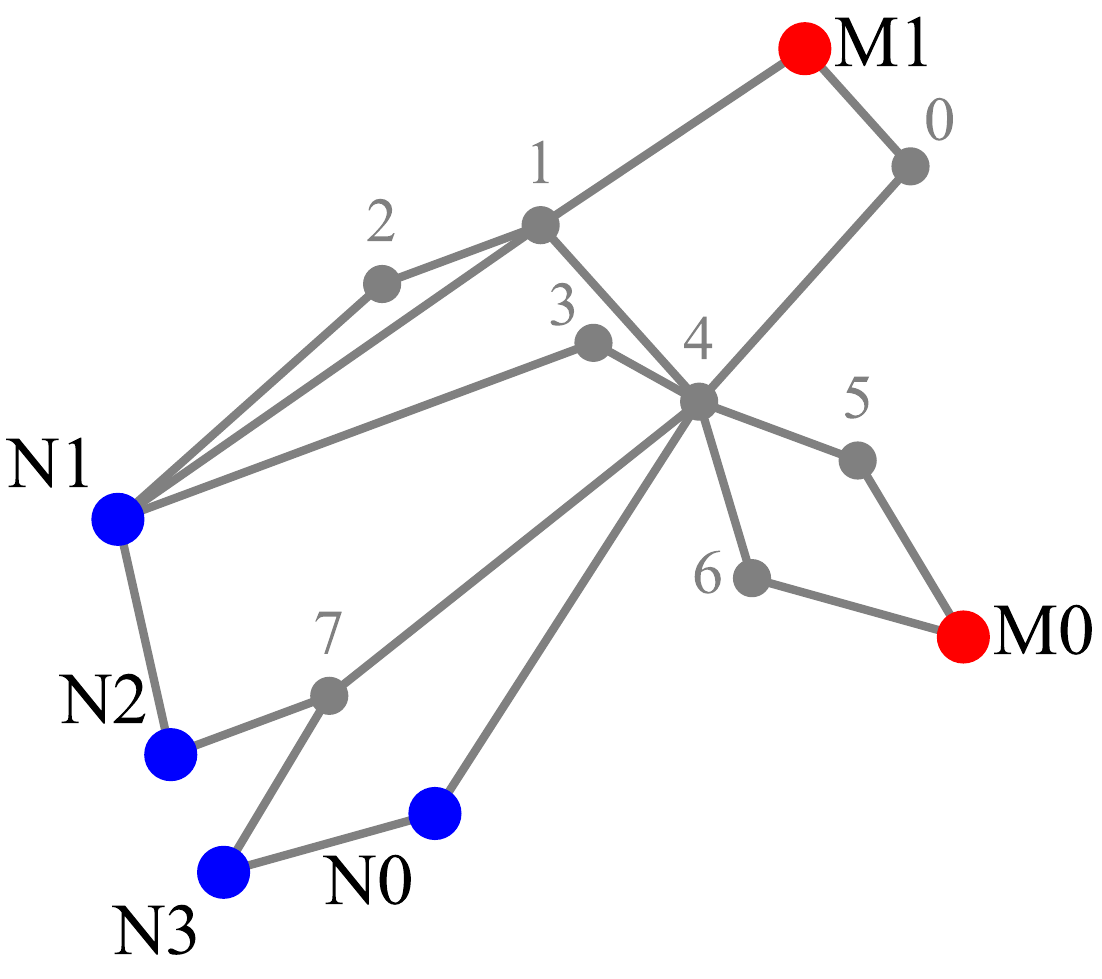}  
        \subcaption{semi-balanced} \label{subfig:semi-balanced} 
    \end{minipage}%
    \hfill
    \begin{minipage}{.33\textwidth}  
        \centering  
        \includegraphics[height=40mm,width=45mm]{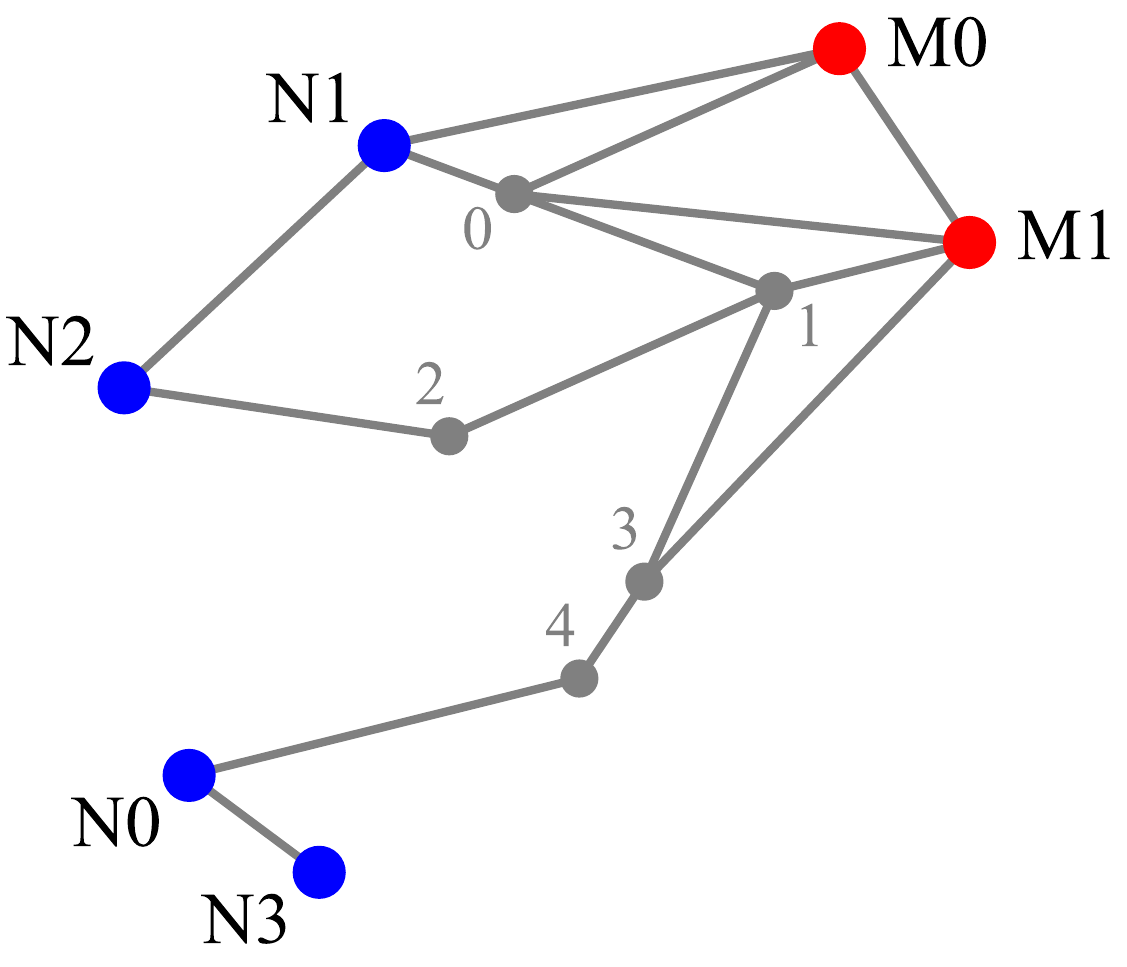}  
        \subcaption{unbalanced}\label{subfig:unbalanced}  
    \end{minipage}%
    \caption{Additional network structures}  
    \label{fig:Add_net_str}  
\end{figure} 

Figure \ref{fig:supp0} illustrates the impact of different parameter variations on the incentives received by participants in a well-balanced network. The parameters include the task target $\boldsymbol{d}$, the individual constraint limit $\boldsymbol{m}$ (where the $\boldsymbol{m}_i$ for each individual are uniformly scaled), and the congestion cost coefficient $c_0$. As presented in Table \ref{tab:incentive} of the main text, the SP mechanism and the VCG mechanism exhibit a modest disparity in the incentives they provide to the overall system under the well-balanced network structure. The three subfigures in Figure \ref{fig:supp0} offer a more detailed examination at the individual level, illustrating how variations in different parameters affect the incentives provided by the two mechanisms. Figure \ref{fig:supp0}(\subref{subfig:supp_d0}) demonstrates that when the task volume is small, node N1 exhibits a higher level of importance. As the task volume increases, the importance of node N0 gradually rises, eventually exceeding that of N1. This significance stems from the topological position of N0: it possesses the same number of paths to node M0 as N1 does, and the same number of paths to node M1 as N2 does. Consequently, N0 gains the capability to offload tasks from other nodes as the task volume grows. Figure \ref{fig:supp0}(\subref{subfig:supp_m0}) indicates that as the inventory capacity increases, nodes previously constrained by inventory limitations -- which hindered their significant contribution to tasks -- gradually begin to function effectively. Consequently, when the individual constraint limit is increased to 1.2 times its original value, the incentives received by all nodes become more evenly distributed compared to the low constraint limit case. Figure \ref{fig:supp0}(\subref{subfig:supp_c0}) demonstrates that the importance of node N0 declines as congestion costs increase. That occurs because N0 handles a larger task volume under low congestion costs, but when congestion costs rise, its total cost surges, diminishing its cost advantage. Consequently, as other nodes capture a greater share of tasks by leveraging their cost advantages, their importance becomes increasingly evident.

\begin{figure}[!ht]
    \centering
    \begin{minipage}[t]{0.98\textwidth} 
        \centering
        \includegraphics[height=50mm, width=150mm]{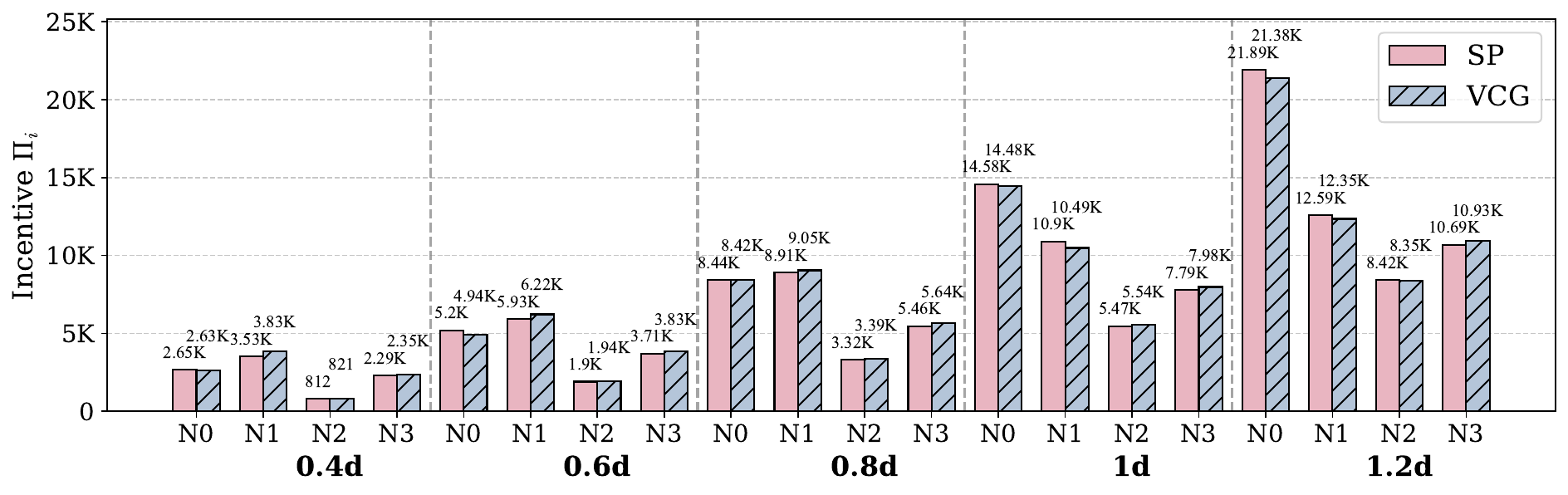}
        \subcaption{Demand parameter $\boldsymbol{d}$}
        \label{subfig:supp_d0}
    \end{minipage}
    
    \vspace{3mm}
    
    \begin{minipage}[t]{0.98\textwidth} 
        \centering
        \includegraphics[height=50mm, width=150mm]{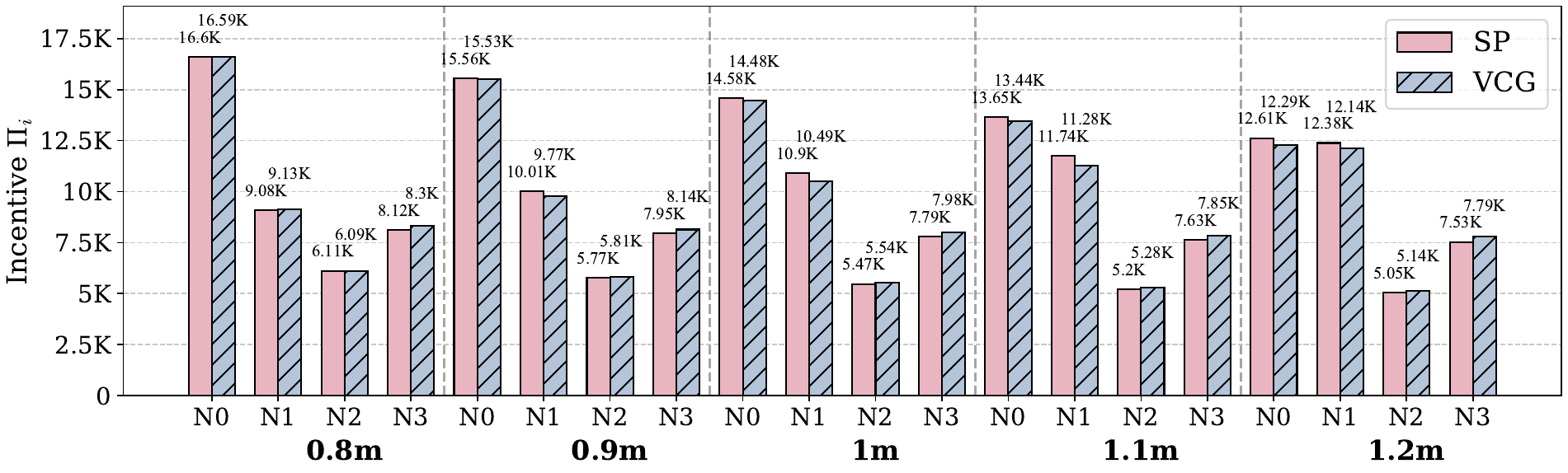}
        \subcaption{Constraint parameter $\boldsymbol{m}$}
        \label{subfig:supp_m0}
    \end{minipage}
    
    \vspace{3mm}
    
    \begin{minipage}[t]{0.98\textwidth} 
        \centering
        \includegraphics[height=50mm, width=150mm]{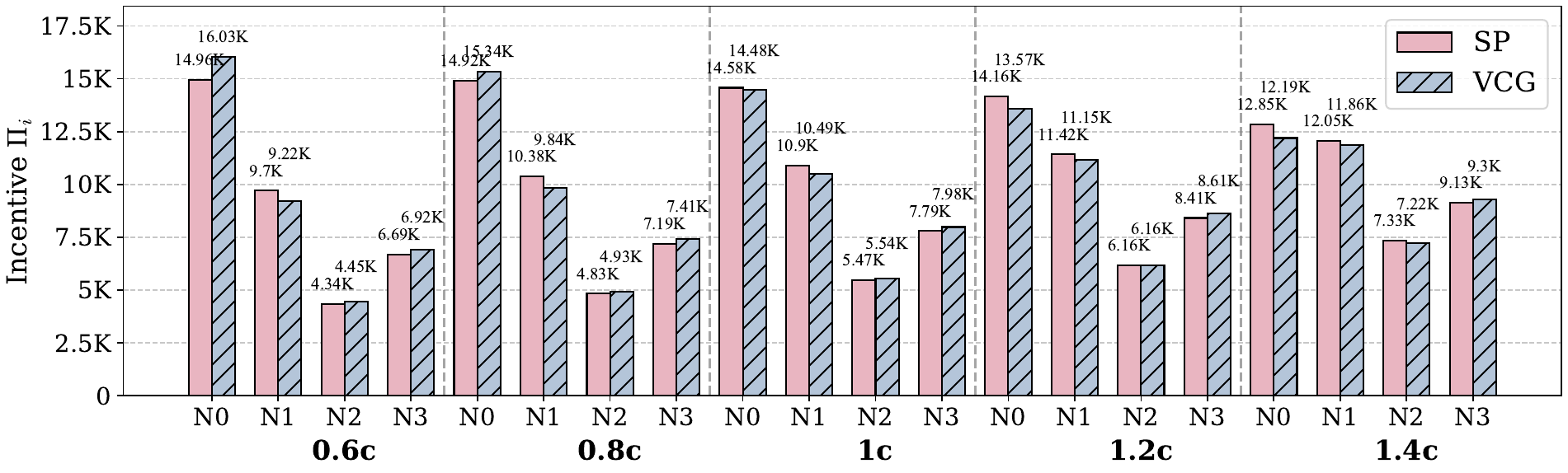}
        \subcaption{Congestion parameter $c_0$}
        \label{subfig:supp_c0}
    \end{minipage}
     \caption{Variation of incentives across mechanisms with different parameters in well-balanced network}
    \label{fig:supp0}
\end{figure}

Figure \ref{fig:supp1} illustrates the impact of different parameters variations on the incentives received by participants in a semi-balanced network. The primary conclusion is that the incentive payments provided by the SP mechanism exceed those under the VCG mechanism. Taking Figure \ref{fig:supp1}(\subref{subfig:supp_d1}) as an example, the incentive payments received by each participant increase with rising demand parameters, which is an intuitive result of demand expansion. It can also be observed that participant N1 receives higher payments compared to other participants. This is attributed to N1’s positional advantage in the path to M1, resulting in relatively lower individual transportation costs. Consequently, N1 can undertake a larger proportion of tasks, leading to greater incentives. Regarding the higher payments under the SP mechanism compared to the VCG mechanism, consider participant N1, whose advantage over M1 lies in having only one fewer edge in its transportation path than participants N0 and N2. This means that even after removing participant N1, the total cost required for the remaining participants to complete the task does not differ substantially from the total cost when N1 is included. Instead, the SP mechanism requires greater incentives to motivate participant N1 to engage in transportation task. Similarly, for participant N0, its advantage over M0 lies in its transportation path containing one fewer edge compared to other participants. This enables N0 to complete a higher proportion of tasks at M0. However, the impact on the system following the removal of node N0 is not sufficiently significant to make the incentives under the VCG mechanism exceed those under the SP mechanism.

\begin{figure}[!ht]
    \centering
    \begin{minipage}[t]{0.98\textwidth} 
        \centering
        \includegraphics[height=50mm, width=150mm]{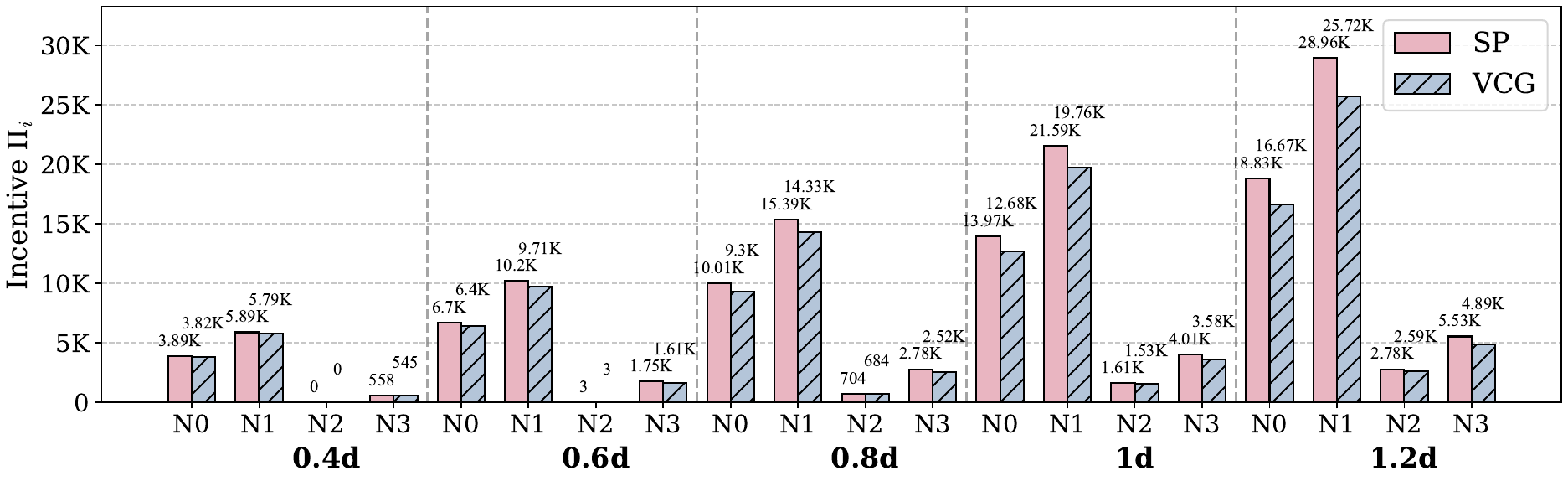}
        \subcaption{Demand parameter $\boldsymbol{d}$}
        \label{subfig:supp_d1}
    \end{minipage}
    
    \vspace{3mm}
    
    \begin{minipage}[t]{0.98\textwidth} 
        \centering
        \includegraphics[height=50mm, width=150mm]{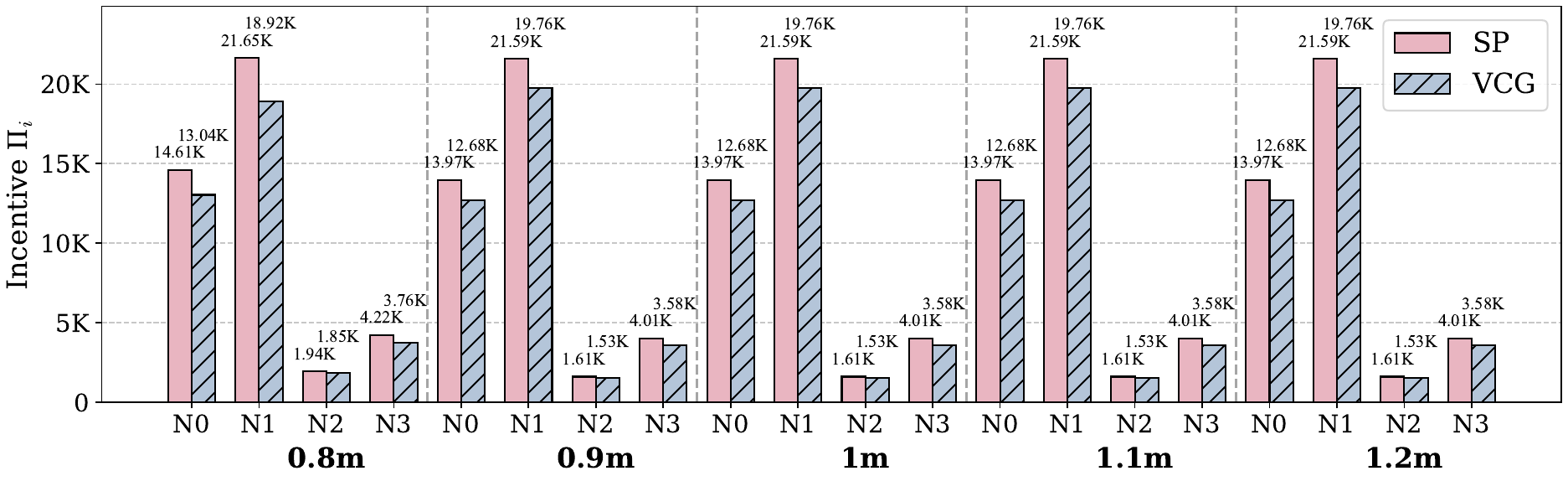}
        \subcaption{Constraint parameter $\boldsymbol{m}$}
        \label{subfig:supp_m1}
    \end{minipage}
    
    \vspace{3mm}
    
    \begin{minipage}[t]{0.98\textwidth} 
        \centering
        \includegraphics[height=50mm, width=150mm]{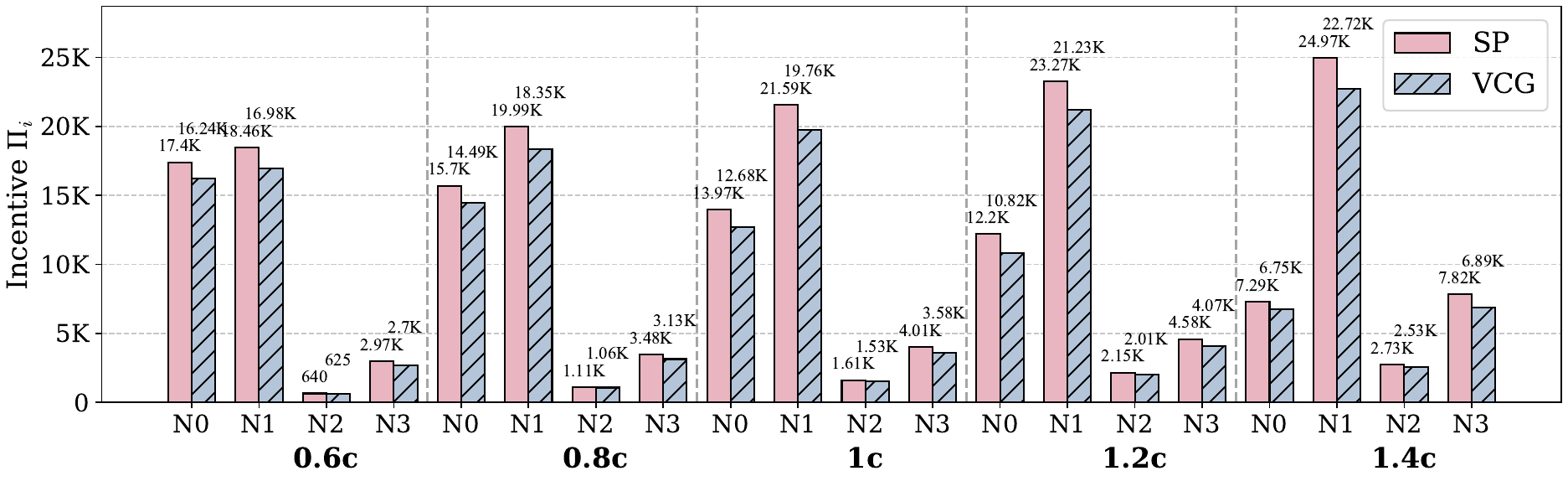}
        \subcaption{Congestion parameter $c_0$}
        \label{subfig:supp_c1}
    \end{minipage}
     \caption{Variation of incentives across mechanisms with different parameters in semi-balanced network}
    \label{fig:supp1}
\end{figure}

Figure \ref{fig:supp2} illustrates the impact of parameter variations on the incentives received by participants in a unbalanced network. In contrast to the semi-balanced network, the main finding under the unbalanced network structure is that the incentive payments under the VCG mechanism exceed those under the SP mechanism. The key reason for this is that participant N1's absolute path advantage towards both M1 and M2 within the network results in a substantial increase in the total cost required for the remaining participants to complete tasks when N1 is removed, compared to scenarios where N1 is included. Consequently, the critical importance of participant N1 leads to significantly higher incentives allocated to it under the VCG mechanism than under the SP mechanism.

Meanwhile, from Figure \ref{fig:supp2}(\subref{subfig:supp_d2})-\ref{fig:supp2}(\subref{subfig:supp_m2}), we observe that under certain conditions, the incentives allocated to node N2 increase significantly—for instance, when task demand $\boldsymbol{d}$ rises to 1.2 times or capacity constraints $\boldsymbol{m}$ tighten to 0.8 times. This occurs because although node N1 possesses an absolute distance advantage, it becomes constrained by its own capacity limitations as parameters shift. To minimize system costs, the mechanism begins incentivizing the suboptimal node N2 (whose criticality increases), thereby encouraging its participation in task execution. Figure \ref{fig:supp2}(\subref{subfig:supp_c2}) reveals that as the congestion coefficient $c_0$ increases, the incentives allocated to node N2 exhibit a modest rise. This phenomenon arises because a higher congestion coefficient force N1 to incur higher congestion costs per unit shipped for the same task volume. When these costs surpass a certain threshold, the system strategically shifts some tasks to N2 to alleviate congestion costs on N0—since distributing tasks reduces localized congestion costs. Consequently, incentives for N1 increase accordingly.

\begin{figure}[!ht]
    \centering
    \begin{minipage}[t]{0.98\textwidth} 
        \centering
        \includegraphics[height=50mm, width=150mm]{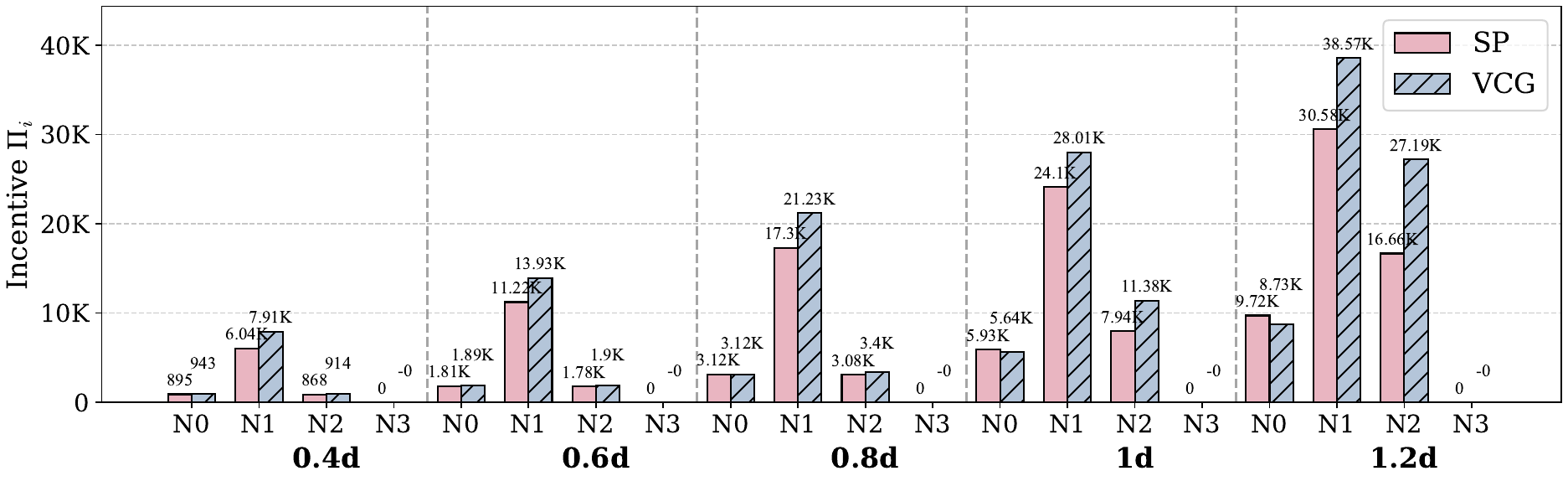}
        \subcaption{Demand parameter $\boldsymbol{d}$}
        \label{subfig:supp_d2}
    \end{minipage}
    
    \vspace{3mm}
    
    \begin{minipage}[t]{0.98\textwidth} 
        \centering
        \includegraphics[height=50mm, width=150mm]{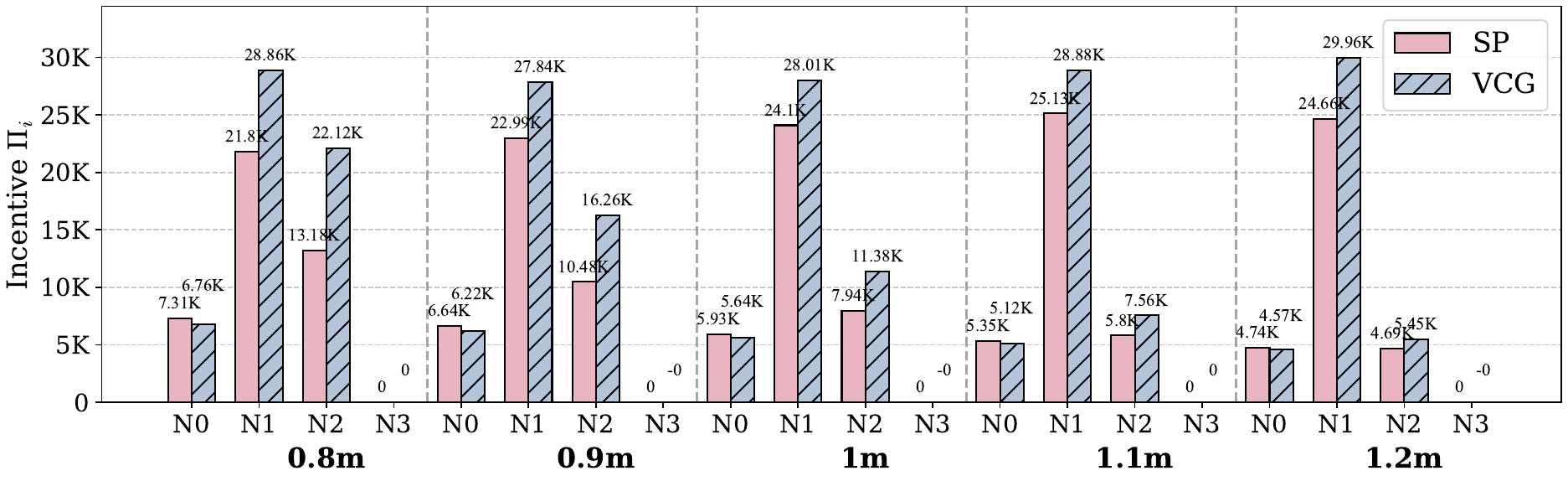}
        \subcaption{Constraint parameter $\boldsymbol{m}$}
        \label{subfig:supp_m2}
    \end{minipage}
    
    \vspace{3mm}
    
    \begin{minipage}[t]{0.98\textwidth} 
        \centering
        \includegraphics[height=50mm, width=150mm]{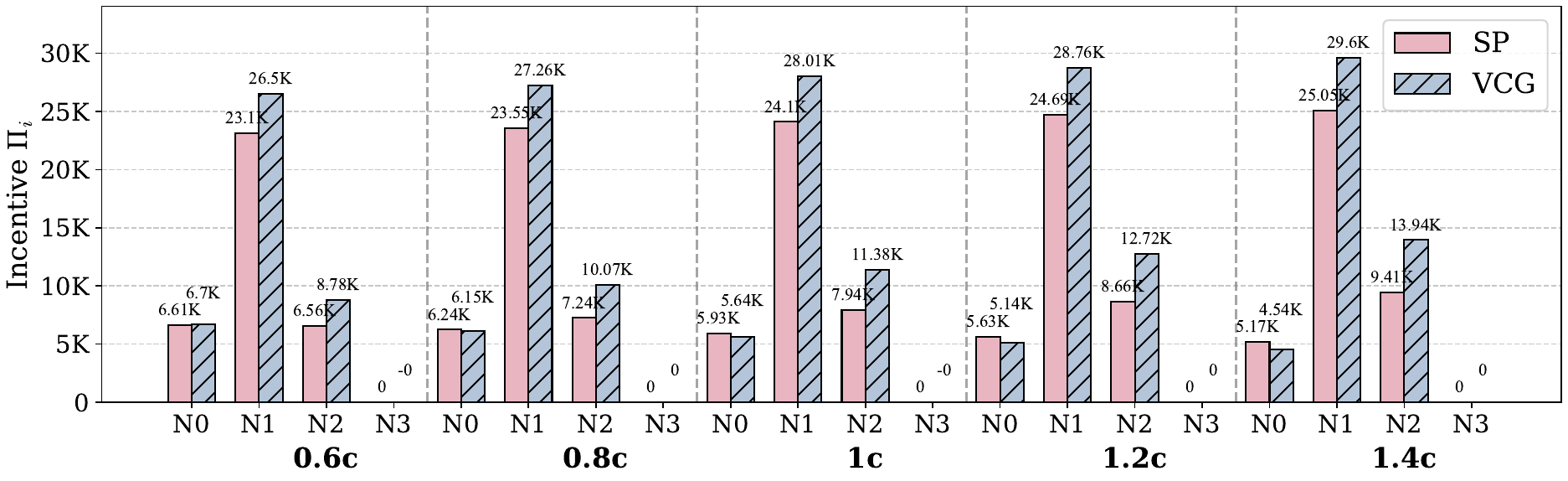}
        \subcaption{Congestion parameter $c_0$}
        \label{subfig:supp_c2}
    \end{minipage}
     \caption{Variation of incentives across mechanisms with different parameters in unbalanced network}
    \label{fig:supp2}
\end{figure}

Table \ref{tab:supp_incentive1}-\ref{tab:supp_incentive2} presents the differences and the percentage of the differences between the two mechanisms under various parameters in the semi-balanced and unbalanced network, respectively. The results demonstrate that in semi-balanced networks, the total incentive payments under the SP mechanism consistently exceeds that under the VCG mechanism, with the percentage difference hovering around 10\%. Conversely, in unbalanced networks, the total incentive payments under the SP mechanism is consistently lower than under the VCG mechanism, exhibiting a percentage difference typically around 15\% and reaching up to 27\% in certain cases. Furthermore, the disparity between the total incentive payments under the two mechanisms does not necessarily change linearly with parameter variations due to the convex structure of the problem. This significant disparity starkly contrasts with the conclusion presented in the main text that the incentive effects of the two mechanisms are essentially indistinguishable in balanced networks. It indicates a strong correlation between the incentive properties of the mechanisms and the underlying network structure.

\begin{table}[!ht]
  \centering
  \small
  \caption{Difference in incentive payments across parameters in semi-balanced network}
  \setstretch{1.1}
    \begin{tabular}{ccccccccc}
    \hline
    \makecell[c]{Task\\Demand} & SP-VCG & Pct. & \makecell[c]{Individual \\Constraints} & SP-VCG & Pct. & \makecell[c]{Congestion \\ Parameter} & SP-VCG & Pct. \\
    \hline
    $0.4\boldsymbol{d}$  & 174.77   & 2\%  & 0.8$\boldsymbol{m}$ & 4848.68   & 11\%  & 0.6$c_0$ & 2931.25  & 7\%\\
    $0.6\boldsymbol{d}$  & 927.92   & 5\% & 0.9$\boldsymbol{m}$ & 3630.03   &   9\% & 0.8$c_0$ & 3243.53 & 8\% \\
    0.8$\boldsymbol{d}$    &  2049.56 & 7\%  & $\boldsymbol{m}$     & 3630.03   & 9\% & $c_0$  & 3630.03 & 9\% \\
    $\boldsymbol{d}$  & 3630.03  & 9\%  & 1.1$\boldsymbol{m}$  & 3630.03     & 9\% & 1.2$c_0$ & 4063.15 &  10\% \\
    1.2$\boldsymbol{d}$  & 6223.75  & 11\%  & 1.2$\boldsymbol{m}$  & 3630.03  &  9\% & 1.4$c_0$ & 3916.04  & 9\% \\
    \hline
    \end{tabular}%
  \label{tab:supp_incentive1}%
\end{table}%

\begin{table}[!ht]
  \centering
  \small
  \caption{Difference in incentive payments across parameters in unbalanced network}
  \setstretch{1.1}
    \begin{tabular}{ccccccccc}
    \hline
    \makecell[c]{Task\\Demand} & SP-VCG & Pct. & \makecell[c]{Individual \\Constraints} & SP-VCG & Pct. & \makecell[c]{Congestion \\ Parameter} & SP-VCG & Pct. \\
    \hline
    $0.4\boldsymbol{d}$  & -1955.13   & 20\%  & 0.8$\boldsymbol{m}$ & -15468.80   & 27\%  & 0.6$c_0$ & -5704.80  & 14\%\\
    $0.6\boldsymbol{d}$  & -2916.15   & 16\% & 0.9$\boldsymbol{m}$ & -10208.90  &   20\% & 0.8$c_0$ & -6442.50  & 15\% \\
    0.8$\boldsymbol{d}$    &  -4249.72  & 15\%  & $\boldsymbol{m}$     & -7052.69   & 16\% & $c_0$  & -7052.69 & 16\% \\
    $\boldsymbol{d}$  & -7052.69  & 16\%  & 1.1$\boldsymbol{m}$  & -5279.35    & 13\% & 1.2$c_0$ & -7647.44 &  16\% \\
    1.2$\boldsymbol{d}$  & -17530.90  & 24\%  & 1.2$\boldsymbol{m}$  & -5894.77  &  15\% & 1.4$c_0$ & -8445.68  & 18\% \\
    \hline
    \end{tabular}%
  \label{tab:supp_incentive2}%
\end{table}%

\end{document}